\documentclass[10pt]{amsart}
\usepackage{graphicx} 
\usepackage{color}
\usepackage[margin=2.8cm]{geometry}
\usepackage{hyperref}
\theoremstyle{plain} 
\newtheorem{theorem}{Theorem}[section]

\newtheorem{proposition}[theorem]{Proposition}
\newtheorem{claim}{Claim}
\newtheorem{fact}[theorem]{Fact}
\newtheorem{lemma}[theorem]{Lemma}
\newtheorem{corollary}[theorem]{Corollary}

\newtheorem{assumption}[theorem]{Assumption}

\DeclareMathOperator{\tr}{tr}
\DeclareMathOperator{\supp}{supp}
\DeclareMathOperator{\co}{co}
\DeclareMathOperator{\B}{B}
\DeclareMathOperator{\SO}{SO}
\DeclareMathOperator{\Ad}{Ad}

\title{Asymmetric stability of the Brunn--Minkowski inequality in compact Lie groups}
\author{Simon Machado}
\address{ETH Zurich, Ramistrasse 101, 8006 Zurich}

\begin{document}

\begin{abstract}
    We show a stability result for the recently established Brunn--Minkowski inequality in compact simple Lie groups. Namely, we prove that if two compact subsets $A, B$ of a compact simple Lie group $G$ satisfy 
    $$ \mu(AB)^{1/d'} \leq (1 + \epsilon)\left(\mu(A)^{1/d'} + \mu(B)^{1/d'}\right)$$
     where $AB$ is the \emph{Minkowski product} $\{ab : a \in A, b \in B\}$, $d'$ denotes the minimal codimension of a proper closed subgroup and $\mu$ is a Haar measure, then $A$ and $B$ must approximately look like neighbourhoods of a proper subgroup $H$ of codimension $d'$, with an error that depends quantitatively on $d', \epsilon$ and the ratio $\frac{\mu(A)}{\mu(B)}$. This result implies an improved error rate in the Brunn--Minkowski inequality in compact simple Lie groups
     $$\mu(AB)^{\frac{1}{d'}} \geq (1-C\mu(A)^{\frac{2}{d'}})\left(\mu(A)^{\frac{1}{d'}} + \mu(B)^{\frac{1}{d'}}\right) $$
     sharp, up to the constant $C$ which depends on $d'$ and $\frac{\mu(A)}{\mu(B)}$ alone.

     Our approach builds upon an earlier paper of the author proving the Brunn--Minkowski inequality, and stability in the case $A=B$. We employ a combinatorial multi-scale analysis and study so-called density functions. Additionally, the asymmetry between $A$ and $B$ introduces new challenges, requiring the use of non-abelian Fourier theory and stability results for the Prékopa–Leindler inequality.
\end{abstract}
\maketitle

\section{Introduction}

The Brunn--Minkowski inequality is a cornerstone of geometric analysis and convex geometry, establishing a deep relationship between the Lebesgue measures of subsets $A, B\subset \mathbb{R}^d$ and the measure of their \emph{Minkowski sum} $A+B:=\{a+b:a\in A,b\in B\}$. In its most well-known form, the inequality states:
\begin{equation}
    \lambda(A+B)^{1/d}\geq \lambda(A)^{1/d}+\lambda(B)^{1/d} \label{Eq: Brunn-Minkowski}
\end{equation}
where $\lambda$ denotes a Haar measure on $\mathbb{R}^d$. The Brunn--Minkowski inequality has profound implications and connections to a wide range of geometric problems, including the isoperimetric inequality, which has motivated extensive research in this area. Moreover, it is part of a broader family of inequalities that play a central role in various fields of mathematics. Notable members of this family include the Prékopa--Leindler inequality, the Brascamp--Lieb inequality, and the Sobolev inequality, among others. For a comprehensive overview, see \cite{BrunnMinkowski02Gardner}.

Equality cases for these inequalities have long been tied to convexity. A recent trend in the field has focused on the stability problem: given two sets that almost satisfy equality in \eqref{Eq: Brunn-Minkowski}, how far are they from sets that achieve equality? This question is not only natural but also crucial for applications, as it provides quantitative control over near-extremal cases. Initial progress on this problem for \eqref{Eq: Brunn-Minkowski} appeared in foundational works by Christ \cite{christ2012near} and Figalli–Jerison \cite{FigalliJerison15}, which have now led to sharp stability results for \eqref{Eq: Brunn-Minkowski}, as well as for the Prékopa–Leindler and Brascamp–Lieb inequalities due to Figalli--van Hintum--Tiba \cite{figalli2023sharp}. In this paper, we prove a stability result for the Brunn–Minkowski inequality in the context of compact simple Lie groups. 

Our main result is the following stability theorem: 

\begin{theorem}[Stability in compact simple Lie groups]\label{Theorem: Global stability}
Let $G$ be a compact simple Lie group of dimension $d$, let $g$ be the unique bi-invariant Riemannian metric and let $\mu$ be the Haar probability measure. Let $d'$ be the minimal co-dimension of a proper closed subgroup and $\tau, \delta > 0$.  Then there is $\epsilon > 0$ such that for all $A,B \subset G$ compact with $\mu(A), \mu(B) \leq \epsilon$, $\tau \leq \frac{\mu(A)}{\mu(B)} \leq \tau^{-1}$ and 
$$\mu(AB)^{\frac{1}{d'}} \leq (1+\epsilon)\left(\mu(A)^{\frac{1}{d'}} + \mu(B)^{\frac{1}{d'}}\right),$$
there are $g_1,g_2 \in G$, $\rho_1, \rho_2 > 0$ and a closed subgroup $H$ of codimension $d'$ such that $$ \mu(A \Delta g_1H_{\rho_1}) \leq \delta \mu(A)$$ and 
$$ \mu(B \Delta H_{\rho_2}g_2) \leq \delta \mu(B)$$
where $H_\rho$ denotes the collection of all points within distance $\rho$ of $H$. The dependence of $\delta$ on $\tau$ and $\epsilon$ can be computed.
\end{theorem}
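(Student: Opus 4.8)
The plan is to run the multi-scale machinery and symmetric-stability results of the author's earlier paper and to graft onto them a non-abelian Fourier argument and a Prékopa–Leindler stability input to handle the asymmetry $A\neq B$. Throughout we may assume $\mu(A)\ge\mu(B)$, so that $\tau\le\mu(B)/\mu(A)\le 1$, and that $\epsilon$ is as small as needed given $\tau$.

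\emph{Step 1 (concentration into one chart).} First I would show that the near-extremal hypothesis forces $A$ and $B$, after discarding subsets of measure at most $\delta'\mu(A)$, to lie in metric balls of radius $O_\tau(\mu(A)^{1/d'})$: if, say, $A$ contained two pieces of non-negligible measure at mutual distance much larger than $\mu(A)^{1/d'}$, then applying the Brunn–Minkowski inequality of the earlier paper piece by piece would push $\mu(AB)$ above the allowed near-minimal value. This confines the whole problem to a single exponential chart, where the bi-invariant metric agrees with the flat one up to lower-order terms — but where the naive Euclidean Brunn–Minkowski, with its weaker exponent $1/d$, is useless: the improvement to exponent $1/d'$ has to be extracted from the group law.

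\emph{Step 2 (multi-scale analysis via density functions).} For a dyadic sequence of scales $r_k$ descending from $\sim\mu(A)^{1/d'}$ I track the local density functions of $A$ and $B$ at scale $r_k$: for each $x$, the normalized mass $\mu(A\cap B(x,r_k))/\mu(B(x,r_k))$ and the ``long directions'' of $A$ at $x$, i.e.\ the subspace of $T_xG\cong\mathfrak g$ along which $A$ has thickness comparable to $r_k$. At each scale there is a dichotomy: either $A$ is essentially round there, and a defect (stability) version of the Euclidean Brunn–Minkowski inequality yields a quantitative gain towards equality that accumulates across the $\sim\log(1/\mu(A))$ scales; or $A$ has a proper subspace $\mathfrak h_k$ of $\mathfrak g$ as its long directions. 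Near-equality propagates the approximate closure $A\cdot A\approx A$ along the long directions, and since commutators in $G$ act on these directions through the Lie bracket, the $\mathfrak h_k$ must stabilise to a genuine Lie subalgebra $\mathfrak h$; exponentiating yields a closed subgroup $H$, whose codimension is forced to be $d'$ by minimality and the measure constraints. The conclusion of this stage is that $A$ and $B$ are each $\delta$-close to a union of translates of some $H_\rho$ — a priori with different subgroups, translates and radii for $A$ and for $B$.

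\emph{Step 3 (reconciling $A$ and $B$; Prékopa–Leindler endgame).} Write $f=\mathbf 1_A$ and $g=\mathbf 1_B$. Then $\supp(f*g)\subseteq AB$, so $\|f*g\|_1=\mu(A)\mu(B)$ while $|\supp(f*g)|\le\mu(AB)$ is near-minimal. Expanding in the Peter–Weyl basis, $\widehat{f*g}(\pi)=\widehat f(\pi)\widehat g(\pi)$, and combining this with the spectral gap of the compact \emph{simple} group $G$ and the rigidity of its lattice of closed subgroups one shows that near-extremality forces the $L^2$-mass of $\widehat f$ and of $\widehat g$ to concentrate, up to $O_\tau(\epsilon)$, on representations adapted to one common codimension-$d'$ subgroup $H$; equivalently, $A$ and $B$ become $\delta$-close to genuine unions of (left, resp.\ right) $H$-cosets. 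Passing to the $d'$-dimensional local quotient by $H$, the fibre masses $a(x)=\mu(A\cap xH)$, $b(x)=\mu(B\cap xH)$ and $c(x)=\mu(AB\cap xH)$ satisfy a Prékopa–Leindler-type inequality whose near-equality case, via the sharp Prékopa–Leindler stability of \cite{figalli2023sharp}, forces $a$ and $b$ to be $\delta$-close to indicators of $d'$-dimensional balls — that is, $\mu(A\,\Delta\,g_1H_{\rho_1})\le\delta\mu(A)$ and $\mu(B\,\Delta\,H_{\rho_2}g_2)\le\delta\mu(B)$ — and matching $\rho_1,\rho_2$ to $\mu(A),\mu(B)$ while tracking all constants gives the stated dependence of $\delta$ on $\tau$ and $\epsilon$.

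\emph{Main obstacle.} The real difficulty lies in the asymmetry, in two places. First, closing the multi-scale induction requires the structural gain at ``structured'' scales to strictly beat the error accumulated over all $\sim\log(1/\mu(A))$ scales; with $A\neq B$ one must compress both sets \emph{simultaneously}, and their centres and long-direction subspaces at a given scale need not coincide, so the error bookkeeping — each term of which must degrade gracefully as $\tau\to 0$ — is delicate. Second, making the Fourier concentration quantitative with the correct $\tau$-dependence and interfacing it cleanly with a Prékopa–Leindler stability statement (which is natural for log-concave data on $\mathbb R^{d'}$) is technically demanding. I expect the first of these to absorb most of the work.
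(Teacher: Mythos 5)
Your Step 1 is based on a wrong picture of the near-extremizers, and this breaks everything downstream. The sets achieving (near-)equality in $\mu(AB)^{1/d'}\ge(1-o(1))(\mu(A)^{1/d'}+\mu(B)^{1/d'})$ are tubes $gH_{\rho}$ around a closed subgroup $H$ of codimension $d'$: these have measure $\sim\rho^{d'}$ but diameter comparable to $\mathrm{diam}(G)=1$, since they contain a whole translate of $H$. So it is false that near-extremality confines $A$ and $B$ (up to $\delta'\mu(A)$) to balls of radius $O_\tau(\mu(A)^{1/d'})$, and the problem cannot be reduced to a single exponential chart. The correct first reduction — which the paper imports from \cite[Prop.~1.6]{Machado2024MinDoubling} — is containment in $F_1H_\delta$ for a proper closed subgroup $H$ and a bounded finite set $F_1$, followed by a combinatorial argument (Lemma \ref{Lemma: Locality} and Lemma \ref{Lemma: last technical lemma}) showing $|F_1|=1$. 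The paper's multi-scale/density-function analysis and the non-abelian Fourier input are then deployed only \emph{transversally to $H$}, inside small balls, to prove a genuinely local stability theorem (Theorem \ref{Theorem: Local stability}) in dimension $d$ with exponent $1/d$; the exponent $1/d'$ appears only after fibering over $H$ via the double-counting identity \eqref{Eq: Double counting}.

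Two further steps of your endgame would also fail as stated. First, Prékopa--Leindler stability cannot force the transverse profiles to be \emph{balls}: equality in the Euclidean Brunn--Minkowski/Prékopa--Leindler inequalities is attained by arbitrary homothetic convex bodies, so the local analysis only yields convex sets $C$ and $\lambda C$. The ball structure of $H_{\rho}$ comes from an extra symmetrization absent from your outline: the transverse convex profile must be almost invariant under the conjugation action of $H$ on $\mathfrak h^\perp$, which is irreducible, and an almost-invariant convex body under an irreducibly acting group is almost a Euclidean ball (Lemmas \ref{Lemma: Almost symmetries and symmetries} and \ref{Lemma: Symmetric short axis implies small volume}). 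Second, gluing the fiberwise translates $g(h_1),r(h_2)$ into a single pair $g_1,g_2$ is not achieved by ``Fourier concentration onto representations adapted to $H$'' (a step you leave entirely heuristic); the paper does it by showing the map $h\mapsto b(h)b(e)^{-1}$ is an almost-homomorphism and invoking Ulam-type stability for homomorphisms of compact groups \cite{zbMATH03429759, KazhdaneRep82}, together with the center-of-mass rigidity of Lemma \ref{Lemma: center of mass}. Without these ingredients your argument only produces, at best, a union of translated convex tubes rather than the asserted single translates $g_1H_{\rho_1}$ and $H_{\rho_2}g_2$.
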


While the stability problem of \eqref{Eq: Brunn-Minkowski} has been a central focus of recent research, the Brunn–Minkowski inequality’s influence extends far beyond its original setting in Euclidean spaces. Initially studied for convex subsets in $\mathbb{R}^d$, the inequality has since been generalized to a variety of settings, including Riemannian manifolds—where the Minkowski sum is replaced by a geodesic interpretation —as well as discrete settings and other ambient groups.  As demonstrated by our main result (Theorem \ref{Theorem: Global stability}), we focus in the present paper on extensions to other groups, a direction that has seen significant progress in recent years. Among the most notable classical results in this area are those of Kemperman \cite{Kemperman64} for tori, and the contributions of McCruden \cite{zbMATH03318704} and Tao \cite{BMnilpotentTao} in the context of simply connected nilpotent Lie groups.

Recent progress in extending Brunn–Minkowski-type inequalities to other ambient groups beyond isolated cases has been led by a series of papers \cite{zbMATH07715451, jing2023measure2, jing2023measure} by Jing, Tran, Zhang, and their co-authors.  Their work was driven by the potential relevance of the Brunn–Minkowski inequality to natural problems in additive combinatorics - an interaction already at the root of the works of Figalli--Jerrison mentioned above. Additive combinatorics focuses on the study of approximate multiplicative structures in groups, both discrete and continuous. A prime example of such structures is subsets $A$ whose product set $A^2:=\{ab: a,b \in A\}$ is not significantly larger than $A$ itself. Jing, Tran and Zhang observed that a tantalizing conjecture of Breuillard and Green - who asked whether, for all compact subsets $A \subset SO_3(\mathbb{R})$,
$$\mu_{SO_3(\mathbb{R})}(A^2) \geq (4 - o(\mu_{SO_3(\mathbb{R})}(A))) \mu_{SO_3(\mathbb{R})}(A)$$
- would fit naturally within the framework of generalizations of the Brunn–Minkowski inequality to compact Lie groups. Despite its apparent simplicity, this conjecture remained unsolved until \cite{jing2023measure}, where Jing, Tran and Zhang, leveraging insights from the Brunn–Minkowski inequality and model theory, successfully resolved it. However, their approach relied crucially on non-quantitative methods and did not extend to $SO_n(\mathbb{R})$ or any other compact groups. The full scope of the conjecture was later addressed by the author in \cite{Machado2024MinDoubling} using a quantitative method, leading to the following result:
\begin{equation}
    \forall A,B \subset G, \mu(AB)^{\frac{1}{d'}} \geq (1-\epsilon)\left(\mu(A)^{\frac{1}{d'}} + \mu(B)^{\frac{1}{d'}}\right) \label{eq: BM compact}
\end{equation} 
where $d'$ denotes the minimal codimension of a proper subgroup. An immediate consequence of our stability result (Theorem \ref{Theorem: Global stability}) is an improved estimate for the error rate in the Brunn--Minkowski inequality (\ref{eq: BM compact}), sharp up to a constant in $d'$ and $\frac{\mu(A)}{\mu(B)}$.

\begin{theorem}\label{Theorem: Error rate BM}
Let $G$ be a compact simple Lie group and $\tau > 0$. There is $C > 0$ such that for all $A,B \subset G$ compact with $\tau \leq \frac{\mu(A)}{\mu(B)} \leq \tau^{-1}$, $$\mu(AB)^{\frac{1}{d'}} \geq (1-C\mu(A)^{\frac{2}{d'}})\left(\mu(A)^{\frac{1}{d'}} + \mu(B)^{\frac{1}{d'}}\right) $$
where $d'$ denotes the minimal codimension of a proper subgroup and $\mu$ is a Haar measure.
\end{theorem}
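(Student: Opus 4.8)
Theorem~\ref{Theorem: Error rate BM} will follow from Theorem~\ref{Theorem: Global stability} by a localisation argument, followed by an essentially Euclidean computation in a thin tubular neighbourhood of the subgroup that stability produces. Two routine reductions come first. Fixing $\tau$, if $\mu(A)$ is bounded below by a constant depending on $\tau$ then, after enlarging $C$, the factor $1-C\mu(A)^{2/d'}$ is non-positive and there is nothing to prove; so one may assume $\mu(A)$, hence $\mu(B)$, is as small as desired. If $\mu(AB)^{1/d'}\ge\mu(A)^{1/d'}+\mu(B)^{1/d'}$ there is again nothing to prove, so one may assume
\[
\mu(AB)^{1/d'}<\mu(A)^{1/d'}+\mu(B)^{1/d'}\le(1+\epsilon)\bigl(\mu(A)^{1/d'}+\mu(B)^{1/d'}\bigr),
\]
where $\epsilon=\epsilon(\tau,\delta_0)$ is the constant supplied by Theorem~\ref{Theorem: Global stability} for a small absolute $\delta_0>0$. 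Theorem~\ref{Theorem: Global stability} then yields, after replacing $A$ by $g_1^{-1}A$ and $B$ by $Bg_2^{-1}$ (which changes none of $\mu(A),\mu(B),\mu(AB)$ by bi-invariance of $\mu$), a closed subgroup $H$ of codimension $d'$ and radii $\rho_1,\rho_2>0$ with $\mu(A\,\Delta\,H_{\rho_1})\le\delta_0\mu(A)$ and $\mu(B\,\Delta\,H_{\rho_2})\le\delta_0\mu(B)$; replacing $\rho_1,\rho_2$ by nearby values one may further arrange $\mu(H_{\rho_1})=\mu(A)$ and $\mu(H_{\rho_2})=\mu(B)$, so that $\rho_1$ and $\rho_2$ are comparable (with constants depending on $\tau$) and both of order $\mu(A)^{1/d'}$ --- in particular small.

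Now localise. Write $\mathfrak g=\mathfrak h\oplus\mathfrak m$ with $\mathfrak m=\mathfrak h^{\perp}$ the $\Ad(H)$-invariant complement, $\dim\mathfrak m=d'$, and use the chart $\Phi(v,h)=\exp(v)h$ on $\{v\in\mathfrak m:|v|<r_0\}\times H$: for $r_0$ small it is a diffeomorphism onto a neighbourhood of $H$ with $\Phi^{*}\mu=J(v)\,dv\,d\mu_H$, where $dv$ is Lebesgue measure on $\mathfrak m$, $\mu_H$ a suitably normalised Haar measure on $H$, and $J$ is smooth, even, $\Ad(H)$-invariant with $J(0)=1$, hence $J(v)=1+O(|v|^{2})$. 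Since $H$ is totally geodesic and $\mathfrak m\perp\mathfrak h$ one has $d(\exp(v)h,H)=|v|$ for $|v|<r_0$, so $H_\rho=\Phi(\{|v|<\rho\}\times H)$ and $\mu(H_\rho)=V(\rho)=c\,\rho^{d'}(1+O(\rho^{2}))$. In this chart the product law reads $\exp(v)h\cdot\exp(w)h'=\exp(Z)\cdot(\exp(Z')hh')$ with $Z\in\mathfrak m$, $Z'\in\mathfrak h$ and $Z=v+\Ad(h)w+O(\rho^{2})$: so up to a relative error $O(\rho)$ the $\mathfrak m$-coordinate of $AB$ is a Minkowski sum twisted by the rotations $\Ad(h)$, while the $H$-coordinate sweeps out all of $H$. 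Passing to the density profiles $f_A(v)=\mu_H\{h:(v,h)\in\Phi^{-1}(A)\}$ and $f_B$, estimating $\mu(AB)$ becomes a Brunn--Minkowski/Prékopa--Leindler problem for $f_A,f_B$ on $\mathbb R^{d'}$, carrying a weight $J=1+O(|v|^{2})$ and an $O(\rho^{2})$ perturbation of the sum coming from the Baker--Campbell--Hausdorff correction and the $\Ad$-twist.

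The heart of the matter is this Euclidean core. The key structural point, forced by the normalisation $\mu(H_{\rho_i})=\mu(A),\mu(B)$, is that $A$ and $B$ cannot simply be missing an outer shell of their tubes: any mass lacking in the interior is balanced by an equal amount of extra mass near or beyond the boundary, and through the twisted addition this extra mass pushes $AB$ back out, so that $\mu(AB)\ge V(\rho_1+\rho_2)$ up to a relative $O(\rho^{2})$ correction. Turning this into an estimate --- and in particular controlling the defect of $A$ and $B$ inside their tubes at the correct order rather than merely by the constant $\delta_0$ --- is where the non-abelian features enter: one controls the profiles $f_A,f_B$ themselves, not just their supports, via the sharp stability of the Prékopa--Leindler inequality of Figalli--van Hintum--Tiba \cite{figalli2023sharp} in the $\mathfrak m$-directions, and exploits the freedom in the $H$-coordinate together with the $\Ad(h)$-twist to keep the fibres of $AB$ full. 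Granting $\mu(AB)\ge(1-C\rho^{2})V(\rho_1+\rho_2)$ and combining it with the elementary expansion $V(\rho_1)^{1/d'}+V(\rho_2)^{1/d'}-V(\rho_1+\rho_2)^{1/d'}=O(\rho^{3})$ and with $V(\rho_i)=\mu(A),\mu(B)$ gives
\[
\mu(A)^{1/d'}+\mu(B)^{1/d'}-\mu(AB)^{1/d'}\le C(\tau)\,\rho_1^{3}\le C'(\tau)\,\mu(A)^{2/d'}\bigl(\mu(A)^{1/d'}+\mu(B)^{1/d'}\bigr),
\]
which is the assertion.

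I expect the main obstacle to be exactly this Euclidean core: one must verify that every error term --- the Jacobian $J$, the Baker--Campbell--Hausdorff bracket, the rotation $\Ad(h)$ --- is genuinely $O(\rho)$ or $O(\rho^{2})$ and never of fixed size; one must handle the non-abelian directions, where the fibre-level inequality is Prékopa--Leindler rather than plain Brunn--Minkowski and the profiles must therefore be controlled, not just their supports; and one must feed the defects of $A$ and $B$ from their tubes through the argument quantitatively. It is here that the sharp Prékopa--Leindler stability is indispensable: a purely qualitative use of Theorem~\ref{Theorem: Global stability} would only yield $\mu(AB)^{1/d'}\ge(1-c)(\mu(A)^{1/d'}+\mu(B)^{1/d'})$ for a fixed $c>0$, and not the rate $\mu(A)^{2/d'}$.
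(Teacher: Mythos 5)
The paper never actually writes out a proof of Theorem~\ref{Theorem: Error rate BM}; it is asserted to follow from Theorem~\ref{Theorem: Global stability}, the relevant mechanism being the fibred chain of inequalities \eqref{Eq: global 1}--\eqref{Eq: global 6} of \S\ref{Subsection: Double counting} together with the containment statement of Lemma~\ref{Lemma: last technical lemma}. Measured against that, your opening reductions are correct and are the right ones (dispose of $\mu(A)\gg_\tau 1$ and of $\mu(AB)^{1/d'}\ge\mu(A)^{1/d'}+\mu(B)^{1/d'}$, translate so the tubes are based at $H$, renormalise $\rho_1,\rho_2$ so that $\mu(H_{\rho_i})$ matches $\mu(A),\mu(B)$), and your closing arithmetic ($V(\rho_1)^{1/d'}+V(\rho_2)^{1/d'}-V(\rho_1+\rho_2)^{1/d'}=O(\rho^3)$, $\rho_1^3\ll_\tau\mu(A)^{2/d'}(\mu(A)^{1/d'}+\mu(B)^{1/d'})$) is also correct.

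The gap is that the inequality $\mu(AB)\ge(1-C\rho^2)V(\rho_1+\rho_2)$, which you introduce with ``Granting'', \emph{is} the theorem once the reductions are made, and the route you sketch for it would not close. First, Theorem~\ref{Theorem: Global stability} only gives $\mu(A\,\Delta\,H_{\rho_1})\le\delta_0\mu(A)$ for a fixed constant $\delta_0$; this is compatible with $A\cap H_{\rho_1}$ missing an entire outer shell of relative measure $\delta_0$ while the compensating mass $A\setminus H_{\rho_1}$ sits where it adds nothing new to $AB$, so the asserted compensation (``this extra mass pushes $AB$ back out'') is precisely what has to be proved and cannot be read off the symmetric-difference bound at precision $O(\rho^2)$. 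Upgrading the control via Theorem~\ref{Theorem:Stab PL} does not help: it returns $O(\epsilon^c)$ for an unspecified absolute $c$ (in the known results $c\ll1$), and the near-equality parameter available after your reduction is at best $O(\rho^2)$, so you would control the profiles only to $O(\rho^{2c})$, far short of $O(\rho^2)$. Second, and more structurally, the fibre-level inequality is not Prékopa--Leindler. With $f_A(v)=\mu_H(\{h:\exp(v)h\in A\})$, the $H$-fibre of $AB$ over a twisted sum $v\oplus w$ contains a \emph{product} of fibres of $A$ and $B$, so Kemperman gives $f_{AB}(v\oplus w)\ge\min(f_A(v)+f_B(w),1)$: fibre measures add with saturation, they are not combined by a geometric mean. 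It is this additivity, fed into the superadditivity argument around the maximal slice $h_0$ in \eqref{Eq: global 2}--\eqref{Eq: global 6}, that produces the main term $(\mu(A)^{1/d'}+\mu(B)^{1/d'})^{d'}$ with only geometric error sources (the $O(\rho^2)$ of \eqref{Eq: Local BM} and of the volume asymptotics of balls in $H$) and with no stabilisation of profiles at all; a Prékopa--Leindler bound $\|f_{AB}\|_1\ge\|f_A\|_1^{1-\lambda}\|f_B\|_1^{\lambda}$ in the fibres would not even reproduce the shape of the main term. The repair is to replace your Euclidean core by: obtain genuine containment $A\subset g_1H_\delta$, $B\subset H_\delta g_2$ with $\delta\asymp_\tau\mu(A)^{1/d'}$ from Lemma~\ref{Lemma: last technical lemma}, and then rerun \eqref{Eq: global 1}--\eqref{Eq: global 6} tracking every error term explicitly in $\rho$ and $\delta$ — a bookkeeping exercise that is itself delicate and that your proposal does not engage with.
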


Surprisingly, despite establishing strong quantitative stability results in this paper, the minimizers for the quantity $\mu(AB)$ remain unknown. This is due to the complexity of the group operation compared to addition, which we only understand up to a finite order via the Baker--Campbel--Hausdorff formula, see \eqref{Eq: BCH formula}. Theorem 1.1 strongly suggests that the minimizers are neighbourhoods of proper subgroups. If this where the case, it would imply $C\simeq \frac{S(2+\tau^{1/d'})\tau^{1/d'}}{6(d'+2)}$ in Theorem \ref{Theorem: Error rate BM} - where $S$ denotes the scalar curvature of the homogeneous space of $G$ of dimension $d'$ -  but the same dependence on $\mu(A)^{2/d'}$, further highlighting the significance of our results.
\subsection{Methods of proof}

The overall strategy for proving Theorem \ref{Theorem: Global stability} builds on the approach developed in \cite{Machado2024MinDoubling}. In that work, we established a stability result for the doubling inequality
$$ \mu(A^2) \geq \left(2^{d'} - o(\mu(A))\right) \mu(A).$$
which corresponds to the special case $A=B$ of Theorem \ref{Theorem: Global stability}. Briefly, the proof proceeds in three main steps. First, we use a key result from \cite{Machado2024MinDoubling} to reduce the problem to the case where $A$ and $B$ are contained in a small neighbourhood of a proper subgroup. Then, a combination of a double-counting argument (see \S \ref{Subsection: Double counting}) and a reconstruction procedure based on Ulam stability for compact groups (see \S \ref{Subsection: Concluding}) allows us to further reduce the problem to a local stability result. Under the symmetry assumption from \cite{Machado2024MinDoubling}, the local result is then obtained using a general limit argument and a simple combinatorial lemma, both of which rely on the equality case in Euclidean spaces.

In the present paper, we refine and extend these ideas. We make the limit argument quantitative and have to completely modify the combinatorial lemma to handle the asymmetric case. The intuition behind the local stability result is as follows: close to the identity, Lie groups resemble their Lie algebra. Thus, locally, a stability result in a Lie group should resemble stability results in Euclidean space of the same dimension. Because of this, we expect convex subsets of the Lie algebra to play a central role. This leads to the following local stability result:

\begin{theorem}\label{Theorem: Local stability}
    Let $G$ be a compact simple Lie group of dimension $d$ and $\tau, \rho,\epsilon > 0$. Let $g$ denote the unique Riemannian bi-invariant metric on $G$ with diameter $1$ and $\B(e,\rho)$ denote the ball of radius $\rho$ at $e$. Suppose that $A,B \subset \B(e,\rho)$ are two compact subsets such that $$\tau^{-1}\mu(B) \leq \mu(A) \leq \tau \mu(B)$$
    and 
    $$ \mu(AB) \leq (1+\epsilon)\left(\mu(A)^{1/d}+ \mu(B)^{1/d}\right)^d.$$ 
    Then there is a convex subset $C$ in the Lie algebra of $G$ such that $\exp( C) \subset \B(e,\rho)$ contains $A$ and
    $$\mu(\exp(C)\setminus A) = O_{d,\tau}(\epsilon + \rho)^c \mu(\B(e,\rho)), \hspace{1cm} \mu(\exp(\lambda C)\Delta B) = O_{d,\tau}(\epsilon + \rho)^c \mu(\B(e,\rho))$$
    and 
    $$ \mu\left(\exp\left((1+\lambda\right)C) \Delta AB\right)= O_{d,\tau}(\epsilon + \rho)^c \mu(\B(e,\rho)) $$
    for some $c \gg_{d,\tau} 1$ and where $\lambda = \frac{\mu(B)^{1/d}}{\mu(A)^{1/d}}$.
\end{theorem}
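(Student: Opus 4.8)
My plan is to pass to the Lie algebra via the exponential map and reduce the statement to a quantitative stability result for the Euclidean Brunn--Minkowski (or rather Prékopa--Leindler) inequality, while carefully tracking the error introduced by the non-commutativity. First I would fix a small $\rho$ and work inside $\B(e,\rho)$, where $\exp$ is a diffeomorphism onto its image with Jacobian $1 + O(\rho^2)$ (this follows from the bi-invariance of the metric and the expansion of the exponential; the relevant point is that the volume distortion between Haar measure on $G$ and Lebesgue measure on the Lie algebra is controlled by $\rho^2$, hence absorbed into the final $(\epsilon+\rho)^c$). Pulling $A$ and $B$ back to subsets $\tilde A, \tilde B$ of the ball of radius $\rho$ in $\mathfrak{g} \cong \mathbb{R}^d$, the Baker--Campbell--Hausdorff formula \eqref{Eq: BCH formula} gives $\log(ab) = \log a + \log b + \tfrac12[\log a,\log b] + \cdots$, so the Minkowski product $AB$ corresponds, up to a set of measure $O(\rho)\mu(\B(e,\rho))$ near the boundary, to a perturbation of the Minkowski sum $\tilde A + \tilde B$ of size $O(\rho^2)$ pointwise. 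Thus the hypothesis $\mu(AB) \le (1+\epsilon)(\mu(A)^{1/d}+\mu(B)^{1/d})^d$ translates into $|\tilde A + \tilde B|^{1/d} \le (1 + O(\epsilon + \rho))(|\tilde A|^{1/d} + |\tilde B|^{1/d})$, i.e.\ $\tilde A$ and $\tilde B$ are near-extremizers of Euclidean Brunn--Minkowski with the same measure ratio $\lambda^d$ bounded by $\tau$.

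Next I would invoke the sharp asymmetric stability theorem for Euclidean Brunn--Minkowski / Prékopa--Leindler of Figalli--van Hintum--Tiba \cite{figalli2023sharp} (this is exactly the point where the paper says it uses ``stability results for the Prékopa--Leindler inequality''): a pair of sets with $|\tilde A + \tilde B|^{1/d} \le (1+\eta)(|\tilde A|^{1/d} + |\tilde B|^{1/d})$ and bounded ratio must satisfy $|\mathrm{co}(\tilde A) \setminus \tilde A| = O_{d,\tau}(\eta^{c_0}) |\tilde A|$ for the convex hull $\mathrm{co}(\tilde A)$, and similarly $\tilde B$ is close to a dilate-translate of the same convex body; moreover $\tilde A + \tilde B$ is close to $\mathrm{co}(\tilde A) + \mathrm{co}(\tilde B)$, which is a dilate of $\mathrm{co}(\tilde A)$. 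Setting $C := \mathrm{co}(\tilde A)$ (possibly shrunk very slightly so that $\exp(C) \subset \B(e,\rho)$, which is harmless since $C$ has diameter $O(\rho)$), one gets $|C \setminus \tilde A| = O_{d,\tau}(\eta^{c_0})|\tilde A|$ with $\eta = O(\epsilon+\rho)$. Transporting back through $\exp$ — whose volume distortion is $1 + O(\rho^2)$ and which sends $\tilde A \mapsto A$, $C \mapsto \exp(C)$ — yields $\mu(\exp(C) \setminus A) = O_{d,\tau}(\epsilon+\rho)^c \mu(\B(e,\rho))$ for a suitable exponent $c$, and likewise for $\exp(\lambda C) \Delta B$. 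For the product set, I would combine the Euclidean estimate $|\tilde A + \tilde B \,\Delta\, (C + \lambda C)| = O_{d,\tau}(\eta^{c_0})|\tilde A|$ with the BCH comparison between $AB$ and the image of $\tilde A + \tilde B$: since $\exp\big((1+\lambda)C\big)$ and $\exp$ of the Euclidean sum $C + \lambda C$ differ by a set of measure $O(\rho^2)$ times the volume (again BCH plus convexity of $C$), the triangle inequality gives the third claim.

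The main obstacle I anticipate is \textbf{controlling the BCH discrepancy between $AB$ and $\tilde A + \tilde B$ uniformly and with the right power of $\rho$}, rather than the invocation of Euclidean stability, which is essentially black-boxed. Specifically, $AB$ need not be contained in $\exp$ of any fixed neighbourhood of $\tilde A + \tilde B$ in a way that is monotone or easily comparable: the quadratic correction $\tfrac12[\cdot,\cdot]$ moves points by $O(\rho^2)$, but to convert ``each point moves by $O(\rho^2)$'' into ``the symmetric difference of the sets has measure $O(\rho^2)\mu(\B(e,\rho))$'' one needs that the boundary $\partial(\tilde A + \tilde B)$ is not too wild — and a priori $\tilde A$, $\tilde B$ are arbitrary measurable sets, so their sumset could have large perimeter. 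The resolution is bootstrapping: one first runs the argument with a crude comparison to conclude $\tilde A, \tilde B$ are \emph{already} close to convex bodies (so their sumset is close to a convex body, with controlled perimeter), then re-runs the BCH comparison restricted to that near-convex situation to upgrade the error to the claimed $(\epsilon + \rho)^c$. Getting the two regimes to chain together with a single clean exponent $c \gg_{d,\tau} 1$, and ensuring all implied constants genuinely depend only on $d$ and $\tau$, is the delicate bookkeeping that the proof will have to carry out carefully; the quantitative limiting argument alluded to in the ``Methods of proof'' section is presumably the device that makes this rigorous.
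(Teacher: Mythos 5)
Your high-level plan (linearise via $\exp$, compare $AB$ with $\tilde A+\tilde B$ using BCH, invoke Euclidean Pr\'ekopa--Leindler stability, transport back) is the right starting intuition and matches the paper's opening move, but the obstacle you flag at the end is not a bookkeeping issue --- it is the actual mathematical content of the proof, and your proposed ``bootstrap'' does not resolve it. The circularity is real: to transfer the hypothesis $\mu(AB)\le(1+\epsilon)(\mu(A)^{1/d}+\mu(B)^{1/d})^d$ into $|\tilde A+\tilde B|\le(1+O(\epsilon+\rho))(\cdots)$ you need $\mu\bigl((AB)_{O(\rho^2)}\bigr)\approx\mu(AB)$, i.e.\ a priori regularity of the boundary of $AB$; but you can only get that regularity \emph{after} you know $A$, $B$, $AB$ are close to convex bodies, which is the conclusion. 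There is no ``crude first pass'' available: for arbitrary compact near-extremizers, $AB$ could a priori look like a union of $\rho^{-1}$ slabs of thickness $\rho^3$, whose $\rho^2$-neighbourhood fills the whole ball. Indeed the paper points out (after Proposition \ref{Proposition: Quasi-random}) that the analogous regularity statement is \emph{false} in $\mathbb{R}$, so whatever breaks the circularity must use simplicity of $G$ --- which your argument never does.

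The paper's route is genuinely different at this point. It applies Pr\'ekopa--Leindler stability not to the sets but to the \emph{density functions} $\chi_{A,\eta}=\mathbf 1_A*\chi_\eta$ at a scale $\eta\approx\rho^{3/2}$ safely above the BCH error $\rho^2$ (Lemma \ref{Lemma: mult. BM for densities}, Proposition \ref{Proposition: Application PL stability}); this sidesteps the boundary issue but creates a new one: the log-concave approximant controls the neighbourhood $A_\eta$, not $A$ itself, and one must rule out that $A$ is a sparse set with large $\eta$-neighbourhood. That is the content of Propositions \ref{Proposition:  Bounds max and support}--\ref{Proposition: Small doubling and large cov number implies large measure}, which require the multi-scale approximate-subgroup analysis (Lemmas \ref{Lemma: Propagating containment of balls} and \ref{Lemma: Boundedly many interesting scales}, using commutators and irreducibility of the adjoint action) and local quasi-randomness via non-abelian Fourier theory (Proposition \ref{Proposition: Quasi-random}). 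Even after that, the endgame is a two-claim scale-comparison argument for the level sets $C_\eta=\{\chi_{AB,\eta}=1\}$, not a direct transport of a Euclidean convex hull. None of this machinery appears in your proposal, and it cannot be replaced by citing Figalli--van Hintum--Tiba, since their theorem is exactly what the paper already imports as Theorem \ref{Theorem:Stab PL}. So the proposal has a genuine gap at its central step.
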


Just like Theorem \ref{Theorem: Global stability} provides quantitative control over near extremal sets relative to the global Brunn--Minkowski inequality, Theorem \ref{Theorem: Local stability} is the corresponding statement pertaining to the local Brun--Minkowski inequality
\begin{equation}
    \forall A,B \subset \B(e,\rho) \text{ compact},\    \mu(AB)^{1/d} \geq (1- O_d(\rho^2))\left(\mu(A)^{1/d} + \mu(B)^{1/d}\right) \label{Eq: Local BM}
\end{equation} 
proved in \cite{Machado2024MinDoubling} using optimal transport.

Compared to the symmetric case, Theorem \ref{Theorem: Local stability} is significantly more involved. The asymmetry of $A$ and $B$ requires us to leverage many of the tools introduced in \cite{Machado2024MinDoubling} (e.g., density functions and multi-scale analysis) as well as exploit additional techniques, such as non-abelian Fourier analysis and stability results for the Prékopa–Leindler inequality (Proposition \ref{Proposition: Application PL stability}).

First, by the Baker–Campbell–Hausdorff formula, group multiplication in the Lie group corresponds to addition in the Lie algebra up to second-order terms (Corollary \ref{Corollary: Sum vs product}). We combine this with stability results for the Prékopa--Leindler inequality to show that $A\B(e,\rho^2)$  is approximately convex — more precisely, that the density function of $A$ at scale $\rho^2$ is log-concave. 

Secondly, we prove that there is a constant $c$ such that if $A\B(e,\rho^c)$ has large Haar measure, then $A$ itself must have large Haar measure (Proposition \ref{Proposition: Small doubling and large cov number implies large measure}). This result is a specificity of simple Lie groups and relies on multi-scale toolsets for simple Lie groups from \cite{zbMATH06466329} and \cite{Machado2024MinDoubling}. At its core, it exploits a local nilpotent-like behaviour of commutator in Lie groups and the fact that sets with small doubling behave like groups at all but finitely many scales, a fact already key in \cite{Machado2024MinDoubling}. 

Finally, we rely on Non-abelian Fourier theory to show that products of two subsets of high density in $\B(e,\rho)$ must contain balls with radius polynomial in $\rho$ (Proposition \ref{Proposition: Quasi-random}). This property - without the polynomial dependence on the radii - was first coined \emph{local quasi-randomness} by Gowers and Long in \cite{zbMATH07286399} and already used in \cite{zbMATH07599280}. Here, we follow the same proof principle as \cite[App. A]{zbMATH07286399} while carefully tracking polynomial error rates. 

\subsection{Outline of the paper}
We start in \S \ref{Preliminaries} with some preliminaries. We state the Baker--Campbell--Hausdorff formula and some of its consequences, we introduce the Prékopa--Leindler inequality and a stability result, we establish a number of useful facts on convex subsets and we provide necessary background on non-abelian Fourier theory. 

In \S \ref{Local stability} we then prove the local stability result (Theorem \ref{Theorem: Local stability}). We rely on the study of density functions and approximate groups, both concepts being defined there. 

Finally, in \S \ref{Global stability} we prove the global stability result (Theorem \ref{Theorem: Global stability}). This part resembles \cite{Machado2024MinDoubling} most, but we track down all estimates to obtain quantitative results. We also pinpoint the only non-polynomial quantitative result involved. 
\subsection*{Acknowledgement} The author gratefully acknowledges the support of the \emph{Forschungsinstitut für Mathematik} (FIM). 
\section{Preliminaries}\label{Preliminaries}
\subsection{Notation}
Given two subsets $X,Y$ of a group $G$, $XY$ denotes the \emph{Minkowski product} $\{xy : x \in X, y \in Y\}$. We define inductively $X^1=X$ and $X^{n+1}=XX^n$ for all non-negative integers $n$. Finally, $X^{-1}:=\{x^{-1} | x \in X\}$ and $X^{-n}:=\left(X^{-1}\right)^n$.

For $f,g: G \rightarrow \mathbb{R}_{\geq 0}$ two functions, we write $f \ll_{x,y,...} g$ and $f = O_{x,y,\ldots}(g)$ to signify $|f| \leq C g $ where $C>0$ is a constant depending on the parameters $x,y,\ldots$ alone. 

\subsection{Compact Lie groups and Lie algebras}

Throughout, $G$ denotes a connected compact simple Lie group with Lie algebra $\mathfrak{g}$ and $d$ denotes the unique bi-invariant Riemannian distance which gives diameter $1$ to $G$. The probability Haar measure is denoted by $\mu$. We denote by $\B(g,\rho) \subset G$ the ball centered at $g$ of radius $\rho$. The ball centered at identity is \emph{normal} i.e. for all $g \in G$, $g\B(e,\rho)g^{-1}=\B(e,\rho)$ because $d$ is bi-invariant. For a subset $X \subset G$ and $\rho >0$, we write $X_\rho:=X\B(e,\rho)$. 

Near the identity, the group multiplication and addition are close to one another. This is expressed by the Baker--Campbell--Hausdorff formula:

\begin{proposition}[Baker--Campbell--Hausdorff formula, \S 5.6, \cite{LieGroups15Hall}]\label{Proposition: BCH formula}
There is a neighbourhood $V$ of  $0 \in \mathfrak{g}$ such that for every $X,Y \in V$ we have 
\begin{equation}
    \log \left(\exp X \exp Y\right) = X + Y + \frac12 [X,Y] + \ldots \label{Eq: BCH formula}
\end{equation}
where the remaining terms range over all higher Lie brackets made using $X$ and $Y$ and the coefficients involved are universal.
\end{proposition}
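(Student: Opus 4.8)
The plan is to reproduce the classical integral-formula proof (this is the route taken in \cite{LieGroups15Hall}), exploiting that a compact group has a faithful unitary representation so that $\exp$ is the matrix exponential and $\log$ the usual matrix logarithm.

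\emph{Setup and reduction.} Fix a faithful representation $G\hookrightarrow U(n)$, identifying $\mathfrak{g}$ with a real Lie subalgebra of $\mathfrak{u}(n)$ and $\exp$ with the matrix exponential. For $X,Y$ in a sufficiently small ball $V\subset\mathfrak{g}$, the product $\exp X\exp Y$ lies in a neighbourhood of $I$ on which the matrix logarithm $\log(I+M)=\sum_{k\ge1}(-1)^{k+1}M^{k}/k$ converges and inverts $\exp$ diffeomorphically; since $\exp X\exp Y\in G$, its logarithm $Z:=\log(\exp X\exp Y)$ lies in $\mathfrak{g}$ and depends real-analytically on $(X,Y)$, with $Z=X+Y+O(\|X\|^{2}+\|Y\|^{2})$.

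\emph{The ODE.} Using the derivative-of-exponential identity $\tfrac{d}{dt}e^{W(t)}\cdot e^{-W(t)}=\frac{1-e^{-\operatorname{ad}_{W(t)}}}{\operatorname{ad}_{W(t)}}\dot W(t)$ (itself obtained from $\tfrac{d}{dt}e^{W}=\int_{0}^{1}e^{sW}\dot W\,e^{(1-s)W}\,ds$ together with $e^{sW}(\cdot)e^{-sW}=e^{s\operatorname{ad}_{W}}$), fix $X$ and set $Z(t):=\log(\exp X\exp tY)$, so that $e^{Z(t)}=e^{X}e^{tY}$ and $Z(0)=X$. Differentiating gives $\tfrac{d}{dt}e^{Z(t)}=e^{Z(t)}Y$, hence $\frac{1-e^{-\operatorname{ad}_{Z(t)}}}{\operatorname{ad}_{Z(t)}}\dot Z(t)=Y$. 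Since $z\mapsto\frac{1-e^{-z}}{z}$ is analytic and nonvanishing near $0$, shrinking $V$ (so that $Z(t)$ stays small for $t\in[0,1]$) lets us invert it, obtaining $\dot Z(t)=\psi(\operatorname{ad}_{Z(t)})\,Y$ with $\psi(z)=\frac{z}{1-e^{-z}}=1+\tfrac{z}{2}+\cdots$ analytic near $0$ and with universal rational Taylor coefficients.

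\emph{Solving and reading off the expansion.} As $\psi$ is analytic and $\|X\|$ is small, Picard iteration for this ODE converges on $[0,1]$ and expresses $Z(t)$ as a convergent series each of whose terms is an iterated Lie bracket in $X$ and $Y$: indeed $\operatorname{ad}_{Z}$ sends bracket expressions to bracket expressions, $Z(0)=X$ contributes the order-one term $X$, and $\int_{0}^{t}\psi(0)Y\,ds=tY$ contributes $tY$. Evaluating at $t=1$ gives $\log(\exp X\exp Y)=Z(1)$ as a series of iterated brackets of $X,Y$; starting Picard from $Z\approx X+tY$ and using $\psi(z)=1+z/2+\cdots$ yields $\dot Z=Y+\tfrac12[X+tY,Y]+\cdots=Y+\tfrac12[X,Y]+(\text{brackets of order}\ge3)$, so $Z(1)=X+Y+\tfrac12[X,Y]+\cdots$. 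The coefficients are universal (independent of $G$ and of the chosen representation) because the iteration is purely formal in the Taylor coefficients of $\psi$; one makes this precise by running the entire argument in the completed free associative algebra $\mathbb{Q}\langle\langle x,y\rangle\rangle$ and then specializing via $x\mapsto X$, $y\mapsto Y$, the free-Lie-algebra structure of the output being exactly what the ODE exhibits (equivalently, Friedrichs' primitivity criterion). The main obstacle is not conceptual but quantitative: choosing $V$ so that $\exp X\exp tY$ and the corresponding $Z(t)$ remain, for all $t\in[0,1]$, in the region where $\log$ is defined and $\psi(\operatorname{ad}_{Z})$ is invertible, with bounds forcing Picard convergence, and checking that the formal computation produces an honest convergent bracket series rather than a mere asymptotic expansion — both addressed as above. (For a general Lie group one would first invoke Ado's theorem to obtain a faithful representation; for our compact $G$ this step is unnecessary.)
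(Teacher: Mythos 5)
The paper does not prove this statement; it is quoted verbatim as a classical result from the cited reference (Hall, \S 5), and your write-up is precisely the standard argument from that source (faithful unitary representation, derivative-of-exponential identity, the ODE $\dot Z=\psi(\operatorname{ad}_{Z})Y$ with $\psi(z)=z/(1-e^{-z})$, Picard iteration, universality via the free Lie algebra), so it is the right proof and it is correct. One small slip: the identity you display should read $e^{-W(t)}\cdot\tfrac{d}{dt}e^{W(t)}=\frac{1-e^{-\operatorname{ad}_{W(t)}}}{\operatorname{ad}_{W(t)}}\dot W(t)$ (the inverse factor on the left), since $\bigl(\tfrac{d}{dt}e^{W}\bigr)e^{-W}$ equals $\frac{e^{\operatorname{ad}_W}-1}{\operatorname{ad}_W}\dot W$ instead; the equation you actually use afterwards, namely $\frac{1-e^{-\operatorname{ad}_{Z(t)}}}{\operatorname{ad}_{Z(t)}}\dot Z(t)=Y$ obtained from $e^{-Z}\tfrac{d}{dt}e^{Z}=Y$, is the correct one, so the argument is unaffected.
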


The main consequence we will use is the following: 

\begin{corollary}\label{Corollary: Sum vs product}
    Let $\rho > 0$, $x,y \in \B(e,\rho)$. Then 
    $$ x+y \in \B\left(xy,O_d(\rho^2)\right).$$
    In particular, 
    $$ yx \in \B\left(xy,O_d(\rho^2)\right).$$

\end{corollary}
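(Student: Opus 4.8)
The plan is to transport everything through the exponential map and apply the Baker--Campbell--Hausdorff formula (Proposition \ref{Proposition: BCH formula}). First, one should read ``$x+y$'' as $\exp(\log x + \log y)$, which only makes sense once $\rho$ is small enough that $x,y$ lie in a normal neighbourhood of $0 \in \mathfrak g$ on which $\log$ is defined; since $G$ has diameter $1$, the claimed bound $O_d(\rho^2)$ is vacuous once $\rho$ exceeds a fixed constant, so I may and will assume $\rho \le \rho_0$ for a constant $\rho_0 = \rho_0(d) > 0$ to be fixed (small enough that $\log$ is defined on $\B(e,2\rho_0)$, that the BCH series converges on the relevant set, and that $\rho_0$ is below the injectivity radius). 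Because the bi-invariant metric is a Riemannian metric whose geodesics through $e$ are the one-parameter subgroups, the group exponential agrees with the Riemannian exponential at $e$; hence, writing $X = \log x$, $Y = \log y$, one has $|X| = d(e,x) \le \rho$ and $|Y| = d(e,y) \le \rho$, where $|\cdot|$ denotes the norm on $\mathfrak g$ induced by the metric.

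Next I would apply \eqref{Eq: BCH formula} to $X$ and $Y$: this gives $\log(xy) = X + Y + R(X,Y)$, where $R(X,Y) = \tfrac12[X,Y] + \cdots$ collects all the higher brackets. Since $R$ is a convergent power series in $(X,Y)$ whose constant and linear parts vanish, on $\B(0,\rho_0)^2$ it satisfies $|R(X,Y)| = O_d\big((|X|+|Y|)^2\big) = O_d(\rho^2)$ (equivalently, bound the leading term by $\tfrac12\|[\cdot,\cdot]\|\,|X||Y|$ and the tail using convergence of the series). Consequently $|X+Y| \le 2\rho$ and $|\log(xy)| \le 2\rho + O_d(\rho^2) \le 3\rho_0$, so both $X+Y$ and $\log(xy)$ lie in the fixed ball $\B(0,3\rho_0) \subset \mathfrak g$. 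On that ball $\exp$ is $C^1$ with $d\exp_0$ the identity, hence Lipschitz with some constant $L_d$ depending only on $d$ (e.g. $L_d = 2$ for $\rho_0$ small). Therefore
$$ d(xy,\, x+y) = d\big(\exp(X+Y+R(X,Y)),\, \exp(X+Y)\big) \le L_d\, |R(X,Y)| = O_d(\rho^2),$$
which is the first assertion.

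For the ``in particular'', I would run the same computation with the roles of $x$ and $y$ exchanged: since addition in $\mathfrak g$ is commutative, $x+y = y+x$, and the first part applied to the pair $(y,x)$ yields $d(yx,\, x+y) = O_d(\rho^2)$. Combining with the triangle inequality, $d(xy,\, yx) \le d(xy,\, x+y) + d(x+y,\, yx) = O_d(\rho^2)$. (Alternatively, one reads $\log(xy) - \log(yx) = R(X,Y) - R(Y,X) = [X,Y] + \cdots = O_d(\rho^2)$ straight off BCH and applies the Lipschitz bound once more.)

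The argument is essentially bookkeeping; there is no genuine obstacle. The only points that need a little care are the identification of the group exponential with the Riemannian one (so that $|\log x| = d(e,x)$ on $\B(e,\rho_0)$), the reduction to small $\rho$ — needed both to make sense of the notation ``$x+y$'' and to remain within the domain of convergence of the BCH series — and the uniform local Lipschitz estimate for $\exp$ near $0$.
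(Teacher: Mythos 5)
Your proof is correct and follows the route the paper intends: the corollary is stated as an immediate consequence of the Baker--Campbell--Hausdorff formula, and your argument (pull back via $\log$, bound the bracket remainder by $O_d(\rho^2)$, push forward via the locally Lipschitz $\exp$, and swap $x,y$ for the second claim) is exactly that deduction, with the minor domain and normalization issues handled properly.
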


Another useful consequence concerns the relation between the Lebesgue measure on the Lie algebra and the Haar measure.

\begin{fact}[e.g. Prop. 3.2,\cite{Machado2024MinDoubling}]\label{Fact: Lebesgue vs Haar}
    There is a Lebesgue measure $\lambda$ on the Lie algebra $\mathfrak{g}$ of $G$ and $\rho_0 > 0$ such that for all $\rho_0 \geq \rho > 0$ and $A \subset \log \B(e,\rho)$, $\log A$ is well-defined and 
    $$ \lambda(\log A) = (1 + O_d(\rho^2)) \mu(A).$$
In particular, we have
    $$ \mu(\B(e,\rho)) = vol(\rho) + O_d\left( \rho^2 vol(\rho)\right)$$
    where $vol(\rho)$ denotes the volume of the ball of radius $\rho$ in $\mathbb{R}^d$.
\end{fact}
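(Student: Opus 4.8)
The plan is to pull the Haar measure back along the exponential map and control the resulting Jacobian, using that the metric is bi-invariant.

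First I would fix the local setup. Since $(d\exp)_0=\mathrm{id}$, the map $\exp$ is a diffeomorphism from the metric ball $B_{\mathfrak g}(0,\rho_0)\subset\mathfrak g$ onto an open neighbourhood of $e$, for some $\rho_0=\rho_0(d)>0$; I take $\rho_0$ also below the injectivity radius at $e$ and write $\log$ for the inverse, so that $\log A$ is well defined for any $A\subset\B(e,\rho)$ with $\rho\le\rho_0$. Because the metric is bi-invariant, the geodesics through $e$ are exactly the one-parameter subgroups $t\mapsto\exp(tX)$, traversed at the constant speed $|X|$ given by the inner product of the metric at $e$; hence $\B(e,\rho)=\exp\big(B_{\mathfrak g}(0,\rho)\big)$ and $\log\B(e,\rho)=B_{\mathfrak g}(0,\rho)$ for $\rho\le\rho_0$. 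Fixing a Lebesgue measure $\lambda_0$ on $\mathfrak g$ normalised by $\lambda_0\big(B_{\mathfrak g}(0,1)\big)=vol(1)$, I can write $\exp^*\mu=h\cdot\lambda_0$ with $h\colon\mathfrak g\to\mathbb R_{>0}$ smooth near $0$, and everything reduces to the estimate $h(X)=h(0)\big(1+O_d(|X|^2)\big)$ for $X\in B_{\mathfrak g}(0,\rho_0)$.

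For this Jacobian estimate I would use the classical formula $(d\exp)_X=(dL_{\exp X})_e\circ\frac{1-e^{-\operatorname{ad}_X}}{\operatorname{ad}_X}$; since Haar measure is left-invariant this gives $h(X)=h(0)\det\!\big(\tfrac{1-e^{-\operatorname{ad}_X}}{\operatorname{ad}_X}\big)$, and expanding,
$$\log\det\!\left(\frac{1-e^{-\operatorname{ad}_X}}{\operatorname{ad}_X}\right)=-\tfrac12\tr(\operatorname{ad}_X)+O(|X|^2)=O(|X|^2),$$
the linear term vanishing because $\operatorname{ad}_X$ is skew-symmetric — hence traceless — for the $\operatorname{Ad}$-invariant inner product underlying the bi-invariant metric. (Alternatively, one avoids the formula: $\exp$ intertwines the $\operatorname{Ad}$-action on $\mathfrak g$ with conjugation on $G$, under which $\mu$ is invariant, so $h$ is $\operatorname{Ad}$-invariant; its differential at $0$ is an $\operatorname{Ad}$-invariant linear functional on $\mathfrak g$, and there is none but $0$ since $\mathfrak g$ is simple, so again $h(X)=h(0)+O(|X|^2)$.) Setting $\lambda:=h(0)\,\lambda_0$, which is again a Lebesgue measure, we get for $\rho\le\rho_0$ and $A\subset\B(e,\rho)$, using $\log A\subset B_{\mathfrak g}(0,\rho)$,
$$\mu(A)=\int_{\log A}h\,d\lambda_0=\int_{\log A}\big(1+O_d(\rho^2)\big)\,d\lambda=\big(1+O_d(\rho^2)\big)\,\lambda(\log A),$$
which is the first assertion. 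Applying this to $A=\B(e,\rho)$ and using $\log\B(e,\rho)=B_{\mathfrak g}(0,\rho)$ gives $\mu(\B(e,\rho))=(1+O_d(\rho^2))\,\lambda(B_{\mathfrak g}(0,\rho))=(1+O_d(\rho^2))\,h(0)\,vol(\rho)$, i.e. the second assertion, with the constant $h(0)$ (the reciprocal of the total Riemannian volume of $G$) absorbed into the normalisation. Finally, all implied constants depend only on the dimension and structure constants of $\mathfrak g$, hence only on $d$, since there are finitely many compact simple Lie groups of a given dimension.

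I do not expect a serious obstacle here: the one genuinely substantive point is the cancellation of the first-order term in the Jacobian, which is exactly where bi-invariance of the metric (equivalently $\operatorname{Ad}$-invariance of the inner product, equivalently skew-symmetry of $\operatorname{ad}$) together with simplicity of $\mathfrak g$ enters. The remaining care is purely book-keeping: choosing $\rho_0$ small enough for $\exp$ to be a diffeomorphism and for the geodesic description of small balls to hold, and tracking the normalisation of $\lambda$ so that the statement about $\mu(\B(e,\rho))$ comes out in terms of the Euclidean ball volume $vol(\rho)$.
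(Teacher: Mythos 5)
Your argument is correct, and the paper itself offers no proof of this fact — it is quoted from \cite{Machado2024MinDoubling} — so there is nothing to compare against except the standard argument, which is exactly what you give: identify the Riemannian and Lie-theoretic exponentials via bi-invariance, pull back $\mu$ through $\exp$, and kill the first-order term of the Jacobian $\det\bigl(\tfrac{1-e^{-\operatorname{ad}_X}}{\operatorname{ad}_X}\bigr)$ using $\tr(\operatorname{ad}_X)=0$ (your alternative via $\Ad$-invariance of the density and simplicity of $\mathfrak g$ is equally valid). The only point worth flagging is the normalisation in the final display: with $\lambda=h(0)\lambda_0$ you get $\mu(\B(e,\rho))=(1+O_d(\rho^2))\,h(0)\,vol(\rho)$, and $h(0)$ is the reciprocal of the total Riemannian volume of $G$, which is not $1$ in general; so the second assertion holds only up to this multiplicative constant (an imprecision already present in the statement as quoted, and harmless for every use of the fact in the paper, which only invokes the scale-invariant first assertion or order-of-magnitude ball volumes). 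You cannot simultaneously normalise $\lambda$ so that the first identity holds and so that $\lambda(B_{\mathfrak g}(0,\rho))=vol(\rho)$ exactly, so "absorbed into the normalisation" should be read as "up to a constant depending on $G$".
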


\subsection{Facts about convex sets}

We will use the following well-known fact from convex geometry: 

\begin{fact}[John Ellipsoid, e.g. \cite{zbMATH01149836}]\label{Fact: John Ellipsoid}
Let $C \subset \mathbb{R}^d$ be a convex subset. We call the ellipsoid $E \subset C$ of maximal volume the \emph{John ellipsoid}. If $c$ is the center of $E$, then 
$$C \subset c + d(E-c):=\{c + d(x-c) | x \in E\}.$$
In particular, if $C$ is symmetric, then $c=0$.
\end{fact}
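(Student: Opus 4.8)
I would prove this classical statement by the ``ice-cream cone'' perturbation argument. First, normalise: applying an invertible affine transformation $T$ — which carries ellipsoids to ellipsoids, scales every volume by the same $|\det T|$, preserves convexity, and intertwines the two sides of the asserted inclusion — reduces us to the case where the John ellipsoid $E$ is the Euclidean unit ball $\mathbb{B}^d$ centred at the origin, so that $c=0$ and the claim becomes $C\subseteq d\,\mathbb{B}^d$. The last sentence is then immediate: if $C=-C$, then $-E$ is also an inscribed ellipsoid of maximal volume, hence $-E=E$ by uniqueness of the John ellipsoid, and an ellipsoid equal to its own reflection is centred at $0$.

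Suppose for contradiction that some $p\in C$ satisfies $|p|>d$; after a rotation write $p=te_1$ with $t>d$. Since $\mathbb{B}^d\subseteq C$ and $C$ is convex, $K:=\co(\mathbb{B}^d\cup\{p\})\subseteq C$, and $K$ is the union of the spherical cap $\{x\in\mathbb{B}^d:x_1\le 1/t\}$ with the solid cone $\{x:x_1+\sqrt{t^2-1}\,|x'|\le t\}\cap\{x_1\ge 1/t\}$, the two pieces meeting along the circle $\{x_1=1/t\}$ (writing $x'=(x_2,\dots,x_d)$). The aim is to place inside $K$ an ellipsoid of volume strictly larger than $\mathrm{vol}(\mathbb{B}^d)=\mathrm{vol}(E)$, contradicting the maximality of $E$. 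By the rotational symmetry of $K$ about the $e_1$-axis it suffices to look among ellipsoids of revolution $E_{s,a,b}=\{x:(x_1-s)^2/a^2+|x'|^2/b^2\le 1\}$. A short support-function computation gives that $E_{s,a,b}\subseteq K$ exactly when $s+\sqrt{a^2+(t^2-1)b^2}\le t$ and the part of $E_{s,a,b}$ with $x_1\le 1/t$ lies in $\mathbb{B}^d$, and that $\mathbb{B}^d=E_{0,1,1}$ is feasible, internally tangent to $\partial K$ with binding contacts at the pole $-e_1$ and along the transition circle $\{x_1=1/t\}$.

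Now perturb about $(s,a,b)=(0,1,1)$: set $a=1+\alpha\lambda$, $b=1-\beta\lambda$, $s=\sigma\lambda$ and let $\lambda\to 0^+$. Expanding the feasibility conditions to first order — the second one amounts to an upward parabola in $\cos\phi$ being $\le 0$ on a subinterval of $[-1,1/t]$, so its binding values are the endpoints $\cos\phi=-1$ and $\cos\phi=1/t$, the two contact loci — they become $\alpha\le\sigma$ and $\alpha+t\sigma\le(t^2-1)\beta$, which together (e.g.\ with $\sigma=\alpha$) give $\alpha\le(t-1)\beta$; meanwhile $\mathrm{vol}(E_{s,a,b})=(1+\alpha\lambda)(1-\beta\lambda)^{d-1}\mathrm{vol}(\mathbb{B}^d)=\bigl(1+(\alpha-(d-1)\beta)\lambda+O(\lambda^2)\bigr)\mathrm{vol}(\mathbb{B}^d)$. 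Because $t>d$, the interval $((d-1)\beta,(t-1)\beta)$ is non-empty (take $\beta=1$), so one can pick $\alpha$ inside it, with $\sigma$ slightly above $\alpha$, so that every first-order feasibility inequality is strict while $\alpha-(d-1)\beta>0$; for small $\lambda>0$ the $O(\lambda^2)$ errors are then absorbed by this strict slack, and $E_{s,a,b}\subseteq K\subseteq C$ has volume strictly greater than $\mathrm{vol}(E)$ — the contradiction sought. Hence $|p|\le d$ for every $p\in C$, i.e.\ $C\subseteq d\,\mathbb{B}^d$, and undoing $T$ gives $C\subseteq c+d(E-c)$ in general. The delicate points are all in the middle paragraph: describing the piecewise spherical/conical boundary of $K$ correctly, confirming that the binding constraints of the perturbation really are those two contact loci, and checking that the $O(\lambda^2)$ terms cannot swallow the first-order volume gain; it is precisely the freedom to translate the competitor ellipsoid — an option available here because $K$ (and the single-point reduction) is not symmetric — that produces the sharp constant $d$, in place of the $\sqrt d$ one would extract from a centred perturbation.
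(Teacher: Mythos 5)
Your proof is correct, but note that the paper does not prove this statement at all: it is recorded as a classical \emph{Fact} with a citation to Ball's survey, so there is no in-paper argument to compare against. What you give is the standard ``ice-cream cone'' perturbation proof of John's containment theorem, and the details check out: after normalising $E=\mathbb{B}^d$, the two feasibility constraints you derive for $E_{s,a,b}\subseteq K$ are the right ones (the support-function condition for the tangent cone and the endpoint evaluation $q(u)=(a^2-b^2)u^2+2asu+s^2+b^2\le 1$ at $u=-1$ and $u=1/t$, the parabola being genuinely upward since $a>b$ along your perturbation), they linearise to $\alpha\le\sigma$ and $\alpha+t\sigma\le(t^2-1)\beta$, and the window $((d-1)\beta,(t-1)\beta)$ is nonempty precisely because $t>d$, which is where the sharp constant $d$ enters. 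Your closing remark correctly identifies the role of the translation parameter $\sigma$ (a centred perturbation only yields $\sqrt d$ for symmetric bodies). Two small points you lean on without proof but which are standard: uniqueness of the maximal-volume inscribed ellipsoid (needed for the symmetric case $c=0$), and the fact that checking $|x|\le 1$ on $\partial E_{s,a,b}\cap\{x_1\le 1/t\}$ suffices for the cap condition (the maximum of $|x|$ over the truncated ellipsoid is attained there). Neither is a gap for a classical fact of this kind.
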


Recall that an ellipsoid in $\mathbb{R}^n$ is simply the image of a Euclidean ball under an affine map $x \mapsto Ax + y$ with $A$ definite-positive and symmetric. We will refer to any eigendirection of $A$ as an \emph{axis} of the associated ellipsoid. The \emph{length} of an axis is the associated eigenvalue.

\begin{lemma}\label{Lemma: Volume of thick convex sets}
    Let $1<c$. Let $C\subset \mathbb{R}^d$ be a convex subset contained in the ball $\B(0,\rho)$ of radius $\rho$ at $0$. Then for $\rho$ sufficiently small
    $$ \mu(C+ \B(e,\rho^c)) \ll_d \mu(L) + \rho^{d+ (c-1)}.$$
\end{lemma}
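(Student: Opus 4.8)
\emph{Proof proposal.} First I would clear up the evident typos: here $\mu$ should mean Lebesgue measure on $\mathbb{R}^d$ (as in Fact~\ref{Fact: Lebesgue vs Haar}), $\B(e,\rho^c)$ should be the Euclidean ball $\B(0,\rho^c)$, and $\mu(L)$ on the right-hand side should read $\mu(C)$. One may then assume without loss of generality that $C$ is a compact (possibly lower-dimensional) convex body, since passing to its closure affects neither $\mu(C)$ nor $C+\B(0,\rho^c)$, and that $0<\rho\le 1$, so that $\rho^c\le\rho$ and hence $C+\B(0,\rho^c)\subseteq\B(0,2\rho)$.

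The strategy is simply to bound the volume of the $\rho^c$-tube around $C$ by \emph{surface area times thickness}. Set $C_\tau:=C+\B(0,\tau)$. For a convex body the function $\tau\mapsto\mu(C_\tau)$ is a polynomial (the Steiner formula) whose derivative is the perimeter, so that
$$\mu\bigl(C+\B(0,\rho^c)\bigr)-\mu(C)=\int_0^{\rho^c}\operatorname{Per}(C_\tau)\,d\tau,$$
where $\operatorname{Per}$ denotes $(d-1)$-dimensional surface area; alternatively this identity follows from the coarea formula applied to the $1$-Lipschitz map $x\mapsto\operatorname{dist}(x,C)$. I would then invoke two standard facts from convex geometry: perimeter is monotone under inclusion of convex bodies (immediate from the $1$-Lipschitz nearest-point projection, or from Cauchy's formula), and $\operatorname{Per}(\B(0,R))=d\omega_d R^{d-1}\ll_d R^{d-1}$. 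Since $C_\tau\subseteq C_{\rho^c}\subseteq\B(0,2\rho)$ for every $\tau\le\rho^c$, this gives $\operatorname{Per}(C_\tau)\ll_d\rho^{d-1}$, and therefore
$$\mu\bigl(C+\B(0,\rho^c)\bigr)\le\mu(C)+\rho^c\operatorname{Per}(\B(0,2\rho))\ll_d\mu(C)+\rho^{d+c-1},$$
which is the claim.

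I do not expect a genuine obstacle here; the only mildly delicate point is justifying the tube identity when $C$ is lower-dimensional (so that $\mu(C)=0$), which is covered by the Steiner formula for arbitrary convex bodies. If one prefers to sidestep this altogether, the same conclusion follows from monotonicity of \emph{all} intrinsic volumes, $V_j(C)\le V_j(\B(0,\rho))\ll_d\rho^j$, together with the Steiner expansion $\mu(C+\B(0,t))=\sum_{j=0}^d\omega_{d-j}V_j(C)t^{d-j}$: each term with $j<d$ contributes $\ll_d\rho^j\rho^{c(d-j)}=\rho^{\,d+(d-j)(c-1)}\le\rho^{d+c-1}$ because $d-j\ge1$ and $\rho\le1$, and there are at most $d$ of them. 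Either route is routine convex geometry.
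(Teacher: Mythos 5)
Your proof is correct, and it takes a genuinely different (though equally standard) route from the paper. The paper's argument goes through the John ellipsoid (Fact~\ref{Fact: John Ellipsoid}): $C$ sits inside a box with side lengths $\ll_d l_i$ where $l_1\ge\cdots\ge l_d$ are the axis lengths and $l_1\le\rho$, so $C+\B(0,\rho^c)$ sits inside a box with sides $\ll_d l_i+\rho^c$; expanding $\prod_i(l_i+\rho^c)$, the pure term $\prod_i l_i$ is $\ll_d\mu(C)$ (the John ellipsoid is contained in $C$), and every cross term carries at least one factor $\rho^c$ and at most $d-1$ factors bounded by $\rho$, giving $\rho^{d-1+c}$. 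Your Steiner-formula argument reaches the same bound by replacing the bounding-box expansion with monotonicity of perimeter (or of all intrinsic volumes) under inclusion in $\B(0,2\rho)$. The trade-off is minor: the paper's route reuses a fact it has already stated and needs nothing beyond it, while yours avoids the John ellipsoid entirely and, via the intrinsic-volume expansion, handles the lower-dimensional case with no extra care. You also correctly identified the typos ($\mu(L)$ for $\mu(C)$, $\B(e,\cdot)$ for $\B(0,\cdot)$, $\mu$ for Lebesgue measure); note the paper itself silently switches between $\lambda$ and $\mu$ in its own two-line proof.
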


\begin{proof}
     Let $l_1 \geq  \ldots \geq l_d$ be the lengths of the axes of the John ellipsoid of $C$ ordered by size (i.e. the eigenvalues of the associated positive-definite symmetric matrix). Note that $l_1 \leq \rho$. Then $$\lambda(C+ \B(e,\rho^{c})) \ll_d \prod(l_i+\rho^c) \ll_d \mu(C) + \rho^{d-1+c}.$$
\end{proof}

Lemma \ref{Lemma: Volume of thick convex sets} is particularly attractive when used in combination with the following. 

\begin{lemma}\label{Lemma: Small sym diff implies close}
    Let $C_1,C_2\subset \mathbb{R}^d$ be convex subsets contained in the ball $\B(0,1)$ of radius $1$ at $0$. Assume that $\lambda(C_1) \geq \eta$. If $\mu(C_1 \Delta C_2) \leq \delta \mu(C_1)$, then $C_2 \subset C_1 + \B(0,\delta^c)$ for some $c \gg_{d,\eta} 1$. 
\end{lemma}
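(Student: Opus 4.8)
The plan is to argue by contradiction via the following quantitative mechanism: if $C_2$ sticks out of a neighbourhood $C_1 + \B(0,\delta^c)$ of $C_1$, then by convexity $C_2$ must contain a whole ``cap'' near the protruding point, and this cap has volume bounded below by a power of $\delta$ times $\lambda(C_1)$, which will contradict $\lambda(C_1 \Delta C_2) \le \delta \lambda(C_1)$ once $c$ is chosen large enough. Concretely, suppose there is a point $p \in C_2$ with $\mathrm{dist}(p, C_1) \ge r := \delta^c$. Pick a point $q$ in the John ellipsoid of $C_1$ (or, more simply, an interior point witnessing that $C_1$ has volume at least $\eta$). The key geometric input is that since $\lambda(C_1) \ge \eta$ and $C_1 \subset \B(0,1)$, by the John ellipsoid fact (Fact 2.6) the John ellipsoid $E$ of $C_1$ has all axis lengths bounded below by some $r_0 \gg_{d,\eta} 1$: indeed $\lambda(E) \ge d^{-d}\lambda(C_1) \ge d^{-d}\eta$ and $E \subset \B(0,1)$ force every axis to have length $\gg_{d,\eta} 1$. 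Hence $C_1$ contains a fixed ball $\B(q, r_0)$.

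Next I would form the convex hull of $\B(q,r_0)$ and the protruding point $p$: since $\B(q,r_0) \subset C_1 \subset C_2$ and $p \in C_2$, convexity of $C_2$ gives $\mathrm{co}(\B(q,r_0) \cup \{p\}) \subset C_2$. This is an ice-cream-cone shaped region. The part of this cone at distance $\ge r_0/2$ (say) from the segment's far reaches, i.e. the slab of the cone lying outside $C_1 + \B(0, r/2)$, still has volume $\gg_{d} r_0^{d-1} \cdot (\text{something like } r)$ — more carefully, the portion of the cone that lies outside the $\delta^c/2$-neighbourhood of $C_1$ forms a frustum whose $(d-1)$-dimensional cross-sections have radius comparable to $r_0 \cdot (\delta^c / \mathrm{diam}) \gtrsim r_0 \delta^c$ and whose length is $\gtrsim \delta^c$, so its volume is $\gg_{d,\eta} (\delta^c)^{d}$. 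Wait — I need to be careful that this piece lies in $C_2 \setminus C_1$; it does, since it is outside the $\tfrac12\delta^c$-neighbourhood of $C_1$ hence certainly outside $C_1$, while being inside $C_2$. Therefore $\lambda(C_2 \setminus C_1) \gg_{d,\eta} \delta^{cd}$.

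Finally, combining with the hypothesis $\lambda(C_1 \Delta C_2) \le \delta \lambda(C_1) \le \delta$, we get $\delta^{cd} \ll_{d,\eta} \delta$, which fails for $\delta$ small once $cd > 1$; so choosing $c = c(d,\eta)$ slightly larger than $1/d$ (and absorbing all implied constants, which is legitimate since for $\delta$ bounded away from $0$ the statement $C_2 \subset C_1 + \B(0,\delta^c)$ is vacuous after shrinking $c$) yields the claim. The main obstacle I anticipate is the bookkeeping in the cone/frustum volume estimate: one must verify that the protruding cap genuinely lies outside $C_1$ and not merely outside a ball, and track how the cross-sectional radius of the cone degrades from $r_0$ down to order $r_0 \delta^c$ as one moves from the base ball $\B(q,r_0)$ toward $p$; a clean way to handle this is to intersect the cone with a hyperplane at the right height and use similar triangles, noting $|p - q| \le 2$ so the shrink factor is $\gtrsim \delta^c$. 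A secondary subtlety is that the final exponent $c$ must be stated as depending on $d$ and $\eta = \eta(d)$ only, which the argument respects since $r_0$ depends only on $d,\eta$.
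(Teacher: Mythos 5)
Your plan follows the same route as the paper: inscribe a ball of definite radius $r_0\gg_{d,\eta}1$, form the cone from that ball to a putative protruding point $p$ of $C_2$, and lower-bound the volume of the cap of that cone which escapes $C_1$. However, one step as written is false and a second is backwards. The false step is the inclusion $\B(q,r_0)\subset C_1\subset C_2$: the containment $C_1\subset C_2$ is not a hypothesis and fails in general (only the symmetric difference is controlled), and without it the cone $\co(\B(q,r_0)\cup\{p\})$ need not lie in $C_2$, which is exactly what you need in order to place the cap inside $C_2\setminus C_1$. The repair — and the move the paper actually makes — is to run the John-ellipsoid argument on the convex set $C_1\cap C_2$, whose measure is at least $(1-\delta)\lambda(C_1)\geq(1-\delta)\eta$, so that it contains a ball of radius $\gg_{d,\eta}1$ lying in \emph{both} sets; the cone over that ball with apex $p\in C_2$ is then contained in $C_2$ by convexity, while the portion of it within distance $\delta^c/2$ of $p$ avoids $C_1$.

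The second issue is the choice of exponent at the end. Your cap estimate gives $\delta^{cd}\ll_{d,\eta}\lambda(C_2\setminus C_1)\leq\delta\lambda(C_1)\ll_d\delta$, and the inequality $\delta^{cd}\ll\delta$ is \emph{automatically satisfied} for small $\delta$ when $cd>1$; the contradiction occurs only when $cd<1$, i.e.\ you must take $c$ strictly \emph{smaller} than $1/d$ (say $c=1/(2d)$), not "slightly larger than $1/d$". This is consistent with the convention that $c\gg_{d,\eta}1$ means $c$ is a positive constant bounded below, and with the paper's own computation, which in effect yields a protrusion bound of order $(\delta\lambda(C_1))^{1/d}$. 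Finally, your remark that for $\delta$ bounded away from $0$ the conclusion becomes "vacuous after shrinking $c$" is not accurate (for $\delta<1$ one never has $\delta^c\geq\mathrm{diam}$), but this edge case is glossed over in the paper's proof as well and is harmless since the lemma is only invoked for small $\delta$.
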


\begin{proof}
    The convex set $C_1 \cap C_2$ has measure at least $\eta$ and is contained in $\B(0,1)$. So Fact \ref{Fact: John Ellipsoid} implies that it contains a ball of radius $\rho_0 \gg_{d,\eta} 1$. Now, for any point $x \in \B(0,1)$ and $0 \leq \rho \leq 1$, the convex set $C_{x,\rho}$ defined as the intersection of the convex hull of $\{x\} \cup \B(0,\rho_0)$ and $\B(x,\rho)$ has measure $\gg_{d,\eta} \rho^{c}$ for some $c \gg_{d,\eta} 1$. Hence, for some $\rho \ll_{d,\eta} (\delta\mu(C))^c$ we have $\mu(C_{x,\rho}) > \delta \mu(C_1) \geq \mu(C_1 \Delta C_2)$.
    
    Let $x \in C_1 \setminus C_2 + \B(0,\rho)$. We have that $C_{x,\rho} \subset C_1 \setminus C_2$. So $\mu(C_1 \setminus C_2) > \mu(C_1 \Delta C_2 ) $. A contradiction.
 
\end{proof}
We record also two additional facts which will be crucial at the very end of this paper: 

\begin{lemma}\label{Lemma: center of mass}
    Let $C \subset \mathbb{R}^d$ be a convex subset contained in a ball of radius $1$ with center of mass $0$. In other words, 
    
    $$ \int_C x dx = 0.$$ Suppose also that $i: \mathbb{R}^n \rightarrow \mathbb{R}^n$ is an affine isometry of $\mathbb{R}^n$ such that 
    $$\lambda(C \Delta i(C)) \leq \eta.$$
    Then $|i(0) - 0 | \leq \eta$.
\end{lemma}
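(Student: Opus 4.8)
The plan is to show that an affine isometry $i$ which nearly fixes a convex body $C$ must nearly fix its center of mass, by exploiting the fact that the center of mass is an affine-equivariant functional. First I would observe that since $i$ is an affine isometry, it maps $C$ to a convex set $i(C)$ with center of mass $i(0)$ (because the center of mass commutes with affine maps, and by hypothesis the center of mass of $C$ is $0$). The quantity I want to control is $i(0)$, so I would write it as the difference of the two centers of mass:
$$ i(0) = i(0) - 0 = \frac{1}{\lambda(i(C))}\int_{i(C)} x\, dx - \frac{1}{\lambda(C)}\int_C x\, dx. $$
Since $i$ is an isometry, $\lambda(i(C)) = \lambda(C) =: V$, so this simplifies to $i(0) = \frac{1}{V}\left(\int_{i(C)} x\, dx - \int_C x\, dx\right)$.

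Next I would estimate the right-hand side by splitting the two integrals over the common part and the symmetric difference. The integrand over $C \cap i(C)$ cancels, leaving only contributions from $i(C)\setminus C$ and $C \setminus i(C)$, both of which lie inside the symmetric difference $C\Delta i(C)$. Since everything is contained in the unit ball, $|x| \le 1$ on these regions, so
$$ |i(0)| \le \frac{1}{V}\left(\int_{i(C)\setminus C} |x|\, dx + \int_{C\setminus i(C)} |x|\, dx\right) \le \frac{1}{V}\,\lambda(C\Delta i(C)) \le \frac{\eta}{V}. $$
Strictly speaking this gives $|i(0)| \le \eta/V$ rather than $|i(0)|\le\eta$, so to get the clean bound as stated one either normalizes so that $V \ge 1$ (e.g. the statement implicitly assumes $C$ has volume at least $1$, or the ball of radius $1$ is interpreted so this holds), or one simply absorbs the harmless factor $1/V$ — I would state it in whichever form matches how the lemma is invoked later, noting the constant is immaterial for the application.

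I do not anticipate a genuine obstacle here: the only subtlety is the affine-equivariance of the center of mass, which is standard (an affine map $x\mapsto Ax+b$ sends the center of mass $\bar x$ to $A\bar x + b$, as follows by change of variables), and the bookkeeping of the volume normalization. The proof is essentially a two-line estimate once one records that the center of mass is an affine invariant and that the error is supported on the symmetric difference, which has measure at most $\eta$, over a region of diameter at most $2$.
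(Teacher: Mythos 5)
Your proof is correct and is essentially identical to the paper's: both express $i(0)$ as the difference of the two (affine-equivariant) centers of mass, cancel the contribution of $C \cap i(C)$, and bound the remainder by the measure of the symmetric difference times the diameter bound. Your remark about the residual factor $1/\lambda(C)$ is apt --- the paper's own chain of (in)equalities carries the same normalization, and in every application the lemma is invoked for sets whose volume is bounded below, so the factor is harmlessly absorbed into the implied constants.
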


\begin{proof}
    We have 
    \begin{align*}
        \frac{|i(0) - 0 |}{\lambda(C)} &= \left|\frac{i\left(\int_C x dx\right)}{\lambda(C)} - \frac{\int_Cxdx}{\lambda(C)} \right| \\ 
        & = \frac{\left|\int_C i(x) dx- \int_Cxdx \right|}{\lambda(C)} \\
        &= \frac{\left|\int_{i(C)} x dx- \int_Cxdx \right|}{\lambda(C)} \\
        &\leq \frac{\mu(i(C) \Delta C)}{\lambda(C)} \leq \frac{\eta}{\lambda(C)}.
    \end{align*}
\end{proof}

In addition: 

\begin{lemma}\label{Lemma: Almost symmetries and symmetries}
    Let $\delta, \eta >0$. Let $0 \in C \subset  \mathbb{R}^d$ be a convex subset of measure at least $\eta$ contained in the ball $\B(0,1)$ of radius $1$ at $0$. Suppose that $\Gamma$ is a subgroup of $\SO_m(\mathbb{R})$ such that for all $\gamma \in \Gamma$
    $$ \mu\left( \gamma(C) \Delta C \right) \leq \delta \mu(C).$$
    Then there is $C' \supset C$ convex and invariant under $\Gamma$ such that $\mu(C' \setminus C) \ll_{d,\eta} \delta^c \mu(C)$ for some $c \gg_{d,\eta} 1$.
\end{lemma}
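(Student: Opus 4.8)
The plan is to take $C'$ to be the convex hull of the $\Gamma$-orbit of $C$, namely $C' := \co\big(\bigcup_{\gamma \in \Gamma}\gamma(C)\big)$, and to show that this convex body — which is visibly $\Gamma$-invariant and contains $C$ — sits inside a $\delta^{c}$-neighbourhood of $C$. Invariance holds because each $\gamma_0 \in \Gamma$ is an affine bijection that permutes $\Gamma$, hence commutes with taking convex hulls: $\gamma_0(C') = \co\big(\bigcup_{\gamma}\gamma_0\gamma(C)\big) = C'$; and $C \subset C'$ upon taking $\gamma = e$. So the whole content is the estimate on $\mu(C'\setminus C)$. We may freely assume $\delta$ is as small as we like (depending on $d,\eta$): when $\delta \gg_{d,\eta} 1$ the claim is trivial, since every $\gamma(C)$ lies in $\B(0,1)$ and therefore so does $C'$, giving $\mu(C'\setminus C) \leq \mu(\B(0,1)) \leq \eta^{-1}\mu(\B(0,1))\,\mu(C) \ll_{d,\eta}\mu(C)$.

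Next, fix $\gamma \in \Gamma$. Then $\gamma(C) \subset \B(0,1)$ (rotations fix the ball), $\lambda(C) \geq \eta$, and $\mu(\gamma(C)\Delta C) \leq \delta\mu(C)$ by hypothesis, so Lemma \ref{Lemma: Small sym diff implies close} applies with $C_1 = C$ and $C_2 = \gamma(C)$ and produces an exponent $c_1 \gg_{d,\eta} 1$ — independent of $\gamma$, as it only depends on $d$ and the volume lower bound $\eta$ — with $\gamma(C) \subset C + \B(0,\delta^{c_1})$. The set $C + \B(0,\delta^{c_1})$ is convex, hence it contains the convex hull of all the $\gamma(C)$, and we obtain
$$ C \subset C' \subset C + \B\!\left(0,\delta^{c_1}\right).$$

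It therefore suffices to establish the thin-shell estimate $\mu(C + \B(0,s)) - \mu(C) \ll_d s$ for a convex $C \subset \B(0,1)$ and $0 < s \leq 1$: plugging in $s = \delta^{c_1}$ and dividing by $\mu(C) \geq \eta$ yields $\mu(C'\setminus C) \ll_{d,\eta}\delta^{c_1}\mu(C)$, which is the statement with $c = c_1$. This estimate is standard: by the Steiner formula $\mu(C + \B(0,s)) = \sum_{j=0}^{d}\kappa_{d-j}V_j(C)\,s^{d-j}$, where the $V_j$ are the intrinsic volumes with $V_d(C) = \mu(C)$, and by monotonicity of the intrinsic volumes under inclusion $V_j(C) \leq V_j(\B(0,1)) = O_d(1)$; hence the lower-order terms sum to $O_d(s)$ when $s \leq 1$. (Equivalently, $\tfrac{d}{ds}\mu(C + \B(0,s))$ is the perimeter of $C + \B(0,s) \subset \B(0,2)$, which is $O_d(1)$ by perimeter monotonicity for convex bodies, and one integrates; or one runs the John-ellipsoid bookkeeping of Lemma \ref{Lemma: Volume of thick convex sets}, all axes of the John ellipsoid of $C$ having length $\ll_d 1$ by Fact \ref{Fact: John Ellipsoid}.)

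I do not expect a serious obstacle here: the proof is essentially the assembly of the two previous lemmas plus a soft convexity observation. The only points requiring a little care are that the exponent delivered by Lemma \ref{Lemma: Small sym diff implies close} must be uniform over all of $\Gamma$ (it is, depending only on $d$ and $\eta$), and that passing from the orbit to its convex hull does not erode the gain — which is exactly what the convexity of $C + \B(0,\delta^{c_1})$ guarantees.
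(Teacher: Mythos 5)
Your proof is correct and follows essentially the same route as the paper: apply Lemma \ref{Lemma: Small sym diff implies close} to each $\gamma(C)$ to get a uniform containment $\gamma(C)\subset C+\B(0,\delta^{c_1})$, take $C'$ to be the convex hull of the orbit (which then also lies in $C+\B(0,\delta^{c_1})$ by convexity of that neighbourhood), and finish with a thin-shell volume bound. The only cosmetic difference is that you invoke the Steiner formula for the last step where the paper cites Lemma \ref{Lemma: Volume of thick convex sets}; these are interchangeable here.
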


\begin{proof}
According to Lemma \ref{Lemma: Small sym diff implies close}, $\gamma(C) \subset C + \B(e,O_{d,\eta}(\delta)^c)$ for all $\gamma$. If $C'$ denotes the convex hull of $\bigcup \gamma(C)$, then $C \subset C' \subset  C + \B(e,O_{d,\eta}(\delta)^c)$. So we conclude by Lemma \ref{Lemma: Volume of thick convex sets}. 
\end{proof}

 And a counterpart when no lower bound on the measure is assumed:
 
\begin{lemma}\label{Lemma: Symmetric short axis implies small volume}
    Let $H \subset \SO_d(\mathbb{R})$ be a closed group of isometries of $\mathbb{R}^d$ acting irreducibly. There is $\delta > 0$ such that for any $X \subset H$ with $X\B_{\SO_d(\mathbb{R})}(e,\delta) = H$ where $B_{\SO_d(\mathbb{R})}(e,\delta)$ denotes the ball at $e$ of radius $\delta$ with respect to the Riemannian bi-invariant metric of diameter $1$, the following holds. Let $C \subset \mathbb{R}^d$ be a compact convex subset. Suppose that one of the axes of $C$ has length $l$. Then
    $$ \bigcap_{h \in X} h(C)$$
    has diameter $O_{d}(l)$.
\end{lemma}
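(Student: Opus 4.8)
The plan is to reduce the ``thin axis'' hypothesis to a width bound via the John ellipsoid, and then exploit irreducibility through an averaging (Schur-type) argument. Let $v$ be a unit vector along an axis of the John ellipsoid $E$ of $C$ whose length is $l$. By Fact~\ref{Fact: John Ellipsoid}, $C \subset c + d(E-c)$ with $c$ the centre of $E$, and since $E$ has half-width $l$ in the direction $v$, the set $C$ has width at most $2dl$ in the direction $v$; that is, $\sup_{x,y\in C}\langle x-y,v\rangle \le 2dl$. Put $K := \bigcap_{h\in X} h(C)$. For each $h \in X$ we have $h^{-1}(K) \subset C$, and $h^{-1}$ is a linear isometry sending the direction $hv$ to $v$, so $K$ has width at most $2dl$ in every direction of the form $hv$, $h\in X$.

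Next I would bootstrap this from $X$ to all of $H$ and conclude by an averaging identity. Since $X\,\B_{\SO_d(\mathbb{R})}(e,\delta)=H$, the set $X$ is $\delta$-dense in $H$ for the bi-invariant metric; comparing this metric with the operator norm (the comparison constant $C_d$ depends on $d$ alone, by compactness of $\SO_d(\mathbb{R})$), every $h\in H$ admits $x\in X$ with $\|hv-xv\|\le C_d\delta$. Hence, writing $D:=\operatorname{diam}(K)$ and picking $x_0,y_0\in K$ realising it, the unit vector $u:=(x_0-y_0)/D$ satisfies $|\langle u,hv\rangle|\le \tfrac{2dl}{D}+C_d\delta$ for \emph{every} $h\in H$. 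On the other hand, the symmetric positive semi-definite matrix $M:=\int_H (hv)(hv)^{\top}\,dh$ commutes with the $H$-action, so each of its eigenspaces is $H$-invariant; irreducibility forces $M$ to be scalar, and taking traces (with $h$ orthogonal and $\|v\|=1$) gives $M=\tfrac1d I$, whence $\int_H\langle u,hv\rangle^2\,dh=\tfrac1d$. Combining the pointwise bound with this identity yields $\tfrac1d\le\bigl(\tfrac{2dl}{D}+C_d\delta\bigr)^2$. Choosing $\delta$ so small that $C_d\delta\le\tfrac{1}{2\sqrt d}$ (a condition depending on $d$ only, so uniform in $H$ and in $X$) forces $\tfrac{1}{2\sqrt d}\le\tfrac{2dl}{D}$, i.e.\ $D\le 4d^{3/2}l=O_d(l)$, as claimed.

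The one genuine subtlety is that irreducibility supplies the averaging identity $\int_H(hv)(hv)^{\top}dh=\tfrac1dI$ only over the full group $H$, never over the discrete set $X$ alone; this is precisely why the covering hypothesis $X\,\B(e,\delta)=H$ is indispensable, together with a uniform comparison between the bi-invariant metric of diameter $1$ on $\SO_d(\mathbb{R})$ and the operator norm. Degenerate convex bodies are handled by the same computation with no extra work: if $C$ has empty interior then $l=0$, the width of $K$ in each direction $hv$ is $0$, the display above gives $\tfrac1d\le 0$ unless $K$ is a single point, and in that case $\operatorname{diam}(K)=0=O_d(l)$.
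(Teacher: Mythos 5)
Your proof is correct, and it takes a genuinely different route from the paper's. The paper argues geometrically: using irreducibility it picks $h_1,\dots,h_d\in H$ so that the vectors $h_i(v_0)$ form a basis, perturbs them to elements $g_i\in X$ using the covering hypothesis, and then observes that the intersection of the $d$ slabs $g_i(v_0)^{\perp}+\B(c',O(dl))$ in quantitatively independent directions is a set of diameter $O(l)$. You instead use only the width bound in the directions $hv$, $h\in X$, and convert irreducibility into the averaging identity $\int_H (hv)(hv)^{\top}\,dh=\tfrac1d I$ (Schur), which forces $\sup_{h\in H}|\langle u,hv\rangle|\ge d^{-1/2}$ for every unit vector $u$; combined with the $\delta$-density of $X$ in $H$ this pins down the diameter. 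Each step of yours checks out: the width-$2dl$ bound from the John ellipsoid, the transfer of widths to $K=\bigcap_{h\in X}h(C)$ via orthogonality of $h$, the comparison of the bi-invariant metric with the operator norm, the scalarity and trace computation for $M$, and the final dichotomy (the $D=0$ case being trivial). Your approach buys something concrete: your $\delta$ and the implied constant $4d^{3/2}$ depend on $d$ alone, uniformly over $H$, $X$, $C$ and the direction $v$, whereas in the paper's proof the elements $h_i$ (and hence $\delta$ and the constants, which the paper even writes as $O_H(dl)$ mid-proof) depend on $H$ and a priori on $v_0$, so that an additional compactness argument over unit vectors is implicitly needed to make $\delta$ legitimate. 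The only cosmetic slip is in your degenerate-case discussion, where the relevant dichotomy is $D=0$ versus $D>0$ rather than $l=0$ versus $l>0$, but the argument you give covers it.
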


\begin{proof}
    By assumption, there is a unit vector $v_0 \in \mathbb{R}^d$ such that 
    $C \subset v_0^{\perp} + \B(c, dl)$ - where $c$ is the center of the John ellipsoid of $C$ - 
    according to Fact \ref{Fact: John Ellipsoid}. Because $H$ acts irreducibly, there are $h_1, \ldots, h_d$ such that the vectors $v_i=h_i(v_0)$ form a basis of $\mathbb{R}^d$. For $\delta > 0$ sufficiently small depending on the $h_i$'s only, we can find $g_i \in X$ such that  
    $$\left|\langle g_i(v_0), \bigoplus_{j\neq i}\mathbb{R}g_j(v_0)\rangle\right| \leq  \frac{\left|\langle h_i(v_0), \bigoplus_{j\neq i}\mathbb{R}h_j(v_0) \rangle\right|+1}{2}=\alpha_i < 1$$
    for all $i,j$. Here, for a vector $v$ and a subspace $V$, $\langle v,V \rangle$ denotes $\sup_{w \in V, |w|=1} \langle v,w\rangle$. One can then deduce that 
    $$ \bigcap_{h \in X} h(C) \subset \bigcap_{i=1}^d g_i(C) \subset (\bigcap_{i=1}^d (g_i(v_0)^{\perp}) + \B(c',O_{H}(dl)) =\B(c',O_{H}(dl))$$
    where the implied constants depend on the $\alpha_i$'s only and $c' \in \mathbb{R}^d$.
\end{proof}

\subsection{The Prékopa--Leindler inequality and its stability}
The key inequality is a functional generalisation of the Brunn--Minkowski inequality called the \emph{Prékopa--Leindler inequality}.

\begin{proposition}[Prékopa--Leindler inequality]\label{Proposition: PL}
    Let $ \lambda \in [0;1]$ and $f,g,h: \mathbb{R}^n \rightarrow \mathbb{R}_{\geq 0}$ be three measurable functions. Suppose that 
    $$ \forall x,y \in \mathbb{R}^n, h((1-\lambda)x + \lambda y) \geq f(x)^{1-\lambda}g(y)^\lambda.$$
    Then 
    $$ \left\Vert h \right\Vert_1  \geq \left\Vert f\right\Vert_1^{1-\lambda}\left\Vert g \right\Vert_1^\lambda.$$
\end{proposition}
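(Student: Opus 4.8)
The plan is to prove the Prékopa--Leindler inequality by induction on the ambient dimension $n$, using the one-dimensional Brunn--Minkowski inequality $|A+B| \ge |A| + |B|$ (for nonempty measurable $A, B \subset \mathbb{R}$, with $|\cdot|$ the Lebesgue measure) as the sole geometric input. One may assume $0 < \lambda < 1$, since for $\lambda \in \{0,1\}$ the hypothesis directly gives $h \ge f$ or $h \ge g$ pointwise and the conclusion is immediate.

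For the base case $n = 1$, I would first reduce to $f$ and $g$ bounded: replacing $f, g, h$ by $\min(f,N), \min(g,N), \min(h,N)$ still satisfies the hypothesis, because the product of the truncated functions is at most $\min(f(x)^{1-\lambda}g(y)^{\lambda}, N) \le \min(h((1-\lambda)x+\lambda y), N)$, and the general case is recovered by monotone convergence as $N \to \infty$. Rescaling $f$ by $\|f\|_\infty^{-1}$ and $g$ by $\|g\|_\infty^{-1}$, with $h$ adjusted correspondingly — which multiplies both sides of the conclusion by the same positive constant — one may then assume $\|f\|_\infty = \|g\|_\infty = 1$, the cases $f \equiv 0$ or $g \equiv 0$ a.e. being trivial. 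For $0 < t < 1$ the superlevel sets $F_t = \{f > t\}$ and $G_t = \{g > t\}$ are nonempty (indeed of positive measure), and the hypothesis forces $(1-\lambda)F_t + \lambda G_t \subseteq \{h > t\} =: H_t$; hence $|H_t| \ge (1-\lambda)|F_t| + \lambda|G_t|$ by one-dimensional Brunn--Minkowski and the scaling $|cA| = c|A|$. Integrating over $t \in (0,1)$, using the layer-cake identities $\|f\|_1 = \int_0^1 |F_t|\,dt$ and $\|g\|_1 = \int_0^1 |G_t|\,dt$ (valid since $f, g \le 1$ a.e.) together with $\|h\|_1 \ge \int_0^1 |H_t|\,dt$, gives $\|h\|_1 \ge (1-\lambda)\|f\|_1 + \lambda\|g\|_1$, and the weighted arithmetic--geometric mean inequality finishes the base case.

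For the inductive step, suppose the inequality holds in dimension $n-1$ and write $x = (x', x_n) \in \mathbb{R}^{n-1} \times \mathbb{R}$. Fixing $a, b \in \mathbb{R}$ and setting $c = (1-\lambda)a + \lambda b$, the slices $f_a = f(\cdot, a)$, $g_b = g(\cdot, b)$, $h_c = h(\cdot, c)$ satisfy the Prékopa--Leindler hypothesis on $\mathbb{R}^{n-1}$, so the inductive hypothesis yields $\int_{\mathbb{R}^{n-1}} h_c \ge \bigl(\int_{\mathbb{R}^{n-1}} f_a\bigr)^{1-\lambda}\bigl(\int_{\mathbb{R}^{n-1}} g_b\bigr)^{\lambda}$. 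Therefore the functions $F(a) = \int_{\mathbb{R}^{n-1}} f_a$, $G(b) = \int_{\mathbb{R}^{n-1}} g_b$, $H(c) = \int_{\mathbb{R}^{n-1}} h_c$ — which are measurable by Tonelli's theorem — satisfy the one-dimensional Prékopa--Leindler hypothesis, and the base case gives $\int_{\mathbb{R}} H \ge \bigl(\int_{\mathbb{R}} F\bigr)^{1-\lambda}\bigl(\int_{\mathbb{R}} G\bigr)^{\lambda}$. By Fubini's theorem this is exactly $\|h\|_1 \ge \|f\|_1^{1-\lambda}\|g\|_1^{\lambda}$, completing the induction.

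The only step that is not purely formal is the one-dimensional Brunn--Minkowski inequality used in the base case; by inner regularity it suffices to treat nonempty compact $A, B \subset \mathbb{R}$, and translating $A$ and $B$ separately so that $\max A = \min B = 0$ makes $A \cup B \subseteq A + B$ with $A \cap B \subseteq \{0\}$, whence $|A + B| \ge |A \cup B| = |A| + |B|$. I do not anticipate any serious obstacle beyond this: the truncation and rescaling reductions, the measurability of the partial integrals, the (outer-)measurability of the sumsets $(1-\lambda)F_t + \lambda G_t$ (handled by approximating $F_t, G_t$ from inside by compact sets), and the degenerate cases are all routine; indeed the statement is entirely classical and could alternatively just be cited, for instance from \cite{BrunnMinkowski02Gardner}.
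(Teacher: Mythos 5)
Your proof is correct. The paper does not actually prove this proposition --- it simply cites \cite{BrunnMinkowski02Gardner} --- so there is no internal argument to compare against; what you have written is the standard classical proof (induction on dimension, one-dimensional base case via superlevel sets and the inequality $|A+B|\geq|A|+|B|$, inductive step via slicing and Fubini--Tonelli), and it is essentially the argument presented in the cited survey. All the reductions you flag (truncation, normalisation of the sup-norms, measurability of slices and of the sumsets via inner regularity, the degenerate cases $\lambda\in\{0,1\}$ and $f\equiv 0$) are handled correctly, so the proposal is complete.
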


See \cite[\S 7]{BrunnMinkowski02Gardner} and references therein. Equality in the Prékopa--Leindler inequality is attained when $h$ is log-concave, see \cite{zbMATH03517415}. A function $f: \mathbb{R}^n \rightarrow \mathbb{R}_{\geq 0}$ is \emph{log-concave} if $D(f):=\{f > 0\}$ is a convex set and $f(x)=e^{-h(x)}$ for some convex function $h: C \rightarrow \mathbb{R}$.  It has been recently shown that the Prékopa--Leindler inequality satisfies a \emph{stability} result: 

\begin{theorem}[Quantitative stability of the Prékopa--Leindler inequality, \cite{zbMATH07977296, arXiv:2501.04656}]\label{Theorem:Stab PL}
    Let $ \lambda \in [0;1]$ and $\epsilon > 0$.  Let $f,g,h: \mathbb{R}^n \rightarrow \mathbb{R}_{\geq 0}$ be three measurable functions. Suppose that 
    $$ \forall x,y \in \mathbb{R}^n, h((1-\lambda)x + \lambda y) \geq f(x)^{1-\lambda}g(y)^\lambda.$$ Suppose moreover that 
    $$ \left\Vert h \right\Vert_1  \leq (1+\epsilon)\left\Vert f\right\Vert_1^{1-\lambda}\left\Vert g \right\Vert_1^\lambda.$$
    There are a log-concave function $\tilde{h} : \mathbb{R}^n \rightarrow \mathbb{R}_{\geq 0}$ and $w \in \mathbb{R}^n$ such that 
    $$ \left\Vert h - \tilde{h}\right\Vert_1 + \left\Vert a^\lambda f  - \tilde{h}( \cdot + \lambda w)\right\Vert_1 + \left\Vert a^{1-\lambda} g  - \tilde{h}( \cdot + (1-\lambda) w)\right\Vert_1 \ll_{n,\tau} \epsilon^{c} \left\Vert h \right\Vert_1$$
    where $a = \parallel f \parallel_1 / \parallel g \parallel_1$ and $c > 0$ is an absolute constant.
\end{theorem}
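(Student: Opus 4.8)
The plan is to slice $h$ into superlevel sets, apply the sharp quantitative stability of the Brunn--Minkowski inequality of Figalli--van Hintum--Tiba \cite{figalli2023sharp} level by level, and reassemble the level-wise data into a single log-concave profile with one translation. First, replacing $f,g,h$ by $f/\|f\|_1$, $g/\|g\|_1$, $h/(\|f\|_1^{1-\lambda}\|g\|_1^{\lambda})$ — which preserves both the pointwise hypothesis and the deficit $\|h\|_1/(\|f\|_1^{1-\lambda}\|g\|_1^{\lambda})-1\le\epsilon$, the rescaling later reabsorbed into the factors $a^{\lambda},a^{1-\lambda}$ — we may assume $\|f\|_1=\|g\|_1=1$ and $1\le\|h\|_1\le 1+\epsilon$; a truncation/exhaustion argument, justified by the strict Prékopa--Leindler inequality, lets us further take $f,g,h$ bounded with bounded support at cost $O(\epsilon)$ in every $L^1$ norm, so that all superlevel sets $F_t=\{f>t\}$, $G_t=\{g>t\}$, $H_t=\{h>t\}$ are bounded. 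The hypothesis yields the \emph{exact} inclusions $(1-\lambda)F_s+\lambda G_t\subseteq H_{s^{1-\lambda}t^{\lambda}}$ for all $s,t>0$; write $m_f(t)=|F_t|$, and $m_g,m_h$ analogously, so $\|f\|_1=\int_0^\infty m_f$, etc.

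Near-equality $\|h\|_1\le 1+\epsilon$ propagates through the analysis underlying Prékopa--Leindler: it forces the one-dimensional marginal problem (for the height variable) to be nearly sharp, whence $m_f,m_g,m_h$ are $\epsilon^{c}$-close in $L^{1}(\mathrm{d}t)$ and $\big|\|f\|_1-\|g\|_1\big|,\ \big|\|h\|_1-\|f\|_1\big|\ll\epsilon^{c}$; and it forces, for all $t$ outside an exceptional set of $\mathrm{d}t$-measure $\ll\epsilon^{c}$ (after the near-identity matching of levels the previous assertion permits), the Minkowski combination $(1-\lambda)F_t+\lambda G_t$ to come within $\epsilon^{c}$ of equality in Brunn--Minkowski while nearly filling $H_t$. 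Invoking the sharp Brunn--Minkowski stability \cite{figalli2023sharp} — with constants uniform once $\lambda$ lies in a fixed compact subset of $(0,1)$, the source of the $\tau$-dependence — each such pair is, up to $L^{1}$-error $\ll\epsilon^{c}|F_t|$ and up to translations, a common convex body: there are a convex body $K_t$ and a vector $w(t)$ with $|F_t\,\Delta\,K_t|$, $|G_t\,\Delta\,(K_t-w(t))|$, $|H_t\,\Delta\,(K_t-\lambda w(t))|\ll\epsilon^{c}|F_t|$ (the homothety ratio between the copies, controlled by the closeness of $m_f$ and $m_g$, being $1+O(\epsilon^{c})$ and normalized to $1$). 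Thus each good superlevel set is $\epsilon^{c}$-close to a convex body, and the genuine nesting $F_t\subseteq F_s$ for $t>s$ forces the bodies $K_t$ to be approximately nested.

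To assemble $\tilde h$, pick a single vector $w$ — the common value, up to $\epsilon^{c}$, of the $w(t)$ — and a consistent family of translates of the $K_t$, and declare $\{\tilde h>t\}$ to be that body, slightly enlarged (at cost $\ll\epsilon^{c}$ by Lemma \ref{Lemma: Volume of thick convex sets}) to restore exact nesting. That $w(t)$ is essentially $t$-independent follows from the exact inclusions: for $t>s$ these pin $K_t$ inside $K_s$ up to $\epsilon^{c}$, hence pin their barycentres together (an estimate of the flavor of Lemma \ref{Lemma: center of mass}), and integrating the resulting drift bound over $t$ fixes $w$. That $\tilde h$ is log-concave requires only that every $\{\tilde h>t\}$ is convex — it is — together with the nesting $(1-\lambda)\{\tilde h>s\}+\lambda\{\tilde h>t\}\subseteq\{\tilde h>s^{1-\lambda}t^{\lambda}\}$, which, by the superlevel-set characterization of log-concavity, is needed only for the single weight $\lambda$ and all $s,t$ and transfers directly from $(1-\lambda)F_s+\lambda G_t\subseteq H_{s^{1-\lambda}t^{\lambda}}$ once the bodies are in place. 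Finally $\|h-\tilde h\|_1\le\int_0^\infty|H_t\,\Delta\,\{\tilde h>t\}|\,\mathrm{d}t\ll\epsilon^{c}$, and likewise $\|f-\tilde h(\cdot-\lambda w)\|_1$, $\|g-\tilde h(\cdot+(1-\lambda)w)\|_1\ll\epsilon^{c}$ — the exceptional levels and the tails $t\to0$, $t\to\|h\|_\infty$ contributing only $\epsilon^{c}$ because there the monotone marginals $m_f,m_g,m_h$ are $L^{1}$-close — and undoing the initial rescaling gives the statement, the two translations linked (as at the extremizers) by $(1-\lambda)\alpha+\lambda\beta=0$ for $f=\tilde h(\cdot-\alpha)$, $g=\tilde h(\cdot-\beta)$.

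The crux, and where the cited works do their real work, is this passage from level-by-level to global with a \emph{polynomial} rate: propagating Brunn--Minkowski stability across a continuum of levels, bounding the drift of the approximating bodies' positions so one $w$ suffices, and controlling the exceptional levels and the two tails, all with an \emph{absolute} exponent $c$ (only the constant depending on $n$ and on the allowed range of $\lambda$) — which is exactly why the \emph{sharp} form of Brunn--Minkowski stability is indispensable. An alternative route to the inequality and to $w$ — but one that stumbles on the sharp rate — is optimal transport: pushing $f\,\mathrm{d}x$ onto $g\,\mathrm{d}x$ by the Brenier map $T=\nabla\psi$, the defects in $\det\big((1-\lambda)I+\lambda D^{2}\psi\big)^{1/n}\ge(1-\lambda)+\lambda(\det D^{2}\psi)^{1/n}$ and in scalar AM--GM force $D^{2}\psi\approx I$, hence $T\approx$ a translation; but quantifying this demands regularity of Monge--Amp\`ere solutions — control of the singular part of $D^{2}\psi$, with no a priori Sobolev bound on $T$ — that does not come with polynomial rates, so the set-theoretic argument above is the one to carry out.
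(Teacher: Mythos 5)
This statement is not proved in the paper at all: it is imported as a black box from Figalli--Ramos and Figalli--van Hintum--Tiba, and the paper's only ``proof'' is the citation. So the relevant question is whether your sketch would stand on its own as a proof of that external theorem. It would not. What you have written is an outline of the known level-set strategy in which every genuinely hard step is either asserted or explicitly deferred (``the crux, and where the cited works do their real work''), and those deferred steps are precisely the content of the theorem.

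Concretely: (i) The passage from the global deficit $\Vert h\Vert_1\leq(1+\epsilon)\Vert f\Vert_1^{1-\lambda}\Vert g\Vert_1^{\lambda}$ to near-equality in Brunn--Minkowski for the \emph{diagonal} pairs $(F_t,G_t)$ at most levels $t$ is not justified. The hypothesis gives $(1-\lambda)F_s+\lambda G_t\subseteq H_{s^{1-\lambda}t^{\lambda}}$ for all \emph{pairs} $(s,t)$; turning the integrated deficit into a level-by-level deficit requires first proving a one-dimensional stability statement for the distribution functions $m_f,m_g,m_h$ and then a measurable reparametrization matching the levels of $f$, $g$ and $h$ --- you assert this ``propagates'' without argument. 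Moreover, $L^1(\mathrm{d}t)$-closeness of $m_f$ and $m_g$ does not give the pointwise comparability $|F_t|\asymp|G_t|$ that you need in order to apply Brunn--Minkowski stability at a fixed level with a homothety ratio $1+O(\epsilon^c)$; on the set of levels where $|F_t|$ and $|G_t|$ are very different the BM-stability constants degenerate, and that set is only controlled in an averaged sense. (ii) The translation $w(t)$ produced by BM stability at level $t$ is only determined up to an error comparable to $\epsilon^{c}$ times the diameter of $K_t$, and your argument that all the $w(t)$ coincide up to $\epsilon^{c}$ --- and that the accumulated drift, integrated in $t$ against $|F_t|$, stays below $\epsilon^{c}\Vert h\Vert_1$ --- is exactly the place where a naive argument loses the polynomial rate; ``an estimate of the flavor of Lemma \ref{Lemma: center of mass}'' is not a proof. (iii) After replacing $H_t$ by nearby convex bodies the exact inclusions are destroyed, so the log-concavity of the assembled $\tilde h$ does not ``transfer directly''; one must re-convexify, verify the enlargement is still $L^1$-small uniformly over a continuum of levels, and note that the superlevel-set inclusion for the single weight $\lambda$ yields log-concavity only after an iteration/upper-semicontinuity argument. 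Since the theorem is a recent, substantial result whose proof occupies entire papers, the honest conclusion is that your text is a roadmap to the literature rather than a proof; for the purposes of this paper the correct move is the one the author makes, namely to cite it.
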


\subsection{Nonabelian Fourier theory and quasi-randomness}

We will also rely upon \emph{non-abelian Fourier theory} - we will follow the treatment from \cite[\S 3]{zbMATH07599280}. Given a compact simple Lie group $G$, the Peter--Weyl theorem asserts that 
$$ L^2(G) = \widehat{\bigoplus_{\pi \in \Sigma }}E_\pi^{\dim(\pi)}$$
where $\Sigma$ denotes the set of finite dimensional irreducible unitary representations of $G$ and $\widehat{\bigoplus}$ denotes the topological closure of the direct sum. From this, one deduces a decomposition of $L^2$ functions in series reminiscent of Fourier series. Given $\pi \in \Sigma$, the Fourier coefficient of a function $f$ is a $\dim \pi \times \dim \pi$ matrix defined as 
$$\hat{f}(\pi):= \int_G f(x) \overline{\pi(x)}dx.$$
Cornerstones of Fourier theory also hold in the non-abelian framework, such as Parseval's identity,
$$ \left\Vert f \right\Vert_2^2 = \sum_{\pi \in \Sigma} \dim \pi \left\Vert \hat{f}(\pi)\right\Vert_{HS}^2$$
where $\left\Vert A \right\Vert_{HS}=\left(\tr(AA^*)\right)^{1/2}$ denotes the Hilbert-Schmidt norm, the product formula $\widehat{f*g}(\pi) = \hat{f}(\pi)\hat{g}(\pi)$ or the inversion formula
$$f(x) = \sum_{\pi \in \Sigma} \dim \pi \tr \left(\hat{f}(\pi) \overline{\pi(x)^*}\right).$$

Because there are only finitely many representations of $G$ below a certain dimension, Gowers and Long observed in \cite{zbMATH07286399} that $G$ obeys a form of mixing property at small scales. More precisely, following their proof along with quantitative refinements from \cite{zbMATH07599280} we obtain:

\begin{lemma}\label{Lemma: Gowers-Long}
    Let $f,g \in L^2(G)$ and $\eta, D > 0$. Then 
  $$ \left\Vert f * g  - f * g * \chi_\eta \right\Vert_2^2 \ll_d  D\eta \left\Vert f * g \right\Vert_2^2 + D^{-1}\left\Vert f \right\Vert_2^2\left\Vert g \right\Vert_2^2.$$
\end{lemma}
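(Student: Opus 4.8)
The plan is to pass to the Fourier side and split the representations of $G$ by dimension. Write $F = f * g$; by Parseval, $\left\Vert F - F * \chi_\eta\right\Vert_2^2 = \sum_{\pi} \dim\pi \left\Vert \hat F(\pi)\left(I - \hat\chi_\eta(\pi)\right)\right\Vert_{HS}^2$, using $\widehat{F*\chi_\eta} = \hat F \hat\chi_\eta$ and that $\chi_\eta$ is (a normalization of) the indicator of $\B(e,\eta)$, so $\hat\chi_\eta(\pi)$ is a self-adjoint contraction. The trivial representation contributes nothing since $\hat\chi_\eta(\mathbf{1}) = 1$. For a threshold $N = N(D)$ to be chosen, I split the remaining sum into representations with $\dim\pi \leq N$ and those with $\dim\pi > N$. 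On the high-dimensional part, I bound $\left\Vert I - \hat\chi_\eta(\pi)\right\Vert_{op} \leq 2$ and use $\hat F = \hat f \hat g$ together with Cauchy--Schwarz in the Hilbert--Schmidt inner product, $\left\Vert \hat f(\pi)\hat g(\pi)\right\Vert_{HS} \leq \left\Vert \hat f(\pi)\right\Vert_{HS}\left\Vert \hat g(\pi)\right\Vert_{op} \leq \left\Vert\hat f(\pi)\right\Vert_{HS}\left\Vert \hat g(\pi)\right\Vert_{HS}$; but to get the product of the full $L^2$ norms I instead note $\dim\pi \left\Vert\hat F(\pi)\right\Vert_{HS}^2 \leq \dim\pi\left\Vert\hat f(\pi)\right\Vert_{HS}^2\left\Vert\hat g(\pi)\right\Vert_{op}^2$ and crucially $\left\Vert\hat g(\pi)\right\Vert_{op}^2 \leq \left\Vert\hat g(\pi)\right\Vert_{HS}^2 \leq \frac{1}{\dim\pi}\left\Vert g\right\Vert_2^2 \leq \frac{1}{N}\left\Vert g\right\Vert_2^2$ on this range, so the high-dimensional part is $\ll N^{-1}\left\Vert f\right\Vert_2^2\left\Vert g\right\Vert_2^2$ after summing $\sum_\pi \dim\pi\left\Vert\hat f(\pi)\right\Vert_{HS}^2 = \left\Vert f\right\Vert_2^2$.

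For the low-dimensional part I need that $\hat\chi_\eta(\pi)$ is close to the identity when $\dim\pi$ is small and $\eta$ is small: quantitatively, $\left\Vert I - \hat\chi_\eta(\pi)\right\Vert_{op} \ll_d \eta \cdot \ell(\pi)$ where $\ell(\pi)$ is (say) the operator norm of $d\pi$ on a unit ball, which by standard highest-weight/Weyl-dimension estimates for compact simple Lie groups is controlled by a power of $\dim\pi$, hence by a function of $N$ on this range. This is because $\pi(\exp X) = I + d\pi(X) + O(\|d\pi(X)\|^2)$, so averaging over $X \in \B(0,\eta)$, which is symmetric, kills the linear term and leaves an error $O_d(\eta^2 \ell(\pi)^2)$; one gets $\left\Vert I - \hat\chi_\eta(\pi)\right\Vert_{HS}^2 \ll_d \eta^2 \ell(\pi)^2 \dim\pi$. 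Summing over the finitely many $\pi$ with $\dim\pi \leq N$ and bounding $\dim\pi\left\Vert\hat F(\pi)\right\Vert_{HS}^2 \leq \left\Vert F\right\Vert_2^2$ termwise, the low-dimensional contribution is $\ll_{d} C(N)\, \eta\, \left\Vert F\right\Vert_2^2$ for some increasing $C(N)$ (absorbing the extra $\eta$ and the $\eta$-independent spectral constants). Finally I choose $N$ so that $C(N) \asymp D$ — i.e. $N$ a suitable slowly-growing function of $D$ — which simultaneously makes the low-dimensional bound $\ll_d D\eta\left\Vert F\right\Vert_2^2$ and the high-dimensional bound $\ll_d N^{-1}\left\Vert f\right\Vert_2^2\left\Vert g\right\Vert_2^2 \leq D^{-1}\left\Vert f\right\Vert_2^2\left\Vert g\right\Vert_2^2$ (shrinking $N$ if needed so that $N \leq D$), giving the claimed inequality.

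The main obstacle is the low-dimensional estimate: one must control $\left\Vert I - \hat\chi_\eta(\pi)\right\Vert$ uniformly in $\eta$ with the right power of a dimension-type parameter, which requires either the explicit Weyl dimension/eigenvalue asymptotics for irreducible representations of a compact simple Lie group (to know that only finitely many $\pi$ have $\dim\pi \leq N$ and that their "Lipschitz constants" $\ell(\pi)$ are bounded in terms of $N$) or a soft compactness argument; the cleanest route is to invoke that for fixed $G$ there are finitely many $\pi$ below any dimension bound, so on that finite set all constants are harmless and only the threshold $N$ vs.\ $D$ trade-off matters. The high-dimensional side is routine once one uses $\left\Vert\hat g(\pi)\right\Vert_{op} \leq (\dim\pi)^{-1/2}\left\Vert g\right\Vert_2$, and the whole argument mirrors \cite[App.~A]{zbMATH07286399} with the parameter $D$ made explicit.
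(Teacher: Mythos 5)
Your overall route is the same as the paper's: pass to the Fourier side via Parseval and the product formula, split the representations at a dimension threshold, bound the high-dimensional tail by $\|\hat g(\pi)\|_{op}\le (\dim\pi)^{-1/2}\|g\|_2$, and bound the low-dimensional part by showing $\hat\chi_\eta(\pi)$ is close to the identity there. Your high-dimensional estimate is correct and in fact reproves the lemma the paper cites for that step.

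There is, however, a genuine flaw in the final optimization. You leave the low-dimensional constant as an unspecified increasing function $C(N)$ (coming from $\ell(\pi)\ll (\dim\pi)^{k}$ for some unspecified power $k$), and then ask for $N$ with $C(N)\asymp D$ \emph{and} $N\le D$, concluding $N^{-1}\le D^{-1}$. That last inequality goes the wrong way: $N\le D$ gives $N^{-1}\ge D^{-1}$. To get the high-dimensional term down to $D^{-1}\|f\|_2^2\|g\|_2^2$ you need $N\ge D$, while to get the low-dimensional term down to $D\eta\|F\|_2^2$ you need $C(N)\le D$; these are compatible only if $C(N)\ll N$, i.e.\ only if the exponent $k$ is at most $1$. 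If $k>1$ your argument only yields $\|F-F*\chi_\eta\|_2^2\ll_d D\eta\|F\|_2^2+D^{-1/k}\|f\|_2^2\|g\|_2^2$, which is strictly weaker and would not suffice for the application in Proposition \ref{Proposition: Quasi-random} (there one needs $D\eta$ and $D^{-1}\rho^{-1}$ to be small simultaneously with $\eta\approx\rho$). So the missing ingredient is precisely the quantitative input the paper takes from \cite{zbMATH07599280}: $\left\Vert I-\hat\chi_\eta(\pi)\right\Vert_{op}\ll_d \eta\dim\pi$, i.e.\ the highest-weight parameter of $\pi$ is $O_d(\dim\pi)$ (this follows from the Weyl dimension formula, and is sharp for $\mathrm{SU}(2)$). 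With that linear bound you simply take $N=D$: the low part is $\ll_d (D\eta)^2\|F\|_2^2\le D\eta\|F\|_2^2$ when $D\eta\le 1$ (and the claim is trivial when $D\eta>1$), and the high part is $\le D^{-1}\|f\|_2^2\|g\|_2^2$. Your remark that the symmetry of the ball kills the linear term is fine but unnecessary; the essential point you must pin down is the exponent in $\ell(\pi)$ versus $\dim\pi$, not the order of vanishing in $\eta$.
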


\begin{proof}
    For any $f \in L^2(G)$ and $D \geq 0$, write 
    $$ L(f;D) := \sum_{\pi \in \Sigma, \dim \pi \leq D} \dim \pi \left\Vert \hat{f}(\pi)\right\Vert_{HS}^2$$ and  $$ H(f;D) := \sum_{\pi \in \Sigma, \dim \pi > D} \dim \pi \left\Vert \hat{f}(\pi)\right\Vert_{HS}^2.$$
    Note that the Parseval identity - see \cite[\S 3]{zbMATH07599280} - yields
    $$ \left\Vert f \right\Vert_2^2 = \sum_{\pi \in \Sigma} \dim \pi \left\Vert \hat{f}(\pi)\right\Vert_{HS}^2 = L(f;D) + H(f;D).$$
    Now, by \cite[Lem. 6.1]{zbMATH07599280} $$H(f*g;D) \leq D^{-1}\left\Vert f \right\Vert_2^2\left\Vert g \right\Vert_2^2$$ for every $f,g \in L^2(G)$. Moreover, $I -\hat{\chi}_\eta(\pi)$ has operator norm at most $O_d(\eta\dim \pi)$ with respect to the Hilbert--Schmidt norm, see \cite[Prop. 5.3 and Lem. 6.2]{zbMATH07599280}. So $$L(f * g - f * g *\chi_\eta; D) \ll_d  D\eta \left\Vert f * g \right\Vert_2^2$$
    by the product formula, see \cite[\S 3]{zbMATH07599280}.
    Finally, 
    $$ \left\Vert f * g - f * g *  \chi_\eta \right\Vert_2^2 \ll_d  D\eta \left\Vert f * g \right\Vert_2^2 + D^{-1}\left\Vert f \right\Vert_2^2\left\Vert g \right\Vert_2^2.$$
   
\end{proof}

This implies a form of \emph{quasi-randomness} in neighbourhoods of the identity: 

\begin{proposition}\label{Proposition: Quasi-random}
    Let $\eta > 0$ and $G$ be a compact simple Lie group of dimension $d$. Let $A,B$ be two subsets of $\B(e,\rho)$ with measure at least $\eta\mu(\B(e,\rho))$. Then for all $\epsilon > 0$ and $\rho \ll_{\eta,d, \epsilon} 1$ we have $c \gg_{C,d,\epsilon} 1$ such that for all $x$ in a subset of size $\gg_{\eta,d,\epsilon} \mu(\B(e,\rho))$,
    $$\mu(\B(x,\rho^{1+\epsilon}) \cap AB) \geq (1-\rho^c) \mu(\B(e,\rho^{1+\epsilon})).$$
    In particular, for $\rho \ll_{\eta,d, \epsilon} 1$, 
    $$\mu(\B(e,\rho^{1+\epsilon})) \subset ABB^{-1}A^{-1}.$$
\end{proposition}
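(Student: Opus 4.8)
The plan is to run the Gowers--Long quasi-randomness argument (as in \cite[App. A]{zbMATH07286399}) while tracking all errors polynomially in $\rho$, using Lemma \ref{Lemma: Gowers-Long} as the engine. Write $f = \mathbf{1}_A$ and $g = \mathbf{1}_B$; the relevant convolution is $f * \check g$ where $\check g(x) = g(x^{-1})$, but for the structure of the argument it is cleanest to work with $f * g$ directly and recover $AB$ at the end (alternatively replace $B$ by $B^{-1}$ at the outset — the hypotheses are symmetric under this up to relabelling). First I would record the basic $L^2$ estimates: $\|f\|_2^2 = \mu(A)$, $\|g\|_2^2 = \mu(B)$, and the support of $f*g$ lies in $\B(e,\rho)\B(e,\rho) \subset \B(e,2\rho)$, so by Cauchy--Schwarz $\|f*g\|_1 = \mu(A)\mu(B)$ and $\|f*g\|_2^2 \geq \|f*g\|_1^2 / \mu(\B(e,2\rho)) \gg_{\eta,d} \mu(\B(e,\rho))^3$ using $\mu(A),\mu(B) \geq \eta\mu(\B(e,\rho))$ and Fact \ref{Fact: Lebesgue vs Haar} to compare $\mu(\B(e,2\rho))$ with $\mu(\B(e,\rho))$ (they differ by a bounded factor $\ll_d 1$ since volumes of Euclidean balls scale like $\rho^d$). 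This gives the key ratio $\|f\|_2^2\|g\|_2^2 / \|f*g\|_2^2 \ll_{\eta,d} \mu(\B(e,\rho))^{-1}$.

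Next I would apply Lemma \ref{Lemma: Gowers-Long} with a suitably chosen $D$ and $\eta$-parameter $\rho^{1+\epsilon'}$ for a small $\epsilon' > 0$ to be fixed in terms of $\epsilon,d$. Balancing the two terms $D\rho^{1+\epsilon'}\|f*g\|_2^2$ and $D^{-1}\|f\|_2^2\|g\|_2^2$ by choosing $D \asymp \rho^{-(1+\epsilon')/2}\cdot(\|f\|_2\|g\|_2 / \|f*g\|_2)$, the right-hand side becomes $\ll_{\eta,d} \rho^{(1+\epsilon')/2}\mu(\B(e,\rho))^{-1/2}\|f*g\|_2^2 = \rho^{(1+\epsilon')/2}\mu(\B(e,\rho))^{-1/2}\cdot\|f*g\|_2^2$; more carefully, plugging the bound on the ratio gives a gain of a power of $\rho$ over $\|f*g\|_2^2$. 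Concretely: $\|f*g - f*g*\chi_{\rho^{1+\epsilon'}}\|_2^2 \ll_{\eta,d} \rho^{c_0}\|f*g\|_2^2$ for some $c_0 = c_0(\epsilon',d) > 0$ (this is where one needs $\rho$ small depending on $\eta,d,\epsilon$). Here $\chi_\eta = \mathbf{1}_{\B(e,\eta)}/\mu(\B(e,\eta))$ is the normalized ball indicator, so $f*g*\chi_{\rho^{1+\epsilon'}}(x)$ is the average of $f*g$ over $\B(x,\rho^{1+\epsilon'})$. Thus $f*g$ is $L^2$-close to its local average at scale $\rho^{1+\epsilon'}$.

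From the $L^2$ closeness I would extract pointwise information by a Chebyshev/Markov argument: on a set of $x$ of measure $\gg_{\eta,d}\mu(\B(e,\rho))$, we have both that the local average $f*g*\chi_{\rho^{1+\epsilon'}}(x)$ is $\gg_{\eta,d}\mu(\B(e,\rho))$ (because $\|f*g*\chi_{\rho^{1+\epsilon'}}\|_1 = \mu(A)\mu(B)$ is concentrated — one has to be slightly careful that the support grows only to $\B(e,2\rho+\rho^{1+\epsilon'})$, still $\ll_d\rho^d$ in measure, so an $L^1$/$L^2$ reverse Cauchy--Schwarz forces the average to be large on a large set) and that $|f*g(x) - f*g*\chi_{\rho^{1+\epsilon'}}(x)|$ is small, say $\leq \rho^{c_0/3}\mu(\B(e,\rho))$, again by Chebyshev from the $L^2$ bound combined with an $L^\infty$ bound $\|f*g - f*g*\chi_{\rho^{1+\epsilon'}}\|_\infty \leq 2\mu(\B(e,\rho))$. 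Actually the cleanest route is: from $\|f*g - f*g*\chi_{\rho^{1+\epsilon'}}\|_2^2 \ll \rho^{c_0}\|f*g\|_2^2 \ll_{\eta,d}\rho^{c_0}\mu(\B(e,\rho))^3$ and $\|h\|_\infty \leq \mu(\B(e,\rho))$ for $h=f*g$, one gets $\|h - h*\chi_{\rho^{1+\epsilon'}}\|_1 \ll_{\eta,d} \rho^{c_0/2}\mu(\B(e,\rho))^2$, hence on all but a set of measure $\ll_{\eta,d}\rho^{c_0/4}\mu(\B(e,\rho))$ we have $|h(x) - h*\chi_{\rho^{1+\epsilon'}}(x)| \leq \rho^{c_0/4}\mu(\B(e,\rho))$. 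Combining the two facts: on a set of $x$ of measure $\gg_{\eta,d}\mu(\B(e,\rho))$ the value $h*\chi_{\rho^{1+\epsilon'}}(x)$ is $\gg_{\eta,d}\mu(\B(e,\rho))$ and so also $h(x) > 0$ — but we need more, namely that $\B(x,\rho^{1+\epsilon})\cap AB$ is nearly all of $\B(x,\rho^{1+\epsilon})$. For that, apply the same argument one scale finer: first establish $h*\chi_{\rho^{1+\epsilon'}}(x) \geq (1-\rho^{c_1})\mu(\B(e,\rho))$-type lower bounds fail in general, so instead I would argue about the indicator of $AB$ rather than $h$: note $h(y) > 0 \iff y \in AB$, and $h*\chi_{\rho^{1+\epsilon'}}(x) = \int_{\B(x,\rho^{1+\epsilon'})} h\,d\mu / \mu(\B(e,\rho^{1+\epsilon'}))$, which is at most $\mu(\B(e,\rho))\cdot\mu(AB\cap\B(x,\rho^{1+\epsilon'}))/\mu(\B(e,\rho^{1+\epsilon'}))$ — wait, that bounds the density of $AB$ from below. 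Being more precise: since $0 \leq h \leq \mu(\B(e,\rho))$, for any $x$ we have $h*\chi_{\rho^{1+\epsilon'}}(x) \leq \mu(\B(e,\rho))\cdot \frac{\mu(\B(x,\rho^{1+\epsilon'})\cap AB)}{\mu(\B(e,\rho^{1+\epsilon'}))}$. Hence on the large set where $h*\chi_{\rho^{1+\epsilon'}}(x) \gg_{\eta,d}\mu(\B(e,\rho))$ we immediately get $\mu(\B(x,\rho^{1+\epsilon'})\cap AB) \gg_{\eta,d}\mu(\B(e,\rho^{1+\epsilon'}))$; to upgrade the constant to $1-\rho^c$ I would feed this positive-density conclusion into an iteration over $O_{\epsilon}(1)$ dyadic-type scales between $\rho^{1+\epsilon'}$ and $\rho^{1+\epsilon}$ (each step applying Lemma \ref{Lemma: Gowers-Long} again, or more simply re-running the above with $AB$ in place of one factor), or alternatively invoke the pigeonhole translate trick from \cite[App. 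A]{zbMATH07286399}: if $AB$ has density $\geq\kappa$ in $\B(x,\rho^{1+\epsilon'})$ for a positive proportion of $x$, then averaging over translates shows $AB \supset \B(x_0,\rho^{1+\epsilon})$ for some $x_0$ with density $1-\rho^c$. The main obstacle I anticipate is precisely this last upgrade — going from "positive density on a large set of centres" to "density $1-\rho^c$" with polynomial control — and pinning down the exponents so that the final $c$ depends only on $d$ and $\epsilon$ (and the constants only on $\eta,d,\epsilon$); the consequence $\B(e,\rho^{1+\epsilon}) \subset ABB^{-1}A^{-1}$ then follows since $\mu(\B(x,\rho^{1+\epsilon})\cap AB) > \tfrac12\mu(\B(x,\rho^{1+\epsilon}))$ on a translation-invariant-enough set forces, for each $z \in \B(e,\rho^{1+\epsilon})$, the two sets $AB\cap\B(x,\rho^{1+\epsilon})$ and $z^{-1}(AB\cap\B(x,\rho^{1+\epsilon}))$ to intersect, giving $z \in AB(AB)^{-1} = ABB^{-1}A^{-1}$.
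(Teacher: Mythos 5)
Your setup is right and matches the paper through the application of Lemma \ref{Lemma: Gowers-Long}: the $L^2$ estimates $\left\Vert \mathbf{1}_A*\mathbf{1}_B\right\Vert_2^2 \asymp_{\eta,d}\rho^{3d}$, the choice of $D$ balancing the two error terms, and the conclusion that $h:=\mathbf{1}_A*\mathbf{1}_B$ is $L^2$-close to its local average $h*\chi_{\rho^{1+2\epsilon}}$. But the proposal does not close the argument. As you yourself flag, from the averaged function you only extract that $AB$ has density $\gg_{\eta,d}1$ in $\B(x,\rho^{1+\epsilon'})$ for many $x$, and neither of your two proposed upgrades to density $1-\rho^c$ works: re-running the quasi-randomness argument with $AB$ as a factor again only yields positive density (the method never improves the constant past $\gg_{\eta,d}1$ by iteration), and the ``pigeonhole translate trick'' as stated — averaging over translates of the \emph{fixed} set $AB$ — cannot boost the density of that set inside a ball; density of a fixed set does not improve under averaging over centres.

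The missing idea is to convert the Fourier bound into approximate \emph{translation invariance of $h$ itself}, not of its average. The paper combines $\left\Vert h - h*\chi_{\rho^{1+2\epsilon}}\right\Vert_2 \ll_{\eta,d}\rho^{\epsilon/2}\left\Vert h\right\Vert_2$ with Young's inequality, which gives $\left\Vert h*\chi_{\rho^{1+2\epsilon}} - h*\chi_{\rho^{1+2\epsilon}}(g\,\cdot)\right\Vert_2 \ll_d \rho^{\epsilon}\left\Vert h\right\Vert_2$ for every $g\in\B(e,\rho^{1+3\epsilon})$ (the translate of the mollifier moves by $O(\rho^\epsilon)$ in $L^1$ by a perimeter estimate). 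The triangle inequality then yields $\left\Vert h - h(g\,\cdot)\right\Vert_2 \ll_{\eta,d}\rho^{\epsilon/2}\left\Vert h\right\Vert_2$ for all such $g$. Integrating this over $g\in\B(e,\rho^{1+3\epsilon})$ and applying Fubini and Markov, one finds a set of $x$ of measure $\gg_{\eta,d,\epsilon}\mu(\B(e,\rho))$ on which $h(x)$ is bounded below (possible since $\left\Vert h\right\Vert_2^2\gg\rho^{3d}$ while $\left\Vert h\right\Vert_\infty\ll\rho^d$) and on which $\int_{\B(e,\rho^{1+3\epsilon})}|h(x)-h(gx)|^2\,dg$ is polynomially small; since $|h(x)-h(gx)|^2\geq h(x)^2$ whenever $h(gx)=0$, the set of $g$ with $gx\notin\supp(h)\subset AB$ has relative measure at most $\rho^c$. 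This pointwise-support step is what produces the $1-\rho^c$ density, and it is absent from your plan. (Your derivation of the final inclusion $\B(e,\rho^{1+\epsilon})\subset ABB^{-1}A^{-1}$ from the density statement is fine.)
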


Note that similar statements appear in \cite{zbMATH07286399, zbMATH07599280}.

\begin{proof}
    We have $\left\Vert \mathbf{1}_A\right\Vert_2^2, \left\Vert \mathbf{1}_B\right\Vert_2^2 \gg_{d} \eta\rho^{d}$. Moreover, $\rho^{3d} \ll_{\eta,d}\left\Vert \mathbf{1}_A * \mathbf{1}_B\right\Vert_2^2 \ll_{\eta,d} \rho^{3d}$, see e.g. \cite[P.9]{TaoProductSet08}. By Lemma \ref{Lemma: Gowers-Long}, for all $\epsilon, D > 0$ we have 
    \begin{align*}\left\Vert \mathbf{1}_A * \mathbf{1}_B - \mathbf{1}_A * \mathbf{1}_B *  \chi_{\rho^{1+2\epsilon}}\right\Vert_2^2 &\ll_d   D\rho^{1+2\epsilon} \left\Vert \mathbf{1}_A * \mathbf{1}_B \right\Vert_2^2 + D^{-1}\left\Vert \mathbf{1}_A\right\Vert_2^2\left\Vert \mathbf{1}_B \right\Vert_2^2 \\ &\ll_{\eta,d}D\rho^{1+2\epsilon} \left\Vert \mathbf{1}_A * \mathbf{1}_B \right\Vert_2^2 + D^{-1}\rho^{-1}\left\Vert \mathbf{1}_A * \mathbf{1}_B \right\Vert_2^2.\end{align*}
    So taking $D = \rho^{-1 - \epsilon}$ we have 
    $$\left\Vert \mathbf{1}_A * \mathbf{1}_B - \mathbf{1}_A * \mathbf{1}_B *  \chi_{\rho^{1+2\epsilon}} \right\Vert_2^2 \ll_{\eta,d} \rho^{\epsilon}\left\Vert \mathbf{1}_A * \mathbf{1}_B \right\Vert_2^2.$$
    But for every $g \in \B(e,\rho^{1+3\epsilon})$, Young's convolution inequality implies, 
    $$ \left\Vert \mathbf{1}_A * \mathbf{1}_B *  \chi_{\rho^{1+2\epsilon}} - \mathbf{1}_A * \mathbf{1}_B *  \chi_{\rho^{1+2\epsilon}}(g\ \boldsymbol{\cdot})\right\Vert_2 \leq \left\Vert\mathbf{1}_A * \mathbf{1}_B\right\Vert_2\left\Vert \chi_{\rho^{1+2\epsilon}} - \chi_{\rho^{1+2\epsilon}}(g \ \boldsymbol{\cdot})\right\Vert_1 \ll_d \rho^{\epsilon} \left\Vert\mathbf{1}_A * \mathbf{1}_B\right\Vert_2.$$
    Thus, 
    $$\left\Vert \mathbf{1}_A * \mathbf{1}_B - \mathbf{1}_A * \mathbf{1}_B(g\ \boldsymbol{\cdot})\right\Vert_2 \ll_{\eta,d} \rho^{\epsilon/2} \left\Vert\mathbf{1}_A * \mathbf{1}_B\right\Vert_2.$$
    In particular, 
    $$\frac{1}{\mu(\B(e,\rho^{1+\epsilon}))}\int_{\B(e, \rho^{1+3\epsilon})}\int_G \left| \mathbf{1}_A * \mathbf{1}_B(x) - \mathbf{1}_A * \mathbf{1}_B(gx)\right|^2dx dg \ll_{\eta,d} \rho^\epsilon\left\Vert\mathbf{1}_A * \mathbf{1}_B \right\Vert_2^2.$$
    By Fubini’s theorem, Markov’s inequality and the $L^2$ estimate on $\mathbf{1}_A * \mathbf{1}_B$, we conclude that there is $c \gg_{\eta,d,\epsilon}1$ such that for $\rho \ll_{\eta,d,\epsilon} 1$ and for all $x \in G$ in a subset of measure $\gg_{\eta,d,\epsilon} \mu(\B(e,\rho))$ we have $$\mu\left(\B(x,\rho^{1+3\epsilon}) \cap supp( \mathbf{1}_A * \mathbf{1}_B)\right) \geq (1- \rho^c)\mu(\B(x,\rho^{1+3\epsilon})).$$
    Since $supp( \mathbf{1}_A * \mathbf{1}_B) \subset AB$, the proof of the first part is complete. The second part follows immediately.
\end{proof}

We invite the interested reader to read \cite{zbMATH07286399} and \cite{zbMATH07599280} for more on this theme. Note that the same statement fails in a non-semi-simple group. For instance, the family of subsets $\left(\bigcup_{i=0}^{n}[\frac{i}{n}; \frac{i}{n} + \frac{1}{10n}]\right)_{n \geq 0}$ provides a counterexample in $\mathbb{R}$. Finally, even though a control as precise as the one obtained in Proposition \ref{Proposition: Quasi-random} certainly helps implying the proof of Theorem \ref{Theorem: Local stability}, our approach is robust and would work with much weaker bounds.

\subsection{Combinatorial lemma}

We conclude this section with a combinatorial lemma exploiting more finely the form of Brunn--Minkowski inequality. This will be useful both in the local and the global context.

\begin{lemma}\label{Lemma: Locality}
    Let $A,B \subset G$ and $d' > 0$. Suppose that $\tau^{-1}\mu(B) \leq \mu(A) \leq \tau \mu(B)$ and $$\mu(AB) \leq (1+ \epsilon)\left(\mu(A)^{1/d'} + \mu(B)^{1/d'} \right)^{d'}.$$ Suppose moreover that for  every $X \subset A $ and $Y \subset B$ we have 
    \begin{equation}
        \mu(XY) \geq (1-\epsilon)\left(\mu(X)^{1/d'} + \mu(Y)^{1/d'} \right)^{d'}. \label{Eq: local BM combinatorial lemma}
    \end{equation}

    If now  $C \subset A$ is such that $\mu(C) \geq \eta \mu(A)$, then there are $F_1, F_2 \subset G$ with $|F_1|=|F_2|=O_{d',\tau}(\eta^{-1})$ and $m = O_{d',\tau,\eta}(1)$ such that:
    \begin{enumerate}
        \item $A \subset F_1(C^{-1}C)^{m}$ and $B \subset (C^{-1}C)^{m}F_2$;
        \item The families $(f_1(C^{-1}C)^{100m})_{f_1 \in F_1}$ and $((C^{-1}C)^{100m}f_2)_{f_2 \in F_2}$ are both made of pairwise disjoint sets;
        \item For all $f_1 \in F_1, f_2 \in F_2$ we have the estimate
        $$\mu(f_1(C^{-1}C)^{O_\eta(1)} \cap A) = (1+O_{d',\tau, \eta}(\epsilon))\frac{\mu(A)}{|F_1|}  $$
        and 
        $$ \mu((C^{-1}C)^{O_\eta(1)}f_2 \cap B) = (1+O_{d',\tau, \eta}(\epsilon))\frac{\mu(B)}{|F_2|}.$$
    \end{enumerate}
\end{lemma}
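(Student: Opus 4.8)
The plan is to extract $F_1$ and $F_2$ by a greedy covering argument applied to $C^{-1}C$, then use the Brunn--Minkowski hypothesis \eqref{Eq: local BM combinatorial lemma} on sub-pairs to control overlaps and densities. First I would observe that $\mu((C^{-1}C)^k)$ grows slowly: applying \eqref{Eq: local BM combinatorial lemma} iteratively (or rather its consequence $\mu(XY) \leq \mu(AB) = O_{d',\tau}(\mu(A))$ for $X,Y$ ranging over translates contained in $A,B$, together with a Ruzsa-type covering argument) one gets that $(C^{-1}C)^k$ has measure $O_{d',\tau,\eta,k}(\mu(A))$ for every fixed $k$; the point is that since $\mu(C) \ge \eta\mu(A)$ and $\mu(CC^{-1}) \le \mu(A^{-1}A) = O_{d',\tau}(\mu(A))$ — this last bound coming from the near-equality hypothesis via the standard argument that small doubling of $A$ forces small doubling of $A^{-1}A$ — the set $S:=C^{-1}C$ is an $O_{d',\tau,\eta}(1)$-approximate group up to finitely many scales, so $\mu(S^k) \le K^{k} \mu(C)$ with $K = O_{d',\tau,\eta}(1)$. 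This already gives item (2)-type disjointness budgets.

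Next, to produce $F_1$: cover $A$ by a maximal collection of points $f_1 \in A$ such that the translates $f_1 S^{50m}$ are pairwise disjoint, where $m$ is chosen large enough (depending on $d',\tau,\eta$) that the growth $\mu(S^{100m})/\mu(S^{m})$ is controlled. Maximality gives $A \subset F_1 S^{100m}$ (so after renaming $m$, item (1)); and since each $f_1 S^{50m}$ contains a translate of $C$, hence has measure $\gg \eta \mu(A)$, while all are disjoint inside $AC^{-1}C\cdots \subset$ a set of measure $O_{d',\tau,\eta}(\mu(A))$, we get $|F_1| = O_{d',\tau}(\eta^{-1})$. The same construction on the right for $B$ yields $F_2$. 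Disjointness at scale $100m$ (item (2)) is arranged directly by the greedy choice at scale, say, $200m$ and then shrinking.

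For item (3), the density equidistribution, I would run the Brunn--Minkowski near-equality as a "no concentration" statement: writing $A_i := f_i S^{r} \cap A$ for an appropriate intermediate radius $r$ (with $m \ll r \ll 100m$), the sets $A_i$ partition $A$ up to a negligible piece, and $\bigcup_i A_i B = AB$ has measure at most $(1+\epsilon)(\mu(A)^{1/d'}+\mu(B)^{1/d'})^{d'}$. On the other hand, for any sub-collection $I$, $\mu(\bigcup_{i\in I} A_i \cdot B) \ge (1-\epsilon)(\mu(\bigcup_{i\in I}A_i)^{1/d'} + \mu(B)^{1/d'})^{d'}$ by \eqref{Eq: local BM combinatorial lemma}, and a matching lower bound for the complementary collection; adding these and using the convexity/superadditivity of $t \mapsto (t^{1/d'}+s)^{d'}$ (strict unless $\mu(\bigcup_I A_i)$ is a prescribed fraction of $\mu(A)$) forces $\mu(\bigcup_{i \in I} A_i) = (\text{const} \pm O_{d',\tau,\eta}(\epsilon))\mu(A)$ for every $I$; taking $I$ a singleton gives $\mu(A_i) = (1+O_{d',\tau,\eta}(\epsilon))\mu(A)/|F_1|$, and symmetrically for $B$.

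The main obstacle I anticipate is item (3): making the "convexity gap" argument quantitative and uniform. The function $t \mapsto (t^{1/d'} + \mu(B)^{1/d'})^{d'}$ is smooth and strictly concave in $t$ on the relevant range (which is bounded away from $0$ and $\infty$ by the ratio hypothesis $\tau \le \mu(A)/\mu(B) \le \tau^{-1}$ and $\mu(A_i) \gg \eta\mu(A)/|F_1|$), so the second-order Taylor coefficient is bounded below by a constant depending only on $d',\tau,\eta$; combining the two Brunn--Minkowski estimates for a partition $\{I, I^c\}$ yields that any deviation of $\mu(\bigcup_I A_i)$ from its "fair share" of order $\sqrt{\epsilon}$ or more would contradict $\mu(AB) \le (1+\epsilon)(\cdots)$. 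One must be careful that the pieces $A_i$ are genuinely disjoint (hence measures add), that there are only $O_{d',\tau,\eta}(1)$ of them so the $2^{|F_1|}$ choices of $I$ cost only a constant, and that the leftover $A \setminus \bigcup_i A_i$ (points of $A$ not within $S^r$ of any chosen center) is empty — which follows from $A \subset F_1 S^{100m}$ once $r$ is taken $\ge 100m$, at the price of enlarging $m$. I would also need to double-check that the passage from "$\mu(S^k)$ controlled" to the approximate-group conclusion does not secretly require the global structure theory; here I would lean on the Ruzsa covering lemma plus the hypothesis \eqref{Eq: local BM combinatorial lemma} applied to $X=C, Y=C^{-1}$-translates rather than on any multi-scale machinery.
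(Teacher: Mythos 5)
Your construction of $F_1,F_2$ (maximal disjoint translates of $C$, Ruzsa covering, $\mu(A^{-1}A)=O_{d',\tau}(\mu(A))$ via the Ruzsa triangle inequality, then a refinement to get disjointness at scale $100m$) is essentially the paper's argument for items (1) and (2), and that part is fine.

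There is, however, a genuine gap in your argument for item (3). You propose to take a subcollection $I$, apply the lower bound \eqref{Eq: local BM combinatorial lemma} to $\bigl(\bigcup_{i\in I}A_i\bigr)B$ and to $\bigl(\bigcup_{i\in I^c}A_i\bigr)B$, and then \emph{add} the two resulting lower bounds and compare with $\mu(AB)$. That addition is only legitimate if $A_IB$ and $A_{I^c}B$ are essentially disjoint, and they are not: both factors are multiplied by the \emph{entire} set $B$, whose measure is comparable to $\mu(AB)$, so the two product sets overlap massively (their union is $AB$, but each can have measure close to $\mu(AB)$). Worse, if the addition were valid the argument would prove too much: the map $t\mapsto (t^{1/d'}+\mu(B)^{1/d'})^{d'}$ is concave with value $\mu(B)>0$ at $t=0$, so the sum of the two lower bounds always exceeds $(1-\epsilon)\bigl[(\mu(A)^{1/d'}+\mu(B)^{1/d'})^{d'}+\mu(B)\bigr]$, which is strictly larger than $(1+\epsilon)(\mu(A)^{1/d'}+\mu(B)^{1/d'})^{d'}$ for small $\epsilon$; this would force $|F_1|=1$, contradicting the very situation the lemma is designed to handle. (Note also that the function is concave/subadditive here, not convex/superadditive as you wrote.) The repair — and this is the heart of the paper's proof — is to localize the $B$-factor as well: fix a single right-piece $B\cap (C^{-1}C)^m f_2$ of measure roughly $\mu(B)/|F_2|$ and use that the products $(f(C^{-1}C)^m\cap A)\cdot((C^{-1}C)^mf_2\cap B)$, for $f$ ranging over $F_1$, \emph{are} pairwise disjoint (they sit in the disjoint sets $f(C^{-1}C)^{100m}f_2$). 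Summing the lower bounds $(\mu(A_f)^{1/d'}+\mu(B_{f_2})^{1/d'})^{d'}$ over $f$, with $f_2$ chosen extremally, and comparing with $\mu(AB)$ is what yields the equidistribution in (3); the identity $|F_1|=|F_2|$ then also requires the cross-comparison between left-pieces of $A$ and right-pieces of $B$ obtained this way, which your sketch does not address.
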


\begin{proof}
Let $F \subset B$ be a subset such that $(Cf)_{f\in F}$ is a family of pairwise disjoint sets and choose it to be maximal for inclusion among such subsets. Then 
$$\mu(C)|F| \leq \mu(CF) \leq \mu(AB) \leq (1+\epsilon)(1+\tau^{1/d'})^{d'}\mu(A).$$
So $|F|\leq \eta^{-1}(1+\epsilon)(1+\tau^{1/d'})^{d'}$. By Rusza's covering lemma (e.g. \cite[Lem. 3.6]{TaoProductSet08}) $B \subset C^{-1}CF$. In addition, by Ruzsa's triangle inequality \cite[Lem. 3.2]{TaoProductSet08}, we have $\mu(AA^{-1}) \ll_{d',\tau}\mu(A)$. So by a similar argument, $A \subset F'(C^{-1}C)$ for some $F' \subset A$ of size $O_{d',\tau}(\eta^{-1})$.

Now, there are $m \leq 100^{|F'|+|F|}$ and subsets $\tilde{F'}\subset F'$ and $\tilde{F} \subset F$ such that $A \subset \tilde{F}'(C^{-1}C)^{m}$ and $B \subset (C^{-1}C)^{m}\tilde{F}$ and the family $(f(C^{-1}C)^{100m})_{f \in \tilde{F}'}$ (resp. $((C^{-1}C)^{100m}f)_{f \in \tilde{F}}$) is made of pairwise disjoint sets, see \cite[Lem. 4.3]{Machado2024MinDoubling} for detail. For notational simplicity, write again $\tilde{F}$ as $F$, $\tilde{F}'$ as $F'$ and $(C^{-1}C)^{m}$ as $C'$.

Let $f_1 \in F', f_2 \in F$ be such that $\mu(f_1C' \cap A) =\max_{f \in F'} \mu(fC' \cap A)$ and $\mu(C'f_2 \cap B) =\max_{f \in F} \mu(C'f \cap B)$. Suppose as we may that $\frac{\mu(f_1C' \cap A)}{\mu(A)} \leq \frac{\mu(C'f_2 \cap B)}{\mu(B)}$.

We know that the subsets $\left((fC' \cap A)(C'f_2 \cap B)\right)_{f\in F'}$ are pairwise disjoint. So 

$$ \mu(AB) \geq \sum_{f\in F'} \mu\left((fC' \cap A)(C'f_2 \cap B)\right).
$$
By (\ref{Eq: local BM combinatorial lemma}) now, 
\begin{equation}
    \mu(AB)  \geq \sum_{f\in F'} (1-\epsilon)\left(\mu(fC' \cap A)^{1/d'}+\mu(C'f_2 \cap B)^{1/d'}\right)^{d'}. \label{Eq: Local sums combinatorial lemma}
\end{equation}

We will first show that all subset $fC' \cap A$ are reasonably large. Let $\delta > 0$ to be chosen later and set $F'_{\geq}$ as the set of those $f \in F'$ such that $\mu(fC' \cap A) \geq \delta\mu(A)$ and write $F'_{<}=F'\setminus F'_{>}$. Now, 
$$\mu(AB)  \geq \sum_{f\in F'_{>}} (1-\epsilon)\mu(fC' \cap A)\left(1 + \frac{\mu(C'f_2 \cap B)^{1/d'}}{\mu(fC' \cap A)^{1/d'}}\right)^{d'} +  \sum_{f\in F'_{<}} (1-\epsilon)\mu(C'f_2 \cap B).$$
Since $\frac{\mu(f_1C' \cap A)}{\mu(A)} \leq \frac{\mu(C'f_2 \cap B)}{\mu(B)}$ and $\mu\left(C'f_2 \cap B\right) \geq \frac{\mu(B)}{|F|}$, we find 
\begin{align*}
     \mu(AB) &\geq \sum_{f\in F'_{>}} (1-\epsilon)\mu(fC' \cap A)\left(1 + \frac{\mu(B)^{1/d'}}{\mu( A)^{1/d'}}\right)^{d'} + \mathbf{1}_{F'_{<}\neq \emptyset}\frac{\mu(B)}{|F|} \\
     & \geq (1-\epsilon)(1-|F'|\delta)\mu(A)\left(1+\frac{\mu(B)^{1/d'}}{\mu( A)^{1/d'}}\right)^{d'}  +  \mathbf{1}_{F'_{<}\neq \emptyset}\frac{\mu(B)}{|F|}\\
     &= (1-\epsilon)(1-|F'|\delta)\left(\mu(A)^{1/d'}+\mu(B)^{1/d'}\right)^{d'}  +  \mathbf{1}_{F'_{<}\neq \emptyset}\frac{\mu(B)}{|F|}
\end{align*}
But $\mu(AB) \leq (1+\epsilon) \left(\mu(A)^{1/d'}+\mu(B)^{1/d'}\right)^{d'}$ by assumption. So
$$ \mathbf{1}_{F'_{<}}\frac{\mu(B)}{|F|} \ll_{d',\tau, \eta} \left(\epsilon + |F'|\delta\right)\left(\mu(A)^{1/d'}+\mu(B)^{1/d'}\right)^{d'}.$$
Since $|F'| \ll_{d', \tau, \eta}1$ and $\mu(A) \leq \tau \mu(B)$ we find
$$ |F'_{<}| \ll_{d',\tau, \eta} \epsilon + \delta.$$

Hence, if $\epsilon, \delta \ll_{d', \tau, \eta} 1$, the subset $F'_{<}$ must be empty.  Assume from now on that  $1 \ll_{d',\tau,\eta} \delta \ll_{d',\tau,\eta} 1$ is fixed such that for all $\epsilon$ sufficiently small depending on $\eta, \tau$ and $d'$ alone $F'_{<}$ is empty.

Recall that for all $f \in F'$, $\left(1 + \frac{\mu(C'f_2 \cap B)^{1/d'}}{\mu(fC' \cap A)^{1/d'}}\right)^{d'} \geq \left(1 + \frac{\mu( B)^{1/d'}}{\mu( A)^{1/d'}}\right)^{d'}$ and $\mu(AB) \leq (1+\epsilon) \left(\mu(A)^{1/d'}+\mu(B)^{1/d'}\right)^{d'}$. Combined with (\ref{Eq: Local sums combinatorial lemma}) we find  

$$\mu(fC' \cap A)\left|\left(1 + \frac{\mu( B)^{1/d'}}{\mu( A)^{1/d'}}\right)^{d'}-\left(1 + \frac{\mu(C'f_2 \cap B)^{1/d'}}{\mu(fC' \cap A)^{1/d'}}\right)^{d'}\right| \leq 2\epsilon \left(\mu(A)^{1/d'} + \mu(B)^{1/d'}\right)^{d'}.$$
But $\mu(fC' \cap A) \gg_{d',\tau,\eta} \mu(A)$ by the above discussion. So
$$\left|\left(1 + \frac{\mu( B)^{1/d'}}{\mu( A)^{1/d'}}\right)^{d'}-\left(1 + \frac{\mu(C'f_2 \cap B)^{1/d'}}{\mu(fC' \cap A)^{1/d'}}\right)^{d'}\right| \ll_{d',\tau,\eta} \epsilon.$$
Hence, by the mean value theorem, 
$$ \left|\frac{\mu(B)}{\mu(A)} -  \frac{\mu(C'f_2 \cap B)}{\mu(fC' \cap A)} \right|\ll_{d',\tau,\eta} \epsilon.$$
which yields $(1-O_{d',\tau, \eta}(\epsilon))\frac{\mu(C'f_2 \cap B)}{\mu(B)}= \frac{\mu(fC' \cap A)}{\mu(A)}$ for all $f \in F'$.

Now, picking any $f \in F'$ and repeating a symmetric argument, we find that \begin{equation}\frac{\mu(C'f \cap B)}{\mu(B)} = (1+O_{d',\tau, \eta}(\epsilon))\frac{\mu(f_3C' \cap A)}{\mu(A)}\label{Eq: rep. constante}\end{equation} for all $f_3 \in F$.

So we have (1),(2) and (3) for the choice $F_1=F'$ and $F_2=F$. It remains to show that $|F_1|=|F_2|$. But (\ref{Eq: rep. constante})  combined with $1=\sum_{f\in F_2} \frac{\mu(C'f \cap B)}{\mu(B)}$ yields 
$$ \frac{\mu(C'f \cap B)}{\mu(B)} =(1+O_{d',\tau, \eta}(\epsilon)) |F_2|$$
for all $f \in F_2$. Similarly, 
$$ \frac{\mu(fC' \cap A)}{\mu(A)} = (1+O_{d',\tau, \eta}(\epsilon)) |F_1|$$
for all $f \in F_1$. Now, because of (\ref{Eq: rep. constante}) again, 
$$(1-O_{d',\tau, \eta}(\epsilon))|F_1| \leq |F_2| \leq (1+O_{d',\tau, \eta}(\epsilon))|F_1|.$$
Because $|F_1|,|F_2| \ll_{\eta,d,\tau'}1$, we must have $|F_1|=|F_2|$ as soon as $\epsilon > 0$ is sufficiently small depending on $\eta, \tau$ and $d'$ alone.
\end{proof}
 
In particular, we have the corollary: 

\begin{corollary}\label{Corollary: Locality 99 percent}
    With $A,B, d'$ and $\tau, \epsilon$ as in Lemma \ref{Lemma: Locality}. Suppose moreover that $\mu(A \cap C) \geq \frac23 \mu(A)$. Then there are $g,g' \in G$ such that $ A \subset g(C^{-1}C)^{O_{d'}(1)}$ and $B \subset (C^{-1}C)^{O_{d'}(1)}g'$ as soon as $\epsilon > 0$ is sufficiently small depending on $d'$ alone. 
\end{corollary}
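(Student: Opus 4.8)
The plan is to derive Corollary \ref{Corollary: Locality 99 percent} directly from Lemma \ref{Lemma: Locality} applied with $\eta = \frac23$. Concretely, since $\mu(A\cap C) \geq \frac23\mu(A)$, Lemma \ref{Lemma: Locality} (with $C$ there taken to be $A\cap C$, so that $(C^{-1}C) \subset ((A\cap C)^{-1}(A\cap C))$ up to harmless enlargement, or simply taking the hypothesis set to be $A\cap C$ and noting it is contained in $C^{-1}C$ after symmetrising) produces finite sets $F_1, F_2$ with $|F_1| = |F_2| = N = O_{d'}(1)$, some $m = O_{d'}(1)$, and the covering $A \subset F_1(C^{-1}C)^m$, $B \subset (C^{-1}C)^m F_2$, together with the density estimates of item (3): each $f_1 \in F_1$ carries an $A$-mass $(1+O_{d'}(\epsilon))\mu(A)/N$ and each $f_2 \in F_2$ carries a $B$-mass $(1+O_{d'}(\epsilon))\mu(B)/N$.

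The key step is then to show $N = 1$ when $\epsilon$ is small. For this I would isolate the piece $f_1^0 C' \cap A$ (where $C' = (C^{-1}C)^m$) on which $A\cap C$ is concentrated: since $A\cap C$ has mass $\geq \frac23\mu(A)$ and $A\cap C$ is itself covered by the translates $f_1 C'$ (it sits inside $A$), at least one translate $f_1^0 C'$ meets $A\cap C$ in mass $\geq \frac{2}{3N}\mu(A)$. On the other hand item (3) forces every translate — in particular this one — to meet $A$ in mass only $(1+O_{d'}(\epsilon))\mu(A)/N$. If $N \geq 2$, then the total $A$-mass lying in translates other than $f_1^0 C'$ is $(N-1)(1+O_{d'}(\epsilon))\mu(A)/N \geq (1+O_{d'}(\epsilon))\mu(A)/2 > \frac13\mu(A)$ for small $\epsilon$; but $A\setminus f_1^0 C'$ has mass $\leq \mu(A) - \frac{2}{3N}\mu(A)$, and combining with the near-equal distribution of mass across the $N$ pieces one gets a contradiction with $\mu(A\cap C)\geq\frac23\mu(A)$ once $\epsilon$ is small enough depending on $d'$. (Equivalently: the single piece $f_1^0C'$ must contain at least $\frac23\mu(A)$ of $A$'s mass to host all of $A\cap C$, yet (3) caps it at $(1+O_{d'}(\epsilon))\mu(A)/N$, so $\frac{1}{N} \geq \frac{2}{3}(1-O_{d'}(\epsilon))$, which forces $N=1$.) Hence $F_1 = \{g\}$ and $F_2 = \{g'\}$ are singletons, giving $A \subset g(C^{-1}C)^m$ and $B \subset (C^{-1}C)^m g'$ with $m = O_{d'}(1)$, which is the claim.

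The only mild subtlety, and the step I expect to need the most care, is matching the two occurrences of the set: Lemma \ref{Lemma: Locality} requires a subset of $A$ of proportional measure, and we must make sure that running it with the subset $A\cap C$ yields covering sets expressed in terms of $C^{-1}C$ rather than $(A\cap C)^{-1}(A\cap C)$ — but this is immediate since $(A\cap C)^{-1}(A\cap C) \subset C^{-1}C$, so all powers $((A\cap C)^{-1}(A\cap C))^k \subset (C^{-1}C)^k$ and the containments only improve. With that observation the argument above goes through verbatim, and the dependence of the threshold $\epsilon$ on $d'$ alone is inherited from Lemma \ref{Lemma: Locality} with $\eta = \frac23$ fixed.
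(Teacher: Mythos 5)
Your proposal is correct and is essentially the argument the paper intends (the corollary is stated without proof as an immediate consequence of Lemma \ref{Lemma: Locality}): apply the lemma to the subset $A\cap C\subset A$ with $\eta=\tfrac23$, use $\left((A\cap C)^{-1}(A\cap C)\right)^k\subset (C^{-1}C)^k$, and force $|F_1|=|F_2|=1$ from the mass estimates. The one step worth writing out is why \emph{all} of $A\cap C$ sits in a single translate: for any $a_0\in A\cap C$ one has $A\cap C\subset a_0\,(A\cap C)^{-1}(A\cap C)\subset f_1^0(C'')^{m+1}$, where $C''=(A\cap C)^{-1}(A\cap C)$ and $f_1^0$ is the element of $F_1$ whose translate contains $a_0$, so by the pairwise disjointness in item (2) that single translate carries at least $\tfrac23\mu(A)$ of the mass of $A$, contradicting item (3) unless $|F_1|=1$ --- your pigeonhole bound of $\tfrac{2}{3N}\mu(A)$ in one piece would not by itself contradict the cap of $(1+O_{d'}(\epsilon))\mu(A)/N$.
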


\section{Local stability}\label{Local stability}
Throughout this section, the results depend on a parameter $\rho > 0$ which will typically be small and parameter $\tau > 0$ which is fixed.

\subsection{Density functions and convex sets}

Fix $\rho > 0$ and $A, B \subset \B(e,\rho)$. To study the product $AB$, one is tempted by the following `principle': in a sufficiently small ball, a Lie group resembles its Lie algebra, and the product is close to the sum. In practice, this can be achieved by looking at \emph{density functions} (see \cite{jing2023measure}): given a scale $\eta > 0$ and a compact subset $X \subset G$ consider $\chi_\eta:=\frac{\mathbf{1}_{\B(e,\eta)}}{\mu(\B(e,\eta)}$ and set \begin{equation}
    \chi_{X,\eta} := \mathbf{1}_X * \chi_\eta = \frac{\mu\left(X \cap \B(\cdot, \eta)\right)}{\mu(\B(e,\eta))}. \label{Eq: Density functions}
\end{equation}
While one loses some information about $X$ by looking at density functions, the measure of $X$ is accessible thanks to the following simple formula:

\begin{equation}
\parallel\chi_{X,\eta}\parallel_1 = \mu(X). \label{Eq: L1 norm density}
\end{equation}

Density functions are smoother than indicator functions in many ways. For instance, they are continuous and their variations are well controlled below the chosen scale $\eta$:

\begin{fact}\label{Fact: Young + isoperimetry}
   Let $p \in [1;\infty]$ and $f \in L^p(G)$, $$\left\Vert f * \chi_\eta - f * \chi_\eta ( g\ \cdot )\right\Vert_p \ll_d \left\Vert f \right\Vert_p d(e,g) \eta^{-1}.$$ 
\end{fact}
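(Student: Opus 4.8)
The statement is to be parsed, exactly as in the proof of Proposition~\ref{Proposition: Quasi-random}, with $\chi_\eta(g\,\cdot)$ denoting the left‑translated function $x \mapsto \chi_\eta(gx)$, which is then convolved with $f$. Since convolution is linear in the second variable and $\left\Vert \phi * \psi \right\Vert_p \leq \left\Vert \phi \right\Vert_p \left\Vert \psi \right\Vert_1$ by Young's inequality, one has
$$ \left\Vert f * \chi_\eta - f * \chi_\eta(g\,\cdot) \right\Vert_p = \left\Vert f * \left(\chi_\eta - \chi_\eta(g\,\cdot)\right) \right\Vert_p \leq \left\Vert f \right\Vert_p \left\Vert \chi_\eta - \chi_\eta(g\,\cdot) \right\Vert_1 .$$
Thus the whole claim reduces to the translation estimate $\left\Vert \chi_\eta - \chi_\eta(g\,\cdot) \right\Vert_1 \ll_d d(e,g)\,\eta^{-1}$, which no longer involves $f$ or $p$.

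To prove this, unwind the translate: by left‑invariance of $d$ we have $g^{-1}\B(e,\eta) = \B(g^{-1},\eta)$, hence $\chi_\eta(g\,\cdot) = \mu(\B(e,\eta))^{-1}\mathbf{1}_{\B(g^{-1},\eta)}$ and
$$ \left\Vert \chi_\eta - \chi_\eta(g\,\cdot) \right\Vert_1 = \frac{\mu\!\left(\B(e,\eta)\,\Delta\,\B(g^{-1},\eta)\right)}{\mu(\B(e,\eta))}.$$
Put $\delta := d(e,g) = d(e,g^{-1})$, the last equality by bi‑invariance. If $\delta \geq \eta$ the right‑hand side is at most $2 \leq 2\delta\eta^{-1}$, so we may assume $\delta < \eta$. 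If $y \in \B(e,\eta)\setminus\B(g^{-1},\eta)$ then $\eta \leq d(y,g^{-1}) \leq d(y,e) + \delta$, so $y$ lies in the shell $\B(e,\eta)\setminus\B(e,\eta-\delta)$; the symmetric argument handles $y \in \B(g^{-1},\eta)\setminus\B(e,\eta)$, and since $\mu$ and $d$ are left‑invariant, $\mu\!\left(\B(g^{-1},\eta)\setminus\B(g^{-1},\eta-\delta)\right) = \mu\!\left(\B(e,\eta)\setminus\B(e,\eta-\delta)\right)$. Hence the claim reduces further to the shell estimate
$$ \mu\!\left(\B(e,\eta)\setminus\B(e,\eta-\delta)\right) \ll_d \frac{\delta}{\eta}\,\mu(\B(e,\eta)), \qquad 0 < \delta < \eta \leq 1 .$$

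This last inequality is the only point with genuine geometric content, and is the step I expect to require care. I would argue through the exponential map: geodesics through $e$ for the bi‑invariant metric are one‑parameter subgroups, so $\B(e,r) = \exp(\B_{\mathfrak g}(0,r))$ for every $r$, where $\B_{\mathfrak g}(0,r)$ is the metric ball in $\mathfrak g$. Combining Fact~\ref{Fact: Lebesgue vs Haar} (for $\eta$ small) with the boundedness of the Jacobian of $\exp$ on $\B_{\mathfrak g}(0,1)$ together with compactness (for $\eta$ bounded below), the exponential pushes the Lebesgue measure $\lambda$ of Fact~\ref{Fact: Lebesgue vs Haar} to the Haar measure with comparable densities on $\B_{\mathfrak g}(0,1)$, so that $\mu(\exp(S)) \ll_d \lambda(S)$ for $S \subset \B_{\mathfrak g}(0,1)$ and $\mu(\B(e,\eta)) \gg_d \eta^{d}$. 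Therefore
$$ \mu\!\left(\B(e,\eta)\setminus\B(e,\eta-\delta)\right) \ll_d \lambda\!\left(\B_{\mathfrak g}(0,\eta)\setminus\B_{\mathfrak g}(0,\eta-\delta)\right) \ll_d \eta^{d} - (\eta-\delta)^{d} \ll_d \delta\,\eta^{d-1},$$
and dividing by $\mu(\B(e,\eta)) \gg_d \eta^{d}$ gives the shell estimate, hence the Fact. (Alternatively, one may invoke the coarea formula and Bishop's bound $\mathrm{Area}(S(e,r)) \ll_d r^{d-1}$ on geodesic spheres, valid because a bi‑invariant metric on $G$ has nonnegative Ricci curvature.) The remaining ingredients — linearity, Young's inequality, and the translation identities for the balls — are entirely formal.
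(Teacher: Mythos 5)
Your proof is correct and is exactly the argument the paper has in mind: its one-line proof cites ``Young's convolution inequality and a perimeter estimate for balls in $G$,'' which are precisely your reduction to $\left\Vert \chi_\eta - \chi_\eta(g\,\cdot)\right\Vert_1$ and your shell estimate (and your parsing of $f * \chi_\eta(g\,\cdot)$ matches the paper's own usage in the proof of Proposition \ref{Proposition: Quasi-random}). Nothing further is needed.
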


\begin{proof}
    It is an easy consequence of Young's convolution inequality and a perimeter estimate for balls in $G$. 
\end{proof}

 When $\rho > 0$ is sufficiently small, $\B(e,\rho)$ can be identified with a neighbourhood of $0$ in the Lie algebra. We will use the additive notation $x+y$ for two elements $x,y \in \B(e,\rho)$ to mean the sum of the corresponding elements in the Lie algebra. Similarly, for $A \subset \B(e,\rho)$ and $\lambda>0$ we write $\lambda A$ to mean the result of rescaling $A$ in the Lie algebra by a factor $\lambda$. By Fact \ref{Fact: Lebesgue vs Haar}, 
 \begin{equation}
      \mu(\lambda A) = (1+O_d(\rho^2))\lambda^d\mu(A). \label{Eq: Rescaling}
 \end{equation}

In fact, this resemblance with Euclidean spaces can be made more detailed: 

\begin{fact}
    Let $A \subset \B(e,\rho)$ be a compact subset. Then: 

    \begin{enumerate}
        \item For any $\lambda \in \mathbb{R}_{\geq 0}$ such that $\lambda A \subset \B(e,\rho)$, $\mu(\lambda A) = (1+O_d(\rho^2))\lambda^d\mu(A)$;
        \item For any $g \in \B(e,\eta)$, $gA \subset Ag\B(e,O_d(\rho\eta))$ and  $Ag \subset gA\B(e,O_d(\rho\eta))$.
    \end{enumerate}

\end{fact}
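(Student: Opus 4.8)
The statement splits into two unrelated claims, which I would handle in turn. Item (1) is nothing more than the rescaling identity \eqref{Eq: Rescaling}, so I would just recall its one-line justification: the rescaling $\lambda A$ is defined via the identification of $\B(e,\rho)$ with a neighbourhood of $0$ in $\mathfrak g$, so $\log(\lambda A) = \lambda\log A$; Fact \ref{Fact: Lebesgue vs Haar} then trades the Haar measure for the Lebesgue measure $\lambda_{\mathfrak g}$ on $\mathfrak g$ at the cost of a factor $1+O_d(\rho^2)$, and the $d$-dimensional Lebesgue measure scales exactly by $\lambda^d$ under the dilation $v \mapsto \lambda v$. Chaining these three facts gives $\mu(\lambda A) = (1+O_d(\rho^2))\lambda^d\mu(A)$. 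The only thing worth a moment's care is the notational collision between the scalar $\lambda$ and the Lebesgue measure, which I would resolve by renaming the latter.

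For item (2) the plan is to exhibit the element of the small ball explicitly as a group commutator. Fix $a \in A$ and $g \in \B(e,\eta)$; one checks the algebraic identities $(ag)\,(g^{-1}a^{-1}ga) = ga$ and $(ga)\,(a^{-1}g^{-1}ag) = ag$, so everything reduces to estimating $d(e,g^{-1}a^{-1}ga)$ and $d(e,a^{-1}g^{-1}ag)$. Writing $a = \exp X$, $g = \exp Y$ with $|X| \ll_d \rho$ and $|Y| \ll_d \eta$ (using that $d(e,\exp Z) \asymp_d |Z|$ near the identity, the metric being bi-invariant and Riemannian), the Baker--Campbell--Hausdorff formula (Proposition \ref{Proposition: BCH formula}) expands $\log(g^{-1}a^{-1}ga)$: the part of degree $\le 1$ in $X,Y$ cancels, the degree-$2$ part is the single bracket $[Y,X]$, and every remaining term is a nonzero iterated bracket in $X$ and $Y$ of degree $\ge 3$. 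Each such term is therefore $O_d(\rho\eta)$ (here one uses $\rho,\eta \le 1$), and summing the convergent series gives $\log(g^{-1}a^{-1}ga) = O_d(\rho\eta)$, hence $g^{-1}a^{-1}ga \in \B(e, O_d(\rho\eta))$. Consequently $ga \in (ag)\B(e,O_d(\rho\eta)) \subset Ag\,\B(e,O_d(\rho\eta))$, and letting $a$ run over $A$ yields $gA \subset Ag\,\B(e,O_d(\rho\eta))$; the inclusion $Ag \subset gA\,\B(e,O_d(\rho\eta))$ follows identically from the second identity and the analogous estimate on $a^{-1}g^{-1}ag$.

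I do not expect any genuine obstacle here: the whole Fact is bookkeeping around the BCH formula. The one point to state carefully is the commutator bound — specifically, that because the degree-$\le 1$ contributions to $\log(g^{-1}a^{-1}ga)$ vanish, the error is controlled by the \emph{product} $\rho\eta$ rather than by $\rho+\eta$. This is exactly the asymmetric sharpening of Corollary \ref{Corollary: Sum vs product}, and once it is recorded the rest of the argument is immediate.
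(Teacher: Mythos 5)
Your proof is correct, and since the paper states this Fact without any proof, your argument supplies exactly the intended justification: item (1) is the rescaling identity \eqref{Eq: Rescaling} obtained from Fact \ref{Fact: Lebesgue vs Haar}, and item (2) is the standard commutator estimate $\log(g^{-1}a^{-1}ga)=O_d(|X||Y|)$ from the Baker--Campbell--Hausdorff formula, the same bound the paper itself invokes in the proof of Lemma \ref{Lemma: Propagating containment of balls}. The only caveat worth recording is that $\rho$ and $\eta$ must be small enough for $X,Y$ to lie in the convergence neighbourhood of Proposition \ref{Proposition: BCH formula}, which is implicit throughout the paper.
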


 This property will also become relevant when considering \emph{convex subsets} in a neighbourhood of the identity. By a slight abuse of language, we call a subset $C$ of $\B(e,\rho)$ \emph{convex} if it is the image through the exponential map of a convex set in the Lie algebra. For any $A \subset \B(e,\rho)$ its \emph{convex hull} $\co(A)$ is the smallest convex set containing it. It is well defined as soon as $\rho$ is sufficiently small. Some elementary properties of convex subsets will be used repeatedly in this paper. 

 \begin{fact}
     Let $C \subset \B(e,\rho)$ be a convex subset such that $\mu(C) \geq \eta \mu(\B(e,\rho))$. Then there is $c \gg 1$ such that:
     \begin{enumerate}
         \item for all $g \in \B(e,\rho)$, $\mu(gC \Delta Cg) \ll_{d,\eta} \rho^c\mu(C)$;
         \item if $C'$ is any other convex subset, $\mu(CC' \Delta (C+C')) \ll_{d,\eta} \rho^c\mu(C)$;
         \item there is a convex subset $C' \subset \B(e,2\rho)$ with center of mass $0$ and $g \in \B(e,\rho)$ such that 
         $$\mu(C\Delta gC')\leq \rho^c\mu(C).$$
     \end{enumerate}
 \end{fact}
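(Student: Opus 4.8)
The plan is to prove the three parts of the final \textbf{Fact} as essentially routine consequences of the Baker--Campbell--Hausdorff expansion (Corollary \ref{Corollary: Sum vs product} and Fact \ref{Fact: Lebesgue vs Haar}) together with the volume estimate for thickened convex sets (Lemma \ref{Lemma: Volume of thick convex sets}). Throughout, I would transport everything to the Lie algebra via $\log$, where $C$ becomes a genuine convex body $\tilde C \subset \B(0,\rho)$ with $\lambda(\tilde C) = (1+O_d(\rho^2))\mu(C) \gg_{d,\eta} \rho^d$; since $\tilde C$ has measure comparable to that of the ball of radius $\rho$, its John ellipsoid has all axes of length $\gg_{d,\eta}\rho$, which is the geometric input that makes every error term of the shape $\rho^{d+s}$ (for $s>0$) genuinely smaller than $\mu(C)$.

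For part (1), the symmetric difference $gC \,\Delta\, Cg$: by Corollary \ref{Corollary: Sum vs product} conjugation-type discrepancies are controlled, but more directly, for $g \in \B(e,\rho)$ we have $gC \subset Cg\B(e,O_d(\rho^2))$ and $Cg \subset gC\B(e,O_d(\rho^2))$ (this is the $g\in\B(e,\eta)$ assertion of the intervening unnumbered Fact, applied with $\eta = \rho$, giving the exponent $\rho\cdot\rho$). Hence $gC\,\Delta\,Cg$ is contained in a $\rho^2$-neighbourhood of the boundary of $gC$, and since $gC$ is (up to the $\log$-distortion) convex and contained in a ball of radius $O(\rho)$, Lemma \ref{Lemma: Volume of thick convex sets} with $c=2$ after rescaling bounds its measure by $O_d(\rho^{d+1}) = \rho^{d+1} \ll_{d,\eta}\rho^c\mu(C)$ for suitable $c>0$; the clean way is to rescale so the ambient ball has radius $1$, apply the lemma, and rescale back, which produces an error $\rho^{d+(c'-1)}$ dominated by $\rho^d \asymp_{d,\eta}\mu(C)$. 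For part (2), Corollary \ref{Corollary: Sum vs product} gives $xy \in \B(x+y, O_d(\rho^2))$ for $x \in C$, $y \in C'$, so $CC' \subset (C+C') + \B(e,O_d(\rho^2))$ and, running the same estimate with the roles reversed (write $x+y = $ a point of $CC'$ up to $O(\rho^2)$), also $(C+C') \subset CC' + \B(e,O_d(\rho^2))$; thus $CC'\,\Delta\,(C+C')$ sits in an $O(\rho^2)$-thickening of $\partial(C+C')$, and since $C+C' \subset \B(e,O(\rho))$ is convex, Lemma \ref{Lemma: Volume of thick convex sets} again bounds this by $\ll_d \rho^{d+1}\ll_{d,\eta}\rho^c\mu(C)$.

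Part (3) is the one genuinely different ingredient: in the Lie algebra, let $\tilde C = \log C$ and let $v \in \mathbb{R}^d$ be its center of mass, so $|v| \leq \rho$ since $\tilde C \subset \B(0,\rho)$. Put $\tilde C' := \tilde C - v$, a convex body with center of mass $0$ contained in $\B(0,2\rho)$, and let $g := \exp(v) \in \B(e,\rho)$. Then $gC$ and $\exp(\tilde C' )$ translated by $v$ agree up to the BCH distortion: $g\exp(\tilde C') = \exp(v)\exp(\tilde C - v)$, and by Corollary \ref{Corollary: Sum vs product} each such product lies within $O_d(\rho^2)$ of $v + (\tilde C - v) = \tilde C$, i.e. within $O_d(\rho^2)$ of the corresponding point of $C$ (after applying $\log/\exp$, whose distortion is also $O_d(\rho^2)$ by Fact \ref{Fact: Lebesgue vs Haar}). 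Hence $C \,\Delta\, g C'$ lies in an $O_d(\rho^2)$-neighbourhood of $\partial C$, and one final application of Lemma \ref{Lemma: Volume of thick convex sets} (rescaling to radius $1$, applying it, rescaling back) gives $\mu(C\,\Delta\,gC') \ll_{d}\rho^{d+1} \leq \rho^c\mu(C)$ for some $c \gg 1$. The only mild subtlety --- and the place I would be most careful --- is bookkeeping the exponents: one must check that after rescaling $C$ up to the unit ball, applying Lemma \ref{Lemma: Volume of thick convex sets} (whose error is $\rho^{d+(c-1)}$ with $c$ \emph{the rescaled thickening exponent}, here $2$, so error $\rho^{d+1}$), and rescaling back, the net error is still a fixed power $\rho^{c}$ times $\mu(C) \asymp_{d,\eta}\rho^d$ --- which it is, with any $0 < c < 1$, since the lower bound $\mu(C)\gg_{d,\eta}\rho^d$ absorbs the discrepancy. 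No serious obstacle arises; the content is entirely that BCH contributes only quadratic error and that a convex body of near-maximal volume in $\B(0,\rho)$ is ``thick'' enough for Lemma \ref{Lemma: Volume of thick convex sets} to bite.
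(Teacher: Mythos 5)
Your proof is correct and follows essentially the same route as the paper, whose entire proof of this Fact is a one-line citation of Corollary \ref{Corollary: Sum vs product} and Lemma \ref{Lemma: Volume of thick convex sets} — exactly the two ingredients you elaborate (BCH reduces each symmetric difference to an $O_d(\rho^2)$-shell around a convex body, and the thickening estimate plus the lower bound $\mu(C)\gg_{d,\eta}\rho^d$ kills the shell). The one point worth tightening is that Lemma \ref{Lemma: Volume of thick convex sets} as stated bounds the measure of the \emph{whole} thickened set with a multiplicative constant in front of $\mu(C)$, so it does not literally bound the shell by $\rho^{d+1}$; instead use your own observation that all John-ellipsoid axes have length $\gg_{d,\eta}\rho$, i.e.\ $C$ contains a ball of radius $r\gg_{d,\eta}\rho$, whence $C+\B(0,O(\rho^2))$ is contained in a $(1+O_{d,\eta}(\rho))$-dilate of $C$ and the shell has measure $\ll_{d,\eta}\rho\,\mu(C)$.
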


 \begin{proof}
     The proof follows from Lemma \ref{Lemma: Volume of thick convex sets} and Corollary \ref{Corollary: Sum vs product}.
 \end{proof}

 Similarly, we will say that a function $h: \B(e,\rho) \rightarrow \mathbb{R}$ is log-concave if $h \circ \exp$ is a log-concave function of the Lie algebra. Log-concave functions will arise in relation with the Prékopa--Leindler inequality. To extract information about convexity from log-concave functions, we will use Markov's inequality in the following formulation: 
\begin{lemma}\label{Lemma: Markov level set form}
    Let $f,g \in L^1(G)$. Then for all $t\geq \epsilon \geq 0$, 

    $$ \mu(\{|f - g| \geq \epsilon\}) \leq \epsilon^{-1}\parallel f- g \parallel_1$$
\end{lemma}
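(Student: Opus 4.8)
The statement is nothing more than Markov's (Chebyshev's) inequality applied to the nonnegative function $|f-g|$, so the proof is a one-line computation and I expect no genuine obstacle. The plan is to set $h:=|f-g|$, which belongs to $L^1(G)$ because $f,g$ do, and to observe $\left\Vert h\right\Vert_1 = \left\Vert f-g\right\Vert_1$. Then, assuming $\epsilon>0$ (if $\epsilon=0$ the asserted bound is vacuous), I would split the integral as
$$ \left\Vert f-g\right\Vert_1 = \int_G h\, d\mu \geq \int_{\{h\geq \epsilon\}} h\, d\mu \geq \epsilon\, \mu(\{h\geq \epsilon\}), $$
where the first inequality uses $h\geq 0$ and the second uses $h\geq \epsilon$ pointwise on the superlevel set $\{h\geq\epsilon\} = \{|f-g|\geq\epsilon\}$. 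Dividing by $\epsilon$ yields $\mu(\{|f-g|\geq\epsilon\}) \leq \epsilon^{-1}\left\Vert f-g\right\Vert_1$, as desired.

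The hypothesis $t\geq \epsilon\geq 0$ is not actually needed for this estimate — the parameter $t$ never enters the right-hand side — so I would simply not use it; it is presumably recorded only to match the shape of the later invocations of the lemma, where $\epsilon$ is chosen as a small fraction of some threshold $t$. The only mild point worth a sentence is measurability of the superlevel set, which is immediate since $f,g$ (hence $h$) are measurable. No step is delicate, and nothing here depends on the group structure of $G$ beyond $\mu$ being a (finite) measure.
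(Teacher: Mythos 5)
Your proof is correct and is exactly the standard Markov inequality argument; the paper omits a proof entirely, treating the lemma as immediate, so there is no divergence in approach. Your observation that the parameter $t$ plays no role in the statement is also accurate.
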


The main immediate consequence of this is a reformulation of our goal:

\begin{lemma}\label{Lemma: Symmetric difference and L^1-diff with densities}
    Let $A \subset \B(e,\rho)$ be a compact subset. For $0 < \eta \leq \rho^{101/100}$ and $C \subset \B(e,\rho)$ convex with $\mu(C) \geq \delta_1 \mu(\B(e,\rho))$, we have: 
    \begin{enumerate}
    \item If $\mu(A \Delta C) \leq \delta_2 \mu(C)$, then  $\parallel \chi_{A,\eta} - \mathbf{1}_C \parallel_1 \ll_{d,\delta_1} (\delta_2 + \rho)^c \mu(C)$ for some $c \gg_d 1$. 
    \item If $\parallel \chi_{A,\eta} - \mathbf{1}_C \parallel_1 \leq \delta \mu(C)$, then $\mu(A \Delta C) \ll_{d,\delta_1} (\delta_2 + \rho)^c  \mu(C)$ for some $c\gg_d 1$. 
    \end{enumerate}
\end{lemma}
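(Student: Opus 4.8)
The plan is to prove both statements by routing through the density function $\chi_{C,\eta}=\mathbf{1}_C*\chi_\eta$ of the convex set $C$ itself, the whole content being the elementary estimate
$$\left\Vert \chi_{C,\eta}-\mathbf{1}_C\right\Vert_1 \ll_{d,\delta_1} \rho^{1/100}\,\mu(C)$$
for convex $C\subset\B(e,\rho)$ with $\mu(C)\ge\delta_1\mu(\B(e,\rho))$ and $\eta\le\rho^{101/100}$. To obtain it, note that $\chi_{C,\eta}(x)=1$ when $\B(x,\eta)\subset C$ and $\chi_{C,\eta}(x)=0$ when $\B(x,\eta)\cap C=\emptyset$, so, since $0\le\chi_{C,\eta}\le1$, it suffices to bound the measure of the set of points lying within $\eta$ of $\partial C$, i.e. of the union of the inner and outer $\eta$-parallel shells of $C$. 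This is where the mass hypothesis enters: passing to $\tilde C=\log C\subset\mathfrak g$ via Fact \ref{Fact: Lebesgue vs Haar}, Fact \ref{Fact: John Ellipsoid} provides a John ellipsoid $E\subset\tilde C$ with $\lambda(E)\ge d^{-d}\lambda(\tilde C)\gg_{d,\delta_1}\rho^d$; since $E\subset\tilde C$ is contained in a ball of radius $O_d(\rho)$, all its semi-axes are $O_d(\rho)$ with product $\gg_{d,\delta_1}\rho^d$, so the shortest is $\gg_{d,\delta_1}\rho$ and $\tilde C$ contains a Euclidean ball of radius $r_0\gg_{d,\delta_1}\rho$. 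Dilating $\tilde C$ about the centre of that ball by the factors $1\pm\eta/r_0$ sandwiches its inner and outer $\eta$-parallel bodies between $(1-\eta/r_0)^d\lambda(\tilde C)$ and $(1+\eta/r_0)^d\lambda(\tilde C)$, so each shell has measure $\ll_{d,\delta_1}(\eta/\rho)\lambda(\tilde C)\le\rho^{1/100}\mu(C)$. Transferring back to $G$ through Fact \ref{Fact: Lebesgue vs Haar} and Corollary \ref{Corollary: Sum vs product}, whose $O_d(\rho^2)$ distortions are of strictly lower order, gives the displayed bound.

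Granting this, part (1) is immediate: by Young's convolution inequality $\left\Vert\chi_{A,\eta}-\chi_{C,\eta}\right\Vert_1=\left\Vert(\mathbf{1}_A-\mathbf{1}_C)*\chi_\eta\right\Vert_1\le\left\Vert\mathbf{1}_A-\mathbf{1}_C\right\Vert_1=\mu(A\Delta C)\le\delta_2\mu(C)$, so the triangle inequality with the displayed estimate yields $\left\Vert\chi_{A,\eta}-\mathbf{1}_C\right\Vert_1\ll_{d,\delta_1}(\delta_2+\rho^{1/100})\mu(C)\ll_{d,\delta_1}(\delta_2+\rho)^{1/100}\mu(C)$, using $\delta_2\le1$; so $c=1/100$ works.

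For part (2) I would use the hypothesis $\left\Vert\mathbf{1}_A*\chi_\eta-\mathbf{1}_C\right\Vert_1\le\delta_2\mu(C)$, restricting the integral to $G\setminus C$ (where $\mathbf{1}_C=0$) and to $C$ (where $\mathbf{1}_C=1$). Using Fubini, left-invariance of $\B(e,\eta)$ and $(\mathbf{1}_A*\chi_\eta)(x)=\mu(\B(x,\eta)\cap A)/\mu(\B(e,\eta))$, the former gives $\int_A\frac{\mu(\B(y,\eta)\setminus C)}{\mu(\B(e,\eta))}\,dy\le\delta_2\mu(C)$; the integrand equals $1$ as soon as $d(y,C)\ge2\eta$, forcing $\mu(A\setminus C_{2\eta})\le\delta_2\mu(C)$, hence $\mu(A\setminus C)\le\delta_2\mu(C)+\mu(C_{2\eta}\setminus C)\ll_{d,\delta_1}(\delta_2+\rho^{1/100})\mu(C)$ by the shell bound of the first paragraph. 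The latter gives $\int_A\frac{\mu(\B(y,\eta)\cap C)}{\mu(\B(e,\eta))}\,dy\ge(1-\delta_2)\mu(C)$, and since the integrand is bounded by $\mathbf{1}_{C_\eta}$ we get $(1-\delta_2)\mu(C)\le\mu(A\cap C_\eta)\le\mu(A\cap C)+\mu(C_\eta\setminus C)$, so $\mu(C\setminus A)\ll_{d,\delta_1}(\delta_2+\rho^{1/100})\mu(C)$. Adding and using $\delta_2\le1$ gives $\mu(A\Delta C)\ll_{d,\delta_1}(\delta_2+\rho)^{1/100}\mu(C)$, which is (2).

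The step I expect to be the real obstacle — the only one that is not routine bookkeeping — is the geometric claim that a convex set filling a definite fraction of $\B(e,\rho)$ has inradius $\gg_{d,\delta_1}\rho$, since this is exactly what makes the $\eta$-parallel shells negligible (relative measure $O(\eta/\rho)$) rather than merely bounded by a multiplicative constant, which would not suffice. A related point demanding care is that $C$, $C_\eta$ and $\partial C$ live in $G$ whereas John's ellipsoid and the dilation argument are Euclidean, so one must perform the reduction to $\tilde C=\log C$ before invoking convexity, and check that the $O_d(\rho^2)$ errors from Fact \ref{Fact: Lebesgue vs Haar} and Corollary \ref{Corollary: Sum vs product} stay below the target rate $\rho^{1/100}$ — which they do since $\rho^2\le\rho^{101/100}$ for $\rho<1$.
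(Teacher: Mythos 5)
Your proof is correct and its backbone is the same as the paper's: everything reduces to the perimeter-type estimate $\left\Vert \mathbf{1}_C - \mathbf{1}_C * \chi_\eta\right\Vert_1 \ll_{d,\delta_1} \rho^c\,\mu(C)$, which the paper simply asserts "by a perimeter estimate" and which you prove in full via the John ellipsoid (inradius $\gg_{d,\delta_1}\rho$, hence shells of relative measure $O(\eta/\rho)\le\rho^{1/100}$); part (1) is then, in both treatments, Young's inequality plus the triangle inequality. The only divergence is in part (2): the paper applies Markov's inequality (Lemma \ref{Lemma: Markov level set form}) to the level set $\{\chi_{A,\eta}\ge 1-\delta_2^{1/2}\}$ and compares it with $C$, whereas you split the $L^1$ hypothesis over $C$ and $G\setminus C$ and unfold the convolution by Fubini, bounding $\mu(A\setminus C)$ and $\mu(C\setminus A)$ separately. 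Your route is equally elementary and, if anything, avoids the square-root loss and the slightly garbled inequality directions in the paper's second half; the geometric point you flag as the real content — that a convex set of proportional mass in $\B(e,\rho)$ has inradius comparable to $\rho$, so that the $\eta$-shells are negligible and not merely bounded — is indeed exactly what the paper's unproved "perimeter estimate" requires, and your John-ellipsoid argument together with the check that the $O_d(\rho^2)$ Lie-algebra distortions sit below $\rho^{1/100}$ closes it correctly.
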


\begin{proof}
We notice first that by a perimeter estimate, 
$$ \parallel \mathbf{1}_C - \mathbf{1}_C * \chi_{\eta}\parallel_1 \ll_{d,\delta_1} \rho^c$$
for some $c \gg_d 1$. 
    Let us now prove $(1) \Rightarrow (2)$. We have that 
    $$ \mu(A \Delta C) = \parallel \mathbf{1}_A - \mathbf{1}_C \parallel_1 \geq \parallel \mathbf{1}_A * \chi_\eta - \mathbf{1}_C * \chi_\eta\parallel_1 \geq (1 - O_{d,\delta_1}(\rho)^c)\parallel \chi_{A,\eta} - \mathbf{1}_C \parallel_1.$$ This concludes.

    Conversely, by Lemma \ref{Lemma: Markov level set form} applied to $f= \mathbf{1}_C$ and $g= \chi_{A,\eta}$ we have 
    $$ \mu(C \Delta \{\chi_{A,\eta} \geq 1 - \delta_2^{1/2}\}) \leq \delta_2^{1/2} \mu(C).$$
 
    So 
    $$ \mu(A \cap C\B(e,\eta)) \leq \int_C \chi_{A,\eta} \leq (1-\delta^{1/2})^2\mu(C).$$
    Since $\mu(C \Delta C\B(e,\delta_1)) \ll_{d,\eta} \rho^c$ this concludes.
    
\end{proof}

\subsection{A consequence of the stability of the Prékopa--Leindler inequality}
Our first result concerns local multiplicative inequalities for densities and product-sets.

\begin{lemma}\label{Lemma: mult. BM for densities}
    Let $1>\lambda > 0$ and $A,B \subset \B(e,\rho)$.  Then 
    $$\chi_{AB, \eta+O_d(\rho^2)}(x+y) \geq (1-O_d(\eta^{-1}\rho^2))\left(\chi_{A,(1-\lambda)\eta}(x)^{1/d} + \chi_{B,\lambda\eta}(y)^{1/d} \right)^d $$
    and 
    $$ \chi_{AB, \eta+O_d(\rho^2)}((1-\lambda)x + \lambda y ) \geq \left(1-O_d(\eta^{-1}\rho^2)\right)\chi_{(1-\lambda)^{-1}A,\eta}(x)^{(1-\lambda)}\chi_{\lambda^{-1}B,\eta}(y)^\lambda.$$
\end{lemma}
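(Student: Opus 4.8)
Both displayed inequalities follow the same three-step scheme. Fix $A,B\subseteq\B(e,\rho)$ and points $x,y$ (identified with elements of a neighbourhood of $0$ in the Lie algebra). \emph{First (slicing):} intersect $A$ and $B$ with small balls whose radii sum to $\eta$ --- for the additive (first) inequality put $A':=A\cap\B(x,(1-\lambda)\eta)$ and $B':=B\cap\B(y,\lambda\eta)$; for the multiplicative (second) one put $A':=A\cap\B((1-\lambda)x,(1-\lambda)\eta)$ and $B':=B\cap\B(\lambda y,\lambda\eta)$. In either case $A',B'\subseteq\B(e,\rho)$, so no translation is needed before invoking \eqref{Eq: Local BM}. \emph{Second (locating the product):} for $a\in A'$, $b\in B'$, the triangle inequality in $\mathfrak{g}$ bounds the distance from $a+b$ to $x+y$ (resp.\ to $(1-\lambda)x+\lambda y$) by $(1-\lambda)\eta+\lambda\eta+O_d(\rho^2)=\eta+O_d(\rho^2)$, while Corollary \ref{Corollary: Sum vs product} gives $d(ab,a+b)=O_d(\rho^2)$; hence $A'B'\subseteq AB\cap\B(x+y,\eta+O_d(\rho^2))$, resp.\ $A'B'\subseteq AB\cap\B((1-\lambda)x+\lambda y,\eta+O_d(\rho^2))$, so it remains to lower-bound $\mu(A'B')$ by the local Brunn--Minkowski inequality.

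For the first inequality, \eqref{Eq: Local BM} applied to $A',B'$ gives $\mu(A'B')\ge(1-O_d(\rho^2))(\mu(A')^{1/d}+\mu(B')^{1/d})^d$, and by definition of the density function $\mu(A')=\chi_{A,(1-\lambda)\eta}(x)\,\mu(\B(e,(1-\lambda)\eta))$, with $\mu(\B(e,(1-\lambda)\eta))=(1-\lambda)^d\mu(\B(e,\eta))(1+O_d(\eta^2))$ by Fact \ref{Fact: Lebesgue vs Haar} (and similarly for $B'$). Substituting, $\mu(A'B')\ge(1-O_d(\rho^2+\eta^2))\,\mu(\B(e,\eta))\bigl((1-\lambda)\chi_{A,(1-\lambda)\eta}(x)^{1/d}+\lambda\chi_{B,\lambda\eta}(y)^{1/d}\bigr)^d$, and dividing by $\mu(\B(e,\eta+O_d(\rho^2)))=\mu(\B(e,\eta))(1+O_d(\eta^{-1}\rho^2+\eta^2))$ gives the first inequality (the weights $1-\lambda$ and $\lambda$ on the right-hand side being forced by the volumes of the balls $\B(e,(1-\lambda)\eta)$ and $\B(e,\lambda\eta)$).

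For the second inequality, one first re-expresses the slice volumes through the rescaled sets: scaling Lebesgue measure on $\mathfrak{g}$ and using \eqref{Eq: Rescaling} together with Fact \ref{Fact: Lebesgue vs Haar} gives $\mu(A')=(1-\lambda)^d\chi_{(1-\lambda)^{-1}A,\eta}(x)\,\mu(\B(e,\eta))(1+O_d(\rho^2))$ and $\mu(B')=\lambda^d\chi_{\lambda^{-1}B,\eta}(y)\,\mu(\B(e,\eta))(1+O_d(\rho^2))$. Feeding these into \eqref{Eq: Local BM}, $\mu(A'B')^{1/d}\ge(1-O_d(\rho^2))\mu(\B(e,\eta))^{1/d}\bigl((1-\lambda)\chi_{(1-\lambda)^{-1}A,\eta}(x)^{1/d}+\lambda\chi_{\lambda^{-1}B,\eta}(y)^{1/d}\bigr)$, and the weighted arithmetic--geometric mean inequality $(1-\lambda)u+\lambda v\ge u^{1-\lambda}v^{\lambda}$ converts the bracket into $\chi_{(1-\lambda)^{-1}A,\eta}(x)^{(1-\lambda)/d}\chi_{\lambda^{-1}B,\eta}(y)^{\lambda/d}$; raising to the $d$-th power and dividing by $\mu(\B(e,\eta+O_d(\rho^2)))$ as above yields the stated multiplicative bound.

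\textbf{Main obstacle.} There is no real difficulty of principle here --- the argument is just slicing, the local Brunn--Minkowski inequality, and bookkeeping. The only delicate point is the error accounting: one must check that every discrepancy introduced --- Lebesgue versus Haar volume (Fact \ref{Fact: Lebesgue vs Haar}), $ab$ versus $a+b$ (Corollary \ref{Corollary: Sum vs product}), and slice volumes versus rescaled densities --- is $O_d(\rho^2)$, and that the only genuinely scale-sensitive loss is the ratio $\mu(\B(e,\eta+O_d(\rho^2)))/\mu(\B(e,\eta))=1+O_d(\eta^{-1}\rho^2)$. In the regime in which the lemma is used ($\eta$ small but not small relative to $\rho^2$, so that $\rho^2$ and $\eta^2$ are both $O_d(\eta^{-1}\rho^2)$) all of these collapse into the recorded factor $1-O_d(\eta^{-1}\rho^2)$.
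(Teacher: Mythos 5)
Your proof is correct and follows essentially the same route as the paper: slice $A$ and $B$ into balls of radii $(1-\lambda)\eta$ and $\lambda\eta$, locate the product of the slices inside $AB\cap\B(\cdot,\eta+O_d(\rho^2))$ via Corollary \ref{Corollary: Sum vs product}, apply the local Brunn--Minkowski inequality \eqref{Eq: Local BM}, convert slice measures to densities by rescaling, and finish the second bound with the AM--GM inequality. You are also right that what this argument yields for the first bound is the weighted combination $\left((1-\lambda)\chi_{A,(1-\lambda)\eta}(x)^{1/d}+\lambda\chi_{B,\lambda\eta}(y)^{1/d}\right)^d$; this is exactly what the paper's own proof produces and what is used downstream (e.g.\ in Corollary \ref{Corollary: Stability condition at almost all points}), the unweighted display in the lemma statement being a typo.
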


\begin{proof}
According to Corollary \ref{Corollary: Sum vs product} we have
$$ \B(xy, \eta) \subset \B(x+y,\eta + O_d(\rho^2)).$$
Therefore, 
 \begin{align*}
     AB \cap \B\left(x+ y, \eta + O_d(\rho^2)\right) &\supset  AB \cap \B\left(xy, (1-\lambda)\eta + \lambda \eta \right) \\
     & \supset \left(A \cap \B(x,(1-\lambda)\eta)\right)\left(B \cap \B(y,\lambda\eta)\right).
 \end{align*}
 By the local Brunn--Minkowski inequality \eqref{Eq: Local BM}, 
 $$ \mu\left(AB \cap \B(x+y, \eta + O_d(\rho^2))\right)\geq (1-O_d(\rho^2))\left(\mu(A \cap \B(x, (1-\lambda)\eta))^{1/d} +\mu(B \cap \B(y, \lambda\eta))^{1/d}\right)^{d}.$$
 This proves the first inequality.
 By (\ref{Eq: Rescaling}) and \eqref{Eq: Density functions}, 
 $$\frac{\mu(A \cap \B(x, (1-\lambda)\eta))}{\mu(\B(e,\eta))} \geq (1-O_d(\rho^2))(1-\lambda)^d\chi_{(1-\lambda)^{-1}A,\eta}((1-\lambda)^{-1}x),$$
  $$\frac{\mu(B \cap \B(x, \lambda\eta))}{\mu(\B(e,\eta))} \geq (1-O_d(\rho^2))\lambda^d\chi_{\lambda^{-1}B,\eta}(\lambda^{-1}x),$$
  and $$\mu\left(\B(e,\eta + O_d(\rho^2))\right) \leq (1 + O_d(\rho^2\eta^{-1}))\mu(\B(e, \eta)).$$
 
 So
 $$\chi_{AB, \eta+O_d(\rho^2)}(x+y) \geq (1-O_d(\rho^2\eta^{-1}))\left((1-\lambda)\chi_{(1-\lambda)^{-1}A,\eta}((1-\lambda)^{-1}x)^{1/d} + \lambda\chi_{\lambda^{-1}B,\eta}(\lambda^{-1}y)^{1/d} \right)^d. $$
  The second inequality then follows from the AM-GM inequality. 
\end{proof}

Lemma \ref{Lemma: mult. BM for densities} is precisely what we need to use the Prékopa--Leindler inequality (Proposition \ref{Proposition: PL}). 

\begin{proposition}\label{Proposition: Application PL stability}
    Let $A,B \subset \B(e,\rho)$ and $\epsilon > 0$ be such that \begin{equation}
        \mu(AB) \leq (1+\epsilon)\left(\mu(A)^{1/d} + \mu(B)^{1/d}\right)^d. \label{Eq: Stab condition 1}
    \end{equation}
    Let $\rho \geq \eta \geq \rho^{3/2}$. Then there is a log-concave function $h$ with support in $\B(e,\rho)$ such that 
    $$ \parallel h - \chi_{AB,\eta+O_d(\rho^2)}\parallel _1 + \parallel h_1 - \chi_{(1-\lambda_0)^{-1}A,\eta}\parallel _1 + \parallel h_2 - \chi_{\lambda_0^{-1}B,\eta}\parallel _1 \ll_{d}(\alpha + \rho)^c \parallel \chi_{AB,\eta+O_d(\rho^2)}\parallel _1$$
where $h_1, h_2$ are translated versions of $h$, $\lambda_0 := \frac{\mu(B)^{1/d}}{\mu(A)^{1/d}+\mu(B)^{1/d}}$ and $c \gg_d 1$.
\end{proposition}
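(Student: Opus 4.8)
The plan is to feed the two multiplicative inequalities from Lemma~\ref{Lemma: mult. BM for densities} into the quantitative Prékopa–Leindler stability theorem (Theorem~\ref{Theorem:Stab PL}), and then to check that the hypothesis on the $L^1$-norms of that theorem is met, with parameter $\alpha$ controlled by $\epsilon$, $\rho$ and $\tau$. Concretely, set $\lambda = \lambda_0 = \mu(B)^{1/d}/(\mu(A)^{1/d}+\mu(B)^{1/d})$, and apply the \emph{second} inequality of Lemma~\ref{Lemma: mult. BM for densities} with $f = (1-O_d(\eta^{-1}\rho^2))\,\chi_{(1-\lambda_0)^{-1}A,\eta}$, $g = (1-O_d(\eta^{-1}\rho^2))\,\chi_{\lambda_0^{-1}B,\eta}$ and $h_0 = \chi_{AB,\eta+O_d(\rho^2)}$, rescaled by the harmless prefactor $(1-O_d(\eta^{-1}\rho^2))^{-1} = 1+O_d(\rho^{1/2})$ coming from $\eta \geq \rho^{3/2}$. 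This gives exactly the pointwise hypothesis $h_0((1-\lambda_0)x+\lambda_0 y)\geq f(x)^{1-\lambda_0}g(y)^{\lambda_0}$ of Theorem~\ref{Theorem:Stab PL}, up to a multiplicative error of size $O_d(\rho^{1/2})$ which can be absorbed into the final $\alpha$ (one replaces $h_0$ by $(1+O_d(\rho^{1/2}))h_0$; this changes $\|h_0\|_1$ by the same factor and the log-concave target accordingly).

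Next I would verify the near-equality hypothesis $\|h_0\|_1 \leq (1+\alpha)\|f\|_1^{1-\lambda_0}\|g\|_1^{\lambda_0}$ with $\alpha \ll_d \epsilon + \rho^{c}$. By \eqref{Eq: L1 norm density}, $\|h_0\|_1 = \mu(AB)$; by \eqref{Eq: L1 norm density} and the rescaling relation \eqref{Eq: Rescaling} (using $(1-\lambda_0)^{-1}A,\lambda_0^{-1}B \subset \B(e,O(\rho))$, after enlarging the ambient ball by a bounded factor, which is permissible since all statements are about a fixed small neighbourhood), we get $\|f\|_1 = (1+O_d(\rho^2))(1-\lambda_0)^{-d}\mu(A)$ and $\|g\|_1 = (1+O_d(\rho^2))\lambda_0^{-d}\mu(B)$ up to the $(1+O_d(\rho^{1/2}))$ prefactors. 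A direct computation then gives $\|f\|_1^{1-\lambda_0}\|g\|_1^{\lambda_0} = (1+O_d(\rho^{1/2}))\,(1-\lambda_0)^{-d(1-\lambda_0)}\lambda_0^{-d\lambda_0}\,\mu(A)^{1-\lambda_0}\mu(B)^{\lambda_0}$; and by the choice of $\lambda_0$ one checks the algebraic identity $(1-\lambda_0)^{-d(1-\lambda_0)}\lambda_0^{-d\lambda_0}\mu(A)^{1-\lambda_0}\mu(B)^{\lambda_0} = (\mu(A)^{1/d}+\mu(B)^{1/d})^d$ (write $\mu(A)^{1/d} = (1-\lambda_0)t$, $\mu(B)^{1/d}=\lambda_0 t$ with $t = \mu(A)^{1/d}+\mu(B)^{1/d}$; then both sides equal $t^d$). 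Combined with hypothesis \eqref{Eq: Stab condition 1}, i.e. $\mu(AB)\le(1+\epsilon)(\mu(A)^{1/d}+\mu(B)^{1/d})^d$, this yields the required $\alpha = O_d(\epsilon + \rho^{1/2})$.

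Theorem~\ref{Theorem:Stab PL} then produces a log-concave $\tilde h$ and $w \in \mathbb{R}^n$ with $\|h_0-\tilde h\|_1 + \|a^{\lambda_0}f - \tilde h(\cdot+\lambda_0 w)\|_1 + \|a^{1-\lambda_0}g - \tilde h(\cdot+(1-\lambda_0)w)\|_1 \ll_{d}\alpha^{c'}\|h_0\|_1$, where $a = \|f\|_1/\|g\|_1$. I would set $h := \tilde h$, $h_1 := a^{\lambda_0}\,\tilde h(\cdot + \lambda_0 w)$ and $h_2 := a^{-(1-\lambda_0)}\,\tilde h(\cdot + (1-\lambda_0)w)$ — these are translated (and scalar-rescaled) copies of $\tilde h$, still log-concave, and the normalising scalars $a^{\pm\cdot}$ can either be folded into ``translated versions'' by a minor abuse or eliminated by noting $a$ is bounded above and below in terms of $\tau$ (here one uses $\tau^{-1}\mu(B)\le\mu(A)\le\tau\mu(B)$, hence $\lambda_0$ bounded away from $0$ and $1$, hence $a = (1+O(\rho^2))(1-\lambda_0)^{-d}\lambda_0^{d}\,\mu(A)/\mu(B)$ bounded). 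Undoing the two $(1+O_d(\rho^{1/2}))$ prefactors between $f,g$ and $\chi_{(1-\lambda_0)^{-1}A,\eta}, \chi_{\lambda_0^{-1}B,\eta}$ costs another $O_d(\rho^{1/2})\|h_0\|_1$ in $L^1$, and one absorbs everything into $(\alpha+\rho)^{c}\|\chi_{AB,\eta+O_d(\rho^2)}\|_1$ for a suitably small $c\gg_d 1$, noting $\alpha \ll_d \epsilon+\rho^{1/2}$. Finally, to get support of $h$ inside $\B(e,\rho)$: $\tilde h$ is close in $L^1$ to $\chi_{AB,\eta+O_d(\rho^2)}$, which is supported in $\B(e,O(\rho))$; replacing $\tilde h$ by its restriction to (a translate of) $\B(e,\rho)$ and using log-concavity to control the truncation loss keeps all three $L^1$ estimates, after harmlessly enlarging constants.

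The main obstacle I anticipate is bookkeeping rather than conceptual: keeping the dilation factors $(1-\lambda_0)^{-1}$ and $\lambda_0^{-1}$, the BCH error $O_d(\rho^2)$ in the scale of the density function, and the $\eta^{-1}\rho^2 = O(\rho^{1/2})$ slack all consistent, and verifying cleanly that the exponent-weighted product $(1-\lambda_0)^{-d(1-\lambda_0)}\lambda_0^{-d\lambda_0}\mu(A)^{1-\lambda_0}\mu(B)^{\lambda_0}$ collapses exactly to $(\mu(A)^{1/d}+\mu(B)^{1/d})^d$ — this identity is what makes $\lambda_0$ the \emph{right} interpolation parameter and is the only place the specific value of $\lambda_0$ is used. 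The boundedness of $\lambda_0$ away from $0,1$ (via $\tau$) is essential both to keep the dilations bounded and to apply the $O_{n,\tau}$ version of Theorem~\ref{Theorem:Stab PL}.
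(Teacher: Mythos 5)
Your proposal is correct and follows essentially the same route as the paper: use the second inequality of Lemma \ref{Lemma: mult. BM for densities} to verify the pointwise Prékopa--Leindler hypothesis for $\chi_{(1-\lambda_0)^{-1}A,\eta}$, $\chi_{\lambda_0^{-1}B,\eta}$ and $(1+O_d(\rho^{1/2}))\chi_{AB,\eta+O_d(\rho^2)}$, check the near-equality of $L^1$ norms via \eqref{Eq: Rescaling} and the algebraic identity forced by the choice of $\lambda_0$, and then invoke Theorem \ref{Theorem:Stab PL}. If anything you are more careful than the paper about the normalising scalar $a$ and the support truncation (minor slip: to cancel the factor in $\left\Vert a^{\lambda_0}f-\tilde h(\cdot+\lambda_0 w)\right\Vert_1$ one should take $h_1=a^{-\lambda_0}\tilde h(\cdot+\lambda_0 w)$, and the scalar disappears because $a=1+O_d(\epsilon+\rho^2)$, not merely because $a$ is bounded).
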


\begin{proof}Note first that $\left\Vert \chi_{AB,\eta+O_d(\rho^2)} \right\Vert_1 = \mu(AB)$, $\left\Vert \chi_{(1-\lambda_0)^{-1}A,\eta} \right\Vert_1=\mu\left((1-\lambda_0)^{-1}A\right)$ and $\left\Vert \chi_{\lambda_0^{-1}B,\eta} \right\Vert_1= \mu(\lambda_0^{-1}B)$. According to \eqref{Eq: Rescaling},
$$\left\Vert \chi_{(1-\lambda_0)^{-1}A,\eta} \right\Vert_1 = (1-\lambda_0)^{-d}\mu(A) + O_{d}(\rho^2\mu(A)).$$
By our choice of $\lambda_0$ we get: 
$$\left\Vert \chi_{(1-\lambda_0)^{-1}A,\eta} \right\Vert_1 = \left(\mu(A)^{1/d} + \mu(B)^{1/d}\right)^{d} + O_{d}(\rho^2\mu(A)).$$
And by \eqref{Eq: Stab condition 1}, 
$$\left\Vert \chi_{(1-\lambda_0)^{-1}A,\eta} \right\Vert_1 = \mu(AB) + O_{d}(\epsilon + \rho^2)\mu(A) = (1+ O_d(\epsilon + \rho^2))\left\Vert \chi_{AB,\eta+O_d(\rho^2)} \right\Vert_1.$$
Similarly, 
$$\left\Vert \chi_{\lambda_0^{-1}B,\eta} \right\Vert_1 =  (1+ O_d(\epsilon + \rho^2))\left\Vert \chi_{AB,\eta+O_d(\rho^2)} \right\Vert_1.$$
As a consequence, we have the obvious inequality:
\begin{equation}
     \parallel \chi_{(1-\lambda_0)^{-1}A,\eta} \parallel_1^{1-\lambda_0}\parallel \chi_{\lambda_0^{-1} B, \eta}\parallel_1^{\lambda_0} \geq (1-O_{d,\tau}(\epsilon + \rho^{2}))\parallel\chi_{AB,\eta+O_d(\rho^2)}\parallel_1
\end{equation}

    Now, according to Lemma \ref{Lemma: mult. BM for densities} applied with $\lambda_0$ and $\eta$ as above, we have that $(1-O_d(\rho^{1/2}))^{-1}\chi_{AB,\eta+O_d(\rho^2)}$, $\chi_{(1-\lambda_0)^{-1}A,\eta}$ and $\chi_{\lambda_0^{-1}B,\eta}$ satisfy the condition of the Prékopa--Leindler inequality (Proposition \ref{Proposition: PL}). 

So we are in the stability case of the Prékopa--Leindler inequality. By Theorem \ref{Theorem:Stab PL}, there is a log-concave function $h$ defined on the Lie algebra and with support in $\B(e,\rho)$, such that 
$$ \parallel h - \chi_{AB,\eta+O_d(\rho^2)}\parallel _1 + \parallel h_1 - \chi_{(1-\lambda_0)^{-1}A,\eta}\parallel _1 + \parallel h_2 - \chi_{\lambda_0^{-1}B,\eta}\parallel _1 \ll_{d,\tau}(\epsilon + \rho)^c \parallel \chi_{AB,\eta+O_d(\rho^2)}\parallel _1$$
where $h_1, h_2$ are translated versions of $h$ (i.e. compositions of $h$ with a translation in the Lie algebra) and $c \gg_{d} 1$.
\end{proof}

\begin{corollary}\label{Corollary: Diff L1 norms}
    With the notations of Proposition \ref{Proposition: Application PL stability}. There are $x_1, x_2 \in \B(e,\rho)$ such that
    $$ \parallel \chi_{(1-\lambda_0)^{-1}A,\eta}(x_1 +\ \boldsymbol{\cdot}) - \chi_{AB,\eta+O_d(\rho^2)}\parallel _1  + \parallel \chi_{\lambda_0^{-1}B,\eta}(x_2 +\ \boldsymbol{\cdot})-\chi_{AB,\eta+O_d(\rho^2)}\parallel _1 \ll_{d,\tau}(\epsilon + \rho)^c \parallel \chi_{AB,\eta+O_d(\rho^2)}\parallel _1.$$
\end{corollary}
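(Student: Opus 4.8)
The plan is to deduce Corollary~\ref{Corollary: Diff L1 norms} directly from Proposition~\ref{Proposition: Application PL stability} by a triangle-inequality argument, exploiting the fact that $h_1$ and $h_2$ are translates of the single log-concave function $h$. Write $\Phi := \chi_{AB,\eta+O_d(\rho^2)}$ for brevity, and let $h$, $h_1 = h(\cdot + v_1)$, $h_2 = h(\cdot + v_2)$ be the functions produced by Proposition~\ref{Proposition: Application PL stability}, so that
$$ \|h - \Phi\|_1 + \|h_1 - \chi_{(1-\lambda_0)^{-1}A,\eta}\|_1 + \|h_2 - \chi_{\lambda_0^{-1}B,\eta}\|_1 \ll_{d}(\epsilon + \rho)^c \|\Phi\|_1. $$

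First I would record that translation is an $L^1$-isometry on $\mathbb{R}^d$ (equivalently on the Lie algebra), so $\|h_1 - \Phi(\cdot - v_1)\cdot\|$-type manipulations are harmless; more precisely, from $\|h - \Phi\|_1 \ll_d (\epsilon+\rho)^c\|\Phi\|_1$ we get $\|h_1 - \Phi(\cdot + v_1)\|_1 \ll_d (\epsilon+\rho)^c\|\Phi\|_1$ by translating the variable. Hence, by the triangle inequality,
$$ \|\chi_{(1-\lambda_0)^{-1}A,\eta}(\cdot - v_1) - \Phi\|_1 \leq \|\chi_{(1-\lambda_0)^{-1}A,\eta} - h_1\|_1 + \|h_1 - \Phi(\cdot + v_1)\|_1 \ll_d (\epsilon+\rho)^c\|\Phi\|_1, $$
where in the first term I used that the translation by $v_1$ is an isometry. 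Setting $x_1 := -v_1$ (and noting $v_1 \in \B(e,\rho)$ since $h$ and its translates are supported in $\B(e,\rho)$, so $x_1 \in \B(e,\rho)$ after possibly enlarging the radius by a constant, which is absorbed in the $O_d(\rho)$ bookkeeping) gives the first half of the claim. The second half follows identically with $x_2 := -v_2$, using the bound on $\|h_2 - \chi_{\lambda_0^{-1}B,\eta}\|_1$.

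There is essentially no obstacle here: the only mild point to check is that the translation vectors $v_1, v_2$ indeed lie in $\B(e,\rho)$ (or a constant multiple thereof), which follows because all of $h$, $h_1$, $h_2$ are asserted to have support in $\B(e,\rho)$ and the functions $\chi_{(1-\lambda_0)^{-1}A,\eta}$, $\chi_{\lambda_0^{-1}B,\eta}$, $\Phi$ they approximate are themselves concentrated on $\B(e,O_d(\rho))$ with mass bounded below by a constant multiple of $\|\Phi\|_1$; a translate carrying one close to the other in $L^1$ cannot move the bulk of the mass by more than $O_d(\rho)$ without destroying the $L^1$ estimate, so $|v_i| \ll_d \rho$. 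I would state this last point briefly and then conclude by the displayed triangle inequality. The constant $c$ is the same (up to the usual halving) as the one in Proposition~\ref{Proposition: Application PL stability}, and the implied constants depend only on $d$ and $\tau$.
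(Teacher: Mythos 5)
Your proof is correct and is exactly the intended deduction: the paper states this as an immediate corollary of Proposition \ref{Proposition: Application PL stability} with no written proof, and the triangle inequality through $h$, $h_1$, $h_2$ (using that translation is an $L^1$-isometry and that the translation vectors must be $O_d(\rho)$ for the supports to overlap) is the argument being left implicit. No gaps.
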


Finally, we obtain that subsets $A,B$ that are close to reaching equality in the Brunn-Minkowski inequality do so locally as well. This will be particularly useful later on.

\begin{corollary}[Pointwise stability]\label{Corollary: Stability condition at almost all points}
    With the notations of Proposition \ref{Proposition: Application PL stability}. Write $\nu$ the measure with density $\frac{h}{\parallel h \parallel_1}$ and let $x$ be picked at random following the law $\nu$. Then with probability $1-O_{d,\tau}(\alpha + \rho)^c$ where $c \gg_{d,\tau}1$ we have 
    $$\chi_{AB,\eta+O_d(\rho^2)}(x) \leq (1+O_{d,\tau}(\epsilon + \rho^2))\left(\chi_{A,(1-
    \lambda_0)\eta}((1-\lambda_0)(x-x_1))^{1/d} + \chi_{B,\lambda_0\eta}(\lambda_0(x-x_2))^{1/d}\right)^d.$$
\end{corollary}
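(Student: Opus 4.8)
The plan is to combine the $L^1$ estimates coming from the stability of the Prékopa--Leindler inequality (Proposition~\ref{Proposition: Application PL stability} and Corollary~\ref{Corollary: Diff L1 norms}) with the pointwise inequality of Lemma~\ref{Lemma: mult. BM for densities}, using Markov's inequality (Lemma~\ref{Lemma: Markov level set form}) to upgrade $L^1$-closeness to a pointwise bound off a small exceptional set. First I would recall that, by Lemma~\ref{Lemma: mult. BM for densities} applied with $\lambda_0$ and $\eta$, for \emph{every} $x$ we already have the lower bound
$$ \chi_{AB,\eta+O_d(\rho^2)}(x) \geq \left(1-O_d(\eta^{-1}\rho^2)\right)\left(\chi_{A,(1-\lambda_0)\eta}((1-\lambda_0)(x-x_1))^{1/d} + \chi_{B,\lambda_0\eta}(\lambda_0(x-x_2))^{1/d}\right)^d, $$
after the change of variables hidden in the rescaled density functions; here $x_1,x_2$ are the translation vectors from Corollary~\ref{Corollary: Diff L1 norms} (equivalently the translations making $h_1,h_2$ into translates of $h$). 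The content of the corollary is the matching \emph{reverse} inequality, up to a multiplicative $(1+O(\epsilon+\rho^2))$ error, at $\nu$-most points.

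Next I would introduce the three functions $F(x) := \chi_{AB,\eta+O_d(\rho^2)}(x)$, $F_1(x) := \chi_{(1-\lambda_0)^{-1}A,\eta}(x_1 + x)$ and $F_2(x) := \chi_{\lambda_0^{-1}B,\eta}(x_2 + x)$. Corollary~\ref{Corollary: Diff L1 norms} gives $\|F_1 - F\|_1 + \|F_2 - F\|_1 \ll_{d,\tau} (\epsilon+\rho)^c\|F\|_1$, and $F_1,F_2$ both satisfy $\|F_i\|_1 = (1+O_d(\epsilon+\rho^2))\|F\|_1$. Now apply Markov's inequality (Lemma~\ref{Lemma: Markov level set form}) with threshold $t = (\epsilon+\rho)^{c/2}\|F\|_\infty$-type cutoff — more precisely, since all these densities are bounded above by $\mu(\B(e,\eta))^{-1}\mu(\B(e,\rho)) = O_d(\rho^d/\eta^d)=O_d(1)$ for our range $\rho \geq \eta \geq \rho^{3/2}$, one gets that on the complement of a set of measure $\ll_{d,\tau}(\epsilon+\rho)^{c'}\mu(\B(e,\rho))$ one has $|F_1(x) - F(x)| + |F_2(x)-F(x)| \leq (\epsilon+\rho)^{c'}$ for a slightly smaller exponent $c'$. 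The point is then to feed this back: on the good set, $F(x) \leq F_1(x) + (\epsilon+\rho)^{c'}$ and similarly for $F_2$, and then pair this with the pointwise lower bound above. Since $h$ is log-concave with $\|h\|_1 = (1+O(\epsilon+\rho^2))\|F\|_1$ and $\|h-F\|_1$ is small, the measure $\nu$ is absolutely continuous with a density comparable to $F/\|F\|_1$ on the bulk, so an exceptional set small in Lebesgue/Haar measure is also $\nu$-small (up to adjusting the exponent), which is what lets us state the conclusion "with probability $1 - O(\epsilon+\rho)^c$".

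The remaining step — and the one requiring a little care rather than real difficulty — is to convert an \emph{additive} gap $F(x) \leq F_i(x) + (\epsilon+\rho)^{c'}$ into the \emph{multiplicative} statement $F(x) \leq (1+O(\epsilon+\rho^2))(\cdots)^d$ of the conclusion, and to match the arguments: the right-hand side of the claim involves $\chi_{A,(1-\lambda_0)\eta}((1-\lambda_0)(x-x_1))$ rather than $\chi_{(1-\lambda_0)^{-1}A,\eta}(x_1+x)$, but these agree up to $(1+O_d(\rho^2))$ by the rescaling identity \eqref{Eq: Rescaling} and \eqref{Eq: Density functions}, exactly as used inside the proof of Lemma~\ref{Lemma: mult. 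BM for densities}. For the additive-to-multiplicative conversion one splits into the case where $F(x)$ is "not too small" — say $F(x) \geq (\epsilon+\rho)^{c''}$ for a suitable $c'' < c'$ — in which case dividing through turns the additive error into a multiplicative $(1+O(\epsilon+\rho)^{c'-c''})$ error, absorbed into $(1+O_{d,\tau}(\epsilon+\rho^2))$ after renaming the exponent $c$; and the case $F(x) < (\epsilon+\rho)^{c''}$, which one simply adds to the exceptional set since $\nu(\{F < (\epsilon+\rho)^{c''}\}) \ll (\epsilon+\rho)^{c''}\cdot\|F\|_\infty/\|F\|_1 + \|h-F\|_1/\|F\|_1$ is again polynomially small. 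The main obstacle, such as it is, is bookkeeping: tracking how the exponent $c$ degrades through each application of Markov and each threshold choice, and confirming that every loss is still a fixed positive power of $\epsilon+\rho$ depending only on $d$ and $\tau$; but since there are only finitely many such steps, the final $c \gg_{d,\tau} 1$ is legitimate.
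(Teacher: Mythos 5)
Your overall architecture (Pr\'ekopa--Leindler stability $\to$ Markov $\to$ pointwise multiplicative bound) is the right one, and the final power-mean/rescaling steps are fine, but the middle step has a genuine gap. You run Markov's inequality in \emph{Haar} measure with an \emph{additive} threshold, getting $|F_i(x)-F(x)|\leq(\epsilon+\rho)^{c'}$ off a set of Haar measure $\ll(\epsilon+\rho)^{c'}\mu(\B(e,\rho))$, and then assert that (i) a Haar-small exceptional set is automatically $\nu$-small and (ii) $\nu(\{F<(\epsilon+\rho)^{c''}\})$ is polynomially small. Neither holds at this stage of the argument, because there is no lower bound on $\mu(AB)/\mu(\B(e,\rho))$: if $A$ and $B$ are, say, balls of radius $\rho^{10}$, then $\|h\|_1\simeq\mu(AB)\simeq\rho^{10d}$ while the exceptional sets live at scale $\rho$, so a set of Haar measure $(\epsilon+\rho)^{c'}\mu(\B(e,\rho))$ can carry essentially all of the $\nu$-mass, and the additive error $(\epsilon+\rho)^{c'}$ dominates the typical value of $F$ on the support of $\nu$. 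Your estimate $\nu(\{F<t\})\ll t\|F\|_\infty/\|F\|_1+\cdots$ has the same defect: the correct bound is $t\,\mu(\supp h)/\|h\|_1$ plus the $L^1$ error, and $\mu(\supp h)/\|h\|_1$ is not $O_{d,\tau}(1)$. Crucially, the lower bound $\mu(A)\gg\mu(\B(e,\rho))$ is only established later (Proposition~\ref{Proposition: Large enough measure for local stability}), and its proof relies on this corollary, so it cannot be assumed here. (A smaller slip: $\chi_{X,\eta}\leq 1$ because $\mu(X\cap\B(x,\eta))\leq\mu(\B(e,\eta))$, not because $\rho^d/\eta^d=O_d(1)$ --- the latter is false for $\eta=\rho^{3/2}$.)

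The paper sidesteps all of this by applying Markov's inequality directly in the measure $\nu$ with a \emph{relative} threshold: for any $\delta>0$ and $g\in L^1$,
$$\nu\left(\left\{x:\ |h(x)-g(x)|\geq\delta\,h(x)\right\}\right)\leq\frac{1}{\delta}\,\frac{\left\Vert h-g\right\Vert_1}{\left\Vert h\right\Vert_1}\ll_{d,\tau}\delta^{-1}(\epsilon+\rho)^c,$$
applied to $g=\chi_{AB,\eta+O_d(\rho^2)}$ and to the (translated) densities of $(1-\lambda_0)^{-1}A$ and $\lambda_0^{-1}B$. This is scale-free: at every good point the three densities agree with $h(x)$ up to a factor $1+O(\delta)$, hence with each other, so the weighted power mean $\left((1-\lambda_0)u^{1/d}+\lambda_0 v^{1/d}\right)^d\geq\min(u,v)\geq(1-O(\delta))\chi_{AB,\eta+O_d(\rho^2)}(x)$ gives the multiplicative bound directly, and the rescaling identity \eqref{Eq: Rescaling} converts it to the stated form; taking $\delta=(\epsilon+\rho)^{c/2}$ balances the failure probability against the error. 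If you replace your additive Markov step by this relative one, the rest of your outline goes through and the case split on the size of $F(x)$ becomes unnecessary.
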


In other words, Corollary \ref{Corollary: Stability condition at almost all points} asserts that if $A$ and $B$ satisfy the stability condition, then locally at scale $\eta$, they also satisfy the stability condition. This will be crucial in the next sections.

\begin{proof}
    Pick $\delta > 0$. By Proposition \ref{Proposition: Application PL stability}, we know that 
    $$ \left\Vert h - \chi_{AB,\eta+O_d(\rho^2)}\right\Vert_1 + \left\Vert h_1 - \chi_{(1-\lambda_0)^{-1}A,\eta}\right\Vert_1 + \left\Vert h_2 - \chi_{\lambda_0^{-1}B,\eta}\right\Vert_1 \ll_{d,\tau}(\epsilon + \rho)^c \left\Vert h\right\Vert_1$$
    for some $c \gg_d 1$. By Markov's inequality, we know that with probability $1-\delta^{-1}O_{d,\tau}(\epsilon + \rho)^c$ for $x$ picked $\nu$-randomly, $$\left|h(x) - \chi_{AB,\eta+O_d(\rho^2)}(x)\right|, \left|h(x+x_1) - \chi_{(1-\lambda_0)^{-1}A,\eta}(x)\right|, \left|h(x + x_2) - \chi_{\lambda_0^{-1}B,\eta}(x)\right| \leq \delta |h(x)|$$
    where $x_1, x_2 \in \B(e, \rho)$ are taken from the statement of Proposition \ref{Proposition: Application PL stability}. In particular,
    $$ (1+O_d(\delta))\left((1-\lambda_0) \chi_{(1-\lambda_0)^{-1}A,\eta}(x-x_1)^{1/d} + \lambda_0\chi_{\lambda_0^{-1}B,\eta}(x-x_2)^{1/d}\right)^d \geq \chi_{AB,\eta+O_d(\rho^2)}(x).$$

    Also,
    $$(1-\lambda_0) \chi_{(1-\lambda_0)^{-1}A,\eta}(x-x_1)^{1/d} \leq (1+O_d(\rho^2))\chi_{A,(1-\lambda_0)\eta}\left((1-\lambda_0)(x-x_1)\right)^{1/d}$$  and $$\lambda_0\chi_{\lambda_0^{-1}B,\eta}(x-x_2)^{1/d} \leq (1+O_d(\rho^2))\chi_{B,\lambda_0\eta}\left(\lambda_0(x-x_2)\right)^{1/d}$$
    by \eqref{Eq: Rescaling}. So 
    $$  \chi_{AB,\eta+O_d(\rho^2)}(x) \leq (1+O_d(\delta+\rho^2))\left(\chi_{A,(1-
    \lambda_0)\eta}((1-\lambda_0)(x-x_1))^{1/d} + \chi_{B,\lambda_0\eta}(\lambda_0(x-x_2))^{1/d}\right)^d.$$
    Taking $\delta = O_{d,\tau}(\epsilon + \rho)^{c/2}$ we get the corollary.
    
\end{proof}

To simplify notations, we will assume in the standing assumptions below that $x_1=x_2=0$. 

\begin{assumption}\label{Standing assumption} From now on, we will suppose that $A,B \subset \B(e,\rho)$ are two subsets such that 
\begin{equation}
        \mu(AB) \leq (1+\epsilon)\left(\mu(A)^{1/d} + \mu(B)^{1/d}\right)^d. \label{Eq: Stab condition}
    \end{equation}
We will moreover make two simplifications: 
\begin{enumerate}
    \item Upon replacing $A$ and $B$ with $gA$ and $Bh$ respectively, we will assume that the elements $x_1, x_2$ we obtain in Proposition \ref{Proposition: Application PL stability}, and Corollaries \ref{Corollary: Diff L1 norms} and \ref{Corollary: Stability condition at almost all points} are equal to $0$. 
    \item Upon considering $A' \subset A$ and $B' \subset B$ with $\mu(A\setminus A')=\mu(B\setminus B') =0$, we will assume that all points are density points. In particular, for all $a \in A'$ and all $\rho > 0$, $\mu(\B(e,\rho) \cap A) > 0$. 
\end{enumerate}
\end{assumption}

\subsection{Large convex subsets and convex hull}

In this section, we will build upon the existence of a log-concave function close to density functions to show that any convex subset that contains a fair proportion of $A$ has size proportional to that of the convex hull $\co(A)$ of $A$.

We turn to the first result of this section.

\begin{lemma}\label{Lemma: Convex control}
Let $A,B \subset \B(e,\rho)$ and $\epsilon > 0$ satisfy Assumption \ref{Standing assumption}.   Let $ \delta >0$. Let $C \subset \B(e,\rho)$ be a convex set such that $\mu(C \cap A) \geq \delta \mu(A)$. Then $$\mu(\co(A)) \ll_{d,\tau,\delta} \mu(C) + \rho^{d+\frac14}$$
    for $\rho, \epsilon$ sufficiently small depending on $d, \delta, \tau$ alone.
\end{lemma}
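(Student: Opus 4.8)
The strategy is to combine the pointwise stability of the Prékopa--Leindler inequality (Corollary \ref{Corollary: Stability condition at almost all points}, via the log-concave function $h$ that is $L^1$-close to $\chi_{AB,\eta+O_d(\rho^2)}$) with the combinatorial locality lemma (Lemma \ref{Lemma: Locality} / Corollary \ref{Corollary: Locality 99 percent}) applied to the set $C \cap A$. First I would fix a small auxiliary scale $\eta$ with $\rho \geq \eta \geq \rho^{3/2}$ (say $\eta = \rho^{101/100}$) so that Lemma \ref{Lemma: Symmetric difference and L^1-diff with densities} and Proposition \ref{Proposition: Application PL stability} apply; this produces a log-concave $h$ with support in $\B(e,\rho)$ such that $\|h - \chi_{AB,\eta+O_d(\rho^2)}\|_1$, $\|h_1 - \chi_{(1-\lambda_0)^{-1}A,\eta}\|_1$, $\|h_2 - \chi_{\lambda_0^{-1}B,\eta}\|_1$ are all $\ll_{d,\tau}(\epsilon+\rho)^c\|h\|_1$, and after Assumption \ref{Standing assumption} the translates $x_1, x_2$ are $0$.

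The heart of the argument is to show that $A$ is itself essentially captured by a single convex set, namely (a dilate of) the support $D(h)$ of the log-concave majorant, with the measure of the error controlled by a power of $\rho$ and $\epsilon$. Indeed, $\{h > t\}$ is convex for every $t$, and since $\|\chi_{(1-\lambda_0)^{-1}A,\eta} - h_1\|_1$ is tiny, Markov's inequality (Lemma \ref{Lemma: Markov level set form}) shows that the superlevel set $\{\chi_{(1-\lambda_0)^{-1}A,\eta} > 1/2\}$ differs from a convex set (a superlevel set of $h_1$) by a set of measure $\ll_{d,\tau}(\epsilon+\rho)^{c'}\mu(A)$; translating back by Lemma \ref{Lemma: Symmetric difference and L^1-diff with densities}(2), this gives a convex set $C_0 \subset \B(e,O(\rho))$ with $\mu(A \Delta C_0) \ll_{d,\tau}(\epsilon+\rho)^{c'}\mu(A)$, hence in particular $\mu(C_0) = (1+o(1))\mu(A)$ and $\co(A) \subset C_0 + \B(e, O_{d,\tau}(\epsilon+\rho)^{c''})$, so by Lemma \ref{Lemma: Volume of thick convex sets} $\mu(\co(A)) \ll_{d,\tau} \mu(A) + \rho^{d+1/4}$ (choosing the constants so the error term beats $\rho^{d+1/4}$; here one uses $\mu(A) \leq \mu(\B(e,\rho)) \ll \rho^d$). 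Now apply this together with the hypothesis $\mu(C \cap A) \geq \delta\mu(A)$: the set $C \cap A$ is contained in a convex set $C$ and has measure $\geq \delta\mu(A) \gg_{d,\tau,\delta} \mu(\co(A)) - O(\rho^{d+1/4})$, which is exactly the desired inequality $\mu(\co(A)) \ll_{d,\tau,\delta} \mu(C) + \rho^{d+1/4}$ since $\mu(C) \geq \mu(C\cap A) \geq \delta\mu(A) \gg_{d,\tau,\delta}\mu(\co(A)) - O(\rho^{d+1/4})$.

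Let me reorganize the order of steps: (1) invoke Proposition \ref{Proposition: Application PL stability} to get the log-concave $h$ and the $L^1$-closeness of the three density functions to translates of $h$; (2) use Markov (Lemma \ref{Lemma: Markov level set form}) on $\|h_1 - \chi_{(1-\lambda_0)^{-1}A,\eta}\|_1$ to show the $1/2$-superlevel set of $\chi_{(1-\lambda_0)^{-1}A,\eta}$ is within $(\epsilon+\rho)^{c}\mu(A)$ of the convex set $\{h_1 > 1/2\}$; (3) convert back to a statement about $A$ via Lemma \ref{Lemma: Symmetric difference and L^1-diff with densities}(2) and rescaling \eqref{Eq: Rescaling}, obtaining a convex $C_0$ with $\mu(A \Delta C_0) \ll_{d,\tau}(\epsilon+\rho)^c\mu(A)$; (4) deduce $\co(A) \subset C_0 + \B(e, O_{d,\tau}(\epsilon+\rho)^{c})$ by Lemma \ref{Lemma: Small sym diff implies close} (applied after rescaling into $\B(0,1)$) and bound $\mu(\co(A)) \ll_{d,\tau}\mu(A) + \rho^{d+1/4}$ by Lemma \ref{Lemma: Volume of thick convex sets}, absorbing the error since it is $O_{d,\tau}((\epsilon+\rho)^c\rho^{d-c'})$ for a suitable power; (5) finally combine with $\mu(C) \geq \mu(C\cap A) \geq \delta\mu(A) \geq \delta(\mu(\co(A)) - O(\rho^{d+1/4}))/O_{d,\tau}(1)$ to conclude.

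The main obstacle I anticipate is step (3)--(4): ensuring that the approximate-convexity statement obtained from Prékopa--Leindler for the dilated set $(1-\lambda_0)^{-1}A$ transfers back cleanly to $A$ with all the rescaling distortions \eqref{Eq: Rescaling} under control, and that the final error $\rho^{d+1/4}$ genuinely dominates the accumulated errors of the form $(\epsilon+\rho)^c\mu(\B(e,\rho))$. Since $\mu(\B(e,\rho)) \asymp \rho^d$ by Fact \ref{Fact: Lebesgue vs Haar}, one needs the exponent $c$ from the various applications of stability of Prékopa--Leindler and Lemma \ref{Lemma: Small sym diff implies close} to satisfy roughly $c \geq 1/4$ (after the loss from going to superlevel sets), which forces $\rho$ and $\epsilon$ to be taken small depending on $d, \delta, \tau$; this is why the statement only claims the bound for sufficiently small $\rho, \epsilon$. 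A secondary technical point is that $\lambda_0 = \mu(B)^{1/d}/(\mu(A)^{1/d}+\mu(B)^{1/d})$ is bounded away from $0$ and $1$ thanks to $\tau^{-1}\mu(B) \leq \mu(A) \leq \tau\mu(B)$, so the dilation factors $(1-\lambda_0)^{-1}$ and $\lambda_0^{-1}$ are $O_\tau(1)$ and the rescaled sets stay inside $\B(e, O_\tau(\rho))$, keeping all of the Baker--Campbell--Hausdorff error terms of size $O_{d,\tau}(\rho^2)$.
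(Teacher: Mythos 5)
Your central steps (2)--(3) do not go through, and they are where the real content of the lemma lives. From Proposition \ref{Proposition: Application PL stability} you only know that $\chi_{(1-\lambda_0)^{-1}A,\eta}$ is $L^1$-close to a \emph{log-concave} function $h_1$, not to the indicator of a convex set; a log-concave function can be a smooth bump whose superlevel set at any fixed height captures an arbitrarily small fraction of its mass, so Markov's inequality applied to $\left\Vert h_1-\chi_{(1-\lambda_0)^{-1}A,\eta}\right\Vert_1$ gives no control on $\mu(A\Delta C_0)$ for any convex $C_0$ (if the density of $A$ at scale $\eta$ were everywhere below $1/2$, your superlevel set $\{\chi_{(1-\lambda_0)^{-1}A,\eta}>1/2\}$ could even be empty). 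In effect you are asserting the conclusion of Theorem \ref{Theorem: Local stability} --- that $A$ is within relative error $(\epsilon+\rho)^c$ of a convex set --- as an input to one of the lemmas needed to prove it; the paper needs Propositions \ref{Proposition:  Bounds max and support}, \ref{Proposition:  Bounds max and support, every scale}, \ref{Proposition: Small doubling and large cov number implies large measure} and the quasi-randomness argument precisely to upgrade ``close to log-concave'' to ``close to an indicator of a convex set''. A second gap sits in step (4): even granting a convex $C_0$ with $\mu(A\Delta C_0)$ small, Lemma \ref{Lemma: Small sym diff implies close} compares two \emph{convex} sets and would require $\mu(\co(A)\Delta C_0)$ --- not $\mu(A\Delta C_0)$ --- to be small, which is exactly the quantity you are trying to bound: the stray $(\epsilon+\rho)^c\mu(A)$ of mass of $A$ outside $C_0$ could a priori have convex hull of measure comparable to $\rho^d$. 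Moreover that lemma needs a lower bound $\mu(C_0)\gg\mu(\B(e,\rho))$, which is not available since the present lemma assumes no lower bound on $\mu(A)$.

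The paper's route is different and uses the hypothesis on $C$ structurally rather than only writing $\mu(C)\geq\delta\mu(A)$ at the end: it applies Lemma \ref{Lemma: Locality} with the thickened set $C_0=C\B(e,\rho^{3/2})$ to cover $A$ by boundedly many pairwise disjoint translates $f(C_0^{-1}C_0)^{m}$, each carrying a definite fraction of $\mu(A)$, and then uses the log-concavity of $h$ (via Pr\'ekopa--Leindler along convex combinations of the pieces) to force the covering family to consist of a single translate; since $(C_0^{-1}C_0)^{m}$ sits inside a convex set of measure $O_{d,\tau,\delta}(\mu(C)+\rho^{d+\frac14})$ by Corollary \ref{Corollary: Sum vs product} and Lemma \ref{Lemma: Volume of thick convex sets}, the bound on $\mu(\co(A))$ follows. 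You should aim for an argument of that shape rather than trying to extract convexity of $A$ directly from the stability of the Pr\'ekopa--Leindler inequality.
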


\begin{proof}
     Set $C_{0}:=C\B(e,\rho^{3/2})$. By Lemma \ref{Lemma: Locality}, $A \subset F_1 (C_0^{-1}C_0)^{m}$ with $|F_1|,m =O_{d,\eta,\tau}(1)$. In addition, the family $(f(C_0^{-1}C_0)^{100m})_{f \in F_1}$ is made of pairwise disjoint subsets and $\mu(A \cap f(C_0^{-1}C_0)^{m}) \geq \frac23\frac{\mu(A)}{|F_1|}$ for all $f \in F_1$ as soon as $\epsilon$ and $\rho$ are sufficiently small depending on $\tau, \eta$ and $d$ alone. 
    Let $h$ be the log-concave function obtained by applying Proposition \ref{Proposition: Application PL stability} to $A,B$ and $AB$ with $\eta=\rho^{3/2}$. Suppose that $\alpha, \rho>0$ are taken sufficiently small so that $\parallel \chi_{\lambda_0^{-1}A,\eta}-h_1\parallel _1 \leq \frac{1}{6|F_1|}\mu(A)$. Using log-concavity and the disjointness of $(f(C_0^{-1}C_0)^{100m})_{f \in F_1}$, we wish to show that $|F_1|=1$. 
    
    Take $f_1, \ldots, f_{d+1} \in F_1$  and $t_1, \ldots, t_{d+1} \in [0;1]$ such that $t_1+\ldots + t_{d+1}=1$. Write $C':= \sum_it_i\left(f_i(C_0^{-1}C_0)^{m}\right)$. Because $h$ is log-concave the Prékopa--Leindler inequality (Proposition \ref{Proposition: PL}) gives $$ \int_{(1-\lambda_0)^{-1}C'} h \geq \min_{i=1, \ldots, d+1} \int_{(1-\lambda_0)^{-1}\left(f_i(C_0^{-1}C_0)^{m}\right)}h \geq \frac{1}{2|F_1|}\mu(A).$$
    But $\parallel \chi_{(1-\lambda_0)^{-1}A,\eta}-h\parallel _1 \leq \frac{1}{6|F_1|}\mu(A)$. So 
    $$ \int_{(1-\lambda_0)^{-1}C'}  \chi_{\lambda_0^{-1}A,\eta} > 0$$
    which means $\left((1-\lambda_0)^{-1}A\right)_\eta \cap (1-\lambda_0)^{-1}C' \neq \emptyset$ and, hence, $C' \cap F_1 (C_0^{-1}C_0)^{m}\B(e,\eta) \neq \emptyset.$

    Moreover, according to Corollary \ref{Corollary: Sum vs product}, $C' \subset (\sum_it_if_i)(C_0^{-1}C_0)^m\B(e,\eta)$ for all $\rho$ sufficiently small depending on $d$ alone. Since $(C_0^{-1}C_0)^m\B(e,\eta) \subset (C_0^{-1}C_0)^{m+1}$, we have $(\sum_it_if_i) \in F_1(C_0^{-1}C_0)^{2m+2}$. Since the $f_i$'s and $t_i$'s are arbitrary, the Carathéodory theorem implies that the convex hull of $F_1$ is contained in $F_1(C_0^{-1}C_0)^{2m+2}$. But $(f(C_0^{-1}C_0)^{100m})_{f \in F_1}$ is made of disjoint open subsets, so $F_1$ must be made of no more than one element. 
    
 Write $F_1=\{f\}$, then $A \subset f(C_0^{-1}C_0)^{m} \subset f + \left(m(C_0-C_0)\right)\B(e,O_{d,\tau}(\rho^2))$ for $\rho$ sufficiently small, by successive applications of Corollary \ref{Corollary: Sum vs product}. By Corollary \ref{Corollary: Sum vs product} once more, $$\co(A) \subset f + \left(m(C_0-C_0)\right)\B(e,O_{d,\tau,\eta}(\rho^2))$$ for $\rho$ sufficiently small. And by Lemma \ref{Lemma: Volume of thick convex sets} we find $$\mu(\co(A)) \ll_{d} \mu\left(m(C_0-C_0)\right) + \rho^{d+\frac12} \ll_{d,\tau,\delta} \mu(C_0) + \rho^{d+\frac14} \ll_{d,\tau,\delta} \mu(C) + \rho^{d+\frac14} $$
 for $\rho$ sufficiently small depending on $\tau,\delta, d$ alone.  
\end{proof}

As a corollary we obtain a crucial result about the volume of neighbourhoods of $A,B$.

\begin{proposition}\label{Proposition:  Bounds max and support}
    Let $1 \leq c \leq \frac32$. Let $A,B \subset \B(e,\rho)$ and $\epsilon > 0$ satisfy Assumption \ref{Standing assumption}. Suppose that $\mu\left(\co(A)\right) > \delta \mu(\B(e,\rho)) > 0$. Let $ \eta=\rho^c $. Then:
    \begin{enumerate}
        \item $M \ll_{d,\tau} \frac{\mu(A)}{\mu(A_\eta)}$;
        \item If $h$ denotes the log-concave function provided, then there is $1<c'\ll_{d,\tau,\delta}1$ such that 
        $$C:=\{h \geq \frac{\mu(AB)}{c'\mu(\B(e, \rho))}\}$$ is a convex subset of measure $\gg_{d,\tau,\delta} \mu(\B(e,\rho))$ and $\mu((1-\lambda_0)C \setminus A_{\eta}) \ll_{ d,\tau,\delta} (\epsilon + \rho)^{c'}\mu(C)$ for some $c' \gg_{d,\tau} 1$. 
        \item In particular, $\mu(A_{\eta}) \gg_{d,\tau,\delta} \mu(\B(e,\rho)).$
    \end{enumerate}
\end{proposition}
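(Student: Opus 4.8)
The plan is to deduce all three items from the $L^1$-closeness of the density functions to a log-concave function $h$ (Proposition \ref{Proposition: Application PL stability}) together with the volume control on convex hulls (Lemma \ref{Lemma: Convex control}). Throughout, set $\eta = \rho^c$ with $1 \le c \le 3/2$, so the hypotheses of Proposition \ref{Proposition: Application PL stability} and Lemma \ref{Lemma: Convex control} apply at this scale. Let $h$ be the log-concave function with support in $\B(e,\rho)$ obtained from Proposition \ref{Proposition: Application PL stability}, so that $\| h - \chi_{AB,\eta+O_d(\rho^2)}\|_1$, $\| h_1 - \chi_{(1-\lambda_0)^{-1}A,\eta}\|_1$ and $\| h_2 - \chi_{\lambda_0^{-1}B,\eta}\|_1$ are all $\ll_{d,\tau}(\epsilon+\rho)^{c}\|h\|_1$, where $h_1, h_2$ are translates of $h$ (which, by Assumption \ref{Standing assumption}, we take to be $h$ itself). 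Recall $\|h\|_1 = (1+O_d(\epsilon+\rho^2))\mu(AB)$ and $\mu(AB) = (1+O_{d,\tau}(\epsilon))\mu(A)(1+\lambda)^d$ with $\lambda = \lambda_0/(1-\lambda_0) = \mu(B)^{1/d}/\mu(A)^{1/d}$, so $\|h\|_1 \asymp_{d,\tau}\mu(A) \asymp_{d,\tau}\mu(\B(e,\rho))\cdot(\mu(A)/\mu(\B(e,\rho)))$.

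For item (2), the idea is the standard fact that a log-concave function of total mass $m_0$ supported in a ball of radius $\rho$ must have super-level sets at height a small constant multiple of $m_0/\mu(\B(e,\rho))$ occupying a definite fraction of the mass: indeed $\int_{\{h < t\}} h \le t\,\mu(\B(e,\rho))$, so choosing $t = \|h\|_1/(2\mu(\B(e,\rho)))$ gives $\int_{\{h\ge t\}}h \ge \|h\|_1/2$, and since $h \le \|h\|_\infty$ this forces $\mu(\{h \ge t\}) \ge \|h\|_1/(2\|h\|_\infty)$; one then uses $\|h\|_\infty \ll_{d,\tau}\|h\|_1/\mu(\B(e,\rho))$, which itself follows because $\chi_{AB,\eta}$ has sup-norm $\le 1$ and $h$ is within $L^1$-distance $(\epsilon+\rho)^c\|h\|_1$ of it while being log-concave on a ball of radius $\rho$ (a log-concave function close in $L^1$ to a bounded function on a small ball is bounded, up to constants — one can see this from $\|h\|_\infty \ll_d \|h\|_1/\mu(\{h \ge \|h\|_\infty/2\})$ and a covering argument, or directly since $\co(A)$ has measure $\gg_\delta \mu(\B(e,\rho))$). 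Set $C := \{h \ge \mu(AB)/(c'\mu(\B(e,\rho)))\}$ for a suitable large constant $c' \ll_{d,\tau,\delta}1$; this is convex since $h$ is log-concave, and $\mu(C) \gg_{d,\tau,\delta}\mu(\B(e,\rho))$ by the preceding estimate. To get $\mu((1-\lambda_0)C \setminus A_\eta) \ll (\epsilon+\rho)^{c'}\mu(C)$: on $C$ we have $h \ge \mu(AB)/(c'\mu(\B(e,\rho)))$, so by Markov/Lemma \ref{Lemma: Markov level set form} applied to $h$ and $\chi_{(1-\lambda_0)^{-1}A,\eta}$, outside a subset of $C$ of measure $\ll (\epsilon+\rho)^{c'}\mu(C)$ we have $\chi_{(1-\lambda_0)^{-1}A,\eta} > 0$, i.e. $\B(e,\eta)$ meets $(1-\lambda_0)^{-1}A$ near that point, which translates (after rescaling by $1-\lambda_0$ and using \eqref{Eq: Rescaling}) into membership in $A_{\eta'}$ for a comparable $\eta' \asymp \eta = \rho^c$; absorbing the constant $c$ adjustment into the exponent gives the claim for $A_\eta$ after renaming.

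Item (1) — the bound $M \ll_{d,\tau}\mu(A)/\mu(A_\eta)$, where $M$ presumably denotes $\|h\|_\infty$ or $\|\chi_{A,\text{scale}}\|_\infty$ — follows from the same sup-norm control: $M \ll_{d,\tau}\|h\|_1/\mu(C') $ where $C' \supset$ a definite-fraction super-level set of $h$; combining $\|h\|_1 \asymp_{d,\tau}\mu(A)$ with the containment $(1-\lambda_0)C \subset A_\eta$ up to a null-in-the-relevant-scale error (from item (2), since the exceptional set has measure $\ll(\epsilon+\rho)^{c'}\mu(C)$ which is negligible) gives $\mu(A_\eta) \gg \mu((1-\lambda_0)C) \gg_{d,\tau}\mu(C)$, hence $M \ll_{d,\tau}\|h\|_1/\mu(C) \ll_{d,\tau}\mu(A)/\mu(A_\eta)$. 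Item (3), $\mu(A_\eta) \gg_{d,\tau,\delta}\mu(\B(e,\rho))$, is then immediate: $\mu(A_\eta) \gg_{d,\tau}\mu((1-\lambda_0)C) \gg_{d,\tau}(1-\lambda_0)^d\mu(C) \gg_{d,\tau,\delta}\mu(\B(e,\rho))$ by item (2) and the hypothesis $\mu(\co(A)) > \delta\mu(\B(e,\rho))$ (which enters only to guarantee, via Lemma \ref{Lemma: Convex control} and Proposition \ref{Proposition: Application PL stability}, that $h$ is genuinely spread out so the constants in the sup-norm bound depend on $\delta$). The main obstacle I expect is pinning down the sup-norm bound $\|h\|_\infty \ll_{d,\tau,\delta}\|h\|_1/\mu(\B(e,\rho))$ cleanly — i.e. ruling out that $h$ concentrates on a tiny convex set — since this is where one genuinely needs both log-concavity and the hypothesis $\mu(\co(A)) > \delta\mu(\B(e,\rho))$ (through Lemma \ref{Lemma: Convex control}, which forces any convex set carrying a fixed fraction of $A$, in particular a top super-level set of $h$ pulled back to $A$, to have measure comparable to $\mu(\co(A))$, hence $\gg_\delta \mu(\B(e,\rho))$); making that chain of implications quantitative with polynomial error rates is the delicate point, everything else being bookkeeping with \eqref{Eq: Rescaling}, Lemma \ref{Lemma: Markov level set form}, and Corollary \ref{Corollary: Sum vs product}.
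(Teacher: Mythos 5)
Your route to items (2) and (3) is genuinely different from the paper's and is essentially viable: you bound $\|h\|_\infty$ directly, observing that log-concavity forces the top super-level set $K=\{h\ge \|h\|_\infty/e\}$ to satisfy $\int_K h \gg_d \|h\|_1$ and $\mu(K)\gg_d \|h\|_1/\|h\|_\infty$, then transferring the mass of $K$ to $A$ via the $L^1$ bound of Proposition \ref{Proposition: Application PL stability} and invoking Lemma \ref{Lemma: Convex control} (together with the hypothesis on $\mu(\co(A))$) to force $\mu(K)\gg_{d,\tau,\delta}\mu(\B(e,\rho))$. The paper instead proves item (1) first --- by evaluating Lemma \ref{Lemma: mult. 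BM for densities} at the point where $\chi_{(1-\lambda_0)^{-1}A,\eta}$ attains its maximum $M$, integrating over the support of $\chi_{\lambda_0^{-1}B,\eta}$ to get $\mu(AB)\gg_{d,\tau} M\,\mu((\lambda_0^{-1}B)_\eta)$, and running a symmetric argument to compare $\mu(A_\eta)$ with $\mu(B_\eta)$ --- and only then extracts the level set $C$ and applies Lemma \ref{Lemma: Convex control} at the very end. Your order of deduction is the reverse of the paper's, and for (2)/(3) it buys a cleaner argument that avoids the symmetrization between the supports of $\chi_{A,\cdot}$ and $\chi_{B,\cdot}$.

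There is, however, a genuine gap in your item (1). In the paper, $M$ is the maximum of the continuous density function $\chi_{(1-\lambda_0)^{-1}A,\eta}$, not of $h$, and the two are only close in $L^1$. An estimate $\|\chi_{(1-\lambda_0)^{-1}A,\eta}-h_1\|_1\ll(\epsilon+\rho)^c\|h\|_1$ is perfectly compatible with $\chi_{(1-\lambda_0)^{-1}A,\eta}$ spiking to a value near $1$ (its trivial upper bound) on a set of measure of order $(\epsilon+\rho)^c\|h\|_1$, which can be far larger than $\|h\|_\infty\asymp \mu(A)/\mu(\B(e,\rho))$ when $\mu(A)\ll\mu(\B(e,\rho))$ --- and ruling out such concentration of $A$ in a single $\eta$-ball is exactly the content of (1). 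So the first link of your chain $M\ll\|h\|_\infty\ll\|h\|_1/\mu(C)$ fails. The missing idea is the paper's pointwise use of the local Brunn--Minkowski inequality for densities at the maximizing point $x_0$: from $\chi_{AB,\eta+O_d(\rho^2)}(x_0+\lambda_0 y)\ge \tfrac{1}{2}(1-\lambda_0)^d M$ for all $y$ in the support of $\chi_{\lambda_0^{-1}B,\eta}$, one integrates to get $\mu(AB)\gg_{d,\tau}M\,\mu((\lambda_0^{-1}B)_\eta)$, and then a symmetric application of the same inequality shows $\mu((\lambda_0^{-1}B)_\eta)\asymp_{d,\tau}\mu(((1-\lambda_0)^{-1}A)_\eta)$. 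Without an argument of this pointwise type, item (1) is not established, even though your versions of (2) and (3) do not need it.
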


\begin{proof}
     Let $h$ be the log-concave function obtained by applying Proposition \ref{Proposition: Application PL stability} to $A,B$ and $AB$ with $\eta$ chosen as in the statement. Note that the support of $\chi_{(1-\lambda_0)^{-1}A,\eta}$ is contained in $((1-\lambda_0)^{-1}A)_\eta$. Conversely, by Assumption \ref{Standing assumption} every point of $A$ is a density point, so $\mu\left(((1-\lambda_0)^{-1}A)_\eta\right) = \mu(\supp \chi_{(1-\lambda_0)^{-1}A,\eta})$. 
     
     Since $\chi_{(1-\lambda_0)^{-1}A,\eta}$ is continuous, it attains its maximum $M$ at some $(1-\lambda_0)^{-1}x_0 $. Also, $M \geq  \frac{\parallel \chi_{(1-\lambda_0)^{-1}A,\eta}\parallel_1}{\mu\left(((1-\lambda_0)^{-1}A)_\eta\right)}$.  By Lemma \ref{Lemma: mult. BM for densities} for $\rho > 0$ sufficiently small depending on $d$ alone, 
     \begin{equation}
         \chi_{AB,\eta+O_d(\rho^2)}(x_0 + \lambda_0y) \geq \frac{(1-\lambda_0)^d}{2}M \mathbf{1}_{(\lambda_0^{-1}B)_{\eta}}(y). \label{Eq: Using local BM and maximum}
     \end{equation} 
     Integrating, we find that 
     $$\mu(AB) \gg_{d,\tau} M \mu\left((\lambda_0^{-1}B)_{\eta}\right).$$
     So 
     $$ \frac{\mu\left((\lambda_0^{-1}B)_{\eta}\right)}{\mu(((1-\lambda_0)^{-1}A)_\eta)}\ll_{d,\tau}1.$$
     By a symmetric argument, 
     $$ \frac{\mu(((1-\lambda_0)^{-1}A)_\eta)}{\mu\left((\lambda_0^{-1}B)_{\eta}\right)}\ll_{d,\tau}1.$$
     By (\ref{Eq: Using local BM and maximum}) again for $C = \frac{(1-\lambda_0)^d}{2}$ we have 
     \begin{align*}
         \mu\left(\{x \in \B(e,\rho)| \chi_{AB,\eta+O_d(\rho^2)}(x) \geq CM\}\right) &\geq \mu\left(\lambda_0\left(\lambda_0^{-1}B\right)_{\eta}\right) \\
         &\gg_{d,\tau} \mu(((1-\lambda_0)^{-1}A)_\eta).
     \end{align*}
     So 
     $$\mu(AB) = \int \chi_{AB,\eta+O_d(\rho^2)} \gg_{d,\tau} M\mu(((1-\lambda_0)^{-1}A)_\eta).$$
     Since $\mu(AB) \ll_{d,\tau}\parallel \chi_{(1-\lambda_0)^{-1}A,\eta}\parallel_1$ we have in turn (1), i.e. 
     $$ M \ll_{d,\tau} \frac{\parallel \chi_{(1-\lambda_0)^{-1}A,\eta}\parallel_1}{\mu(((1-\lambda_0)^{-1}A)_\eta)}.$$
     We therefore have, 
     $$ \mu\left(\left\{x\in \B(e,\rho) | \chi_{(1-\lambda_0)^{-1}A,\eta}(x) \geq \frac{\parallel \chi_{(1-\lambda_0)^{-1}A,\eta}\parallel_1}{2\mu(((1-\lambda_0)^{-1}A)_\eta)}\right\}\right) \geq \frac{\parallel \chi_{(1-\lambda_0)^{-1}A,\eta} \parallel_1}{2M}  \gg_{d,\tau} \mu(((1-\lambda_0)^{-1}A)_\eta)$$
     where we have implicitly used $\mu\left(\supp \chi_{(1-\lambda_0)^{-1}A,\eta}\right) = \mu(((1-\lambda_0)^{-1}A)_\eta)$. But $\parallel \chi_{(1-\lambda_0)^{-1}A,\eta} - h_1\parallel_1  \leq O_{d,\tau}(\epsilon+\rho)^c \parallel \chi_{(1-\lambda_0)^{-1}A,\eta}\parallel_1 $ where $h$ is log-concave and $c \gg_{d,\tau} 1$. Write $$C:=\left\{x | h_1(x) \geq \frac{\parallel \chi_{(1-\lambda_0)^{-1}A,\eta}\parallel_1}{4\mu(((1-\lambda_0)^{-1}A)_\eta)}\right\}.$$ Then $C$ is convex by log-concavity of $h$. We have by Markov's inequality (Lemma \ref{Lemma: Markov level set form}), 

     \begin{align*} \mu\left(C \cap \left\{x\in \B(e,\rho) | \chi_{(1-\lambda_0)^{-1}A,\eta}(x) \geq \frac{\parallel \chi_{(1-\lambda_0)^{-1}A,\eta}\parallel_1}{2\mu(((1-\lambda_0)^{-1}A)_\eta)}\right\}\right) \  \   \   \   \   \  \  \ \  \  \  \  \  \  \  \  \  \  \  \  \ \; \; \;\;\;\;\;\;\;\;\;\;\;\;\;\;\;\;\;\;\;\;\\
     \geq \left(1 - O_{d,\tau}(\epsilon+\rho)^c\right)\mu\left( \left\{x\in \B(e,\rho) | \chi_{(1-\lambda_0)^{-1}A,\eta}(x) \geq \frac{\parallel \chi_{(1-\lambda_0)^{-1}A,\eta}\parallel_1}{2\mu(((1-\lambda_0)^{-1}A)_\eta)}\right\}\right)
     \end{align*}
     So for $\epsilon, \rho$ sufficiently small, we have 
     $$ \mu\left(C\right) \gg_{d,\tau} \mu(((1-\lambda_0)^{-1}A)_\eta)$$
      and 
     $$ \int_C \chi_{(1-\lambda_0)^{-1}A,\eta} \gg_{d,\tau} \parallel \chi_{(1-\lambda_0)^{-1}A,\eta}\parallel_1\gg_{d,\tau} \mu(A).$$
     Since $\left\Vert h \right\Vert_1 \leq \mu(AB) + O_{d,\tau}(\epsilon + \rho)^c \mu(A)$,  we also have 
     $$  \mu\left(C\right) \ll_{d,\tau} \mu((1-\lambda_0)^{-1}A)_\eta)\ll_{d,\tau} \mu(A_\eta)$$
    
     By Lemma \ref{Lemma: Convex control} applied to $C\B(e,\eta)$, for $\rho,\epsilon$ sufficiently small, 

     $$ \mu(\co(A)) \ll_{d,\tau} \mu(C\B(e,\eta)) + \rho^{d+\frac14}.$$
     By the assumption on $\mu(\co(A))$, 
     $$  \mu(\co(A)) \ll_{d, \tau} \mu(C\B(e,\eta))$$
     for $\rho$ sufficiently small. By Lemma \ref{Lemma: Volume of thick convex sets} and the assumption on $\mu(\co(A))$ again, 
      $$  \mu(\co(A)) \ll_{d,\tau} \mu(C).$$
      Finally, this implies, 
      $$ \mu(\co(A)) \ll_{ d,\tau} \mu(A_\eta).$$
      This concludes the proof of (2) and (3).
\end{proof}

Crucially, we will be able to expand the scope of Proposition \ref{Proposition:  Bounds max and support} past the scale $\rho^{3/2}$. 

\begin{proposition}\label{Proposition:  Bounds max and support, every scale}
    Let $1 \leq c $. Let $A,B \subset \B(e,\rho)$ and $\epsilon > 0$ satisfy Assumption \ref{Standing assumption}. Suppose that $\mu\left(\co(A)\right) > \delta \mu(\B(e,\rho) > 0$. Then $\mu(A_{\rho^c}) \gg_{d,\tau,c, \delta} \mu(\co(A))$.
\end{proposition}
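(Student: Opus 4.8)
The plan is to prove something stronger, namely that $A$ \emph{itself} satisfies $\mu(A)\gg_{d,\tau,\delta}\mu(\co(A))$; the stated estimate then follows at once for every $c\geq 1$, since $A_{\rho^c}\supset A$ and $\co(A)\subset\B(e,\rho)$ (so the $c$-dependence in the statement is only a convenience for later use). As usual one assumes $\rho$ and $\epsilon$ small enough in terms of $d,\tau,\delta$.

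First I would apply Proposition \ref{Proposition:  Bounds max and support}(3) with the admissible exponent $c=\tfrac32$: since $A,B$ satisfy Assumption \ref{Standing assumption} and $\mu(\co(A))>\delta\mu(\B(e,\rho))$, this gives $\mu(A_{\rho^{3/2}})\gg_{d,\tau,\delta}\mu(\B(e,\rho))$, i.e.\ $A$ has covering number at scale $\rho^{3/2}$ a definite proportion of the maximum possible. (For a cleaner input one can use Proposition \ref{Proposition:  Bounds max and support}(2) at scale $\rho^{3/2}$ instead: the convex $C$ it produces satisfies $\mu(C)\gg_{d,\tau,\delta}\mu(\B(e,\rho))$ and $\mu((1-\lambda_0)C\setminus A_{\rho^{3/2}})\ll(\epsilon+\rho)^{c'}\mu(C)$, and its John ellipsoid contains a ball of radius $\gg_{d,\tau,\delta}\rho$, so $A_{\rho^{3/2}}$ contains all but a $(\epsilon+\rho)^{c'}$-fraction of a genuine ball of radius $\gg_{d,\tau,\delta}\rho$.) Next I would record that $A$ has small doubling: by Assumption \ref{Standing assumption} and $\mu(A)\leq\tau\mu(B)$ one has $\mu(AB)\ll_{d,\tau}\mu(A)$, whence $\mu(A^2),\mu(AA^{-1})\ll_{d,\tau}\mu(A)$ by the Ruzsa inequalities, exactly as in the proof of Lemma \ref{Lemma: Locality}.

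These are precisely the hypotheses of the covering-number result, Proposition \ref{Proposition: Small doubling and large cov number implies large measure} (a small-doubling subset of $\B(e,\rho)$ whose neighbourhood at scale $\rho^{c_0}$, for a suitable $c_0>1$, has measure comparable to $\mu(\B(e,\rho))$ must already have large measure). Applying it to $A$ with $c_0=\tfrac32$ yields $\mu(A)\gg_{d,\tau,\delta}\mu(\B(e,\rho))\geq\mu(\co(A))$, which finishes the proof. The one genuinely non-trivial ingredient is this last proposition: below the Baker--Campbell--Hausdorff scale $\rho^2$ the log-concave density-function machinery of Proposition \ref{Proposition:  Bounds max and support} is useless (the error $O_d(\eta^{-1}\rho^2)$ in Lemma \ref{Lemma: mult. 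BM for densities} only helps for $\eta\gg\rho^2$), so one must invoke the multi-scale structure theory of small-doubling sets in simple Lie groups — such a set is an approximate coset at all but $O_{d,\tau}(1)$ scales, and a proper approximate subgroup has a strict covering-number deficit at fine scales. Should Proposition \ref{Proposition: Small doubling and large cov number implies large measure} only be available for some fixed $c_0>\tfrac32$, one reaches it by a short bootstrap: $X:=A_{\rho^{c_0}}$ still has small doubling (covering argument, using that $\B(e,\rho^{c_0})$ is normal) and $\mu(X_{\rho^{3/2}})\geq\mu(A_{\rho^{3/2}})\gg_{d,\tau,\delta}\mu(\B(e,\rho))$, so one applies the proposition to $X$; alternatively one localises via Corollary \ref{Corollary: Stability condition at almost all points} to a sub-ball of radius $\asymp\rho^{3/2}$ on which $A,B$ still satisfy the stability condition, rescales by $\rho^{-1/2}$, re-runs Proposition \ref{Proposition:  Bounds max and support}, and iterates $O_c(1)$ times, gaining $\tfrac12$ in the exponent per round.
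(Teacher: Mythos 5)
Your plan stands or falls on being able to apply Proposition \ref{Proposition: Small doubling and large cov number implies large measure} with the exponent $c_0=\tfrac32$, and this is exactly where it breaks. In that proposition $c_0$ is not a free parameter: it is an \emph{output} of the Lie-theoretic multi-scale machinery (it must exceed twice the exponent produced by Proposition \ref{Proposition: App. subgroup and large cov number implies large measure}, which in turn comes from the Lipschitz constant $\rho^{-c_0}$ in Lemma \ref{Lemma: Propagating containment of balls} and De Saxc\'e's product theorem), and there is no reason it should be $\leq\tfrac32$. Proposition \ref{Proposition:  Bounds max and support} only controls scales $\rho^c$ with $c\leq\tfrac32$ (the density-function argument dies at the BCH scale, as you note yourself), so bridging from $\rho^{3/2}$ down to $\rho^{c_0}$ is precisely the content of the proposition you are asked to prove; your main line is therefore circular. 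Your first fallback does not repair this: to apply the proposition to $X:=A_{\rho^{c_0}}$ you would need $\mu(X_{\rho^{c_0}})=\mu(A_{2\rho^{c_0}})$ to be $\gg\mu(\B(e,\rho))$, and since $2\rho^{c_0}\leq\rho^{3/2}$ for small $\rho$ this is a \emph{stronger} statement than the one you know ($\mu(A_{\rho^{3/2}})\gg\mu(\B(e,\rho))$ gives an upper, not a lower, bound on it); the inequality $\mu(X_{\rho^{3/2}})\geq\mu(A_{\rho^{3/2}})$ you invoke concerns the wrong scale and verifies nothing.

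Your second fallback — localise to sub-balls of radius $\asymp\rho^{3/2}$ via Corollary \ref{Corollary: Stability condition at almost all points}, re-run Proposition \ref{Proposition:  Bounds max and support}, and iterate $O_c(1)$ times — is in fact the paper's proof (an induction on $\lfloor\log_{3/2}c\rfloor$, with the exponent growing multiplicatively by $\tfrac32$ per round rather than additively by $\tfrac12$). But as written it is a one-sentence gesture at the hard part. To re-run the argument on $A_x:=A\cap\B(x,\rho^{c_1})$ and the correspondingly rescaled piece of $B$ (centred at $\lambda_0(1-\lambda_0)^{-1}x$, not at $x$), you must verify for most $x$ all of Assumption \ref{Standing assumption} \emph{and} the convex-hull lower bound $\mu(\co(A_x))\gg\mu(\B(e,\rho^{c_1}))$; the latter comes from part (2) of Proposition \ref{Proposition:  Bounds max and support} (the level set $C$ of the log-concave function is almost entirely covered by $A_{\rho^{3/2}}$, hence so is almost every sub-ball $\B(x,\rho^{c_1})$ with $x\in C$), and one must also check that the measure ratio of the localised pair stays in $[\tau/2,2\tau]$. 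None of this appears in your sketch, so the proposal as it stands has a genuine gap rather than being a complete alternative proof.
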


\begin{proof}
    We will proceed by induction on $\lfloor \log_{\frac32} c\rfloor$. If $\lfloor \log_{\frac32} c\rfloor = 0$, then $c \leq \frac32$. So the conclusion already follows from Proposition \ref{Proposition:  Bounds max and support}. Suppose that the result is proved for all $c$'s with $m\geq \lfloor \log_{\frac32} c\rfloor$ and let $c_0 > 0$ be such that $m+1=\lfloor \log_{\frac32} c_0\rfloor$. Then $c_0 < (\frac32)^{m+2}$. In other words, we have $1<c_1:=c\frac32^{-m-1}<c_2= \frac32$. In addition, the result holds for $c_0c_1^{-1}$. 

    Now, (2) of Proposition \ref{Proposition:  Bounds max and support} applied to $A,B$ and $c_2$ provides a convex subset $C \subset \B(e,\rho)$ with measure $\mu(C) \gg_{d,\tau,\delta} \mu(\B(e,\rho))$ such that $\mu\left(C \setminus A_{\rho^{c_2}})\right) \ll_{d,\tau, \delta} (\epsilon + \rho)^{c'} \mu(\B(e,\rho))$ for some $c' \gg_{d,\tau} 1$.  For notational brevity, we will use $c'$ to denote different constants $\gg_{d,\tau} 1$ throughout the rest of this proof. For a proportion at least $(1-O_{d,\tau,\delta}(\epsilon + \rho)^{c'})$ of all $x \in C$, $\mu(B(x,\rho^{c_1}) \setminus A_{\rho^{c_2}}) \ll_{d,\tau, \delta} (\epsilon + \rho)^{c'}\mu(B(x,\rho^{c_1}))$. Moreover, by the definition of $C$, for all $x \in C$, $h(x) \gg_{d,\tau,\delta} \frac{\parallel h \parallel_1}{\supp h}$, see (2) of Proposition \ref{Proposition:  Bounds max and support}. By Corollary \ref{Corollary: Stability condition at almost all points}, for a proportion $(1-O_{ d,\tau,\delta}(\epsilon + \rho)^{c'})$ of all $x \in C$, the subsets $A_x:=A \cap \B(x,\rho^{c_1})$ and $B_x:=B \cap \B(\lambda_0 (1-\lambda_0)^{-1}x,\lambda_0 (1-\lambda_0)^{-1}\rho^{c_1})$ satisfy Assumption \ref{Standing assumption} with parameter $\epsilon'=O_{d,\tau}(\epsilon + \rho)^{c'}$, $\rho'=\rho^{c_1}$ and $\frac{\tau}{2}\leq\tau' \leq 2\tau $.

    By the above paragraph and the induction hypothesis applied to $A_x$ and $B_x$ at scale $\rho^{c_1}$, we have that for a proportion $(1-O_{ d,\tau,\delta,}(\epsilon + \rho)^{c'})$ of all $x \in C$, 
    $\mu( A\B(e, \rho^{c_1(c_0c_1^{-1})}) \cap \B(x,\rho^{c_1})) \gg_{d,\tau, \delta, d_0} \mu(\B(x,\rho^{c_1}))$. In particular, $$\mu(A\B(e, \rho^{c_1(c_0c_1^{-1})}) \cap C) \gg_{c_0, \delta, \tau, d} \mu(C) \gg_{ \delta, \tau, d} \mu(\B(e,\rho)).$$
\end{proof}
\subsection{Approximate subgroups with large covering number have large measure}

We will need one additional input to complete the proof of the local stability result. We have already established that up to any scale $\rho^c$ the density function has a rather large support i.e. $\mu(A_{\rho^c}) \gg_{d,\tau,\delta,c} \mu(\B(e,\rho))$. The results of this section will help us leverage this to show that $\mu(A) \gg_{d,\tau,\delta}  \mu(\B(e,\rho))$.

    \begin{proposition}\label{Proposition: Small doubling and large cov number implies large measure}
    Let $K, \tau \geq 0$. There is $1 < c_0 $ such that if $A,B \subset \B(e,\rho)$ are such that $\mu(AB) \leq K\mu(A)$, $\tau^{-1}\leq \frac{\mu(A)}{\mu(B)} \leq \tau$ and $\mu( A_{\rho^{c_0}}) \geq \delta \mu(\B(e,\rho))$, then $\mu(A) \gg_K \delta \mu(\B(e,\rho))$ as soon as $\rho$ is sufficiently small depending on $K$, $\tau$, $\delta$ and $d$ alone.
    \end{proposition}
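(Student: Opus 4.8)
The plan is to exploit the fact that $A$ having a large $\rho^{c_0}$-neighbourhood means that $A$, while possibly of small measure, spreads out over macroscopic distances in $\B(e,\rho)$; combined with the small-doubling hypothesis $\mu(AB)\le K\mu(A)$, this should force $A$ to actually have large measure. The engine for this is the multi-scale structure theory for approximate subgroups in simple Lie groups from \cite{zbMATH06466329} and \cite{Machado2024MinDoubling}: a set of bounded doubling, when analysed scale by scale, behaves like a genuine subgroup at all but a bounded number of scales. In a \emph{simple} Lie group the only subgroups available are the trivial one and the whole group (there is no proper positive-dimensional normal or even "coarsely normal" subgroup structure at small scales other than along genuine closed subgroups), and the local nilpotent-like behaviour of commutators forces any such approximate-subgroup-like object that reaches scale $\rho$ in one direction to reach it in all directions — this is where simplicity (as opposed to, say, the torus counterexample mentioned after Proposition~\ref{Proposition: Quasi-random}) is essential.

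**Key steps.**
First I would pass from $A$ to a symmetrised bounded-doubling set: by Ruzsa calculus (as already used in the proof of Lemma~\ref{Lemma: Locality}), $\mu(AA^{-1})\ll_{K,\tau}\mu(A)$ and more generally $(AA^{-1})^k$ has measure $O_{K,\tau,k}(\mu(A))$, so $S:=AA^{-1}$ is an approximate group with parameters bounded in terms of $K,\tau$. Next, note $A_{\rho^{c_0}}\subset S\B(e,\rho^{c_0})$ up to a harmless translate, so $\mu(S\B(e,\rho^{c_0}))\gg\delta\,\mu(\B(e,\rho))$; thus $S$, thickened at scale $\rho^{c_0}$, is macroscopic. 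Then I would run the scale-by-scale analysis of $S$ from $\cite{zbMATH06466329}$: at each dyadic scale $\rho^{c_0}\le r\le \rho$ one controls the "dimension" of $S$ at scale $r$, and there are only $O_K(1)$ scales at which $S$ behaves non-group-like. Choosing $c_0$ large enough that the number of available scales between $\rho^{c_0}$ and $\rho$ exceeds this bound, one finds a large band of consecutive scales on which $S$ genuinely behaves like a subgroup; using the local Baker--Campbell--Hausdorff formula (Corollary~\ref{Corollary: Sum vs product}) and the simplicity of $\mathfrak g$ — concretely, that the Lie algebra generated by any nonzero vector under bracketing with the tangent directions picked up at those scales is all of $\mathfrak g$ — one propagates the spread of $S\B(e,\rho^{c_0})$ down to a lower bound $\mu(S\B(e,\rho^{c_1}))\gg_{K,\tau,\delta}\mu(\B(e,\rho^{c_1}))$ for a much smaller $c_1$, and iterating (or bootstrapping in one step) down to scale $O(\rho)$ gives $\mu(S)\gg_{K,\tau,\delta}\mu(\B(e,\rho))$. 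Finally, $\mu(A)\ge \mu(S)/O_{K,\tau}(1)$ by the Ruzsa covering bound, which is the claim.

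**Main obstacle.**
The hard part will be making the scale-by-scale propagation quantitative and uniform: one needs that the "number of bad scales" depends only on $K$ (and $\tau, d$), independently of $\rho$, and that the commutator/BCH argument that converts spreading in one tangent direction into spreading in all of $\mathfrak g$ loses only a bounded power of $\rho$ per step, so that after $O(1)$ steps one is still at a scale that is a fixed power of $\rho$ and the accumulated constants remain under control. This is exactly the kind of bookkeeping carried out in \cite{Machado2024MinDoubling}, and I would expect the proof to cite the relevant structural statements there and in \cite{zbMATH06466329} and then perform the induction on scales, with the choice of $c_0$ dictated by the (bounded) number of bad scales plus the (bounded) number of BCH-propagation steps needed to fill out $\mathfrak g$.
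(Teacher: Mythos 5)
Your overall architecture matches the paper's: reduce to a $K^{O(1)}$-approximate subgroup $\Lambda\subset AA^{-1}$ via Ruzsa calculus and \cite[Thm. 4.6]{TaoProductSet08}, then run a multi-scale analysis with boundedly many ``bad'' scales (Lemma \ref{Lemma: Boundedly many interesting scales}) and a commutator/BCH propagation step (Lemma \ref{Lemma: Propagating containment of balls}) that uses the irreducibility of the adjoint action, and finally count translates. However, there is a concrete gap at the point where you write that $\mu(S\B(e,\rho^{c_0}))\gg\delta\,\mu(\B(e,\rho))$ means $S$ ``thickened at scale $\rho^{c_0}$ is macroscopic'' and then launch directly into the scale-by-scale propagation. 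The propagation machinery does not run on a measure lower bound: Lemma \ref{Lemma: Propagating containment of balls} requires the hypothesis $\B(e,\rho)\subset\Lambda_{\rho^{2c_0}}$, i.e.\ that the thickened approximate group \emph{covers a ball}, because one must find elements $\lambda_1,\dots,\lambda_d\in\Lambda$ whose adjoint action sends a single tangent direction $X$ to a spanning set of directions. A set whose $\rho^{c_0}$-thickening has measure $\delta\mu(\B(e,\rho))$ can a priori be a scattered union of small pieces with no ball containment and no usable directional spread; indeed this is exactly what happens in the counterexample in $\mathbb{R}$ mentioned after Proposition \ref{Proposition: Quasi-random}.

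The paper bridges this gap with local quasi-randomness: Proposition \ref{Proposition: Quasi-random} (non-abelian Fourier theory, following Gowers--Long) applied to $\Lambda\B(e,\rho^{c})$ yields $\B(e,\rho^{2})\subset(\Lambda\B(e,\rho^{c}))^{4}\subset\Lambda^{4}\B(e,4\rho^{c})$, and only then is the scale-descent proposition (Proposition \ref{Proposition: App. subgroup and large cov number implies large measure}) applied to $\B(e,2\rho^{c})\cap\Lambda^{8}$. So the role of semisimplicity here is twofold: quasi-randomness (finitely many low-dimensional representations) converts positive density of the thickening into containment of a ball at a polynomially smaller scale, and simplicity (irreducible adjoint action) drives the commutator propagation down to scale $0$. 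Your proposal attributes everything to the second mechanism and omits the first entirely; without it, or some substitute argument producing a ball inside a bounded power of $\Lambda\B(e,\rho^{c_0})$, the induction on scales cannot be initialized.
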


To do so we will use a multi-scale analysis inspired from \cite{Machado2024MinDoubling}, and the notion of a $K$-approximate subgroup. A subset $\Lambda \subset G$ is a \emph{$K$-approximate subgroup} if $e \in \Lambda$, $\Lambda = \Lambda^{-1}$ and there is $F \subset G$ of size at most $K$ such that $\Lambda^2 \subset F\Lambda$. Two well-known facts will turn out to be particularly useful. First of all, if $A,B \subset G$ have positive Haar measure and satisfy $\mu(AB) \leq K\mu(A)$, $\tau\leq \frac{\mu(A)}{\mu(B)} \leq \tau^{-1}$ then there is a $(K\tau)^{O(1)}$-approximate subgroup $\Lambda \subset AA^{-1}$ such that $A$ is covered by $(K\tau)^{O(1)}$-translates of $\Lambda$. In particular, $\frac{1}{(K\tau)^{O(1)}}|A| \leq |\Lambda| \leq (K\tau)^{O(1)}|A|$, see \cite[Thm. 4.6]{TaoProductSet08}. Secondly, for every $m \geq 0$, $\Lambda^m \subset F^{m-1}\Lambda$ so 
$$ |\Lambda^m| \leq K^{O(m-1)}|A|.$$

    The proof of Proposition \ref{Proposition: Small doubling and large cov number implies large measure} therefore boils down to: 

    \begin{proposition}\label{Proposition: App. subgroup and large cov number implies large measure} Let $K \geq 0$ and $\Lambda \subset \B(e,\rho)$ be a $K$-approximate subgroup. There is $1 < c_0$ such that if $\mu(\Lambda_{\rho^{c_0}}) \geq \delta \mu(\B(e,\rho))$, then $\mu(\Lambda) \gg_{K,\tau} \delta \mu(\B(e,\rho))$ as soon as $\rho$ is sufficiently small depending on $K$, $\tau$, $\delta$ and $d$ alone.
    
\end{proposition}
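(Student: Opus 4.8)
The plan is to recast the hypothesis as a statement about the \emph{local dimension} of $\Lambda$ at every scale down to $\rho^{c_0}$, and then to use the multi-scale structure theory of approximate subgroups in simple Lie groups to propagate this all the way to the finest scales, at which point $\mu(\Lambda)=\lim_{\eta\to 0}\mu(\Lambda_\eta)$ becomes comparable to $\mu(\B(e,\rho))$.

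First I would study the non-increasing function $g(s):=\mu(\Lambda_{\rho^s})=\mu\big(\Lambda\B(e,\rho^s)\big)$, $s\ge 1$. Since $e\in\Lambda$ we have $\B(e,\rho^s)\subseteq\Lambda_{\rho^s}$, so $g(s)\ge\mu(\B(e,\rho^s))$; since $\Lambda\subseteq\B(e,\rho)$ we have $g(1)\le\mu(\B(e,2\rho))\ll_d\mu(\B(e,\rho))$; and by hypothesis $g(c_0)\ge\delta\mu(\B(e,\rho))$. Monotonicity of $g$ then forces
$$ g(s)\asymp_{d,\delta}\mu(\B(e,\rho))\qquad\text{for all }s\in[1,c_0], $$
so $\Lambda$ does not ``lose dimension'' at any scale coarser than $\rho^{c_0}$. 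A second, equally elementary, consequence is that once $c_0>1$ the hypothesis rules out $\Lambda\subseteq H_{\rho^{c_0}}$ for every proper closed subgroup $H$: indeed $\mu\big(H_{\rho^{c_0}}\cap\B(e,2\rho)\big)\ll_d\rho^{\dim H}\rho^{c_0(d-\dim H)}=o(\mu(\B(e,\rho)))$ as $\rho\to 0$, since $\dim H<d$. Thus $\Lambda$ is not confined to a neighbourhood of a proper subgroup at scale $\rho^{c_0}$.

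The crux is the following structural input, drawn from the classification of approximate subgroups of Lie groups together with the quantitative multi-scale toolset of \cite{zbMATH06466329} and \cite{Machado2024MinDoubling}. A $K$-approximate subgroup $\Lambda\subset\B(e,\rho)$ (with $\rho$ small) behaves like a subgroup at all but $O_{d,K}(1)$ ``exceptional'' scales, with $g$ decaying like a fixed power $\rho^{(s-s')(d-k)}$ between consecutive exceptional scales --- $k\in\{0,\dots,d\}$ being the local dimension over that range --- and, because $G$ is \emph{simple}, all exceptional scales lie in $[\rho^{C(d,K)},\rho]$ for some fixed $C(d,K)$. The simplicity is used precisely here: an exceptional scale $\rho^s$ with $s$ large would produce a long, thin progression-like stratum of $\Lambda$ of step size $\asymp\rho^s$ inside $\B(e,\rho)$; such a stratum is abelian to leading order, so by Corollary \ref{Corollary: Sum vs product} and the perfectness of $\mathfrak g$ either its generating direction spans only a proper subalgebra --- forcing $\Lambda$ into a neighbourhood of a proper (toral) subgroup, excluded by the previous paragraph --- or iterated commutators regenerate every direction, so that boundedly many products of the stratum already contain $\B(e,\rho^{O_d(1)})$, which cannot sit inside $\B(e,O(\rho))$ unless $s=O_d(1)$. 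This failure mode is exactly why the statement is false for tori, cf.\ the arithmetic-progression example following Proposition \ref{Proposition: Quasi-random}. We take $c_0:=C(d,K)$ (enlarged so that also $c_0>1$).

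Combining the two steps: by the first, the local dimension of $\Lambda$ equals $d$ on all of $[1,c_0]$; by the crux, no exceptional scale lies below $\rho^{c_0}$; hence the local dimension of $\Lambda$ equals $d$ at \emph{every} scale, i.e.\ $g(s)\gg_{d,K}g(c_0)\gg_{d,K}\delta\mu(\B(e,\rho))$ for all $s\ge 1$. Since $\Lambda$ is compact, $\Lambda_{\rho^s}\downarrow\Lambda$ and $\mu(\Lambda)=\lim_{s\to\infty}g(s)\gg_{d,K,\tau}\delta\mu(\B(e,\rho))$, which is the assertion. The genuinely hard part is the crux: making the ``local dimension'' heuristic rigorous through covering numbers (using $\mu(\Lambda_\eta)\asymp_d N(\Lambda,\eta)\,\mu(\B(e,\eta))$), and --- with polynomial-in-$\rho$ control, as elsewhere in the paper --- quantifying that the structure of a $K$-approximate subgroup of a simple Lie group can evolve only over a bounded band of scales, bounded away from $0$ by a fixed power of $\rho$; this is where the multi-scale machinery of \cite{Machado2024MinDoubling}, refined from \cite{zbMATH06466329}, carries the weight.
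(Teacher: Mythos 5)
Your overall architecture is the same as the paper's (multi-scale analysis, boundedly many exceptional scales, simplicity used via commutators to bridge scales), and the elementary bookkeeping with $g(s)=\mu(\Lambda_{\rho^s})$ at the beginning and end is fine. But the entire content of the proposition is concentrated in your ``crux'', and that crux is asserted rather than proven. Worse, as stated it is not correct: the claim that for a $K$-approximate subgroup of a simple Lie group ``all exceptional scales lie in $[\rho^{C(d,K)},\rho]$'' is a property of $\Lambda$ alone and fails for, e.g., $\Lambda=\B(e,\rho^{100})$, whose only interesting scale sits at $\rho^{100}$. What is actually true, and what the paper proves, is a \emph{relative} statement: consecutive interesting scales cannot be separated by more than a fixed polynomial factor, \emph{given} that $\Lambda$ already fills $\B(e,\rho)$ at scale $\rho^{O(1)}$ (which is where the hypothesis $\mu(\Lambda_{\rho^{c_0}})\geq\delta\mu(\B(e,\rho))$ enters, via Proposition \ref{Proposition: Quasi-random} in the companion Proposition \ref{Proposition: Small doubling and large cov number implies large measure}). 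Your heuristic justification of the crux also does not survive inspection: an interesting scale is a gap in the set of distances $\{\inf_{\lambda}d(e,\lambda f)\}$, not a one-parameter progression-like stratum, and the concluding step ``boundedly many products of the stratum contain $\B(e,\rho^{O_d(1)})$, which cannot sit inside $\B(e,O(\rho))$ unless $s=O_d(1)$'' is vacuous, since $\B(e,\rho^{O_d(1)})\subset\B(e,\rho)$ always.

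The paper's mechanism, which you would need to supply, is Lemma \ref{Lemma: Propagating containment of balls}: assuming $\B(e,\rho)\subset\Lambda\B(e,\rho^{2c_0})$, commutation of $\B(e,\rho)$ with a single element $\lambda\in\Lambda$ produces a one-parameter arc of length $\asymp d(e,\lambda)\rho^{c_0}$ inside $(\Lambda^4)_{O(\cdot)}$, and conjugating this arc by $d$ suitable elements of $\Lambda$ and using the Lipschitz surjectivity of the resulting product map (\cite[Lem.~2.16]{zbMATH06466329}, where simplicity/irreducibility of the adjoint action is used quantitatively) fills a full ball of radius $d(e,\lambda)\rho^{c_0}$ by $(\Lambda^{6d})_{O(\cdot)}$ --- i.e.\ from scale $r$ one regenerates scale $r\rho^{c_0}$, a controlled polynomial loss. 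Feeding this into Lemma \ref{Lemma: Boundedly many interesting scales} and running a descent on the least scale $r$ at which the containment $\B(e,r_i)\subset\Lambda^4\B(e,r')$ can fail shows $r=0$, whence $\B(e,\rho)$ is covered by $O_{K,d}(1)$ translates of $\Lambda^{O_{K,d}(1)}$ and $\mu(\Lambda)\gg_{K,d}\mu(\B(e,\rho))$. Without this (or an equivalent quantitative bridge between scales), your argument is a restatement of the proposition, not a proof of it.
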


We start with an intermediate lemma.

    \begin{lemma}\label{Lemma: Propagating containment of balls}
        There is $1<c_0 \ll_{d} 1$ such that for $\rho > 0$ sufficiently small, if $\B(e,\rho)\subset \Lambda_{\rho^{2c_0}}$ and $\lambda \in \Lambda$, then $$  \B\left(e,d(e,\lambda)\rho^{c_0}\right) \subset \left(\Lambda^{6d}\right)_{O_d(d(e,\lambda)\rho^{2c_0} + d(e,\lambda)^2\rho^{c_0})}.$$
    \end{lemma}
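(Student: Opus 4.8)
Set $t:=d(e,\lambda)$ and $X:=\log\lambda$, so that $\|X\|=t$ (the metric is bi-invariant, so Riemannian and Lie exponentials agree near $e$). It suffices to show that every $g\in\B(e,t\rho^{c_0})$ lies within $O_d(t\rho^{2c_0}+t^2\rho^{c_0})$ of the word-set $\Lambda^{6d}$, and throughout we work in the exponential chart, converting between the group law and addition in $\mathfrak g$ by the Baker--Campbell--Hausdorff formula (Proposition \ref{Proposition: BCH formula}) and Corollary \ref{Corollary: Sum vs product}. The first workhorse is the hypothesis: $\B(e,\rho)\subset\Lambda_{\rho^{2c_0}}$ means every point of $\B(e,\rho)$ is within $\rho^{2c_0}$ of $\Lambda$, so any $Z\in\mathfrak g$ with $\|Z\|\le\rho$ admits $\mu\in\Lambda$ with $\log\mu=Z+O(\rho^{2c_0})$. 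The second is the commutator estimate: if $\mu\in\Lambda$ with $\log\mu=W$, then $[\lambda,\mu]=\lambda^{-1}\mu^{-1}\lambda\mu\in\Lambda^4$ and, by BCH, $\log[\lambda,\mu]=[X,W]+E$ where $E$ is a convergent sum of iterated brackets in $X,W$ of total degree $\ge 3$, whence $\|E\|\ll_d\|X\|^2\|W\|+\|X\|\|W\|^2+(\text{higher})$. Conjugating a commutator by $f\in\Lambda$ costs two extra letters, so $f[\lambda,\mu]f^{-1}\in\Lambda^6$ realizes $[\operatorname{Ad}(f)X,\operatorname{Ad}(f)W]+O(\|E\|)$.

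\textbf{Simplicity as linear algebra.} Since $G$ is compact, $\mathfrak g$ carries an $\Ad$-invariant inner product, so $\operatorname{ad}_X$ is skew-symmetric: $\mathfrak g=\ker(\operatorname{ad}_X)\oplus^{\perp}V$ with $V:=\operatorname{Im}(\operatorname{ad}_X)$, and $\operatorname{ad}_X$ is invertible on $V$. Because $\mathfrak g$ is simple, iterated brackets of $V$ exhaust $\mathfrak g$; inspecting the root-space decomposition relative to a maximal torus through $X$ one in fact gets $V+[V,V]=\mathfrak g$, so depth $2$ suffices. Fix an orthonormal basis $e_1,\dots,e_d$ in which each $e_i$ is either in $V$ or a bounded combination of brackets $[a,b]$ with $a,b\in V$ of comparable norm. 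Given $g=\exp(Y)$ with $\|Y\|\le t\rho^{c_0}$, write $Y=\sum_i s_ie_i$ with $|s_i|\le t\rho^{c_0}$; then $g=\exp(s_1e_1)\cdots\exp(s_de_d)$ up to a BCH error of size $O_d((t\rho^{c_0})^2)=O_d(t^2\rho^{2c_0})$. It remains to realize each $\exp(s_ie_i)$ as a word of length $\le 6$ in $\Lambda\cup\Lambda^{-1}$ modulo the target error.

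\textbf{Realizing the directions and assembling.} For $e_i\in V$ put $W_i:=(\operatorname{ad}_X|_V)^{-1}(s_ie_i)$, so $[X,W_i]=s_ie_i$ and $\|W_i\|\ll_{\mathfrak g}|s_i|/t\le\rho^{c_0}\le\rho$ (routing the finitely many badly conditioned directions of $\operatorname{ad}_X$ through the construction below); choosing $\mu_i\in\Lambda$ with $\log\mu_i=W_i+O(\rho^{2c_0})$ gives $[\lambda,\mu_i]\in\Lambda^4$ with $\log[\lambda,\mu_i]=s_ie_i+O_d(\|X\|^2\|W_i\|+\|X\|\|W_i\|^2+\|X\|\rho^{2c_0})=s_ie_i+O_d(t^2\rho^{c_0}+t\rho^{2c_0})$. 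For the remaining $e_i$, express $s_ie_i$ as a bounded sum of brackets and rescale each summand so that it equals $[a',b']$ with $a',b'\in V$ of norm $\ll_{\mathfrak g}\sqrt{|s_i|}\le\sqrt{t\rho^{c_0}}\le\rho$; picking $\Lambda$-approximants and forming the group commutators $[\exp a',\exp b']\in\Lambda^4$ (possibly conjugated, staying in $\Lambda^6$) incurs total error $O_d((t\rho^{c_0})^{3/2})$, which by AM--GM satisfies $(t\rho^{c_0})^{3/2}\le\tfrac12(t^2\rho^{c_0}+t\rho^{2c_0})$, so it is within budget. Multiplying the $d$ factors — each of length $\le 6$ — puts $g$ in $(\Lambda^{6d})_{O_d(t\rho^{2c_0}+t^2\rho^{c_0})}$, once $\rho$ is small and $c_0>1$ is a fixed constant $\ll_d 1$ chosen so that all the error exponents land on the right side.

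\textbf{Main obstacle.} The difficulty is reconciling two competing demands. The word length must stay linear in $d$, which forbids deep nested group commutators (whose length grows like $2^{\text{depth}}$) and is exactly why one must show $\mathfrak g$ is spanned by brackets of $V$ of bounded depth rather than relying on a long ascending filtration. Simultaneously every error contribution must fit under the sharp budget $O_d(t^2\rho^{c_0}+t\rho^{2c_0})$, which pins down how large each factor of a commutator may be: large enough (of norm $\sim t$, respectively $\sim\sqrt{t\rho^{c_0}}$) to steer the commutator into the prescribed direction while remaining in $\B(e,\rho)$, yet small enough that the cubic BCH remainder is negligible. A secondary technical point is that $\operatorname{ad}_X$ can have nonzero eigenvalues much smaller than $\|X\|$, so the associated directions cannot be reached by a single commutator with $\lambda$ inside $\B(e,\rho)$ and must instead be obtained through the rescaled small-commutator construction.
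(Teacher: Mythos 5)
Your overall architecture (use $[\lambda,\cdot]$ to convert $\Lambda$-approximants of $\B(e,\rho)$ into elements of $\Lambda^4$ pointing in prescribed directions, then multiply $O(d)$ such words) is in the right spirit, but two steps do not hold up, and they are precisely the steps the paper's proof is engineered to avoid.

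First, the linear-algebra input is asserted rather than proved, and it is delicate. You need that $V:=\operatorname{Im}(\operatorname{ad}_X)$ together with depth-$2$ brackets $[V,V]$ spans $\mathfrak g$ for \emph{every} nonzero $X$, via a \emph{bounded} number of brackets of \emph{comparable norm} and with \emph{quantitative} non-degeneracy (lower bounds on the relevant structure constants, uniform in $X$). For non-regular $X$ the kernel of $\operatorname{ad}_X$ contains whole root spaces $\mathfrak g_\alpha$ with $\alpha(X)=0$, and reaching those from $[V,V]$ requires writing $\alpha=\beta+\gamma$ with $\beta(X),\gamma(X)\neq 0$ and with $[\mathfrak g_\beta,\mathfrak g_\gamma]\neq 0$; none of this is checked, and no uniform lower bound on the conditioning is given. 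Worse, you yourself note that $\operatorname{ad}_X|_V$ has eigenvalues $i\alpha(X)$ that can be arbitrarily small compared with $\|X\|$, and you "route" those directions through the $[V,V]$ device — but those directions lie \emph{in} $V$, and there is no argument that they are reachable as brackets of well-conditioned elements of $V$.

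Second, and fatally, the error budget for the rescaled-commutator directions fails in exactly the regime where the lemma is used. Writing $t=d(e,\lambda)$, approximating $a',b'$ (of norm $\sim\sqrt{t\rho^{c_0}}$) by elements of $\Lambda$ costs $O(\rho^{2c_0})$ in the Lie algebra, so the commutator acquires an error $[O(\rho^{2c_0}),b']=O\bigl(\sqrt{t\rho^{c_0}}\,\rho^{2c_0}\bigr)$. Comparing with the allowed budget $t\rho^{2c_0}+t^2\rho^{c_0}$: dividing by $\rho^{2c_0}$ one needs $t^{1/2}\rho^{c_0/2}\ll t+t^2\rho^{-c_0}$, which fails whenever $t\ll\rho^{c_0}$. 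In the application (Proposition \ref{Proposition: App. subgroup and large cov number implies large measure}) the element $\lambda$ is taken with $d(e,\lambda)$ of order $r\rho^{-c_0}$ with $r$ as small as $\rho^{3c_0}$, i.e.\ $t\sim\rho^{2c_0}\ll\rho^{c_0}$, so this regime is unavoidable. (Your AM--GM bound does correctly handle the cubic BCH remainder $(t\rho^{c_0})^{3/2}$; it is the approximation error, not the BCH error, that breaks.) There is also a minor bookkeeping issue: directions requiring a "bounded sum of brackets" cost more than $6$ letters each, so the total word length is not visibly $\le 6d$.

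For contrast, the paper never inverts $\operatorname{ad}_X$ and never uses depth-$2$ brackets. It shows that $[\lambda,\B(e,\rho^{c_0})]$, slightly thickened, contains a full one-parameter interval $I=\exp([0,Cd(e,\lambda)\rho^{c_0}]X)$ inside a small neighbourhood of $\Lambda^4$, and then takes the product of $d$ conjugates $\lambda_i I\lambda_i^{-1}\subset\Lambda^6$ with $\lambda_i\in\Lambda$ approximating suitable points of $\B(e,\rho)$; the quantitative fact that the directions $\Ad(\lambda_1)X,\dots,\Ad(\lambda_d)X$ span $\mathbb{R}^d$ with only a $\rho^{-c_0}$ loss is imported from De Saxcé's product theorem (\cite[Lem.~2.16]{zbMATH06466329}). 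That lemma is exactly the quantitative simplicity input your argument is missing, and the conjugation trick is what keeps all errors linear in the approximation scale $\rho^{2c_0}$ (times $d(e,\lambda)$) rather than proportional to $\sqrt{d(e,\lambda)\rho^{c_0}}$.
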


    \begin{proof}
        Note first that $[\lambda, \Lambda] \subset \Lambda^4$. Consider the map $g \mapsto [\lambda,g]$. The BCH formula (\ref{Eq: BCH formula}) implies in particular $[\exp(X),\exp(Y)]=\exp([X,Y])\exp(Z)$ where $|Z| \ll_{d} |X\parallel Y|\left(|X|+|Y|\right)$ - where we consider on $\mathfrak{g}$ the norm associated with the Riemannian metric fixed on $G$. Moreover, $\exp(X)\exp(Y) = \exp(X + Y) \exp(Z)$ with $|Z| \ll_d |X\parallel Y|$. For every $r > 0$, we thus have
        $$[\lambda, \B(g,r)] \subset [\lambda, g]\B\left(e,O_d(d(\lambda,e)r)\right).$$

        Therefore, 
        $$ [\lambda, \B(e,\rho)] \subset [\lambda, \Lambda \B(e,\rho^c)] \subset \Lambda^4 \B\left(e,O_d(d(e,\lambda)\rho^c)\right).$$
        Now,  $[\lambda, \B(e,\rho^{c'})]\B(e,d(e,\lambda)^2\rho^{c'})$ - and, hence, $\Lambda^4 \B\left(e,O_d(d(e,\lambda)\rho^c) + d(e,\lambda)^2\rho^{c'}\right)$  - contains a whole one-parameter sub-interval of the form $I:=\exp([0;Cd(\lambda,e)\rho^{c'}]X)$ for some $C\gg_d 1$ and $X \in \mathfrak{g}$ of unit norm. Since $\B(e,\rho)\subset \Lambda \B(e,\rho^{c})$, for each $i=1, \ldots, d$ there is $\lambda_i \in \Lambda$ such that the map 
        \begin{align*}
            [-\rho^C;\rho^C]^d &\longrightarrow G \\
            (t_1,\ldots,t_d) &\longmapsto \exp(t_1 \Ad(\lambda_1)X) \cdots \exp(t_d \Ad(\lambda_d)X)
        \end{align*}
        is $\rho^{-c_0}$-Lipschitz, see \cite[Lem. 2.16]{zbMATH06466329}. Finally, 
\begin{align*}
    \B\left(e,d(e,\lambda)\rho^{c'-c_0}\right) &\subset  \left(\prod \lambda_iI\lambda_i^{-1}\right) \\
    & \subset \Lambda^{6d}\B\left(e,O_d(d(e,\lambda)\rho^{c-c_0} + d(e,\lambda)^2\rho^{c'-c_0})\right).
\end{align*}
Taking $c'=c_0$ and $c=2c_0$ concludes.
    \end{proof}

We will use Lemma \ref{Lemma: Propagating containment of balls} in combination with a scale selection argument for approximate subgroups already used in \cite{Machado2024MinDoubling}.

\begin{lemma}[Boundedly many interesting scales, Lemma 4.4, \cite{Machado2024MinDoubling}]\label{Lemma: Boundedly many interesting scales}
    Let $\Lambda\subset G$ be a compact approximate subgroup and $m \geq 0$. Let $F \subset G$ finite be such that $\Lambda^m \subset  \Lambda F$ and write $|F|=n$. Define 
    $r_f:=\inf_{\lambda \in \Lambda} d(e,\lambda f)$ and let $r_1> \ldots > r_n$ be an enumeration of $\{r_f:f \in F\} \setminus \{0\}$. Then for all $i=1, \ldots, n$ (with $r_{n+1}=0$), 
    $$ \B(e,r_i) \cap \Lambda^m \subset \Lambda^2B_{r_{i+1}}.$$
\end{lemma}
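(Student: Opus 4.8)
The plan is to unwind the definitions and run a pigeonhole over the finitely many ``interesting scales'' $r_f$. Fix $i\in\{1,\dots,n\}$ and take an arbitrary $x\in\B(e,r_i)\cap\Lambda^m$. Since $\Lambda^m\subset\Lambda F$ by hypothesis, I would first write $x=\lambda f$ with $\lambda\in\Lambda$ and $f\in F$. The key observation is then immediate from the definition of $r_f$: we have $r_f=\inf_{\lambda'\in\Lambda}d(e,\lambda'f)\le d(e,\lambda f)=d(e,x)<r_i$, so the scale attached to this particular $f$ is strictly smaller than $r_i$.

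Next I would use compactness of $\Lambda$ to pick $\lambda_0\in\Lambda$ realising the infimum, so that $d(e,\lambda_0f)=r_f$, i.e.\ $f\in\lambda_0^{-1}\overline{\B(e,r_f)}$. Writing $x=(\lambda\lambda_0^{-1})(\lambda_0f)$ and using $\Lambda=\Lambda^{-1}$ to see that $\lambda\lambda_0^{-1}\in\Lambda^2$, we obtain $x\in\Lambda^2\,\B(e,r_f)$ (up to the harmless closed/open-ball distinction on the boundary, which is immaterial for all later uses). Finally I would close the scale bookkeeping: as $f$ ranges over $F$, $r_f$ lies in $\{r_1,\dots,r_n\}\cup\{0\}$, a totally ordered set with $r_1>\cdots>r_n>0=r_{n+1}$; hence the strict inequality $r_f<r_i$ forces $r_f\le r_{i+1}$. (This is exactly the place where the convention $r_{n+1}=0$ is needed: for $i=n$ the inequality $r_f<r_n$ can hold only when $r_f=0$.) Combining, $x\in\Lambda^2\,\B(e,r_{i+1})$, and since $x$ was arbitrary this gives $\B(e,r_i)\cap\Lambda^m\subset\Lambda^2B_{r_{i+1}}$.

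I do not expect a genuine obstacle: the statement is essentially a repackaging of the finiteness of $F$ into a statement about scales, and the only point requiring a moment's care is the strictness of the inequality $d(e,x)<r_i$ (so one should read $\B(e,r_i)$ as the open ball, or restrict to that regime, which changes nothing downstream). If one preferred not to invoke compactness of $\Lambda$, the same argument runs by choosing $\lambda_0$ with $d(e,\lambda_0f)\le r_f+\varepsilon$ and letting $\varepsilon\to0$, but compactness makes this detour unnecessary.
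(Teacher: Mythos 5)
Your proof is correct and is essentially the intended argument: the paper quotes this lemma without proof from \cite{Machado2024MinDoubling}, and the decomposition $x=(\lambda\lambda_0^{-1})(\lambda_0 f)$ with $r_f<r_i$ forcing $r_f\le r_{i+1}$ is exactly the standard pigeonhole-over-scales reasoning, with symmetry of $\Lambda$ giving $\lambda\lambda_0^{-1}\in\Lambda^2$. The only point of care — attainment of the infimum via compactness and the open/closed ball distinction at radius $r_{i+1}$ — you have correctly identified as immaterial.
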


    \begin{proof}[Proof of Proposition \ref{Proposition: App. subgroup and large cov number implies large measure}.]
    Let $\rho\geq r_1 > \ldots > r_n > 0, n \leq K^{24d+4}$ be the scales given by Lemma \ref{Lemma: Boundedly many interesting scales} applied to $\Lambda$ and $m=24d+4 $. Now, let $c_0$ be given by Lemma \ref{Lemma: Propagating containment of balls}. Let $r \geq 0$ be the least number such that for all $\rho > r' > r$ and $i$ such that $r_i \geq r' > r_{i+1}$ we have either (1), $B(e,r_i) \subset \Lambda^4B(e,r')$, or (2) $10r_{i+1} \geq r'$. Assume that $r \leq \rho^{3c_0}$. Suppose that $r>0$ and let $i$ be such that $r_i \geq r > r_{i+1}$. By construction, $r$ does not satisfy (2) i.e. $10r_{i+1} < r'$.  Moreover, there must be at least one $r'$ satisfying (1) in the range $10^{n} r\rho^{-c_0} \geq r' \geq r\rho^{-c_0}$. So there is $\lambda \in \Lambda^4$ such that ${10}^{n+1}2 r\rho^{-c_0} \geq d(\lambda,e) \geq 2r\rho^{-c_0}$ - provided $\rho$ is sufficiently small depending on $n$ alone. In particular, $d(e,\lambda) \leq \rho^{c_0}$ as soon as $\rho$ is sufficiently small in terms of $K$ and $d$ alone. By Lemma \ref{Lemma: Propagating containment of balls}, $$\B(e,d(e,\lambda)\rho^{c_0}) \subset \Lambda^{24d} \B(e,O_d(d(e,\lambda)\rho^{2c_0}))$$ and because $r \leq d(e,\lambda) \rho^{c_0}$, we have, 
    $$ \B(e,r_i) \subset \Lambda^4 \B(e,d(e,\lambda)\rho^{c_0}). $$

    Therefore,  $$\B(e,r_i) \subset \Lambda^{24d+4}\B(e,O_d(d(e,\lambda)\rho^{2c_0})).$$ For $\rho$ sufficiently small depending on $K$ and $d$ alone we have, with a slight abuse of notation, $O_d(d(e,\lambda)\rho^{2c_0}) \leq \frac{r}{10} \leq \frac{r_1}{2}$. Thus, by Lemma \ref{Lemma: Boundedly many interesting scales},
    $$ \B\left(e,\frac{r_i}{2}\right) \subset \left( \Lambda^{24d+4} \cap \B(e,r_i) \right)\B(e,O_d(d(e,\lambda)\rho^{2c_0}))\subset \Lambda^2 \B\left(e,O_d(d(e,\lambda)\rho^{2c_0}) + r_{i+1}\right) \subset \Lambda^2 \B\left(e,\frac{r}{5}\right). $$ 
    Thus, 
    $$ \B(e,r_i) \subset \Lambda^4 \B\left(e,\frac{2r}{5}\right).$$
   This implies that $r \leq \frac{2r}{5}$ i.e. $r=0$.

    To conclude, for all $i \leq n$ we have $\B(e,r_i) \subset \Lambda^4B(e,10r_{i+1})$. This implies by a straightforward induction that $\B(e,\rho)$ is covered by $100^{dn}$ translates of  $\Lambda^{4n}$. The result is established.
        
    \end{proof}

\begin{proof}[Proof of Proposition \ref{Proposition: Small doubling and large cov number implies large measure}.]
    By \cite[Thm. 4.6]{TaoProductSet08}, there is a $K^{O(1)}$-approximate subgroup $\Lambda \subset AA^{-1}$ such that $A$ is covered by $K^{O(1)}$ translates of $\Lambda$ and $\mu(\Lambda) \leq K^{O(1)}\mu(A)$. In particular, for all $c> 0$,  
   $$  \mu(A\B(e,\rho^c)) \leq K^{O(1)}\mu(\Lambda \B(e,\rho^c)).$$

    Let $c_0 > 0$ be given by Proposition \ref{Proposition: App. subgroup and large cov number implies large measure} and assume that $c > 2c_0$. By Proposition \ref{Proposition: Quasi-random} applied to $\Lambda \B(e,\rho^c)$,
    $$\B(e,\rho^{2}) \subset \left(\Lambda \B(e,\rho^c)\right)^4\subset \Lambda^4\B(e,4\rho^c)$$
    for $\rho$ sufficiently small depending on $K, d$ and $\delta$. Since $c > 2c_0$, Proposition \ref{Proposition: App. subgroup and large cov number implies large measure} applied to $\B(e, 2\rho^c) \cap \Lambda^8$ implies that $\B(e,\rho^c) \subset \Lambda^CF$ with $C = O_{K,d}(1)$ and $|F|= O_{K,d}(1)$. Hence,
    \begin{align*}
        \delta \mu(\B(e,\rho)) &\leq \mu(A\B(e,\rho^c)) \\ &\leq K^{O(1)} \mu(\Lambda \B(e,\rho^c)) \\&\leq K^{O(1)}\mu(\Lambda^{C+1}F) \\ &\ll_{K,d}\mu(\Lambda)  \ll_{K,d}\mu(A).
    \end{align*}
\end{proof}

    \subsection{Proof of local stability}

    \begin{proposition}\label{Proposition: Large enough measure for local stability}
    Let $A,B \subset \B(e,\rho)$ and $\epsilon > 0$ satisfying Assumption \ref{Standing assumption}. Suppose that $\mu(\co(A)) \geq \delta \mu(\B(e, \rho))$ for some $\delta > 0$. Then for all $\rho, \epsilon \ll_{d,\tau, \delta} 1$, we have $\mu(A) \gg_{d,\tau, \delta} \mu(\B(e, \rho))$.
\end{proposition}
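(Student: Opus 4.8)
The plan is to obtain this statement by chaining together the two preceding results, Proposition~\ref{Proposition:  Bounds max and support, every scale} and Proposition~\ref{Proposition: Small doubling and large cov number implies large measure}. The first already tells us that, for \emph{any} fixed exponent $c$, the $\rho^c$-neighbourhood $A_{\rho^c}$ fills a definite proportion of $\B(e,\rho)$; the second tells us that, for sets of small doubling and bounded measure ratio, a neighbourhood at an appropriate \emph{fixed} scale being large forces the set itself to be large. So the proof amounts to matching up the hypotheses of these two black boxes and being careful about the order in which the constants are quantified.

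Concretely, I would first extract a doubling bound from Assumption~\ref{Standing assumption}. Since $\mu(B) \le \tau\mu(A)$ (the measure ratio bound $\tau^{-1} \le \mu(A)/\mu(B) \le \tau$ being standing throughout this section) and we may assume $\epsilon \le 1$, the stability inequality $\mu(AB) \le (1+\epsilon)\big(\mu(A)^{1/d}+\mu(B)^{1/d}\big)^d$ yields $\mu(AB) \le K\mu(A)$ with $K := 2(1+\tau^{1/d})^d = O_{d,\tau}(1)$. Next, let $c_0 = c_0(K,\tau) > 1$ be the exponent furnished by Proposition~\ref{Proposition: Small doubling and large cov number implies large measure}; crucially this depends on $d$ and $\tau$ alone. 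Applying Proposition~\ref{Proposition:  Bounds max and support, every scale} with $c = c_0$ (legitimate for $\rho,\epsilon$ small in terms of $d,\tau,\delta$) gives $\mu(A_{\rho^{c_0}}) \gg_{d,\tau,\delta} \mu(\co(A)) \ge \delta\mu(\B(e,\rho))$; write the resulting lower bound as $\delta'\mu(\B(e,\rho))$ with $\delta' > 0$ depending on $d,\tau,\delta$ alone. Finally, feeding $K$, the ratio bound $\tau^{-1} \le \mu(A)/\mu(B) \le \tau$ and the estimate $\mu(A_{\rho^{c_0}}) \ge \delta'\mu(\B(e,\rho))$ into Proposition~\ref{Proposition: Small doubling and large cov number implies large measure} produces $\mu(A) \gg_K \delta'\mu(\B(e,\rho)) \gg_{d,\tau,\delta} \mu(\B(e,\rho))$, provided $\rho$ is small enough in terms of $K,\tau,\delta',d$, i.e.\ in terms of $d,\tau,\delta$ alone.

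There is no serious analytic obstacle at this stage: all of the substance is hidden in the two propositions being invoked, whose proofs (the Pr\'ekopa--Leindler stability argument behind Proposition~\ref{Proposition:  Bounds max and support, every scale}, and the multi-scale approximate-group analysis behind Proposition~\ref{Proposition: Small doubling and large cov number implies large measure}) occupy the bulk of this section. The one point that genuinely requires care is the order of quantifiers for the implied constants: one must ensure that the exponent $c_0$ coming out of Proposition~\ref{Proposition: Small doubling and large cov number implies large measure} depends on $d$ and $\tau$ only, and not on $\delta$, so that applying Proposition~\ref{Proposition:  Bounds max and support, every scale} at scale $\rho^{c_0}$ does not introduce a circular dependence; one then checks that the various ``$\rho,\epsilon$ sufficiently small'' thresholds required by the three inputs can all be met under a single threshold depending only on $d,\tau,\delta$.
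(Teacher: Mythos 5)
Your proposal is correct and follows essentially the same route as the paper: fix the exponent $c_0$ from Proposition~\ref{Proposition: Small doubling and large cov number implies large measure}, apply Proposition~\ref{Proposition:  Bounds max and support, every scale} at scale $\rho^{c_0}$, and then feed the result back into Proposition~\ref{Proposition: Small doubling and large cov number implies large measure}. Your extra care with the doubling constant $K$ and the order of quantifiers (in particular that $c_0$ depends only on $d,\tau$ and not on $\delta$) is left implicit in the paper but is exactly the right point to check.
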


\begin{proof}
    Let $c_0$ be as in Proposition \ref{Proposition: Small doubling and large cov number implies large measure}. By Proposition \ref{Proposition:  Bounds max and support, every scale},  
    $$\mu(A\B(e,\rho^{c_0}))  \gg_{d,\tau,\delta,c_0} \mu(\B(e,\rho)).$$ By Proposition \ref{Proposition: Small doubling and large cov number implies large measure}, we have $\mu(A) \gg_{d,\tau} \mu(\B(e,\rho))$ for $\epsilon, \rho$ sufficiently small depending on $d$ and $\tau$ alone. 
\end{proof}

\begin{proof}[Proof of Theorem \ref{Theorem: Local stability}.]
By Proposition \ref{Proposition: Application PL stability}, for every $\rho^{5/4} \geq \eta \geq \rho^{3/2}$ there is a log-concave function $h_\eta$ with support in $\B(e,\rho)$ such that 
    \begin{equation}
        \left\Vert h_\eta - \chi_{A,\eta} \right\Vert_1 \leq O_{d,\tau}(\epsilon + \rho)^c\mu(A) \label{Eq: L1 comp convex}
    \end{equation} 
    for some $c \gg_{d,\tau} 1$. Throughout the proof $c$ will denote various constants that all satisfy $c \gg_{d,\tau}1$. By Proposition \ref{Proposition: Large enough measure for local stability}, $\mu(A), \mu(B) \gg_{d,\tau, \delta} \mu(\B(e,\rho))$ as soon as $\epsilon$ and $\rho$ are sufficiently small. According to Proposition \ref{Proposition: Quasi-random}, we find that for all $x \in AB$ in a subset of measure $\gg_{d,\tau, \delta} \mu(\B(e,\rho))$, we have 
    $$\mu(\B(x,\eta) \cap AB) \geq (1- \rho^c)\mu(\B(x,\eta)).$$ In other words, 
    $\mu\left(\{\chi_{AB,\eta} \geq 1- \rho^c\}\right) \gg_{d,\tau,\delta} \mu(\B(e,\rho))$. By Markov's inequality and Corollary \ref{Corollary: Diff L1 norms} we have $$\mu\left(\{\chi_{A,(1-\lambda_0)\eta} \geq 1- \rho^c\}\right) \gg_{d,\tau, \delta} \mu(\B(e,\rho))$$ and 
    $$\mu\left(\{\chi_{B,\lambda_0\eta} \geq 1- \rho^c\}\right) \gg_{d,\tau,\delta} \mu(\B(e,\rho)).$$ 
    Now, if $x \in \{\chi_{A,(1-\lambda_0)\eta} \geq 1- \rho^c\}$ and $y \in \{\chi_{B,\lambda_0\eta} \geq 1- \rho^c\}$ we have 
    $$\chi_{AB,\eta/2}(xy)=1$$
    as soon as $\rho \ll_{d} 1$. Indeed, this is a consequence of:
    \begin{claim}\label{Claim je sais pas}
        Let $\lambda > 0$ and $\eta_1 > \eta_2 > 0$ with $\eta_1/\eta_2 \leq \lambda$. There is $\delta > 0$ such that if $X \subset \B(e,\eta_1)$ and $Y \subset \B(e,\eta_2)$ satisfy $\mu(X) \geq (1-\delta) \mu(\B(e,\eta_1))$  and $\mu(Y) \geq (1-\delta)\mu(\B(e,\eta_2))$, then 
        $\B(e,\frac{\eta_1+\eta_2}{2}) \subset XY$.

        In fact, if $C \subset \B(e,\eta_1)$ is a convex subset of measure at least $\delta' \mu(\B(e,\eta_1))$ and $X \subset (1-\lambda)C$, $Y \subset \lambda C$ have measure $(1-\delta)(1-\lambda)^dC$ and $(1-\delta)\lambda^d C$ respectively, then $XY$ contains a convex subset of $C$ of measure $(1-O_{\delta',\lambda}(\delta+\eta_1)^c)\mu(C)$ as soon as $\rho$ is sufficiently small depending on $\delta'$. 
    \end{claim}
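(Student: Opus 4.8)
The plan is to notice that, as written, the claim is an essentially Euclidean statement perturbed by the Baker--Campbell--Hausdorff error. By Corollary \ref{Corollary: Sum vs product} the group product of elements of $\B(e,\eta_1)$ agrees with the Lie-algebra sum up to an additive error $O_d(\eta_1^2)$, and by Fact \ref{Fact: Lebesgue vs Haar} the Haar measure agrees with a fixed Lebesgue measure up to a factor $1+O_d(\eta_1^2)$; so I would first prove the corresponding statements about Minkowski \emph{sums} of subsets of Euclidean balls, carrying an additive slack of size $O_d(\eta_1^2)$ throughout, and then transfer back (this transfer is harmless provided $\eta_1$ is small, which I will assume). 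After this reduction, the first assertion becomes: near-full subsets $X\subset \B(0,\eta_1)$, $Y\subset\B(0,\eta_2)$ have $X+Y\supset\B\!\left(0,\tfrac{\eta_1+\eta_2}{2}\right)$; the second becomes a refinement of ``$(1-\lambda)C+\lambda C=C$'' for near-full subsets of the two rescaled copies of a convex body $C$.

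\textbf{First part.} Fix a target $z$ with $|z|\le\tfrac{\eta_1+\eta_2}{2}$; it suffices to find $x\in X$, $y\in Y$ with $x+y=z$, i.e.\ to show $X\cap(z-Y)\neq\emptyset$. Since $X$ misses at most $\delta\,\mu(\B(0,\eta_1))$ of $\B(0,\eta_1)$ and $z-Y$ misses at most $\delta\,\mu(\B(0,\eta_2))$ of $\B(z,\eta_2)$, inclusion--exclusion gives
\[
\mu\big(X\cap(z-Y)\big)\ \ge\ \mu\big(\B(0,\eta_1)\cap \B(z,\eta_2)\big)-\delta\big(\mu(\B(0,\eta_1))+\mu(\B(0,\eta_2))\big).
\]
The heart of the matter is the uniform lower bound $\mu\big(\B(0,\eta_1)\cap\B(z,\eta_2)\big)\gg_{d,\lambda}\eta_1^{\,d}$ for every such $z$: since $|z|\le\tfrac{\eta_1+\eta_2}{2}$ the overlap margin $\tfrac{\eta_1+\eta_2-|z|}{2}$ is at least $\tfrac{\eta_2}{4}$, and since $\eta_2\ge\eta_1/\lambda$ one can exhibit an explicit ball of radius $\gg_\lambda\eta_1$ (centered on the segment $[0,z]$, at distance $\asymp\eta_2$ from $z$) inside the intersection. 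Choosing $\delta=\delta(d,\lambda)$ small enough, the displayed quantity is positive, so $z\in X+Y$; as this holds uniformly, $\B(0,\tfrac{\eta_1+\eta_2}{2})\subset X+Y$, and transferring back (the lens estimate being robust to the $O_d(\eta_1^2)$ slack, because $\tfrac{\eta_1+\eta_2}{2}$ is comfortably below the sum $\eta_1+\eta_2$) yields $\B(e,\tfrac{\eta_1+\eta_2}{2})\subset XY$.

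\textbf{Second part.} Here I would replace $C$ by an inner parallel body. By Fact \ref{Fact: John Ellipsoid}, $C\subset\B(0,\eta_1)$ with $\mu(C)\ge\delta'\mu(\B(0,\eta_1))$ forces $C$ to contain a ball of radius $r_0\gg_{d,\delta'}\eta_1$; hence $C_t:=\{z\in C:\B(z,t)\subset C\}$ is convex, contains $(1-t/r_0)C+(t/r_0)c$, and satisfies $\mu(C\setminus C_t)\ll_{d,\delta'}(t/\eta_1)\mu(C)$. I would take $t:=K(\lambda)\,\eta_1\,(\delta+\eta_1^2)^{1/d}$ with $K(\lambda)$ a suitably large constant. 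The key point: if $z\in C_{\max(1,(1-\lambda)/\lambda)\,t}$ then $(1-\lambda)w\in(1-\lambda)C\cap(z-\lambda C)$ for every $w\in\B(z,t)$ — indeed $(1-\lambda)w\in(1-\lambda)C$ trivially, while $(1-\lambda)w\in z-\lambda C$ is equivalent to $z-\tfrac{1-\lambda}{\lambda}(w-z)\in C$, which holds because $\B(z,\tfrac{1-\lambda}{\lambda}t)\subset C$. Thus $(1-\lambda)\B(z,t)\subset(1-\lambda)C\cap(z-\lambda C)$, so this intersection has measure $\gg_\lambda (t/\eta_1)^d\mu(C)$. Now I rerun the intersection argument of the first part with hosts $(1-\lambda)C$ and $z-\lambda C$: by \eqref{Eq: Rescaling}, $X$ and $z-Y$ are near-full in them with deficits $\le(\delta+O_d(\eta_1^2))\mu((1-\lambda)C)$ and $\le(\delta+O_d(\eta_1^2))\mu(\lambda C)$ respectively, and choosing $K(\lambda)$ large makes $(t/\eta_1)^d$ dominate these, so $X\cap(z-Y)\neq\emptyset$, i.e.\ $z\in X+Y$. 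Transferring back, $XY$ contains the convex set $C':=C_{\max(1,(1-\lambda)/\lambda)\,t}\subset C$ with $\mu(C\setminus C')\ll_{d,\delta'}(t/\eta_1)\max(1,\tfrac{1-\lambda}{\lambda})\ll_{d,\delta',\lambda}(\delta+\eta_1)^{1/d}$ — the stated bound with $c=1/d$ — all valid once $\eta_1$ is small enough in terms of $\delta'$ (and $\lambda$) that the $O_d(\eta_1^2)$ corrections stay negligible against the $\delta'$-dependent quantities and $t\le r_0$.

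\textbf{Main obstacle.} The one genuinely delicate point is the uniform lens estimate in the first part in the near-degenerate regimes, in particular when $\eta_1$ is only marginally larger than $\eta_2$ (or, symmetrically, when $\eta_1/\eta_2$ is as large as $\lambda$ allows): the naive ``$\B(z,\eta_1-|z|)$ sits inside the intersection'' bound collapses, and one must use the genuine geometry of two overlapping balls to see that the intersection is $\gg_{d,\lambda}\eta_1^d$ uniformly in $z$. Everything else — the convexity of $C_t$ (an inner parallel body of a convex set), the inradius bound from John's ellipsoid, and the transfer via Corollary \ref{Corollary: Sum vs product} — is routine; the only care needed in the second part is bookkeeping, namely choosing the boundary-layer thickness $t\asymp_\lambda\eta_1(\delta+\eta_1^2)^{1/d}$ so that it simultaneously beats the $L^1$-deficits of $X$ and $Y$ and absorbs the $O_d(\eta_1^2)$ errors coming from \eqref{Eq: Rescaling} and the Baker--Campbell--Hausdorff formula.
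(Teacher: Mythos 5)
The paper never actually proves Claim \ref{Claim je sais pas}: it is stated inside the proof of Theorem \ref{Theorem: Local stability} and the closing sentence ``it remains only to prove the two claims'' refers only to Claims \ref{Claim: Invariance between scales} and \ref{Claim: BM-type growth}. So there is no in-paper argument to compare against, and your proposal has to stand on its own. On its own it is essentially sound: the lens estimate $\mu(\B(0,\eta_1)\cap\B(z,\eta_2))\gg_{d,\lambda}\eta_1^d$ for $|z|\le\tfrac{\eta_1+\eta_2}{2}$ is correct and uniform (a ball of radius $\tfrac{\eta_1+\eta_2-|z|}{2}\ge\tfrac{\eta_1+\eta_2}{4}$ centred on $[0,z]$ works whenever $\B(z,\eta_2)\not\subset\B(0,\eta_1)$, and otherwise the whole $\B(z,\eta_2)$, of measure $\ge \lambda^{-d}vol(\eta_1)$, lies in the intersection), and the inner-parallel-body computation in the second part, including the identity $\tfrac{z-(1-\lambda)w}{\lambda}=z-\tfrac{1-\lambda}{\lambda}(w-z)$ and the choice $t\asymp_{\lambda}\eta_1(\delta+\eta_1^2)^{1/d}$, is the right bookkeeping and delivers the exponent $c=1/d$.

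The one genuine gap is the final ``transfer back'' in both parts. Your intersection argument produces $x\in X$, $y\in Y$ with $x+y=z$ in the Lie algebra, and by Corollary \ref{Corollary: Sum vs product} this only gives $xy\in\B(\exp z,O_d(\eta_1^2))$, i.e.\ $\exp z$ lies within $O_d(\eta_1^2)$ of $XY$ — it does not give $\exp z\in XY$, which is what the claim asserts. An additive slack cannot repair a containment of a set in an \emph{exact} product set. The fix is to run the very same intersection argument directly in the group: to realise $z=xy$ exactly you need $X\cap zY^{-1}\neq\emptyset$, so replace $z-Y$ by $zY^{-1}$ throughout. Since the metric is bi-invariant, $zY^{-1}\subset\B(z,\eta_2)$ exactly and $\mu(zY^{-1})=\mu(Y)$; by the BCH formula $\log(zY^{-1})$ lies within Hausdorff distance $O_d(\eta_1^2)$ of $z-\log Y$ and, by Fact \ref{Fact: Lebesgue vs Haar}, has the same Lebesgue measure up to a factor $1+O_d(\eta_1^2)$. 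All your deficit and lens estimates then apply verbatim with an extra additive error $O_d(\eta_1)\,vol(\eta_2)$ from the boundary layer, which is absorbed exactly as you absorb the other $O_d(\eta_1^2)$ corrections (and, in the first part, forces the unstated but contextually harmless restriction that $\eta_1$, not only $\delta$, be small in terms of $d$ and $\lambda$). With that substitution the proof is complete; I would also say that the delicate point is this transfer rather than the lens estimate you single out, which is routine.
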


    Write $C_\eta:=\{\chi_{AB,\eta} = 1\}$. We have shown in the previous paragraph that $\mu(C_{\frac{\eta}2}) \gg_{d,\tau, \delta} \mu(\B(e,\rho))$. Note moreover that 
    \begin{equation}
        \mu(C_\eta) \leq \int_{G}\chi_{AB,\eta} = \mu(AB). \label{Eq: Upper bound C}
    \end{equation}
    We will show that, conversely, 
    $$ \mu(C_{\eta}) \geq (1- O_{d,\tau,\delta}(\epsilon + \rho)^c)\mu(AB)$$
    for some $c \gg_{d,\tau} 1$. This will boil down to the combination of two claims. One regarding the (approximate) invariance of $\mu(C_\eta)$ when changing the scale $\eta$:

    \begin{claim}\label{Claim: Invariance between scales}
        Let $\rho^{5/4} \geq \eta_1 > \eta_2 \geq \rho^{3/2}$. Then 
        $$\mu(C_{\eta_1}) \leq \mu(C_{\eta_2}) \leq (1 + O_{d,\tau, \delta}(\epsilon + \rho)^c)\mu(C_{\eta_1})$$
        for some $c \gg_{d,\tau}1$. 
    \end{claim}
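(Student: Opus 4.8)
The plan is as follows. The first inequality is pure monotonicity: since $AB$ is compact, $\chi_{AB,\eta}(x)=1$ holds exactly when $\B(x,\eta)\subseteq AB$, i.e. $C_\eta=G\setminus\bigl((AB)^{c}\bigr)_{\eta}$, so $\eta_2<\eta_1$ forces $C_{\eta_1}\subseteq C_{\eta_2}$ and $\mu(C_{\eta_1})\le\mu(C_{\eta_2})$. The content is the reverse bound, which I will obtain by estimating $\mu(C_{\eta_2}\setminus C_{\eta_1})$.

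The first real step uses that the bi-invariant metric is geodesic, so balls add \emph{exactly}: $\B(x,\eta_1-\eta_2)\B(e,\eta_2)=\B(x,\eta_1)$, with no lower-order error. Hence, if $x\in C_{\eta_2}$ is such that the whole ball $\B(x,\eta_1-\eta_2)$ lies in $C_{\eta_2}$, then $\B(x,\eta_1)=\bigcup_{z\in\B(x,\eta_1-\eta_2)}\B(z,\eta_2)\subseteq AB$, so $x\in C_{\eta_1}$. Therefore $C_{\eta_2}\setminus C_{\eta_1}$ is contained in the inner boundary layer $\partial^{-}_{w}C_{\eta_2}:=\{x\in C_{\eta_2}:\B(x,w)\not\subseteq C_{\eta_2}\}$ with $w:=\eta_1-\eta_2\le\rho^{5/4}$, and the claim reduces to $\mu(\partial^{-}_{w}C_{\eta_2})\ll_{d,\tau,\delta}(\epsilon+\rho)^{c}\mu(\B(e,\rho))$.

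The key input for this is a convex set $D\subseteq AB$ with $\mu(D)\ge(1-O_{d,\tau,\delta}(\epsilon+\rho)^{c})\mu(AB)$, produced from the local structural picture at scale $\eta_2$: the log-concave approximation of $\chi_{AB,\eta_2}$ from Proposition \ref{Proposition: Application PL stability}, the pointwise stability of Corollary \ref{Corollary: Stability condition at almost all points}, and the local quasi-randomness of Proposition \ref{Proposition: Quasi-random} together with Claim \ref{Claim je sais pas} (which converts ``almost full'' balls of $A$ and of $B$ into honest balls contained in $AB$). Granting such a $D$: if $x\in\partial^{-}_{w}C_{\eta_2}$ and $z\in\B(x,w)$ witnesses $\B(z,\eta_2)\not\subseteq AB$, then $\B(x,\eta_2+w)\not\subseteq D$, since otherwise $\B(z,\eta_2)\subseteq\B(x,\eta_2+w)\subseteq D\subseteq AB$; hence $\partial^{-}_{w}C_{\eta_2}\subseteq C_{\eta_2}\setminus D_{-(\eta_2+w)}$, where $D_{-r}:=\{y:\B(y,r)\subseteq D\}$. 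Since $D_{-(\eta_2+w)}\subseteq C_{\eta_2}\subseteq AB$ and $D$ is a convex body of measure $\gg_{d,\tau,\delta}\mu(\B(e,\rho))$ inside $\B(e,2\rho)$ — so its perimeter is $\ll_{d}\rho^{d-1}$ (Fact \ref{Fact: John Ellipsoid}) and thus $\mu(D)-\mu(D_{-(\eta_2+w)})\ll_{d,\tau,\delta}(\eta_2+w)\rho^{d-1}\le\rho^{5/4}\rho^{d-1}$ — we obtain $\mu(\partial^{-}_{w}C_{\eta_2})\le\mu(AB)-\mu(D_{-(\eta_2+w)})\le\bigl(\mu(AB)-\mu(D)\bigr)+\bigl(\mu(D)-\mu(D_{-(\eta_2+w)})\bigr)\ll_{d,\tau,\delta}(\epsilon+\rho)^{c}\mu(\B(e,\rho))$, using $\mu(AB)\asymp_{d}\mu(\B(e,\rho))$. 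Finally $\mu(C_{\eta_1})\ge\mu(C_{\rho^{5/4}})\gg_{d,\tau,\delta}\mu(\B(e,\rho))$ (again from the structural picture), which upgrades this additive bound to the stated multiplicative one.

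The main obstacle is precisely the middle step — producing the almost-full convex $D\subseteq AB$ — because it is essentially as strong as the eventual conclusion $\mu(C_\eta)\ge(1-O(\epsilon+\rho)^c)\mu(AB)$: the delicate point is that $\chi_{AB,\eta_2}$ must attain its \emph{maximal} value $1$, not merely $1-(\epsilon+\rho)^c$, on a large convex region, and it is exactly the quasi-randomness / Claim \ref{Claim je sais pas} mechanism (honest balls inside products of dense sets) that forces this. For this reason I do not expect the invariance claim to be proved in isolation: one should first pin down the convex description at the base scale $\eta=\rho^{3/2}$, where the Fourier-analytic quasi-randomness estimate is sharpest (this is the companion claim), and then the ``balls add exactly'' argument above transports both the convex description and the invariance of $\mu(C_\eta)$ to every scale $\eta\le\rho^{5/4}$, the $O(\log(1/\rho))$ doublings involved costing only $O(\rho^{1/4})\mu(\B(e,\rho))$ in total.
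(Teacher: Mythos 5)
Your skeleton (monotonicity for $\mu(C_{\eta_1})\le\mu(C_{\eta_2})$, then a convex set inside $AB$ plus a boundary-layer/perimeter estimate for the reverse) is in the right spirit, but the input you require — a convex $D\subseteq AB$ with $\mu(D)\ge(1-O_{d,\tau,\delta}(\epsilon+\rho)^c)\mu(AB)$ — creates a genuine circularity that your proposed remedy does not resolve. The claim you are proving is one of the two ingredients whose \emph{combination} (with Claim \ref{Claim: BM-type growth}) eventually yields $\mu(C_\eta)\ge(1-O(\epsilon+\rho)^c)\mu(AB)$; at this stage of the argument the available tools only give $\mu(C_{\eta/2})\gg_{d,\tau,\delta}\mu(\B(e,\rho))$ (a positive proportion, via quasi-randomness) and an $L^1$-approximation $\Vert h_\eta-\chi_{AB,\eta}\Vert_1\ll(\epsilon+\rho)^c\mu(AB)$, neither of which produces an almost-full convex subset of $AB$. "Pinning down the convex description at the base scale first" is not an independent step — it \emph{is} the conclusion of combining the two claims — so your plan has no non-circular entry point.

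The fix, which is what the paper does, is to run your argument relative to $\mu(C_{\eta_2})$ rather than $\mu(AB)$. Since $\chi_{AB,\eta_2}=1$ on $C_{\eta_2}$, Markov's inequality applied to $\Vert h_{\eta_2}-\chi_{AB,\eta_2}\Vert_1$ gives a convex superlevel set $C'_{\eta_2}=\{h_{\eta_2}\ge 1-O(\epsilon+\rho)^c\}$ with $\mu(C'_{\eta_2})\ge(1-O(\epsilon+\rho)^c)\mu(C_{\eta_2})$ — close to $C_{\eta_2}$, with no claim about $\mu(AB)$. Corollary \ref{Corollary: Diff L1 norms} shows that almost all of $(1-\lambda_0)C'_{\eta_2}$ has $A$-density near $1$ and almost all of $\lambda_0C'_{\eta_2}$ has $B$-density near $1$, and the second part of Claim \ref{Claim je sais pas} then places a convex set of measure $\ge(1-O(\epsilon+\rho)^c)\mu(C_{\eta_2})$ inside $C_{\eta_2/2}\subseteq AB$; a perimeter estimate on that convex set (of measure $\gg_{d,\tau,\delta}\mu(\B(e,\rho))$, so perimeter $\ll_d\rho^{d-1}$) shows all but an $O(\eta_1\rho^{-1})$-fraction of its points $x$ satisfy $\B(x,\eta_1)\subseteq C_{\eta_2/2}$, hence $\chi_{AB,\eta_1}(x)=1$. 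This yields $\mu(C_{\eta_1})\ge(1-O(\epsilon+\rho)^c)\mu(C_{\eta_2})$ directly. Your geodesic "balls add exactly" observation and the perimeter bookkeeping are fine; the missing idea is that the convex set must be produced from the level sets of the log-concave approximant and compared to $\mu(C_{\eta_2})$, not extracted from $AB$ itself.
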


    The second one will be a Brunn--Minkowski-type growth between scales. 

    \begin{claim}\label{Claim: BM-type growth}
         Let $\rho^{5/4} \geq \eta \geq \rho^{3/2}$. Recall that $\lambda_0:= \frac{\mu(B)^{1/d}}{\mu(A)^{1/d} + \mu(B)^{1/d}}$. Then, 
         $$ \mu(C_{\frac{(1-\lambda_0)\eta}{2}}) \geq (1- O_{d,\tau,\delta}(\epsilon+\rho)^c)\left((1-\lambda_0)\mu(C_\eta)^{1/d} + \mu(B)^{1/d}\right)^{d}$$
         for some $c \gg_{d,\tau} 1$.
    \end{claim}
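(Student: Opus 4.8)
The plan is to exhibit a large subset of $C_{(1-\lambda_0)\eta/2}$ as a Minkowski product $A^{\sharp}B^{\sharp}$ of ``deep interior'' sets of $A$ and $B$, and to bound $\mu(A^{\sharp}B^{\sharp})$ from below by the local Brunn--Minkowski inequality \eqref{Eq: Local BM}.

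Fix a constant $c\gg_{d,\tau}1$ and set $A^{\sharp}:=\{x:\chi_{A,(1-\lambda_0)\eta}(x)\geq 1-\rho^{c}\}$ and $B^{\sharp}:=\{y:\chi_{B,\lambda_0\eta}(y)\geq 1-\rho^{c}\}$. Claim~\ref{Claim je sais pas}, used exactly as in the passage preceding its statement, gives $\chi_{AB,\eta/2}(xy)=1$ for all $x\in A^{\sharp}$, $y\in B^{\sharp}$, so $A^{\sharp}B^{\sharp}\subseteq C_{\eta/2}\subseteq C_{(1-\lambda_0)\eta/2}$ (as $(1-\lambda_0)\eta/2\leq\eta/2$), whence by \eqref{Eq: Local BM}
$$ \mu\bigl(C_{(1-\lambda_0)\eta/2}\bigr)\ \geq\ \mu\bigl(A^{\sharp}B^{\sharp}\bigr)\ \geq\ \bigl(1-O_d(\rho^{2})\bigr)\bigl(\mu(A^{\sharp})^{1/d}+\mu(B^{\sharp})^{1/d}\bigr)^{d}. $$
Thus it suffices to establish the two lower bounds $\mu(A^{\sharp})\geq\bigl(1-O_{d,\tau,\delta}(\epsilon+\rho)^{c}\bigr)(1-\lambda_0)^{d}\mu(C_\eta)$ and $\mu(B^{\sharp})\geq\bigl(1-O_{d,\tau,\delta}(\epsilon+\rho)^{c}\bigr)\mu(B)$, since substituting them into the display and absorbing the resulting $(1-O)^{d}$ factor yields the claim. (Throughout, the error terms of the form $O_{d,\tau}((\epsilon+\rho)^{c})\mu(AB)$ appearing below are rewritten as $O_{d,\tau,\delta}((\epsilon+\rho)^{c})\mu(C_\eta)$, using $\mu(AB)\ll_d\mu(\B(e,\rho))$ together with $\mu(C_\eta),\mu(A),\mu(B)\gg_{d,\tau,\delta}\mu(\B(e,\rho))$ --- the latter being available from the part of the proof of Theorem~\ref{Theorem: Local stability} that precedes these claims together with Claim~\ref{Claim: Invariance between scales}.)

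The bound for $\mu(A^{\sharp})$ is the routine half. By Corollary~\ref{Corollary: Diff L1 norms} (with vanishing translation parameter, by Assumption~\ref{Standing assumption}), Fact~\ref{Fact: Young + isoperimetry} to change the smoothing scale from $\eta+O_d(\rho^{2})$ to $\eta$, and the rescaling relation $\chi_{(1-\lambda_0)^{-1}A,\eta}(w)=(1+O_d(\rho^{2}))\chi_{A,(1-\lambda_0)\eta}((1-\lambda_0)w)$ from \eqref{Eq: Rescaling}, one obtains $\bigl\|\chi_{A,(1-\lambda_0)\eta}((1-\lambda_0)\,\boldsymbol{\cdot}\,)-\chi_{AB,\eta}\bigr\|_{1}\ll_{d,\tau,\delta}(\epsilon+\rho)^{c}\mu(C_\eta)$. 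Applying Markov's inequality (Lemma~\ref{Lemma: Markov level set form}) on $C_\eta=\{\chi_{AB,\eta}=1\}$ shows that $\chi_{A,(1-\lambda_0)\eta}((1-\lambda_0)x)\geq 1-\rho^{c}$ for all $x\in C_\eta$ outside a subset of measure $O_{d,\tau,\delta}(\epsilon+\rho)^{c'}\mu(C_\eta)$, and rescaling by $1-\lambda_0$ gives the stated bound on $\mu(A^{\sharp})$. Running the same argument with the $B$-half of Corollary~\ref{Corollary: Diff L1 norms} only produces $\mu(B^{\sharp})\geq(1-O_{d,\tau,\delta}(\epsilon+\rho)^{c'})\lambda_0^{d}\mu(C_\eta)$, which is insufficient because a priori $\lambda_0^{d}\mu(C_\eta)$ can be strictly below $\mu(B)$.

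The real obstacle is to improve this to $\mu(B^{\sharp})\geq(1-O_{d,\tau,\delta}(\epsilon+\rho)^{c})\mu(B)$. Via the identity $\int_{B}(1-\chi_{B,\lambda_0\eta})=\mu(B_{\lambda_0\eta})-\mu(B)$ and Markov's inequality, this reduces to showing that the thin shell $B_{\lambda_0\eta}\setminus B$ has measure $O_{d,\tau,\delta}(\epsilon+\rho)^{c'}\mu(B)$. To prove this I would combine the $L^{1}$ closeness of $\chi_{\lambda_0^{-1}B,\eta}$ to the log-concave profile furnished by Proposition~\ref{Proposition: Application PL stability} and Corollary~\ref{Corollary: Diff L1 norms} (whose support is convex and untranslated, by Assumption~\ref{Standing assumption}) with the comparisons between $\mu(\co(B))$, $\mu(B_\eta)$ and $\mu(\B(e,\rho))$ coming from Lemma~\ref{Lemma: Convex control}, Proposition~\ref{Proposition:  Bounds max and support} and Lemma~\ref{Lemma: Volume of thick convex sets} (the last bounding $\mu(B_\eta)$ by $\mu(\co(B))$ up to an additive $O_d(\rho^{d+1/4})$), and with the pointwise stability of Corollary~\ref{Corollary: Stability condition at almost all points}; together these pin $\mu(B_{\lambda_0\eta})$ to within a factor $1+O_{d,\tau,\delta}(\epsilon+\rho)^{c'}$ of $\mu(B)$. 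I expect this regularity estimate for $B$ to be the only genuinely delicate point; the remainder is bookkeeping with $L^{1}$ comparisons already in hand.
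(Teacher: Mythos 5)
Your reduction of the claim to the two bounds $\mu(A^{\sharp})\geq(1-O(\cdot))(1-\lambda_0)^{d}\mu(C_\eta)$ and $\mu(B^{\sharp})\geq(1-O(\cdot))\mu(B)$ is where the argument breaks. The first bound is fine and is exactly how the paper's proof begins. The second is not available at this stage, and your proposed route to it does not close. The estimate $\mu(B^{\sharp})\geq(1-O_{d,\tau,\delta}(\epsilon+\rho)^{c})\mu(B)$ --- equivalently, that the intermediate-density region $\{0<\chi_{B,\lambda_0\eta}<1-\rho^{c}\}$ carries only an $O((\epsilon+\rho)^{c})$-fraction of the mass of $B$ --- is essentially the regularity statement for $B$ that Theorem \ref{Theorem: Local stability} is in the process of establishing: in the paper it becomes available only \emph{after} Claims \ref{Claim: Invariance between scales} and \ref{Claim: BM-type growth} have been combined to force $\mu(C_\eta)\geq(1-O(\cdot))\mu(AB)$, from which $\left\Vert \mathbf{1}_{C_\eta}-\chi_{AB,\eta}\right\Vert_1$ and then the symmetric differences for $A$ and $B$ are controlled. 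Invoking it inside the proof of Claim \ref{Claim: BM-type growth} is circular. None of the results you cite delivers it: Proposition \ref{Proposition:  Bounds max and support} and Lemma \ref{Lemma: Convex control} compare $\mu(B)$, $\mu(B_\eta)$, $\mu(\co(B))$ and $\mu(\B(e,\rho))$ only up to multiplicative constants depending on $d,\tau,\delta$ (and part (2) of Proposition \ref{Proposition:  Bounds max and support} controls $\mu(C\setminus A_\eta)$, not $\mu(A_\eta\setminus C)$), while $L^{1}$-closeness of $\chi_{\lambda_0^{-1}B,\eta}$ to a log-concave profile says nothing about how much of $\left\Vert\chi_{\lambda_0^{-1}B,\eta}\right\Vert_1$ sits at intermediate density levels. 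At this stage nothing rules out $B$ placing a fixed positive proportion of its mass where $\chi_{B,\lambda_0\eta}$ is bounded away from $1$, in which case $\mu(B^{\sharp})$ falls short of $(1-o(1))\mu(B)$ and your lower bound loses the full $\mu(B)^{1/d}$ term.

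The fix is the paper's one-line observation, which removes the need for $B^{\sharp}$ altogether: for $x\in A^{\sharp}$ and \emph{arbitrary} $y\in B$, normality of balls gives $\B(x,(1-\lambda_0)\eta)y=\B(xy,(1-\lambda_0)\eta)$, so $\left(A\cap\B(x,(1-\lambda_0)\eta)\right)y\subset AB\cap\B(xy,(1-\lambda_0)\eta)$ and hence $\chi_{AB,(1-\lambda_0)\eta}(xy)\geq\chi_{A,(1-\lambda_0)\eta}(x)\geq 1-O(\cdot)$, by right-invariance of $\mu$. Applying \eqref{Eq: Local BM} to the product $A^{\sharp}B$ then yields $\mu\left(\{\chi_{AB,(1-\lambda_0)\eta}\geq 1-O(\cdot)\}\right)\geq(1-O(\cdot))\left((1-\lambda_0)\mu(C_\eta)^{1/d}+\mu(B)^{1/d}\right)^{d}$ with the full weight of $B$. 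The price is landing on a superlevel set at level $1-O(\cdot)$ rather than on $C_{(1-\lambda_0)\eta/2}$ itself; the paper converts the former into the latter by the same mechanism as at the end of the proof of Claim \ref{Claim: Invariance between scales} (Markov against the log-concave approximant to extract a convex set, then Claim \ref{Claim je sais pas} and a perimeter estimate). Your insistence on applying Claim \ref{Claim je sais pas} with \emph{both} factors restricted to high-density points is precisely what forces the untenable lower bound on $\mu(B^{\sharp})$.
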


    For both claims to be true at once, we must have: 

    $$\mu(C_\eta) \geq (1-O_{d,\tau, \delta}(\epsilon + \rho)^c)\left((1-\lambda_0)\mu(C_\eta)^{1/d} + \mu(B)^{1/d}\right)^d.$$
    This yields, 
    $$\mu(C_{\eta})^{1/d} \geq (1-O_{d,\tau, \delta}(\epsilon + \rho)^c)\lambda_0^{-1}\mu(B)^{1/d}.$$
    In turn, this can be rewritten as,
    $$\mu(C_\eta) \geq (1-O_{d,\tau, \delta}(\epsilon + \rho)^c)\left(\mu(A)^{1/d} + \mu(B)^{1/d}\right)^d$$
    for some $c \gg_{d,\tau} 1$. Combining \eqref{Eq: Upper bound C} and Assumption \ref{Standing assumption} we find 
    \begin{equation}
        \mu(AB) \geq \mu(C_\eta) \geq (1 - O_{d,\tau, \delta}(\epsilon + \rho)^c) \mu(AB)
    \end{equation}
    for some $c \gg_{d,\tau}1$. In other words, 
    $$ \parallel \mathbf{1}_{C_\eta} - \chi_{AB,\eta}\parallel_1 \ll_{d,\tau, \delta} (\epsilon + \rho)^c \mu(AB).$$
    In turn, 
    $$ \parallel \mathbf{1}_{C_\eta} - h_\eta\parallel_1 \ll_{d,\tau, \delta} (\epsilon + \rho)^c \parallel h_\eta \parallel_1.$$
    So by Markov's inequality (Lemma \ref{Lemma: Markov level set form}), 
    $$\mu(C_\eta \Delta C_\eta') \ll_{d,\tau,\delta} (\epsilon + \rho)^{c/2}\mu(AB)$$
    where $C_\eta'$ is the convex level set $\{h_\eta \geq 1- O_{d,\tau,\delta}(\epsilon + \rho)^{c/2}\}$. According to Lemma \ref{Lemma: Symmetric difference and L^1-diff with densities}, this is enough to conclude, the fact that $C$ can be taken containing $A$ excepted. But this follows from the measure inequality 
    $$ \mu\left((A \setminus C)(B \cap \lambda C) \setminus (1+\lambda)C\right) \ll_{d,\tau} (\epsilon + \rho)^c \mu(C).$$

  Thus, it remains only to prove the two claims.

  \begin{proof}[Proof of Claim \ref{Claim: Invariance between scales}.]
   For notational simplicity, throughout this proof, $c$ will denote constants that might vary line by line, but all satisfy $c \gg_{d,\tau} 1$. 
   
   The left-hand side inequality is obvious. Indeed, if $\mu(\B(x,\eta_1) \cap AB) = \mu(\B(x,\eta_1))$, then $\mu(\B(x,\eta_2) \cap AB) = \mu(\B(x,\eta_2))$ since $\eta_1 \geq \eta_2$. Conversely, by Markov's inequality (Lemma \ref{Lemma: Markov level set form}), the convex level set $C_{\eta_2}':=\{h_{\eta_2} \geq 1- O_{d,\tau,\delta}(\epsilon + \rho)^{c}\}$ has measure $\geq (1- O_{d,\tau,\delta}(\epsilon + \rho)^c)\mu(C_{\eta_2})$. Now, by Corollary \ref{Corollary: Diff L1 norms}, 
   $$ \mu\left((1-\lambda_0)C_{\eta_2}' \cap \{\chi_{A,(1-\lambda_0)\eta_2}\geq 1 - O_{d,\tau,\delta}(\epsilon + \rho)^c) \}\right) \geq (1-O_{d,\tau, \delta}(\epsilon + \rho)^c) \mu((1-\lambda_0)C_{\eta_2}')$$
   and 
      $$ \mu\left(\lambda_0C_{\eta_2}' \cap \{\chi_{B,\lambda_0\eta_2}\geq 1 - O_{d,\tau,\delta}(\epsilon + \rho)^c) \}\right) \geq (1-O_{d,\tau, \delta}(\epsilon + \rho)^c) \mu(\lambda_0C_{\eta_2}').$$
      So for all $x \in (1-\lambda_0)C_{\eta_2}' \cap \{\chi_{A,(1-\lambda_0)\eta_2}\geq 1 - O_{d,\tau,\delta}(\epsilon + \rho)^c) \}$ and $y \in \lambda_0C_{\eta_2}' \cap \{\chi_{B,\lambda_0\eta_2}\geq 1 - O_{d,\tau,\delta}(\epsilon + \rho)^c) \}$ we have by Claim \ref{Claim je sais pas} that 
      $$ \chi_{AB,\eta_2/2}(xy) = 1.$$
      By the second part of Claim \ref{Claim je sais pas}, $C_{\eta_2/2}$ therefore contains a convex subset $C_{\eta_ 2/2}'$ of measure at least $(1-O_{d,\tau,\delta}(\epsilon + \rho)^c)\mu(C_{\eta_2})$. Since $\mu(C_{\eta_2/2}'') \gg_{d,\tau, \delta} \mu(\B(e,\rho))$, we know by a perimeter estimate that for all $x$ in a subset of measure at least $(1 - O_{d,\tau,\delta}(\rho)^c)\mu(C_{\eta_2/2}'')$ of $C_{\eta_2}''$ we have $\B(x,\eta_1) \subset C_{\eta_2}''$. Hence, $\chi_{AB,\eta_1}(x) = 1$ for any such $x$. This concludes the proof of the claim. 
       \end{proof}

       Finally, we prove the last claim.

       \begin{proof}[Proof of Claim \ref{Claim: BM-type growth}.]
           According to Markov's inequality, 
           $$ \{\chi_{A,(1-\lambda_0)\eta} \geq 1 - O_{d,\tau,\delta}(\epsilon + \rho^c)\}$$
           has measure at least $(1-O_{d,\tau,\delta}(\epsilon + \rho)^c)(1-\lambda_0)^d\mu(C_{\eta})$. For every $x \in \{\chi_{A,(1-\lambda_0)\eta} \geq 1 - O_{d,\tau,\delta}(\epsilon + \rho)^c\}$ and $y \in B$, we have $\chi_{AB,(1-\lambda_0)\eta}(xy) \geq 1 - O_{d,\tau,\delta}(\epsilon + \rho)^c$. So 
           $$ \mu\left(\{\chi_{AB,(1-\lambda_0)\eta} \geq 1 - O_{d,\tau,\delta}(\epsilon + \rho)^c\} \right) \geq (1 - O_{d,\tau,\delta}(\epsilon + \rho)^c) \left((1-\lambda_0)\mu(C_\eta)^{1/d} + \mu(B)^{1/d}\right)^d.$$
           We can now proceed as in the end of the proof of Claim \ref{Claim: Invariance between scales}. This yields,
           $$\mu(C_{(1-\lambda_0)\eta/2}) \geq  (1 - O_{d,\tau,\delta}(\epsilon + \rho)^c) \left((1-\lambda_0)\mu(C_\eta)^{1/d} + \mu(B)^{1/d}\right)^d.$$
       \end{proof}
    \end{proof}

\section{Global stability}\label{Global stability}
We will now conclude the proof of our global stability result (Theorem \ref{Theorem: Global stability}). Our strategy will roughly follow that of \cite{Machado2024MinDoubling} deducing global stability from local stability (Theorem \ref{Theorem: Local stability}). As in the proof of Theorem \ref{Theorem: Local stability}, we will mostly be able to maintain \emph{polynomial} errors and dependencies. The only part where this fails is in the invocation of \cite[Prop. 1.6]{Machado2024MinDoubling}, which comes with a double exponential bound.

 \subsection{Initial observation}\label{Subsection: Double counting}
    The proof of the global Brunn--Minkowski inequality in \cite[\S 5.5]{Machado2024MinDoubling} provides additional information. Let $\delta > 0$ and let $H$ be a proper closed subgroup. Recall that $H_\delta:=H\B(e,\delta)$.  Write $\mathfrak{h}$ the Lie subalgebra of $\mathfrak{g}$ associated with $H$ and $T_\delta:=\exp(\mathfrak{h}^\perp \cap \B_{\mathfrak{g}}(0,\delta))$. Then $H_\delta = H T_\delta$ and for all $X \subset H_\delta, h \in H$ and $\rho > 0$ write $$X_{h,\rho} = X \cap (H \cap \B(e,\rho))T_\delta.$$ 
    We have 
    \begin{equation}
        \int_H \mu(X_{h,\rho})dh = \mu(X)\mu_H(H \cap \B(e,\rho)) \label{Eq: Double counting}
    \end{equation}
    where $\mu_H$ denotes a Haar measure on $H$. We will denote $H \cap \B(e,\rho)$ by $\B_H(e,\rho)$. We will reduce the study of Minkowski products $XY$ to the study of Minkowski products $X_{h_1,\rho_1}Y_{h_2,\rho_2}$. The following is a useful formula in that regard 
    $$X_{h_1,\rho_1}Y_{h_2,\rho_2} \subset (XY)_{h_1h_2, \rho_1+\rho_2+ O_d(\delta^2)}, $$
    see \cite[\S 5.3]{Machado2024MinDoubling}. In this first subsection, we make the following assumptions:

    \begin{assumption}\label{Assumption semi-local}
        We have $\delta, \epsilon > 0$, a proper connected subgroup $H$ and $A,B \subset G$ such that $A,B,AB \subset H_\delta$ and 
        $$ \mu(AB) \leq (1+\epsilon)(\mu(A)^{1/d'}+\mu(B)^{1/d'})^{d'}$$
        where $d'$ denotes the codimension of $H$. Write 
        $$ \tau := \frac{\mu(A)}{\mu(B)} \text{ and } \lambda:=\tau^{1/d}.$$
        Define now 
    $$M:=\max_{h_1,h_2}\left\{\frac{\mu(A_{h_1, \rho})}{\mu(A)\mu_H(\B_H(e,\rho))}, \frac{\mu(B_{h_2,\lambda\rho})}{\mu(B)\mu_H(\B_H(e,\lambda\rho))}\right\},$$ and assume, as we may, that $M=\frac{\mu(B_{h,\lambda\rho})}{\mu(B)\mu_H(\B_H(e,\lambda\rho))}$ for some $h \in H$. Choose finally any $h_0 \in H$ such that $\frac{\mu(B_{h_0,\lambda\rho})}{\mu(B)\mu_H(\B_H(e,\lambda\rho))} \geq (1-\epsilon)M$. 
    \end{assumption}

    We found in \cite[\S 5.5]{Machado2024MinDoubling} the following series of inequalities under the assumption $\rho \geq \delta^{3/2}$:
    \begin{align}
        \mu(AB) &\geq \frac{\int_H \mu((AB)_{h,(1+\lambda)\rho + O_{d,\tau}(\delta^2)}) dh}{\mu_H(\B(e,(1+\lambda)\rho+ O_{d,\tau}(\delta^2)))} \label{Eq: global 1}\\
                &\geq \frac{\int_H \mu(A_{hh_0^{-1},\rho}B_{h_0,\lambda\rho})dh}{\mu_H(\B(e,(1+\lambda)\rho + O_{d,\tau}(\delta^2)))} \label{Eq: global 2}\\
                & \geq\frac{\int_H \mu(A_{hh_0^{-1},\rho})\left(1 + \frac{\mu(B_{h_0,\lambda\rho})^{1/d}}{\mu(A_{hh_0^{-1},\rho})^{1/d}}\right)^ddh }{ \mu_H(\B(e,(1+\lambda)\rho)+O_{d,\tau}(\delta^2)) } \label{Eq: global 3}\\
                & \geq (1-\epsilon)^d\left(1 + \frac{\mu_H(\B(e,\lambda\rho))^{1/d}\mu(B)^{1/d}}{\mu_H(\B(e,\rho))^{1/d}\mu(A)^{1/d}}\right)^d\frac{\int_H \mu(A_{hh_0^{-1},\rho})dh}{ \mu_H(\B(e,(1+\lambda)\rho+O_{d,\tau}(\delta^2))) } \label{Eq: global 4}\\
                & = (1-\epsilon)^d \frac{\mu_H(\B(e,\rho))}{\mu_H(\B(e,(1+\lambda)\rho+O_{d,\tau}(\delta^2)))} \left(1 + \frac{\mu_H(\B(e,\lambda\rho))^{1/d}\mu(B)^{1/d}}{\mu_H(\B(e,\rho))^{1/d}\mu(A)^{1/d}}\right)^d\mu(A) \label{Eq: global 5}\\
                & \geq (1-O_{d,\tau}(\epsilon+\rho + \delta)^c)\left(\mu(A)^{1/d'} + \mu(B)^{1/d'} \right)^{d'} \label{Eq: global 6}
    \end{align}
    where $c \gg_{d,\tau} 1$. Hence, if there is almost equality in the Brunn--Minkowski inequality, all the above inequalities are almost equalities. This will be the main observation that will enable us to reduce from global stability to local stability.





   \begin{proposition}\label{Proposition: Local mass is constant}
   Let $A, B \subset G$ satisfy Assumption \ref{Assumption semi-local} with parameters $\delta, \epsilon$.  Let $\rho \geq \delta^{3/2}$.
   Write $$m(\rho):= \max_{h \in H} \left( \mu(A_{h,\rho}) \right), \  H_{A,\rho}:=\{h \in H: A_{h,\rho}\neq \emptyset\}$$ and for $\alpha > 0$, $$H^\alpha_{A, \rho}:=\{h \in H: \mu(A_{h,\rho}) \geq (1-\alpha)m(\rho)\}.$$ Then  $\mu_H\left(H^{(\epsilon +\delta + \rho)^c}_{A, \rho}\right) \geq 1 - O_{d,\tau}(\epsilon +\delta + \rho)^c$ and $m(\rho) = (1 +O_{d,\tau}(\epsilon +\delta + \rho)^c)M$  for some $c \gg_{d,\tau} 1$. A similar statement holds for $B$.
   \end{proposition}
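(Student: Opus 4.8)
The plan is to exploit the near-equality in the chain \eqref{Eq: global 1}--\eqref{Eq: global 6} much more sharply. The starting observation, as recorded in \S\ref{Subsection: Double counting}, is that when $\rho\geq\delta^{3/2}$ and $A,B$ are in near-equality every inequality in that chain is a near-equality, so each of the quantities \eqref{Eq: global 1}--\eqref{Eq: global 6} equals $(1+O_{d,\tau}(\epsilon+\delta+\rho)^{c})(\mu(A)^{1/d'}+\mu(B)^{1/d'})^{d'}$ for some $c\gg_{d,\tau}1$. In particular \eqref{Eq: global 3} and \eqref{Eq: global 5} differ by at most $O_{d,\tau}(\epsilon+\delta+\rho)^{c}(\mu(A)^{1/d'}+\mu(B)^{1/d'})^{d'}$; since they share the denominator $\mu_H(\B_H(e,(1+\lambda)\rho+O_{d,\tau}(\delta^{2})))$, since \eqref{Eq: global 5} equals $\gamma\,\mu(A)\mu_H(\B_H(e,\rho))$ over that denominator with $\gamma:=(1-\epsilon)^{d}\big(1+\tfrac{\mu_H(\B_H(e,\lambda\rho))^{1/d}\mu(B)^{1/d}}{\mu_H(\B_H(e,\rho))^{1/d}\mu(A)^{1/d}}\big)^{d}$ the constant from \eqref{Eq: global 4}, and since by \eqref{Eq: Double counting} and bi-invariance of $\mu_H$ one has $\int_H\mu(A_{hh_0^{-1},\rho})\,dh=\mu(A)\mu_H(\B_H(e,\rho))$, multiplying out would yield
$$ \int_H\Big[\big(\mu(A_{h,\rho})^{1/d}+b^{1/d}\big)^{d}-\gamma\,\mu(A_{h,\rho})\Big]\,dh\;\ll_{d,\tau}\;(\epsilon+\delta+\rho)^{c}\,\mu(A)\,\mu_H(\B_H(e,\rho)),$$
with $b:=\mu(B_{h_0,\lambda\rho})$.

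The heart of the argument is then to analyse the fixed function $\phi(x):=(x^{1/d}+b^{1/d})^{d}-\gamma x$ on the interval $[0,x_{\max}]$, where $x_{\max}:=M\,\mu(A)\,\mu_H(\B_H(e,\rho))$ is an upper bound for $\mu(A_{h,\rho})$ valid for every $h$ by the definition of $M$. I would use three properties. First, $\phi$ is concave, because $\tfrac{d}{dx}(x^{1/d}+b^{1/d})^{d}=(1+b^{1/d}x^{-1/d})^{d-1}$ is nonincreasing. Second, $\phi\geq0$ on $[0,x_{\max}]$: this is exactly the pointwise inequality underlying the step \eqref{Eq: global 3}$\to$\eqref{Eq: global 4}, valid because $b\geq(1-\epsilon)M\mu(B)\mu_H(\B_H(e,\lambda\rho))$ forces $(1+b^{1/d}x^{-1/d})^{d}\geq\gamma$ throughout the interval. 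Third, the two endpoint values of $\phi$ are very different: using $\mu(A)\asymp_\tau\mu(B)$ and $\mu_H(\B_H(e,\lambda\rho))\asymp_{d,\tau}\mu_H(\B_H(e,\rho))$ (here $\lambda=\tau^{1/d}$ is bounded and $\delta^{2}\ll\rho$), I get $\phi(0)=b\gg_{d,\tau}x_{\max}$ while $\phi(x_{\max})=O_{d,\tau}(\epsilon)\,x_{\max}$. Concavity forces $\phi$ to lie above the chord joining $(0,\phi(0))$ and $(x_{\max},\phi(x_{\max}))$, hence $\phi(x)\geq\tfrac{x_{\max}-x}{x_{\max}}\phi(0)\gg_{d,\tau}x_{\max}-x$ on $[0,x_{\max}]$. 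Since $0\leq\mu(A_{h,\rho})\leq x_{\max}$ for all $h$, feeding this into the integral bound above gives
$$ \int_H\big(x_{\max}-\mu(A_{h,\rho})\big)\,dh\;\ll_{d,\tau}\;(\epsilon+\delta+\rho)^{c}\,\mu(A)\,\mu_H(\B_H(e,\rho)).$$

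To finish, I would evaluate the left-hand side: $\int_Hx_{\max}\,dh=x_{\max}$, and $\int_H\mu(A_{h,\rho})\,dh=\mu(A)\mu_H(\B_H(e,\rho))$ by \eqref{Eq: Double counting}, so the integral equals $(M-1)\mu(A)\mu_H(\B_H(e,\rho))$. As $M$ dominates the average $\int_H\tfrac{\mu(A_{h,\rho})}{\mu(A)\mu_H(\B_H(e,\rho))}\,dh=1$, this forces $1\leq M\leq1+O_{d,\tau}(\epsilon+\delta+\rho)^{c}$. The same bound together with Markov's inequality (Lemma \ref{Lemma: Markov level set form}) shows that outside a set of $h$ of $\mu_H$-measure $O_{d,\tau}(\epsilon+\delta+\rho)^{c/2}$ one has $\mu(A_{h,\rho})\geq(1-O_{d,\tau}(\epsilon+\delta+\rho)^{c/2})x_{\max}\geq(1-O_{d,\tau}(\epsilon+\delta+\rho)^{c/2})m(\rho)$, which is the desired lower bound on $\mu_H(H^{\alpha}_{A,\rho})$ with $\alpha=O_{d,\tau}(\epsilon+\delta+\rho)^{c/2}$; taking $h$ in this nonempty set also gives $m(\rho)\geq(1-O_{d,\tau}(\epsilon+\delta+\rho)^{c/2})x_{\max}$, whence $m(\rho)=(1+O_{d,\tau}(\epsilon+\delta+\rho)^{c})M\mu(A)\mu_H(\B_H(e,\rho))$, the stated estimate. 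The claim for $B$ follows by rerunning the argument with the two factors exchanged. The delicate point is the third property of $\phi$: one must pin down both endpoint values of this concave function accurately enough to extract the \emph{linear} lower bound $\phi(x)\gg_{d,\tau}x_{\max}-x$, and observe that the a priori bound $\mu(A_{h,\rho})\leq x_{\max}$ encoded in the definition of $M$ is precisely what makes $\phi$ nonnegative on all of $[0,x_{\max}]$; this is where the specific shape of the Brunn--Minkowski defect is used, and the rest is bookkeeping with the Haar normalisations and with keeping the error polynomial.
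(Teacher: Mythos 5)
Your analysis of the concave function $\phi(x)=(x^{1/d}+b^{1/d})^{d}-\gamma x$ is a clean way to recover what the paper proves as Lemma \ref{Lemma: Local measure is almost constant} (there via the mean value theorem): namely that $\mu(A_{h,\rho})$ is nearly constant, equal to $M\mu(A)\mu_H(\B_H(e,\rho))$, \emph{on the support} $H_{A,\rho}$. But there is a genuine gap at your very first display. The near-equality of the chain \eqref{Eq: global 1}--\eqref{Eq: global 6} only controls $\int_{H_{A,\rho}}\phi(\mu(A_{h,\rho}))\,dh$, not $\int_{H}\phi(\mu(A_{h,\rho}))\,dh$: for $h\notin H_{A,\rho}$ the integrands in \eqref{Eq: global 2}--\eqref{Eq: global 4} all vanish (the set $A_{h,\rho}B_{h_0,\lambda\rho}$ is empty, and the factor $\mu(A_{h,\rho})=0$ kills the term in \eqref{Eq: global 3}), whereas your $\phi(0)=b>0$. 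The uncontrolled extra term is $\mu_H(H\setminus H_{A,\rho})\cdot b$, and discarding it is not cosmetic: integrating your linear lower bound only over $H_{A,\rho}$ yields $\mu_H(H_{A,\rho})\,x_{\max}-\mu(A)\mu_H(\B_H(e,\rho))\ll_{d,\tau}(\epsilon+\delta+\rho)^{c}\mu(A)\mu_H(\B_H(e,\rho))$, i.e.\ $\mu_H(H_{A,\rho})\,M\approx 1$. This is perfectly consistent with, say, $M=2$ and $\mu_H(H_{A,\rho})=1/2$ ($A$ concentrated over half of $H$ with doubled fibre density), in which case your conclusions $M\leq 1+O(\cdot)$ and $\mu_H(H^{\alpha}_{A,\rho})\geq 1-O(\cdot)$ both fail. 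The statement you are proving is a claim about measure in all of $H$, and some input must force the support to be essentially all of $H$.

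That missing input is group-theoretic, and it is the heart of the paper's proof: one shows $H^{\alpha}_{A,\rho}H_{B,\lambda\rho}$ is contained in the single translate $H^{\alpha}_{A,\rho}h_0$ up to a set of small measure (because each product fibre $A_{h_1,\rho}B_{h_2,\lambda\rho}$ already has nearly maximal mass, so the fibres of $AB$ cannot spread over more of $H$ without violating \eqref{Eq: global 1}), and then invokes Kemperman's inequality $\mu_H(XY)\geq\min(\mu_H(X)+\mu_H(Y),1)$ in the compact group $H$, together with the commensurability $\mu_H(H_{A,\rho})\asymp_{d,\tau}\mu_H(H_{B,\lambda\rho})$ (Fact \ref{Fact: Support are commensurable}), to conclude $1-\mu_H(H^{\alpha}_{A,\rho})\ll_{d,\tau}(\epsilon+\delta+\rho)^{c}$. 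Only after that does the normalisation $M\approx 1\cdot M\approx m(\rho)/(\mu(A)\mu_H(\B_H(e,\rho)))$ follow. Your endpoint analysis of $\phi$ can be kept as a replacement for the mean-value-theorem step, but you must add the Kemperman argument (or an equivalent mechanism exploiting near-equality in \eqref{Eq: global 1}$\to$\eqref{Eq: global 2} for $h$ outside the support) before the averaging over all of $H$ is legitimate.
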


   From now on and throughout the rest of this subsection $c$ will denote possibly different positive constants that depend on $d$ and $\tau$ only. We will prove Proposition \ref{Proposition: Local mass is constant} in a few steps. First of all, notice that:

   \begin{fact}\label{Fact: Lower bound minimal}
   We have
       $$\sup_{h \in H} \mu(A_{h,\rho}) \geq \frac{\mu_H(\B(e,\rho))\mu(A)}{\mu(H_{A,\rho})} \text{ and } \sup_{h \in H} \mu(B_{h,\lambda\rho}) \geq \frac{\mu_H(\B(e,\lambda\rho))\mu(B)}{\mu(H_{B,\lambda\rho})}.$$
   \end{fact}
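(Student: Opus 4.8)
The plan is to obtain both inequalities directly from the double-counting identity \eqref{Eq: Double counting}. Applying that identity with $X = A$ gives $\int_H \mu(A_{h,\rho})\,dh = \mu(A)\mu_H(\B_H(e,\rho))$. By the very definition of $H_{A,\rho}$, the non-negative integrand $h \mapsto \mu(A_{h,\rho})$ vanishes off $H_{A,\rho}$ (indeed $A_{h,\rho} = \emptyset$ forces $\mu(A_{h,\rho}) = 0$), so the integral may be restricted to $H_{A,\rho}$. Bounding the integrand pointwise by its supremum then yields $\mu(A)\mu_H(\B_H(e,\rho)) \leq \mu_H(H_{A,\rho})\,\sup_{h\in H}\mu(A_{h,\rho})$, and dividing by $\mu_H(H_{A,\rho})$ gives the first claimed bound. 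The second is proved identically: apply \eqref{Eq: Double counting} with $X = B$ and radius $\lambda\rho$ to get $\int_H \mu(B_{h,\lambda\rho})\,dh = \mu(B)\mu_H(\B_H(e,\lambda\rho))$, restrict the integral to $H_{B,\lambda\rho}$, and bound the integrand by $\sup_{h\in H}\mu(B_{h,\lambda\rho})$.

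There is no genuine obstacle here: once \eqref{Eq: Double counting} is available, the statement is a one-line averaging argument, and the only point deserving a moment's care is the (immediate) identification of the support of the integrand with $H_{A,\rho}$, respectively $H_{B,\lambda\rho}$; everything else is monotonicity of the integral. (Here $\mu$ restricted to subsets of $H$ is understood as the Haar measure $\mu_H$.)
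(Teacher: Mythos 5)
Your argument is correct and is exactly what the paper intends: it states only that the fact "is a straightforward consequence of \eqref{Eq: Double counting}", and your restriction of the integral to the support $H_{A,\rho}$ followed by bounding the integrand by its supremum is that straightforward consequence. The only point worth noting is the one you already flag, namely that $\mu(H_{A,\rho})$ in the statement should be read as $\mu_H(H_{A,\rho})$, consistent with the paper's later usage.
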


   \begin{proof}
       This is a straightforward consequence of \eqref{Eq: Double counting}.
   \end{proof}

  And in turn this implies: 

   \begin{fact}\label{Fact: Support are commensurable}
   We have
       $\mu_H(H_{A,\rho}) \ll_{d,\tau} \mu_H(H_{B,\lambda\rho}) \ll_{d,\tau} \mu_H(H_{A,\rho})$
   \end{fact}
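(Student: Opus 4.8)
The plan is to obtain both inequalities from the \emph{same} two-sided averaging argument, which uses only: the double-counting identity \eqref{Eq: Double counting}, the slice-product inclusion $X_{h_1,\rho_1}Y_{h_2,\rho_2}\subseteq(XY)_{h_1h_2,\,\rho_1+\rho_2+O_d(\delta^2)}$ recalled at the start of \S\ref{Subsection: Double counting}, bi-invariance of $\mu$, Fact \ref{Fact: Lower bound minimal}, the hypothesis $\mu(AB)\le(1+\epsilon)(\mu(A)^{1/d'}+\mu(B)^{1/d'})^{d'}$ from Assumption \ref{Assumption semi-local}, and the small-scale doubling of $\mu_H$ on balls of $H$ (by the same reasoning that yields Fact \ref{Fact: Lebesgue vs Haar}, now applied to $H$): $\mu_H(\B_H(e,r'))\ll_{d,\tau}\mu_H(\B_H(e,r))$ whenever $r'\le C_{d,\tau}\,r$ and $r$ is small. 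In particular the chain \eqref{Eq: global 1}--\eqref{Eq: global 6} plays no role here.

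First I would prove $\mu_H(H_{A,\rho})\ll_{d,\tau}\mu_H(H_{B,\lambda\rho})$. By Fact \ref{Fact: Lower bound minimal} pick $h_B\in H$ with $\mu(B_{h_B,\lambda\rho})\ge(1-\epsilon)\,\mu_H(\B_H(e,\lambda\rho))\,\mu(B)/\mu_H(H_{B,\lambda\rho})>0$. For every $g\in H_{A,\rho}$ the slice $A_{g,\rho}$ is nonempty; choosing $a\in A_{g,\rho}$ and using the slice-product inclusion together with translation-invariance of $\mu$,
$$\mu\big((AB)_{gh_B,\,(1+\lambda)\rho+O_d(\delta^2)}\big)\ \ge\ \mu(A_{g,\rho}B_{h_B,\lambda\rho})\ \ge\ \mu(aB_{h_B,\lambda\rho})\ =\ \mu(B_{h_B,\lambda\rho}).$$
Writing $r:=(1+\lambda)\rho+O_d(\delta^2)$, integrating the previous inequality over $g\in H_{A,\rho}$ (note $g\mapsto gh_B$ preserves $\mu_H$) and bounding the resulting integral from above by \eqref{Eq: Double counting} applied to $AB$ at scale $r$ gives $\mu(AB)\,\mu_H(\B_H(e,r))\ge\mu_H(H_{A,\rho})\,\mu(B_{h_B,\lambda\rho})$. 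Since $\mu(AB)\le(1+\epsilon)(\mu(A)^{1/d'}+\mu(B)^{1/d'})^{d'}=(1+\epsilon)(1+\tau^{1/d'})^{d'}\mu(B)$, this rearranges to
$$\mu_H(H_{A,\rho})\ \le\ \frac{(1+\epsilon)(1+\tau^{1/d'})^{d'}}{1-\epsilon}\cdot\frac{\mu_H(\B_H(e,r))}{\mu_H(\B_H(e,\lambda\rho))}\cdot\mu_H(H_{B,\lambda\rho}).$$
Because $\rho\ge\delta^{3/2}$ forces $\delta^2/\rho\le\delta^{1/2}$, one has $r\le C_{d,\tau}\,\rho$ for $\delta$ small, whence $\mu_H(\B_H(e,r))\ll_{d,\tau}\mu_H(\B_H(e,\lambda\rho))$ by the doubling property; this yields $\mu_H(H_{A,\rho})\ll_{d,\tau}\mu_H(H_{B,\lambda\rho})$ for $\epsilon,\rho,\delta$ small.

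The reverse inequality is obtained by the mirror-image argument: pick $h_A\in H$ with $\mu(A_{h_A,\rho})\ge(1-\epsilon)\,\mu_H(\B_H(e,\rho))\,\mu(A)/\mu_H(H_{A,\rho})$, observe that $\mu\big((AB)_{h_Ag,\,(1+\lambda)\rho+O_d(\delta^2)}\big)\ge\mu(A_{h_A,\rho})$ for every $g\in H_{B,\lambda\rho}$ (collapse $B_{g,\lambda\rho}$ to a point), integrate over $g\in H_{B,\lambda\rho}$, and combine \eqref{Eq: Double counting} with $\mu(AB)\le(1+\epsilon)(1+\tau^{-1/d'})^{d'}\mu(A)$ and $\mu_H(\B_H(e,r))\ll_{d,\tau}\mu_H(\B_H(e,\rho))$. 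No step is a genuine obstacle; the one point to get right is that the \emph{trivial} lower bound $\mu(A_{g,\rho}B_{h,\lambda\rho})\ge\mu(B_{h,\lambda\rho})$ — obtained by collapsing one slice to a point rather than by invoking Brunn--Minkowski — already suffices, so that nonemptiness of the relevant slice (i.e. membership in $H_{A,\rho}$, resp.\ $H_{B,\lambda\rho}$) is exactly what feeds the averaging, and that the $O_d(\delta^2)$ fluctuation of the scale is absorbed once $\rho\ge\delta^{3/2}$.
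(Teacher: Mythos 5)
Your proof is correct and is essentially the paper's own argument: the paper likewise fixes the near-maximizing slice $h_0$ of $B$ via Fact \ref{Fact: Lower bound minimal}, uses the trivial bound $\mu(A_{hh_0^{-1},\rho}B_{h_0,\lambda\rho})\ge\mu(B_{h_0,\lambda\rho})$ on $H_{A,\rho}h_0$, averages via the double-counting identity (this is exactly the content of \eqref{Eq: global 1}--\eqref{Eq: global 2}, so your remark that the chain plays no role is only presentational), and divides by $\mu(AB)\ll_{d,\tau}\mu(B)$, with the reverse inequality by symmetry. No gaps.
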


   \begin{proof}
       By (\ref{Eq: global 2}) and Fact \ref{Fact: Lower bound minimal} we have that 
       \begin{align*}
           \mu(AB) & \geq \frac{\int_H \mu(A_{hh_0^{-1},\rho}B_{h_0,\lambda\rho})dh}{\mu_H(\B(e,\rho))} \\
           & \geq \frac{\int_{H_{A,\rho}} \mu(B_{h_0,\lambda\rho})dh}{\mu_H(\B(e,\rho))} \\
           & \geq \frac{\mu_H(H_{A,\rho})\mu(B_{h_0,\lambda\rho})}{\mu_H(\B(e,\rho))} \\
           & \gg_{d,\tau} \mu(B)\frac{\mu_H(H_{A,\rho})}{\mu_H(H_{B,\lambda\rho})}.
       \end{align*}
       So $$\frac{\mu_H(H_{A,\rho})}{\mu_H(H_{B,\lambda\rho})} \ll_{d,\tau} \frac{\mu(AB)}{\mu(B)}\ll_{d,\tau}1.$$ The other inequality is obtained in a symmetric way.
   \end{proof}

   \begin{lemma}\label{Lemma: Local measure is almost constant}
Let $\alpha > 0$. There is $c \gg_{d,\tau}1$ such that for all $h$ in a subset of measure at least $\left(1-\left|\alpha-\epsilon\right|^{-1}(\epsilon + \delta + \rho)^c\right)\mu(H_{A,\rho})$ of $H_{A,\rho}$ we have $$\mu(A_{h, \rho}) \geq (1-\alpha)m(\rho) \text{ and } m(\rho) \geq (1-\alpha)\mu(A)\mu_H(\B_H(e,\rho))M. $$ 
\end{lemma}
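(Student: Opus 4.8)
The plan is to exploit that, under Assumption \ref{Assumption semi-local}, the whole chain (\ref{Eq: global 1})--(\ref{Eq: global 6}) collapses. Indeed its last line gives $\mu(AB)\ge(1-O_{d,\tau}(\epsilon+\delta+\rho)^c)(\mu(A)^{1/d'}+\mu(B)^{1/d'})^{d'}$ while the hypothesis gives $\mu(AB)\le(1+\epsilon)(\mu(A)^{1/d'}+\mu(B)^{1/d'})^{d'}$; since every quantity in (\ref{Eq: global 1})--(\ref{Eq: global 6}) lies between these, each of them equals $\mu(AB)$ up to a factor $1\pm O_{d,\tau}(\epsilon+\delta+\rho)^c$. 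I would use this only for the step (\ref{Eq: global 3})$\,\ge\,$(\ref{Eq: global 4}), the one in which the local masses $\mu(A_{h,\rho})$ enter.

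Set $N_A:=\mu(A)\mu_H(\B_H(e,\rho))$, $N_B:=\mu(B)\mu_H(\B_H(e,\lambda\rho))$, $t:=(N_B/N_A)^{1/d}$, $b:=\mu(B_{h_0,\lambda\rho})$ (so $(1-\epsilon)MN_B\le b\le MN_B$ by the choice of $h_0$ and $M$), and for $h\in H_{A,\rho}h_0$ put $s_h:=\bigl(\mu(A_{hh_0^{-1},\rho})/(MN_A)\bigr)^{1/d}\in(0,1]$. Unwinding the difference of (\ref{Eq: global 3}) and (\ref{Eq: global 4}), using $\int_H\mu(A_{hh_0^{-1},\rho})\,dh=N_A$ (translation invariance plus (\ref{Eq: Double counting})), and noting that $b\ge(1-\epsilon)MN_B$ together with $\mu(A_{hh_0^{-1},\rho})\le m(\rho)\le MN_A$ makes the integrand nonnegative (this is just the pointwise Brunn--Minkowski-type inequality already used in passing from (\ref{Eq: global 3}) to (\ref{Eq: global 4})), I expect to reach
$$ M\int_{H_{A,\rho}h_0} g(s_h)\,dh \;\le\; O_{d,\tau}(\epsilon+\delta+\rho)^c,\qquad g(s):=(s+t)^d-(1-\epsilon)^d(1+t)^ds^d\ \ (\ge 0),$$
the extra factor $M$ surviving because the normalising quantity $\mu_H(\B_H(e,(1+\lambda)\rho+O(\delta^2)))\,\mu(AB)/(MN_A)$ is $O_{d,\tau}(1/M)$ (the balls in $H$ cancel and $\mu(AB)=O_{d,\tau}(\mu(A))$).

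I would then analyse $g$ on $[0,1]$ by hand: since $(1-\epsilon)^d\le1$ one has $g(s)\ge(s+t)^d-((1+t)s)^d$, and as $(s+t)-(1+t)s=t(1-s)\ge0$ this is $\ge c_1(1-s)$ with $c_1=c_1(d,\tau)>0$ (the quotient by $1-s$ extends continuously and positively to $s=1$, with value $dt(1+t)^{d-1}$). Hence $M\int_{H_{A,\rho}h_0}(1-s_h)\,dh\le O_{d,\tau}(\epsilon+\delta+\rho)^c$, so with $\alpha_0:=1-(1-\alpha)^{1/d}\in[\alpha/d,\alpha]$ Markov's inequality gives $M\mu_H(E)\le\alpha_0^{-1}O_{d,\tau}(\epsilon+\delta+\rho)^c$ for $E:=\{h\in H_{A,\rho}h_0:s_h<1-\alpha_0\}$. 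Since $\mu_H(H_{A,\rho})\ge N_A/m(\rho)\ge1/M$ by Fact \ref{Fact: Lower bound minimal}, the factor $M$ cancels and $\mu_H(E)/\mu_H(H_{A,\rho})\le\alpha_0^{-1}O_{d,\tau}(\epsilon+\delta+\rho)^c\le|\alpha-\epsilon|^{-1}O_{d,\tau}(\epsilon+\delta+\rho)^c$, the $|\alpha-\epsilon|$ (rather than $\alpha$) being the honest threshold once one tracks the $O_{d,\tau}(\epsilon)$ losses in the pointwise inequality — e.g. $g(1)=(1+t)^d(1-(1-\epsilon)^d)=O_{d,\tau}(\epsilon)$ — which only bite when $\alpha\lesssim\epsilon$. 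Translating back by $h_0$: every $h\in H_{A,\rho}$ outside a set of relative measure $\le|\alpha-\epsilon|^{-1}O_{d,\tau}(\epsilon+\delta+\rho)^c$ has $s_h\ge(1-\alpha)^{1/d}$, i.e. $\mu(A_{h,\rho})=MN_As_h^d\ge(1-\alpha)MN_A\ge(1-\alpha)m(\rho)$; and picking any such $h$ (the good set being nonempty in the only nontrivial regime $|\alpha-\epsilon|^{-1}(\epsilon+\delta+\rho)^c<1$) yields $m(\rho)\ge\mu(A_{h,\rho})\ge(1-\alpha)MN_A=(1-\alpha)\mu(A)\mu_H(\B_H(e,\rho))M$, the second assertion. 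The statement for $B$ follows by exchanging the roles of $A$ and $B$.

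The real work I anticipate is extracting the displayed inequality: one must be careful about (i) exactly which $(1-\epsilon)$- and $(1-O(\rho^2+\delta^2))$-factors occur at each line of (\ref{Eq: global 1})--(\ref{Eq: global 6}); (ii) the fact that for $h\notin H_{A,\rho}h_0$ the corresponding summand in (\ref{Eq: global 3}) is $0$, not $b$, so that the integral genuinely runs over $H_{A,\rho}h_0$ only; and (iii) keeping the seemingly harmless factor $M$ (which is not bounded a priori) so that it cancels against $\mu_H(H_{A,\rho})\ge1/M$ at the end — without this the relative bound would be vacuous. Everything else is routine estimation.
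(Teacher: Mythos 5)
Your proposal is correct and follows essentially the same route as the paper: both difference lines (\ref{Eq: global 3}) and (\ref{Eq: global 4}), bound the (nonnegative) integrand from below by a quantity proportional to the deviation of $\mu(A_{hh_0^{-1},\rho})$ from $M\mu(A)\mu_H(\B_H(e,\rho))$ (the paper via the mean value theorem on the ratio $\mu(B_{h_0,\lambda\rho})/\mu(A_{hh_0^{-1},\rho})$, you via the explicit one-variable function $g$), apply Markov, and cancel the factor $M$ against $\mu_H(H_{A,\rho})\gg M^{-1}$ from Fact \ref{Fact: Lower bound minimal} (the paper routes this through $\mu_H(H_{B,\lambda\rho})$ and Fact \ref{Fact: Support are commensurable}, you use the $A$-side bound directly). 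Your caveat about $\alpha_0^{-1}$ versus $\left|\alpha-\epsilon\right|^{-1}$ only matters when $\alpha\lesssim\epsilon$, a regime in which the paper's own pointwise inequality also degenerates and which is irrelevant for the application ($\alpha=(\epsilon+\delta+\rho)^{c/2}$).
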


\begin{proof}
    Taking the difference between line (\ref{Eq: global 3}) and (\ref{Eq: global 4}) we find
    \begin{align*}
        \int_H \mu(A_{hh_0^{-1},\rho})\left[\left(1 + \frac{\mu(B_{h_0,\lambda\rho})^{1/d} }{\mu(A_{hh_0^{-1},\rho})^{1/d}}\right)^d-\left(1 + \frac{\mu_H(\B(e,\lambda\rho))^{1/d}\mu(B)^{1/d}}{\mu_H(\B(e,\rho))^{1/d}\mu(A)^{1/d}}\right)^d\right]dh \hspace{3cm} \\
        \ll_{d,\tau} (\epsilon +\rho + \delta)^{c}\mu_H\left(\B\left(e,(1+\lambda)\rho + O_{d,\tau}(\delta^2)\right)\right)\mu(A)
    \end{align*}
    for some $c \gg_{d,\tau}1$. By the mean value theorem 
    \begin{align}
        \left|\left(1 + \frac{\mu(B_{h_0,\lambda\rho})^{1/d} }{\mu(A_{hh_0^{-1},\rho})^{1/d}}\right)^d-\left(1 + \frac{\mu_H(\B(e,\lambda\rho))^{1/d}\mu(B)^{1/d}}{\mu_H(\B(e,\rho))^{1/d}\mu(A)^{1/d}}\right)^d\right| \gg_{d,\tau}   \left|\frac{\mu(B_{h_0,\lambda\rho}) }{\mu(A_{hh_0^{-1},\rho})} - \frac{\mu_H(\B(e,\lambda\rho))\mu(B)}{\mu_H(\B(e,\rho))\mu(A)}\right|\label{Eq: Application mean value}
    \end{align}
    Let $X$ be the set of all $h \in H$ such that $\mu(A_{hh_0^{-1},\rho}) \geq (1-\alpha)M\mu_H(\B_H(e, \rho)) \mu(A)$. For any $h \notin X$, we have 
    $$\left|\frac{\mu(B_{h_0,\lambda\rho}) }{\mu(A_{hh_0^{-1},\rho})} - \frac{\mu_H(\B(e,\lambda\rho))\mu(B)}{\mu_H(\B(e,\rho))\mu(A)}\right| \geq \frac{\left|\alpha-\epsilon\right|}2\frac{\mu(B_{h_0,\lambda\rho})}{\mu(A_{hh_0^{-1},\rho})}.$$

    Therefore, we deduce from (\ref{Eq: Application mean value}) that 
    $$\left|\left(1 + \frac{\mu_H(\B(e,\lambda\rho))^{1/d}\mu(B)^{1/d}}{\mu_H(\B(e,\rho))^{1/d}\mu(A)^{1/d}}\right)^d - \left(1 + \frac{\mu(B_{h_0,\lambda\rho})^{1/d} }{\mu(A_{hh_0^{-1},\rho})^{1/d}}\right)^d\right|\gg_{d,\tau} \frac{\left|\alpha-\epsilon\right|}2\frac{\mu(B_{h_0,\lambda\rho})}{\mu(A_{hh_0^{-1},\rho})}.$$
    Hence, 
      $$ \left|\alpha-\epsilon\right| \mu(B_{h_0,\rho})\mu_H(X) \ll_{d,\tau} (\epsilon +\rho + \delta)^c\mu_H(\B(e,(1+\lambda)\rho+ O_{d,\tau}(\delta^2)))\mu(A)$$
      Which yields
      \begin{align*}
           \mu_H(X) &\ll_{d,\tau} \left|\alpha-\epsilon\right|^{-1} (\epsilon +\rho + \delta)^c\frac{\mu_H(\B(e,(1+\lambda)\rho+ O_{d,\tau}(\delta^2)))\mu(A)}{\mu(B_{h_0,\rho})} \\
           &\ll_{d,\tau}\left|\alpha-\epsilon\right|^{-1} (\epsilon +\rho + \delta)^c M^{-1} \\
           &\ll_{d,\tau} \left|\alpha-\epsilon\right|^{-1} (\epsilon +\rho + \delta)^c\mu(H_{B,\lambda\rho}) \\
           & \ll_{d,\tau} \left|\alpha-\epsilon\right|^{-1} (\epsilon +\rho + \delta)^c\mu(H_{A,\rho}).
      \end{align*}
    Finally, since $\frac{m(\rho)}{\mu_H(\B_H(e,\rho))\mu(A)} \leq M$ we obtain the conclusion.

\end{proof}

Now, we will deduce Proposition \ref{Proposition: Local mass is constant} as a consequence of Kemperman's inequality in connected compact groups:

\begin{proof}[Proof of Proposition \ref{Proposition: Local mass is constant}.]
Let $\alpha > 0$ to be chosen later. We use the notation introduced in the statement of Proposition \ref{Proposition: Local mass is constant}. For any $h_1 \in H_{A,\rho}^{\alpha}, h_2 \in H_{B,\lambda\rho}$ we have 
$$
    \mu\left((AB)_{h_1h_2, (1+c) \rho +  O_{d,\tau}(\delta^2)}\right) \geq \mu(A_{h_1,\rho}B_{h_2,\lambda\rho}) 
    \geq \mu(A_{h_1,\rho})
    \geq (1-\alpha)m(\rho).
$$

So \eqref{Eq: global 1} and Lemma \ref{Lemma: Local measure is almost constant} imply
\begin{align*}
    \mu_H(\B_H(h, (1+\lambda)\rho + O_{d,\tau}(\delta^2)))\mu(AB) 
    \geq \int_{H^\alpha_{A,\rho}h_0} \mu((AB)_{h,(1+\lambda)\rho + O}) dh + \int_{H\setminus (H^\alpha_{A,\rho}h_0)} \mu((AB)_{h,(1+\lambda)\rho+ O_{d,\tau}(\delta^2)}) dh \\
    \geq (1 - O_{d,\tau}(\epsilon + \delta + \rho)^c) \mu_H(\B_H(h, (1+\lambda)\rho+ O_{d,\tau}(\delta^2)))\mu(AB) + \mu_H(H^{\alpha}_{A,\rho}H_{B,\lambda\rho} \setminus H^{\alpha}_{A,\rho}h_0)\frac{m(\rho)}{2} 
\end{align*}

So 

$$\mu_H\left(H^{\alpha}_{A,\rho}H_{B,\lambda\rho} \setminus H^{\alpha}_{A,\rho}h_0\right)\frac{m(\rho)}{2}  \ll_{d,\tau} (\epsilon + \delta + \rho)^c \mu_H(\B_H(h, (1+\lambda)\rho+O_{d,\tau}(\delta^2))\mu(AB).$$
Which implies 
$$ \mu_H(H^{\alpha}_{A,\rho}H_{B,\lambda\rho} \setminus H^{\alpha}_{A,\rho}h_0) \ll_{d,\tau} (\epsilon + \delta + \rho)^c\mu_H(H_{A,\rho})$$
by Fact \ref{Fact: Lower bound minimal}. By Kemperman's inequality,
$$\mu_H(H^{\alpha}_{A,\rho}H_{B,\lambda\rho} ) \geq \min\left(\mu_H(H^\alpha_{A,\rho})+\mu_H(H_{B,\lambda\rho}),1\right).$$
So
$$ \min( 1- \mu_H(H^\alpha_{A,\rho}), \mu_H(H_{B,\lambda\rho})) \ll_{d,\tau} (\epsilon + \delta + \rho)^c\mu_H(H_{A,\rho}).$$
According to Fact \ref{Fact: Support are commensurable} and Lemma \ref{Lemma: Local measure is almost constant}, we find $1-\mu_H(H_{A,\rho}) \ll_{d,\tau}\left|\alpha-\epsilon\right|^{-1}(\epsilon + \delta + \rho)^c$. Choosing $\alpha := (\epsilon + \delta + \rho)^{c/2}$ concludes.

It remains to prove that a similar statement holds for $B$. Note that because of the first part of the proof, there is $h \in H$ such that $\frac{\mu(A_{h,\rho})}{\mu(A)} \geq (1- (\epsilon + \delta + \rho)^{c/2}) M$ as soon as $\epsilon, \rho, \delta$ are sufficiently small depending on $d, \tau$ alone. We are therefore in a position to run a symmetric argument with $(\epsilon + \delta + \rho)^{c/2}$ playing the role of $\epsilon$.
\end{proof}

   \subsection{Reducing to subsets in the neighbourhood of a closed subgroup}
We finally perform one last reduction in preparation of the proof of the main theorem (Theorem \ref{Theorem: Global stability}). Throughout, we will work with two subsets satisfying an almost-equal condition: 

\begin{assumption}\label{Assumption Global}
    Let $A,B \subset G$ and $\epsilon > 0$.  We assume that 
    $$ \mu(A) < \epsilon, \ \mu(B) < \epsilon$$
    and 
    $$\mu(AB) \leq (1+\epsilon) \left(\mu(A)^{1/d'} + \mu(B)^{1/d'} \right)^{d'}$$
    where $d'$ denotes the minimal codimension of a proper closed subgroup.
\end{assumption}

   As in \cite[\S 7]{Machado2024MinDoubling}, the first part of the proof is to show that $A$ and $B$ are contained in precisely one translate of a neighbourhood of a subgroup of maximal dimension. 
   
   \begin{lemma}\label{Lemma: last technical lemma}
    Let $\delta > 0$. Let $A,B$ and $\epsilon$ satisfy Assumption \ref{Assumption Global}. If $\epsilon > 0$ is sufficiently small depending on $\tau, d, \delta$ alone, then there is a proper closed subgroup subgroup $H$ and $g_1, g_2 \in G$ such that: 
    \begin{enumerate}
        \item $A \subset g_1 H_{\delta}$ and $B \subset H_\delta g_2$;
        \item Define the subset $X_\alpha$ made of all $h \in H$ such that $$\left|\frac{\mu(A_{h,\delta})}{\mu(A)\mu_H(\B_H(e,\delta))} - 1 \right| > \alpha \text{ and }\left|\frac{\mu(B_{h,\lambda\delta})}{\mu(B)\mu_H(\B_H(e,\delta))} - 1 \right| > \alpha.$$ Then there is $c \gg_{d,\tau} 1$ such that $\mu_H(X_{(\epsilon + \delta)^c}) \ll_{d,\tau} (\epsilon+\delta)^c$.
    \end{enumerate}
   \end{lemma}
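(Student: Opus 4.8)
The plan is to obtain (1) directly from the structural black box \cite[Prop.~1.6]{Machado2024MinDoubling} — the single non-polynomial ingredient of the paper — and then to deduce (2) by running the resulting configuration through the machinery of \S\ref{Subsection: Double counting}, chiefly Proposition~\ref{Proposition: Local mass is constant}.

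For (1), I would apply \cite[Prop.~1.6]{Machado2024MinDoubling} to the pair $A,B$. Since $\mu(AB)$ is within a factor $1+\epsilon$ of the Brunn--Minkowski minimum and $\mu(A)\asymp_\tau\mu(B)$, that result produces — for $\epsilon$ small in terms of $d,\tau,\delta$, with a double-exponential rate — a proper closed \emph{connected} subgroup $H$ (of codimension $d'$; a strictly larger codimension is incompatible with the near-equality for small sets) together with $g_1,g_2\in G$ such that $A\subset g_1H_{\delta/4}$ and $B\subset H_{\delta/4}g_2$. Replacing $A$ by $g_1^{-1}A$ and $B$ by $Bg_2^{-1}$ alters neither $\mu$ (bi-invariance) nor the hypothesis, so we may assume $A,B\subset H_{\delta/4}$; since $\B(e,\delta/4)$ is normal, $H_{\delta/4}H_{\delta/4}=H\B(e,\,\delta/2+O_d(\delta^2))\subset H_\delta$ for $\delta$ small, hence $AB\subset H_\delta$ as well. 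This places us in the setting of Assumption~\ref{Assumption semi-local} with subgroup $H$, radius $\delta$, and $\tau=\mu(A)/\mu(B)$, which proves (1).

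For (2), I would invoke Proposition~\ref{Proposition: Local mass is constant} at scale $\rho=\delta$ (legitimate since $\delta\geq\delta^{3/2}$ for $\delta$ small), both directly for $A$ and, through its symmetric counterpart, for $B$. Writing $D_A(h):=\mu(A_{h,\delta})/(\mu(A)\mu_H(\B_H(e,\delta)))$, the double-counting identity \eqref{Eq: Double counting} gives $\int_HD_A\,d\mu_H=1$, whereas Proposition~\ref{Proposition: Local mass is constant} gives $D_A\leq M_A:=m(\delta)/(\mu(A)\mu_H(\B_H(e,\delta)))$ pointwise and $D_A\geq(1-(\epsilon+\delta)^c)M_A$ off a set of $\mu_H$-measure $O_{d,\tau}((\epsilon+\delta)^c)$. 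Integrating forces $1\leq M_A$ and $1\geq(1-(\epsilon+\delta)^c)^2M_A$, so $M_A=1+O_{d,\tau}((\epsilon+\delta)^c)$ and hence $|D_A(h)-1|=O_{d,\tau}((\epsilon+\delta)^c)$ for all $h$ outside a set of $\mu_H$-measure $O_{d,\tau}((\epsilon+\delta)^c)$, and likewise for the analogous $B$-density at scale $\lambda\delta$. Since $X_\alpha$ is contained in the exceptional set for $D_A$ alone, taking $\alpha=(\epsilon+\delta)^c$ (shrinking $c$ if needed so the two bounds share the exponent) gives $\mu_H(X_{(\epsilon+\delta)^c})\ll_{d,\tau}(\epsilon+\delta)^c$.

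The genuine difficulty is entirely imported: it lives in \cite[Prop.~1.6]{Machado2024MinDoubling} — whose double-exponential dependence is the sole such loss in the paper — and, upstream, in Proposition~\ref{Proposition: Local mass is constant} and the inequality chain \eqref{Eq: global 1}--\eqref{Eq: global 6}. Within this lemma the only points requiring care are (i) arranging that \cite[Prop.~1.6]{Machado2024MinDoubling} returns a connected $H$ of codimension exactly $d'$ — if it only furnishes a possibly disconnected subgroup, one first observes that near-equality forces $A$ and $B$ to concentrate on a single identity-component coset, since products across distinct cosets are essentially disjoint and would inflate $\mu(AB)$ past $(1+\epsilon)(\mu(A)^{1/d'}+\mu(B)^{1/d'})^{d'}$ — and (ii) the bookkeeping of radii ($\delta/4$ versus $\delta$, and scale $\lambda\delta$ for $B$) when passing among $A$, $B$, $AB$ and when applying Proposition~\ref{Proposition: Local mass is constant}.
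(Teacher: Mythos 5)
There is a genuine gap in your treatment of part (1), and it is exactly where the real content of this lemma lives. First, \cite[Prop.~1.6]{Machado2024MinDoubling} (together with \cite[Thm.~4.6]{TaoProductSet08}) does not hand you a single translate: it only gives finite sets $F_1,F_2$ of size $O_{d,\tau}(1)$ with $A\subset F_1H_\delta$ and $B\subset H_\delta F_2$. The bulk of the paper's proof is devoted to collapsing these boundedly many translates into one — by showing, via Lemma \ref{Lemma: Locality}, Proposition \ref{Proposition: Local mass is constant} and Kemperman's inequality, that the elements of $F_1,F_2$ lie in $N(H)_{O_{d,\tau}(\delta)}$ and that their images in $N(H)/H$ form a \emph{subgroup}, so that the final $H$ of the lemma is the possibly disconnected group $\tilde F_1H$. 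You cannot skip this step.

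Second, the patch you propose in point (i) — that near-equality forces $A$ and $B$ onto a single coset of the identity component because "products across distinct cosets are essentially disjoint and would inflate $\mu(AB)$" — is false. If the translates form a coset of a finite subgroup of $N(H)/H$, the products of the $k$ cosets of $A$ with the $k$ cosets of $B$ recombine into only $k$ cosets, and no inflation occurs: taking $A=B=H'_\rho$ for a disconnected closed subgroup $H'$ with identity component $H$ of codimension $d'$ gives $\mu(AB)=\mu(H'_{2\rho})\approx 2^{d'}\mu(A)=\left(\mu(A)^{1/d'}+\mu(B)^{1/d'}\right)^{d'}$, i.e.\ disconnected subgroups are genuine extremizers. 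This is precisely why the lemma is stated for a proper closed (not necessarily connected) subgroup, and why the normalizer/subgroup argument cannot be avoided. Your part (2) is fine as an averaging argument from \eqref{Eq: Double counting} and Proposition \ref{Proposition: Local mass is constant} once (1) is in place (the paper does essentially the same via \eqref{Eq: Equal mass on each coset}), but note that Assumption \ref{Assumption semi-local} requires a connected $H$, so when the final subgroup is disconnected the proposition must be applied componentwise, as in the paper, rather than to $A$ and $B$ globally.
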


    \begin{proof}
       For all $\delta > 0$, we know by \cite[Prop. 1.6]{Machado2024MinDoubling} and \cite[Thm. 4.6]{TaoProductSet08} that for all $\epsilon \ll_{d,\tau,\delta} 1$ there are there are finite subsets $F_1,F_2$ with $|F_1|,|F_2| = O_{d,\tau}(1)$  such that $A \subset F_1 H_{\delta}$ and $B \subset H_{\delta}F_2$.  By Lemma \ref{Lemma: Locality}, we can moreover assume that $|F_1|=|F_2|$ and both families $(fH_{100\delta})_{f\in F_1}$ and $(H_{100\delta} f)_{f\in F_2}$ are made of pairwise disjoint subsets. 

        Write $M:=\max_{f_1 \in F_1, f_2 \in F_2} \{\frac{\mu(A \cap f_1H_\delta)}{\mu(A)},\frac{\mu(B \cap H_\delta f_2)}{\mu(B)}  \}$. Suppose as we may that there is $f_0 \in F_2$ such that $\frac{\mu(B \cap H_\delta f_0)}{\mu(B)} = M$. In particular, 
        \begin{equation}
            \frac{\mu(B \cap H_\delta f_0)}{\mu(A \cap fH_\delta)} \geq \frac{\mu(B)}{\mu(A)}
        \end{equation}
        for all $f \in F_1$.

        By Lemma \ref{Lemma: Locality} applied to $A$ and $B$, as soon as $\epsilon, \delta$ are sufficiently small depending on $d,\tau$ alone, for all $f \in F_1$, 
        $$\mu(A \cap fH_\delta) \gg_{d,\tau}M \gg_{d,\tau} \mu(AB)$$ and for all $f' \in F_2$, 
        $$\mu(B \cap H_\delta f') \gg_{d,\tau}M\gg_{d,\tau} \mu(AB).$$

        Now,  $$ A \left(B \cap H_\delta f_0\right) \subset AB$$ and 
        \begin{align}
             \mu\left(A \left(B \cap H_\delta f_0\right) \right) 
                    & =\mu\left(\bigsqcup_{f \in F_1} (A \cap fH_\delta)\left(B \cap H_\delta f_0\right) \right) \\
                    & = \sum_{f \in F_1} \mu\left((A \cap fH_\delta)(B \cap H_\delta f_0)\right) \\
                    & \geq \sum_{f \in F_1}(1- O_{d,\tau}(\delta^2))\left(\mu(A \cap fH_\delta)^{1/d} + \mu(B \cap H_\delta f_0)^{1/d}\right)^d \\
                    & \geq (1-O_{d,\tau}(\delta^2)) \left(\mu(A)^{1/d} + \mu(B)^{1/d}\right)^d \\
                    & \geq (1-O_{d,\tau}(\delta^2))\mu(AB).
        \end{align}
        From these inequalities we draw two consequences. 

        Firt of all, for all $f \in F_1$, 
        \begin{align*}
            (1- O_{d,\tau}(\delta^2))\left(\mu(A \cap fH_\delta)^{1/d} + \mu(B \cap H_\delta f_2)^{1/d}\right)^d &\leq \mu\left((A \cap fH_\delta)(B \cap H_\delta f_2)\right) + O_{d,\tau}(\delta+\epsilon)^{c} \mu(AB) \\
            &\leq \mu\left((A \cap fH_\delta)(B \cap H_\delta f_2)\right) + O_{d,\tau}(\delta+\epsilon)^c \mu(A \cap fH_\delta).
        \end{align*}
        We can thus apply Proposition \ref{Proposition: Local mass is constant} to $A \cap fH_\delta$ and $B \cap H_\delta f_0$. 

        Moreover, for every $f \in F_1$, $f' \in F_2$, 
        $$ \mu\left((A \cap fH_\delta)(B \cap H_\delta f') \setminus \bigcup_{f''\in F_1} f''H_{2\delta}f_0\right) \ll_{d,\tau} (\delta + \epsilon)^c\mu(A).$$
        So $$ \mu\left((A \cap fH_\delta) \setminus \bigcup_{f''\in F_1} f''H_{2\delta}f_0f'^{-1}\right) \ll_{d,\tau} (\delta + \epsilon)^c\mu(A).$$
        Now, Proposition \ref{Proposition: Local mass is constant} implies the following estimate for $1\gg_{d,\tau}\delta,\epsilon > 0$, 
        $$ \mu\left((A \cap fH_\delta) \setminus \bigcup_{f''\in F_1} f''H_{2\delta}f_0f'^{-1}\right) \gg_{d,\tau} \mu(A)\mu_H\left(H \setminus\bigcup_{f''\in F_1} f^{-1}f''H_{2\delta}f_0f'^{-1} \right).$$
        So for $\delta, \epsilon \ll_{d,\tau}1$, 
        $$\mu_H\left(H \setminus\bigcup_{f''\in F_1} f''H_{2\delta}f_0f'^{-1} \right) \leq \frac{1}{100}.$$
        Thus, there is $f'' \in F_1$ such that 
        $$ \mu_H(H \cap f'' H_{2\delta}f_0 f'^{-1}) \gg_{d,\tau}1.$$
        We get 
        $$ \mu_H(H \cap (f_0 f'^{-1})^{-1} H_{2\delta}f_0 f'^{-1}) \gg_{d,\tau}1.$$
        And by Kemperman's inequality \cite{Kemperman64}, we find that for $m \ll_{d,\tau} 1$, 
        $$ \mu_H\left(H \cap (f_0 f'^{-1})^{-1} H_{2m\delta}f_0 f'^{-1}\right) \geq \mu_H\left(\left(H \cap (f_0 f'^{-1})^{-1} H_{2\delta}f_0 f'^{-1}\right)^m\right) = 1.$$

        So $H \subset (f_0f'^{-1})^{-1} H_{O_{d,\tau}(\delta)}f_0 f'^{-1}$. Hence, $f_0f'^{-1} \in N(H)_{O_{d,\tau}(\delta)}$ where $N(H)$ denotes the normaliser of $H$. A symmetric argument shows that $f^{-1}f' \in N(H)_{O_{d,\tau}(\delta)}$ for all $f,f' \in F_1$. Thus, there are $f_1 \in F_1$, $f_2 \in F_2$, two subsets $\tilde{F_1},\tilde{F_2} \subset N(H)$ and $\delta'\ll_{d,\tau}\delta$ such that $A \subset f_1\tilde{F_1}H_{\delta'}$ and $B \subset H_{\delta'}\tilde{F_2}f_2$. Upon considering $f_1^{-1}A$ and $Bf_2^{-1}$ instead of $A$ and $B$ respectively, we may assume that $f_1 =f_2 =e$. We can apply Lemma \ref{Lemma: Locality} once more and assume that $|\tilde{F_1}| = |\tilde{F_2}|$ and for all $f_1 \in \tilde{F_1}, f_2 \in \tilde{F_2}$ and 
        \begin{equation}
            \frac{\mu(A \cap f_1H_{\delta'})}{\mu(A)} = (1+O_{d,\tau}(\epsilon + \delta)^c)\frac{\mu(B \cap H_{\delta'}f_2)}{\mu(B)} \label{Eq: Equal mass on each coset}
        \end{equation}

       Let $\pi: H \rightarrow N(H)/H$ denote the canonical projection. Suppose that $|\pi(\tilde{F_1}\tilde{F_2})| > |\pi(\tilde{F_1})|$ and fix $f_2 \in \tilde{F_2}$. Take now $f \in \tilde{F_1}$, $f' \in \tilde{F_2}$ such that $\pi(ff') \notin \pi(\tilde{F_1}f_2)$. Recall that any two connected subgroups of minimal co-dimension are conjugate. Hence, for $f \in N(H) \setminus H$, $d(fH,H) \gg_d 1$. Thus, for $\delta$ - hence, $\delta'$ - sufficiently small depending on $\tau, d$, $$fH_{2\delta'}f' \cap \bigcup_{f'' \in F_1}f''H_{2\delta'}f_2 =\emptyset. $$
        So 
        $$ \mu(AB) \geq \mu(A(B \cap H_\delta' f_2)) + \mu\left((A \cap fH_{\delta'})(B \cap H_{\delta'}f')\right).$$
        Since $\mu((A \cap fH_{\delta}'))\gg_{d,\tau} \mu(AB)$, this yields a contradiction. So $|\pi(\tilde{F_1}\tilde{F_2})| = |\pi(\tilde{F_1})|$. Since $|\pi(\tilde{F_1})|=|\tilde{F_1}|=|\tilde{F_2}|=|\pi(\tilde{F_2})|$ and $e \in \tilde{F_1}$, $e \in \tilde{F_2}$, this means that $\pi(\tilde{F_1})=\pi(\tilde{F_2})$ is a subgroup of $N(H)/H$. So (1) is proved. Now, (2) follows from (\ref{Eq: Equal mass on each coset}) and Proposition \ref{Proposition: Local mass is constant}.
        \end{proof}




\subsection{Concluding the proof}\label{Subsection: Concluding}
We are finally ready to conclude the proof of Theorem \ref{Theorem: Global stability}. We will use the same argument twice in a row - this argument is close in spirit to ones found in the proof of stability results in \cite{jing2023measure2,Machado2024MinDoubling}. They all rely on stability properties of group homomorphisms between compact groups, see \cite{zbMATH03429759, KazhdaneRep82}. 

First of all, we have to prepare the grounds to use the local stability result. In particular, it is necessary to show that the assumption about convex hull is respected locally.

\begin{lemma}\label{Lemma: local convex hull is large}
Let $A,B$ and $\epsilon > 0$ satisfy Assumption \ref{Assumption Global}. Let, moreover, $H \subset G$ be a closed subgroup and $\delta > 0$ be such that $g_1A,Bg_2 \subset H_\delta$ for some $g_1,g_2 \in G$ and the conclusions of Lemma \ref{Lemma: last technical lemma} are satisfied.

Then, we can choose $H$ and $\delta$ in such a way that:
\begin{enumerate}
    \item For all $(h_1,h_2)$ in a subset of measure at least $1 - O_{d,\tau}(\epsilon + \delta)^c$ of $H \times H$, we have 
    $$ \mu(A_{h_1,\delta}B_{h_2,\lambda\delta}) \leq \mu((AB)_{h_1h_2,(1+\lambda)\delta + O_{d,\tau}(\delta^2)})  \leq (1 + ( \epsilon + \delta)^c)\left(\mu(A_{h_1,\delta})^{1/d} + \mu(B_{h_2,\lambda\delta})^{1/d}\right)^d;$$
    \item For all $h$ in a subset of measure at least $1 - O_{d,\tau}(\epsilon + \delta)^c$ of $H$, we have 
    $$ \mu\left(\co( A_{h,\delta})\right) \gg_{d,\tau}\mu(\B(e,\delta)) \text{ and } \mu\left(\co( B_{h,\lambda\delta})\right) \gg_{d,\tau}\mu(\B(e,\delta)).$$
\end{enumerate}
\end{lemma}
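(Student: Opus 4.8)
The plan is to handle the two assertions separately; only (2) requires replacing the given subgroup--scale pair by a better one.

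For (1), the left-hand inequality is immediate from the inclusion $A_{h_1,\delta}B_{h_2,\lambda\delta}\subset (AB)_{h_1h_2,(1+\lambda)\delta+O_{d,\tau}(\delta^2)}$ recorded in \S\ref{Subsection: Double counting} and monotonicity of $\mu$. For the right-hand inequality I would integrate over $(h_1,h_2)\in H\times H$: by that inclusion, the bi-invariance of $\mu_H$ and the double-counting identity \eqref{Eq: Double counting}, the integral of $\mu(A_{h_1,\delta}B_{h_2,\lambda\delta})$ is at most $\mu(AB)\,\mu_H\big(\B_H(e,(1+\lambda)\delta+O_{d,\tau}(\delta^2))\big)$. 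On the other hand the local Brunn--Minkowski inequality \eqref{Eq: Local BM} gives $\mu(A_{h_1,\delta}B_{h_2,\lambda\delta})\ge (1-O_d(\delta^2))\big(\mu(A_{h_1,\delta})^{1/d}+\mu(B_{h_2,\lambda\delta})^{1/d}\big)^d$ for every $(h_1,h_2)$, while Proposition \ref{Proposition: Local mass is constant} applied to $A$ and to $B$ (the hypotheses of Assumption \ref{Assumption semi-local} being in force at scale $\delta$ after absorbing $g_1,g_2$), combined with \eqref{Eq: Double counting}, shows that off a subset of $H\times H$ of measure $O_{d,\tau}(\epsilon+\delta)^c$ one has $\mu(A_{h_1,\delta})=(1+O_{d,\tau}(\epsilon+\delta)^c)\mu(A)\mu_H(\B_H(e,\delta))$ and $\mu(B_{h_2,\lambda\delta})=(1+O_{d,\tau}(\epsilon+\delta)^c)\mu(B)\mu_H(\B_H(e,\lambda\delta))$, the matching upper bounds holding for \emph{every} $(h_1,h_2)$ since a local mass never exceeds its maximum. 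Feeding these into the chain \eqref{Eq: global 1}--\eqref{Eq: global 6}, which is a near-equality because $\mu(AB)\le(1+\epsilon)(\mu(A)^{1/d'}+\mu(B)^{1/d'})^{d'}$, identifies $\int_{H\times H}\big(\mu(A_{h_1,\delta})^{1/d}+\mu(B_{h_2,\lambda\delta})^{1/d}\big)^d$ with $(1+O_{d,\tau}(\epsilon+\delta)^c)\mu(AB)\mu_H(\B_H(e,(1+\lambda)\delta+O_{d,\tau}(\delta^2)))$. Subtracting, the nonnegative function $\mu(A_{h_1,\delta}B_{h_2,\lambda\delta})-(1-O_d(\delta^2))\big(\mu(A_{h_1,\delta})^{1/d}+\mu(B_{h_2,\lambda\delta})^{1/d}\big)^d$ has integral $O_{d,\tau}(\epsilon+\delta)^c\,\mu(AB)\mu_H(\B_H(e,(1+\lambda)\delta+O_{d,\tau}(\delta^2)))$ over $H\times H$, so Markov's inequality, together with the fact that off a small set $\big(\mu(A_{h_1,\delta})^{1/d}+\mu(B_{h_2,\lambda\delta})^{1/d}\big)^d$ is a fixed $(1+O_{d,\tau}(\epsilon+\delta)^c)$-multiple of its average, yields (1) after shrinking $c$.

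For (2), the crucial point is the choice of scale. Lemma \ref{Lemma: last technical lemma} only furnishes a \emph{fixed} small $\delta_0$ with $g_1A,Bg_2\subset H_{\delta_0}$, and at that scale the slabs $A_{h,\delta_0}$ are typically far too thin in the $d'$ transverse directions for $\mu(\co(A_{h,\delta_0}))$ to be comparable to $\mu(\B(e,\delta_0))$. I would therefore replace $\delta_0$ by a scale comparable (with constants depending on $d,\tau$) to $\mu(A)^{1/d'}$. To see this is legitimate, inside $H_{\delta_0}$ I would use the Baker--Campbell--Hausdorff approximation (Corollary \ref{Corollary: Sum vs product}) to project $g_1A$ and $Bg_2$ onto the $d'$-dimensional transverse space $\mathfrak{h}^{\perp}$; the hypothesis $\mu(AB)\le(1+\epsilon)(\mu(A)^{1/d'}+\mu(B)^{1/d'})^{d'}$, together with the $H$-uniformity from Proposition \ref{Proposition: Local mass is constant}, turns this into a near-equality for a Minkowski sum in $\mathbb{R}^{d'}$, to which Euclidean Brunn--Minkowski stability (\cite{christ2012near, FigalliJerison15}, as in \cite[\S 7]{Machado2024MinDoubling}) applies, showing that after adjusting $g_1,g_2$ the transverse spreads of $A$ and $B$ are $O_{d,\tau}(\mu(A)^{1/d'})$ and $O_{d,\tau}(\mu(B)^{1/d'})$, and that Lemma \ref{Lemma: last technical lemma} and Proposition \ref{Proposition: Local mass is constant} still hold at a scale $\delta=c_1\mu(A)^{1/d'}$ with $c_1=c_1(d,\tau)$. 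Fixing this $H$ and $\delta$, part (1) holds at this scale by the previous paragraph, and Proposition \ref{Proposition: Local mass is constant} at scale $\delta$ gives $\mu(A_{h,\delta})=(1+O_{d,\tau}(\epsilon+\delta)^c)\mu(A)\mu_H(\B_H(e,\delta))$ for all $h$ outside a set of $\mu_H$-measure $O_{d,\tau}(\epsilon+\delta)^c$. Since $\mu_H(\B_H(e,\delta))$ is of order $\delta^{d-d'}$ and $\mu(\B(e,\delta))$ of order $\delta^d$ (Fact \ref{Fact: Lebesgue vs Haar} applied in $G$ and in $H$), the choice of $\delta$ of order $\mu(A)^{1/d'}$ makes $\mu(A_{h,\delta})$ of order $\mu(A)\delta^{d-d'}$, which is of order $\mu(\B(e,\delta))$; hence $\mu(\co(A_{h,\delta}))\ge\mu(A_{h,\delta})\gg_{d,\tau}\mu(\B(e,\delta))$. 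The bound for $B_{h,\lambda\delta}$ is identical, using $\mu(B)=\tau^{-1}\mu(A)$ and $\lambda\delta=\tau^{1/d}\delta$.

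The routine part is (1): it is just double counting together with the already-established near-equality in \eqref{Eq: global 1}--\eqref{Eq: global 6}. The hard part is the scale reduction in (2)---descending from the a priori fixed scale $\delta_0$ of Lemma \ref{Lemma: last technical lemma} to the critical scale $\mu(A)^{1/d'}$ while keeping all error terms polynomial. In particular one cannot afford to re-invoke \cite[Prop. 1.6]{Machado2024MinDoubling} at the small scale, since its bounds are double-exponential; the descent must be carried out using only the transverse Euclidean stability and the multi-scale machinery of \S\ref{Local stability} (Proposition \ref{Proposition: Local mass is constant} at successively finer scales), and one must verify that the Brunn--Minkowski near-equality, hence the conclusions of Lemma \ref{Lemma: last technical lemma} and Proposition \ref{Proposition: Local mass is constant}, is inherited at every intermediate scale.
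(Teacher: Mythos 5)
Your treatment of part (1) is essentially the paper's argument: the left inequality is the inclusion $A_{h_1,\delta}B_{h_2,\lambda\delta}\subset (AB)_{h_1h_2,(1+\lambda)\delta+O_{d,\tau}(\delta^2)}$, and the right one comes from the near-equality in the chain \eqref{Eq: global 1}--\eqref{Eq: global 6} together with Proposition \ref{Proposition: Local mass is constant} and Markov's inequality. That part is fine.

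Part (2) is where the proposal has a genuine gap. You propose to descend from the fixed scale $\delta_0$ of Lemma \ref{Lemma: last technical lemma} to a scale $\delta\sim\mu(A)^{1/d'}$, at which point $\mu(A_{h,\delta})\sim\mu(A)\mu_H(\B_H(e,\delta))\sim\mu(\B(e,\delta))$ and the convex-hull bound is trivial. But the conclusion you need at the new scale is the \emph{containment} $g_1A, Bg_2\subset H_{\delta}$ (this is what makes the slabs $A_{h,\delta}$, the double-counting identity \eqref{Eq: Double counting}, and Proposition \ref{Proposition: Local mass is constant} meaningful there), and the tool you invoke to get it --- Euclidean Brunn--Minkowski stability applied to the transverse projections --- only controls symmetric differences, i.e.\ most of the mass of $\pi(A)$. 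It says nothing about low-measure ``tentacles'' of $A$ reaching transverse distance $\gg\mu(A)^{1/d'}$ from the coset, which are perfectly compatible with near-equality in Brunn--Minkowski but destroy the containment. Discarding those tentacles changes $A$ and $B$, and you would then have to re-establish the hypothesis $\mu(AB)\le(1+\epsilon)(\mu(A)^{1/d'}+\mu(B)^{1/d'})^{d'}$ and all the conclusions of Lemma \ref{Lemma: last technical lemma} for the truncated sets; none of this is carried out, and the final sentence of your proposal (``one must verify that the near-equality \dots is inherited at every intermediate scale'') is precisely the missing proof, not a remark.

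The paper sidesteps the need to identify the critical scale altogether: it takes $(H,\delta)$ with $\delta$ \emph{minimal} among pairs for which the containment holds, and argues by contradiction. If for typical $h$ some $H^\perp$-axis of $\co(B_{h,\lambda\delta^{3/2}})$ had length $l\ll\delta$, then Lemma \ref{Lemma: Locality}, the irreducibility of the $H$-action on $H^\perp$ (via Claim \ref{Claim: Small axis implies small convex hull} and Lemma \ref{Lemma: Symmetric short axis implies small volume}), and a Fubini argument produce an almost-homomorphism $b$ on $H$ with defect $O_{d,\tau}(l)$; Ulam stability (\cite{zbMATH03429759}) upgrades it to a genuine homomorphism $\phi$, and Corollary \ref{Corollary: Locality 99 percent} then gives an actual containment of $A$ and $B$ in $O_{d,\tau}(l)$-neighbourhoods of a conjugate of $\phi(H)$, contradicting minimality of $\delta$. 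This yields that \emph{all} $H^\perp$-axes of the local convex hulls have length $\gg_{d,\tau}\delta$, hence the volume bound on $\co(A_{h,\delta})$ and $\co(B_{h,\lambda\delta})$ --- note the paper only needs the convex hull to be fat, not $A_{h,\delta}$ itself to carry mass $\gg\mu(\B(e,\delta))$. This almost-homomorphism/minimality mechanism is the heart of the proof of (2) and is absent from your proposal.
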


\begin{proof}
    The letter $c$ will denote constants that might change from line to line but will all satisfy $c \gg_{d,\tau}1$. For technical reasons, working with scales slightly below $\delta$ will be convenient in this proof. Fix $\frac32 \geq \beta \geq 1$. Choose $h \in H$ such that $\left|\mu(B_{h,\lambda\delta^\beta}) - \mu_H(\B_H(e,\lambda\delta^\beta))\mu(B)\right| \ll_{d,\tau} (\epsilon + \delta)^c\mu_H(\B_H(e,\lambda\delta^\beta))\mu(B)$. By Lemma \ref{Lemma: Local measure is almost constant},  the subset of those $h$'s has measure $1-O_{d,\tau}\left(\epsilon + \delta\right)^c$.

    Subsrtracting \eqref{Eq: global 3} from \eqref{Eq: global 2}, 

    \begin{align}
        \int_H \left|\mu((AB)_{h'h,(1+\lambda)\delta^\beta + O_{d,\tau}(\delta^2)}) - (1-O_{d,\tau}(\delta)^c)\left(\mu(A_{h',\delta^\beta})^{1/d} + \mu(B_{h,\lambda\delta^\beta})^{1/d}\right)^d\right| dh'\hspace{2cm} \\ \ll_{d,\tau} (\epsilon + \delta)^c\mu(AB)\mu_H(\B_H(e,\delta^\beta)). \notag
    \end{align}
    By Markov's inequality,  for all $h'$ in a subset of $H$ of measure at least $1-O_{d,\tau}\left(\epsilon + \delta\right)^c$, 
    $$ \mu(A_{h',\delta^\beta}B_{h,\lambda\delta^\beta}) \leq \mu((AB)_{h'h,(1+\lambda)\delta^\beta + O_{d,\tau}(\delta^2)})  \leq (1 + O_{d,\tau}( \epsilon + \delta)^c)\left(\mu(A_{h',\delta^\beta})^{1/d} + \mu(B_{h,\lambda\delta^\beta})^{1/d}\right)^d$$
    and, by Proposition \ref{Proposition: Local mass is constant},
    $$ \left|\mu(A_{h', \delta^\beta}) - \mu_H(\B(e, \delta^\beta))\mu(A)\right| \ll_{d,\tau} (\epsilon + \delta)^c\mu_H(\B(e, \delta^\beta))\mu(A).$$

    Set $Y \subset  H \times H$ the subset of all such pairs $(h,h')$. We have shown that $(\mu_H \times \mu_H)(Y) \geq 1- O_{d,\tau}( \epsilon + \delta)^c$. When $\beta = 1$,  this proves (1).

    We will now show (2). We will now use that case $\beta = \frac32$. For all $(h,h') \in Y$, Lemma \ref{Lemma: Locality} applied to $A_{h',\delta^\beta}, B_{h,\lambda\delta^\beta}$ and $C$ the convex hull $\co(B_{h,\lambda\delta^\beta})$, yields $A_{h',\delta^\beta} \subset g \left(C\right)^m$ for some $m=O_{d,\tau}(1)$ and $g \in G$.  Applying Lemma \ref{Lemma: Locality} once more, we see that for any $h'' \in H$ such that $(h'',h') \in Y$, we have $B_{h'',\lambda\delta^\beta} \subset C^{m^2}r$ for some $r \in G$. By a Fubini-type argument, we see that for a proportion $1 - O_{d,\tau}( \epsilon + \delta)^c$ of all $h \in G$ there are a subset $Y_h \subset Y$ with $\mu_H(Y_h) \geq 1 - O_{d,\tau}( \epsilon + \delta)^c$ and $m = O_{d,\tau}(1)$ such that for all $(h'',h') \in Y_h$,
    $$ A_{h', \delta^{\beta}} \subset g(h')C^m \text{ and } B_{h'',\lambda\delta^\beta} \subset C^mr(h'')$$
    where $C=\co(B_{h,\lambda\delta^\beta})$. Note in particular that $g(h'),r(h') \in \B(h', O_{d,\tau}(\delta))$. 
   
   For any such $h$, write $X_h \subset Y_h$ the subset of pairs $(h_1,h_2)$ such that
   $$\mu_H\left(\{(h',h_1h_2h'^{-1}) \in Y_h\}\right) \geq 1- O_{d,\tau}( \epsilon + \delta)^c.$$
   We have that $\left(\mu_H \times \mu_H\right)(X_h) \geq 1 - O_{d,\tau}(\epsilon + \delta)^c$ by a Fubini-type argument.
   

   Take $x \in H$ such that $x=h_1h_2$ with $(h_1,h_2) \in X_h$. Then 
   \begin{align}\mu(A_{xh'^{-1},\delta^\beta}B_{h',\lambda\delta^\beta}) &\geq (1 - O_{d,\tau}( \epsilon + \delta)^c)\left(\mu(A_{xh'^{-1},\delta^\beta})^{1/d} + \mu(B_{h',\lambda\delta^\beta})^{1/d}\right)^d \\ &\geq (1 - O_{d,\tau}( \epsilon + \delta)^c)\mu((AB)_{x,(1+\lambda)\delta^\beta+ O_{d,\tau}(\delta^2)}). \label{Eq: Local doubling}
   \end{align}
    and 
    \begin{equation}
        A_{xh'^{-1},\delta^\beta}B_{h',\lambda\delta^\beta} \subset g(xh'^{-1})C^{2m^2}r(h') \label{Eq: Convex envelope}
    \end{equation}
    for $h'$ ranging in a subset of measure at least $1-O_{d,\tau}( \epsilon + \delta)^c$. In particular, for any $m \geq 0$, take $h'_1,\ldots, h'_m$ as above. We have 
    $$
        \mu\left( (AB)_{x,(1+\lambda)\delta^\beta+ O_{d,\tau}(\delta^2)} \cap \bigcap_{i=1}^m g(xh'^{-1}_i)C^{2m^2}r(h'_i) \right) \geq (1-O_{d,\tau,m}( \epsilon + \delta)^c)\mu((AB)_{x,(1+\lambda)\delta^\beta + O_{d,\tau}(\delta^2)}).$$
    and so for any $g \in \bigcap_{i=1}^m g(xh'^{-1}_i)C^{2m^2}r(h'_i)$ we have

       \begin{equation}
        \mu\left( (AB)_{x,(1+\lambda)\delta^\beta+ O_{d,\tau}(\delta^2)} \cap g\left(\bigcap_{i=1}^m r(h'_i)^{-1}C^{4m^2}r(h'_i) \right)\right) \geq (1-O_{d,\tau,m}( \epsilon + \delta)^c)\mu((AB)_{x,(1+\lambda)\delta^\beta}). \label{Eq: Large intersection with a convex subset}
    \end{equation}
    
    We will exploit the following simple claim: 
    
    \begin{claim}\label{Claim: Small axis implies small convex hull}
        There is $\eta >0$ such that the following holds. Let $C \subset \B(e,\rho)$ be a convex subset and $X \subset H$ be a subset such that $\mu(X\B(e,\eta))\geq 1-\eta$. Suppose that $\rho$ is sufficiently small that the exponential and log maps are well-defined diffeomorphisms. Let $C_{H^\perp}$ denote the convex subset obtained by orthogonally projecting $C$ to the subspace orthogonal to the Lie algebra of $H$. Call $H^\perp$-axis of $C$ the axis of $C_H$. If one $H^\perp$-axis of has length $l$, then there is a subset $F \subset X$ of size $O_d(1)$ such that the projection of 
        $$ \bigcap_{h\in F} hCh^{-1}$$
        to $H^\perp$ is contained in a ball of radius $O_{d}(l)$. 
    \end{claim}

    \begin{proof}
        This is clear from Lemma \ref{Lemma: Symmetric short axis implies small volume} and the fact that $H$ acts irreducibly on $H^\perp$, see \cite[App. A]{Machado2024MinDoubling}.
    \end{proof}

    By Claim \ref{Claim: Small axis implies small convex hull} combined with \eqref{Eq: Large intersection with a convex subset},
    $$\mu\left((AB)_{x,(1+\lambda)\delta^\beta+ O_{d,\tau}(\delta^2)} \cap \B(g,O_{d,\tau}(l))\right) \geq (1-O_{d,\tau}( \epsilon + \delta)^c) \mu((AB)_{x,(1+\lambda)\delta^\beta})$$ for some $g \in G$ - where $l$ denotes the maximum between the length of the $H^\perp$-shortest axis of $C$ and $\delta^{\frac32}$.

    Since balls are normal,  $$A_{xh'^{-1},\delta^\beta} \subset \B(g(xh'^{-1}), O_{d,\tau}(l)) \text{ and } B_{h',\lambda\delta^\beta} \subset \B(r(h'),O_{d,\tau}(l)) $$
    for some $g(xh'^{-1}),r(h') \in G$. In addition, as soon as $\epsilon,  \delta \ll_{d,\tau}1$, $$A_{hh'^{-1},\delta^\beta}B_{h',\lambda\delta^\beta} \cap A_{hh''^{-1},\delta^\beta}B_{h'',\lambda\delta^\beta} \neq \emptyset$$ for any $(xh''^{-1},h'') \in X_h$ by \eqref{Eq: Local doubling}. So \begin{equation}g(hh'^{-1})r(h') \in \B\left(g(hh''^{-1})r(h''), O_{d,\tau}(l)\right). \label{Eq: Indep definition b}\end{equation}

    Define now the map $b: H \rightarrow G$ by $b(x) = g(h_1)r(h_2)$ for some choice of $h_1,h_2 \in X_h$ such that $h_1h_2=x$. Note that if $h_3, h_4$ were any other choice, $b(x) \in g(h_3)r(h_4) \B_H(e,O_{d,\tau}(l))$ by (\ref{Eq: Indep definition b}). 
    
    \begin{claim}\label{Claim: almost rep}
    The map $h \mapsto b(h)b(e)^{-1}$ is $O_{d,\tau}(l)$-close to a group homomorphism.
    \end{claim}

    \begin{proof}
        Note that $b$ is well-defined on a subset of $H$ of measure $1- O_{d,\tau}(\epsilon + \delta)^c$. For all $h \in H$, define $\phi(h):=b(h_1)b(h_2)^{-1}$ for some $h_1, h_2$ such that $b$ is well defined, $h_1h_2^{-1}=h$ and both $(h_1,h_1^{-1}), (h_2,h_2^{-1}) \in X_h$. For $\epsilon, \delta \ll_{d,\tau} 1$, $b$ is well-defined over $H$. Moreover, if $h_3, h_4$ were any possible choice, we would have $b(h_3)b(h_4)^{-1} \in \B_H(\phi(h), O_{d,\tau}(l))$. Indeed, for $ \epsilon, \delta$ sufficiently small, we can find $g_1, g_2, g_3, g_4 \in H$ such that $g_2=g_4$, $g_1g_2 = h_3$, $g_3g_4 = h_4$ and $g(g_1), r(g_2), g(g_4)$ and $r(g_3)$ are well-defined. Thus,

        $$ b(h_3)b(h_4)^{-1} \in g(g_1)r(g_2) r(g_4)^{-1}g(g_3)^{-1}\B_H(e,O_{d,\tau}(l)) = g(g_1)g(g_3)^{-1}\B_H(e,O_{d,\tau}(l))$$
        according to (\ref{Eq: Indep definition b}). Moreover, looking at the conditions on $g_1, g_2,g_3,g_4$, we see that $g_1g_3^{-1}=h$ and $g_3$ can be freely chosen in a subset of measure $1- O_{d,\tau}( \epsilon + \delta)^c$. Since this is true for any choice of $h_3, h_4$, including $h_3=h_1, h_4=h_2$, we have, 
        $$ \phi(h) \in b(h_3)b(h_4)^{-1}\B_H(e,O_{d,\tau}(l))$$
        as soon as $ \delta$ and $\epsilon$ are sufficiently small depending on $\tau$ and $d$ alone.
         
        Now, we wish to show that it satisfies 
        $$ d(\phi(x)\phi(y), \phi(xy)) \ll_{d,\tau} l$$
        for all $x,y \in H$. Indeed, as above, we can find $h_1, h_2,h_3$ such that $x=h_1h_2^{-1}$, $y= h_2h_3^{-1}$ and $b$ is defined on $h_1, h_2$ and $h_3$. Then,
        \begin{align*}
            \phi(x)\phi(y) &\in (b(h_1)b(h_2)^{-1})(b(h_2)b(h_3)^{-1})\B_H(e,O_{d,\tau}(l)) \\
            & \in b(h_1)b(h_3)^{-1}\B_H(e,O_{d,\tau}(l)) \\
            & \in \phi(xy)\B_H(e,O_{d,\tau}(l)).
        \end{align*}
        So $\phi$ - and, hence, $h \mapsto b(h)b(e)^{-1}$ - is within distance $O_{d,\tau}(l)$ of an actual homomorphism $\phi: H \rightarrow G$, see \cite{zbMATH03429759}. 
    \end{proof}
    Hence, $\mu(AB \cap \phi(H)_{O_{d,\tau}(l)}b(e))\geq \frac23 \mu(AB)$. And by Corollary \ref{Corollary: Locality 99 percent} again, there are $g_1, g_2 \in G$, $b \in G$ such that $A \subset g_1\left(b^{-1}\phi(H)b\right)_{O_{d,\tau}(l)}$ and $B \subset \left(b^{-1}\phi(H)b\right)_{O_{d,\tau}(l)}g_2$. 

    We are finally ready to conclude (2). Suppose as we may that we had chosen $(H,\delta)$ with $A \subset g_1H_\delta$ and $B \subset H_\delta g_2$ for some $g_1,g_2 \in G$ in such a way as to minimise the value of $\delta$. The above proof then shows that for any $h \in H$ such that $(\mu_H \times \mu_H)(Y_h) \geq 1 - O_{d,\tau}(\epsilon + \delta)^c$, we have that all $H^\perp$-axes of $\co(B_{h,\delta^\beta})$ have length $\gg_{d,\tau} \delta$. This readily implies that for all $h$ in a subset of measure at least $1 - O_{d,\tau}(\epsilon + \delta)^c$  we have $\mu(\co(B_{h,\delta})) \gg_{d,\tau} \mu(T_\delta\B_H(e,\delta))$. This proves (2).
    
\end{proof}

    We can conclude the proof of the main result with a strikingly similar line of reasoning. 
    
    \begin{proof}[Proof of Theorem \ref{Theorem: Global stability}.]
         Let $H$ and $\delta$ be given by Lemma \ref{Lemma: last technical lemma}. Upon considering  $gA, Bh$ instead of $A,B$, we may assume the $A,B \subset H_\delta$.  By Lemma \ref{Lemma: local convex hull is large}, there is $c \gg_{d,\tau}1$ such that the subset $Y \subset H \times H$ made of those $(h_1,h_2)$ such that:
         \begin{enumerate}
            \item $\mu(A_{h_1,\delta}) = (1 + O_{d,\tau}(\epsilon + \delta)^c)\mu(A) \mu(\B(e,\rho))$ and $\mu(B_{h_2,\lambda\delta}) == (1 + O_{d,\tau}(\epsilon + \delta)^c)\mu(B) \mu(\B(e,\lambda\rho))$;
             \item $ \mu(A_{h_1,\delta}B_{h_2,\lambda\delta}) \leq \mu((AB)_{h_1h_2,(1+\lambda)\delta + O_{d,\tau}(\delta^2)})  \leq (1 + ( \epsilon + \delta)^c)\left(\mu(A_{h_1,\delta})^{1/d} + \mu(B_{h_2,\lambda\delta})^{1/d}\right)^d;$
             \item
                 $\mu\left(\co( A_{h_1,\delta})\right) \gg_{d,\tau}\mu(\B(e,\delta)) \text{ and } \mu\left(\co( B_{h_2,\lambda\delta})\right) \gg_{d,\tau}\mu(\B(e,\delta))$;
             
             \item for every $(h_1,h_2) \in Y$ the subset $\{h \in H : (h_1,h) \in Y\}$ (resp. $\{h \in H : (h,h_2) \in Y\}$) has measure at least $1-O_{d,\tau}(\epsilon+\delta)^c$;
             \item for every $(h_1,h_2) \in Y$ the subset $\{h \in H : (h_1h_2h^{-1},h) \in Y\}$  has measure at least $1-O_{d,\tau}(\epsilon+\delta)^c$;
             \item upon considering $hA, Bh'$ for some $h,h' \in H$, we may assume that $(e,e) \in Y$. 
         \end{enumerate}
         has measure $1- O_{d,\tau}( \epsilon + \delta)^c$. We can ensure (1)-(3) because of Lemma \ref{Lemma: local convex hull is large} and (4) and (5) because of a Fubini-type argument. By the stability result for the local Brunn--Minkowski (Theorem \ref{Theorem: Local stability}), for every $(h_1, h_2) \in Y$ we have that there are a convex subset $C_{h_1} \subset \B(e,\delta)$ with center of mass $e$ and $g \B(h_1,2\delta), r \in \B(h_2,2\delta)$ such that  
         $$\mu\left(gC_{h_1}\Delta A_{h_1,\delta}\right) \ll_{d,\tau}  (\epsilon + \delta)^c\mu\left( A_{h_1,\delta}\right), \hspace{0.5cm} \mu\left((\lambda C_{h_1})r\Delta B_{h_2,\lambda_2\delta}\right) \ll_{d,\tau}  (\epsilon + \delta)^c\mu\left(C_{h_1}\right)$$
         and 
         $$ \mu\left((AB)_{h_1h_2,(1+\lambda)\delta+ O_{d,\tau}(\delta^2)} \Delta g\left((1+\lambda)C_{h_1}\right)r\right)  \ll_{d,\tau}  (\epsilon + \delta)^c\mu\left(C_{h_1}\right)$$
         where, as above, $\lambda=\frac{\mu(B)^{1/d}}{\mu(A)^{1/d}}$.


       Thus, for every $(h_1,h_2), (h_3,h_2) \in Y$ we have 

       $$\mu(C_{h_1} \Delta C_{h_3}) \ll_{d,\tau}  (\epsilon + \delta)^c\mu\left(C_{h_1}\right).$$

       Since for every two $h_1, h_3$ appearing as first coordinate in $Y$, there is $h_2$ such that $(h_1,h_2)$ and $(h_3,h_2)$, we can assume that $C_{h_1}$ is independent of $h_1$ and write $C:=C_{h_1}$. 
In particular, for every $h$ appearing as a first coordinate in $Y$, there is $g(h) \in \B(e,2\delta)$ such that $ \mu(A_{h,\delta} \Delta hg(h)C) \ll_{d,\tau}  (\epsilon + \delta)^c\mu\left(C\right).$ Similarly, for every $h$ appearing as a second coordinate in $Y$, there is $r(h) \in \B(e,2\delta)$ such that $ \mu(A_{h,\delta} \Delta Cr(h)h) \ll_{d,\tau}  (\epsilon + \delta)^c\mu\left(C\right).$ Moreover, for all $(h_1,h_2) \in Y$ we have $$ \mu((AB)_{h_1h_2, (1+\lambda)\delta + O_{d,\tau}(\delta^2)} \Delta h_1g(h_1)\left((1+\lambda)C\right)r(h_2)h_2)\ll_{d,\tau}  (\epsilon + \delta)^c\mu\left(C\right).$$
To simplify the above inequality, notice that the order in the product matters only up to an error of size $O_{d,\tau}  (\epsilon + \delta)^c\mu\left(C\right)$ - because of Corollary \ref{Corollary: Sum vs product} and the inequality $\mu(C) \gg_{d,\tau} \mu(\B(e,\delta))$. Namely,
$$\mu\left[h_1g(h_1)r(h_2)h_2\left((1+\lambda)\left(h_2^{-1}Ch_2\right)\right) \Delta h_1g(h_1)(1+\lambda)Cr(h_2)h_2\right]\ll_{d,\tau}  (\epsilon + \delta)^c\mu\left(C\right)$$
       Now, by assumption, for all $h$ in a subset of measure $1 - O_{d,\tau}(\epsilon + \delta)^c$ we have $(h,h^{-1}) \in Y_{\epsilon}$. So the above implies that whenever $(h_1,h_1^{-1}), (h_2, h_2^{-1}) \in Y$, then 
       $$ \mu\left[\left(h_1g(h_1)r(h_1^{-1})h_1^{-1}\right)h_1Ch_1^{-1} \Delta \left(h_2g(h_2)r(h_2^{-1})h_2^{-1}\right)h_2Ch_2^{-1}\right]\ll_{d,\tau}  (\epsilon + \delta)^c\mu\left(C\right).$$
       Which yields
       $$ \mu\left(h_1^{-1}Ch_1 \Delta h_2^{-1}Ch_2\right)\ll_{d,\tau}  (\epsilon + \delta)^c\mu\left(C\right)$$
       by Lemma \ref{Lemma: center of mass}. Since every $h \in H$ can be written $h_2^{-1}h_1$ for some $h_1, h_2$ as above, 
       $$ \mu\left(C\Delta h^{-1}Ch\right)\ll_{d,\tau}  (\epsilon + \delta)^c\mu\left(C\right)$$
       for all $h \in H$. According to Lemma \ref{Lemma: Small sym diff implies close}, we can therefore assume that $C$ is invariant under the action of $H$. 
       Finally, for any $(h_1,h_2),(h_3,h_4) \in Y$ with $h_1h_2=h_3h_4$ we have 
       $$\mu\left(g(h_1)r(h_2)\left((1+\lambda)C\right) \Delta g(h_3)r(h_4)\left((1+\lambda)C\right)\right) \ll_{d,\tau}  (\epsilon + \delta)^c\mu\left(C\right).$$
       According to Lemma \ref{Lemma: center of mass}, we have 
       $$ d\left(h_1g(h_1)r(h_2)h_2,h_3g(h_3)r(h_4)h_4\right) \ll_{d,\tau} (\epsilon + \delta)^c\delta. $$
       Proceeding as in the end of the proof of Lemma \ref{Lemma: local convex hull is large}, we find a group homomorphism $\phi: H \rightarrow G$ such that $h_1g(h_1)r(h_2)h_2$ is within distance $O_{d,\tau}\left( (\epsilon + \delta)^c\delta\right)$ of $\phi(h_1h_2)g$ for all $(h_1,h_2) \in Y$ and some $g \in G$. There is $h_2$ such that for all $h$ in a subset $X$ of measure $1 - O_{d,\tau}(\epsilon + \delta)^c$ we have $(h,h_2) \in Y$. This yields $g_0 \in H_\delta$ such that $hg(h)g_0$ (resp. $g_0^{-1}r(h)$) is within distance $O_{d,\tau}\left( (\epsilon + \delta)^c\delta\right)$ of $\phi(h)g_0$ (resp. $g_0^{-1}\phi(h)$). Since $g_0 \in H_\delta$, $\mu(g_0^{-1}Cg_0 \Delta C) \ll_{d,\tau} \delta^c$. So writing $H'= g_0^{-1}\phi(H)g_0$ and $C'=g_0^{-1}Cg_0$ we find $g, r \in G$ such that 
       $$ \mu(A \Delta gH'C') \ll_{d,\tau} (\epsilon + \delta)^c \mu(A),$$
       $$ \mu(B \Delta (\lambda C')H'r) \ll_{d,\tau} (\epsilon + \delta)^c \mu(B)$$
       and 
       $$\mu(AB \Delta g(1+\lambda)C'H'r)  \ll_{d,\tau} (\epsilon + \delta)^c \mu(AB).$$
       Finally, since $C'$ is invariant under conjugation by  $H'$, we have $C'H' = H'_{\delta'}$, $\lambda C'H'= H_{\lambda \delta'}$ and $(1+\lambda) C'H'= H_{(1+\lambda)\delta'}$ for some $\delta'$, see \cite[App. A]{Machado2024MinDoubling}.

    \end{proof}


\begin{thebibliography}{10}

\bibitem{zbMATH01149836}
Keith Ball.
\newblock An elementary introduction to modern convex geometry.
\newblock In {\em Flavors of geometry}, pages 1--58. Cambridge: Cambridge University Press, 1997.

\bibitem{christ2012near}
Michael Christ.
\newblock Near equality in the brunn-minkowski inequality.
\newblock {\em arXiv preprint arXiv:1207.5062}, 2012.

\bibitem{zbMATH06466329}
Nicolas De~Saxc{\'e}.
\newblock A product theorem in simple {Lie} groups.
\newblock {\em Geom. Funct. Anal.}, 25(3):915--941, 2015.

\bibitem{zbMATH03517415}
Serge Dubuc.
\newblock Crit{\`e}res de convexit{\'e} et in{\'e}galit{\'e}s int{\'e}grales.
\newblock {\em Ann. Inst. Fourier}, 27(1):135--165, 1977.

\bibitem{FigalliJerison15}
Alessio Figalli and David Jerison.
\newblock Quantitative stability for sumsets in $\mathbb{R}^n$.
\newblock {\em J. Eur. Math. Soc. (JEMS)}, 17(5):1079--1106, 2015.

\bibitem{zbMATH07977296}
Alessio Figalli and Jo{\~a}o P.~G. Ramos.
\newblock Improved stability versions of the {Pr{\'e}kopa}-{Leindler} inequality.
\newblock {\em J. Convex Anal.}, 32(1):291--320, 2025.

\bibitem{figalli2023sharp}
Alessio Figalli, Peter van Hintum, and Marius Tiba.
\newblock Sharp quantitative stability of the brunn-minkowski inequality, 2023.

\bibitem{arXiv:2501.04656}
Alessio Figalli, Peter van Hintum, and Marius Tiba.
\newblock Sharp {Quantitative} {Stability} for the {Pr{\'e}kopa}-{Leindler} and {Borell}-{Brascamp}-{Lieb} {Inequalities}.
\newblock Preprint, {arXiv}:2501.04656 [math.{FA}] (2025), 2025.

\bibitem{BrunnMinkowski02Gardner}
R.~J. Gardner.
\newblock The {Brunn}-{Minkowski} inequality.
\newblock {\em Bull. Am. Math. Soc., New Ser.}, 39(3):355--405, 2002.

\bibitem{zbMATH07286399}
W.~T. Gowers and J.~Long.
\newblock Partial associativity and rough approximate groups.
\newblock {\em Geom. Funct. Anal.}, 30(6):1583--1647, 2020.

\bibitem{zbMATH03429759}
Karsten Grove, Hermann Karcher, and Ernst~A. Ruh.
\newblock Jacobi fields and {Finsler} metrics on compact {Lie} groups with an application to differentiable pinching problems.
\newblock {\em Math. Ann.}, 211:7--21, 1974.

\bibitem{LieGroups15Hall}
Brian Hall.
\newblock {\em Lie groups, {Lie} algebras, and representations. {An} elementary introduction}, volume 222 of {\em Grad. Texts Math.}
\newblock Cham: Springer, 2nd ed. edition, 2015.

\bibitem{jing2023measure2}
Yifan Jing and Chieu-Minh Tran.
\newblock Measure growth in compact semisimple lie groups and the kemperman inverse problem.
\newblock {\em arXiv preprint arXiv:2303.15628}, 2023.

\bibitem{jing2023measure}
Yifan Jing, Chieu-Minh Tran, and Ruixiang Zhang.
\newblock Measure doubling of small sets in $\mathrm{SO}(3,\mathbb{R})$, 2023.

\bibitem{zbMATH07715451}
Yifan Jing, Chieu-Minh Tran, and Ruixiang Zhang.
\newblock A nonabelian {Brunn}-{Minkowski} inequality.
\newblock {\em Geom. Funct. Anal.}, 33(4):1048--1100, 2023.

\bibitem{KazhdaneRep82}
David Kazhdan.
\newblock On $\varepsilon$-representations.
\newblock {\em Isr. J. Math.}, 43:315--323, 1982.

\bibitem{Kemperman64}
J.~H.~B. Kemperman.
\newblock On products of sets in a locally compact group.
\newblock {\em Fundam. Math.}, 56:51--68, 1964.

\bibitem{Machado2024MinDoubling}
Simon Machado.
\newblock Minimal doubling for small subsets in compact {Lie} groups.
\newblock Preprint, {arXiv}:2401.14062 [math.{GR}] (2024), 2024.

\bibitem{zbMATH07599280}
Keivan Mallahi-Karai, Amir Mohammadi, and Alireza Salehi~Golsefidy.
\newblock Locally random groups.
\newblock {\em Mich. Math. J.}, 72:479--527, 2022.

\bibitem{zbMATH03318704}
M.~McCrudden.
\newblock On critical pairs of product sets in a certain matrix group.
\newblock {\em Proc. Camb. Philos. Soc.}, 67:569--581, 1970.

\bibitem{BMnilpotentTao}
Terence Tao.
\newblock The brunn--minkowski inequality for nilpotent groups.

\bibitem{TaoProductSet08}
Terence Tao.
\newblock Product set estimates for non-commutative groups.
\newblock {\em Combinatorica}, 28(5):547--594, 2008.

\end{thebibliography}

\end{document}